\documentclass{amsart}
\usepackage{bbm}
\usepackage{color}
\usepackage{amscd}
\usepackage{nicefrac}
\usepackage{graphicx}
\usepackage{hyperref}
\usepackage{amssymb}
\usepackage{euscript}
\usepackage{enumitem}
\usepackage[cmtip,all]{xy}
\usepackage[export]{adjustbox}

\numberwithin{equation}{section}
\renewcommand{\subsection}[1]{\hspace{-\parindent}\refstepcounter{subsection}{\bf (\thesubsection) #1.}}
\renewcommand{\thesubsection}{\arabic{section}\alph{subsection}}

\allowdisplaybreaks

\theoremstyle{plain}
\newtheorem{thm}{Theorem}[section]

\newtheorem{theorem}[thm]{Theorem}

\newtheorem{corollary}[thm]{Corollary}

\newtheorem{lemma}[thm]{Lemma}

\newtheorem{definition}[thm]{Definition}

\newtheorem{remark}[thm]{Remark}

\newtheorem{proposition}[thm]{Proposition}

\newtheorem{non-example}[thm]{Non-example}

\newtheorem{condition}[thm]{Condition}

\newtheorem{properties}[thm]{Properties}

\newtheorem{conjecture}[thm]{Conjecture}

\newtheorem{conventions}[thm]{Conventions}
\newtheorem*{claim*}{Claim} 
\newtheorem*{lemma*}{Lemma}
\newtheorem*{theorem*}{Theorem}
\newtheorem*{conjecture*}{Conjecture}

\newcommand{\bC}{{\mathbb C}}

\newcommand{\bF}{{\mathbb F}}

\newcommand{\bR}{{\mathbb R}}

\newcommand{\bZ}{{\mathbb Z}}

\newcommand{\scrA}{\EuScript A}
\newcommand{\scrB}{\EuScript B}
\newcommand{\scrC}{\EuScript C}

\newcommand{\scrF}{\EuScript F}

\newcommand{\scrO}{\EuScript O}

\newcommand{\frakm}{\mathfrak{m}}

\newcommand{\frakA}{\mathfrak{A}}
\newcommand{\frakB}{\mathfrak{B}}

\newcommand{\frakD}{\mathfrak{D}}

\newcommand{\frakM}{\mathfrak{M}}

\newcommand{\frakR}{\mathfrak{R}}

\newcommand{\half}{{\textstyle\frac{1}{2}}}

\newcommand{\iso}{\cong}
\newcommand{\htp}{\simeq}
\newcommand{\smooth}{C^\infty}

\begin{document}
\title[Quantum connection mod $p$]{The quantum connection\\ and its mod $p$ reduction}
\author{Dan Pomerleano, Paul Seidel}

\begin{abstract}
Recent progress on the structure of the quantum connection for monotone symplectic manifolds has used two approaches, which share the common feature of reducing to mod $p$ coefficients. We refine and compare those approaches. In particular, we establish a relation with quantum Steenrod operations which is stronger than that in \cite{chen24c}, leading to more precise information about the singularity at $\infty$ of the quantum connection; and for the version relative to a smooth anticanonical divisor, we draw attention to the implications of the mod $p$ Fontaine-Laffaille structure from \cite{petrov-vaintrob-vologodsky17}. 
\end{abstract}

\maketitle

\section{Introduction}
The results of this paper concern two related but distinct contexts. The first is the quantum connection of a closed monotone symplectic manifold. This is reviewed in Section \ref{subsec:quantum-setup}, and our results stated in Section \ref{subsec:quantum-new}. We then switch to a ``relative'' context (meaning, relative to a smooth anticanonical divisor). Section \ref{subsec:relative} introduces the relative version of the quantum connection, from a Floer-theoretic viewpoint. This in itself is not new, but as we'll see in Section \ref{subsec:fl}, it comes with additional structures of categorical origin, which appear to have no known explanation in terms of the standard TQFT picture of Floer theory.

\subsection{The setup\label{subsec:quantum-setup}}
Let $M^{2n}$ be a closed symplectic manifold which is monotone, 
\begin{equation} \label{eq:monotone}
[\omega_M] = c_1(M). 
\end{equation}
Among the most basic enumerative invariants of $M$ is the $q$-linear degree $0$ endomorphism
\begin{equation} \label{eq:quantum-product-c1}
H^*(M;\bC)[q^{\pm 1}] \longrightarrow H^*(M;\bC)[q^{\pm 1}], \quad
x \longmapsto q^{-1}c_1(M) \ast_q x.
\end{equation}
Here, $q$ is a formal variable of degree $2$, and $\ast_q$ is the small quantum product, which counts rational curves in class $A \in H_2(M;\bZ)$ with weight $q^{\int_A c_1(M)}$. Essentially equivalently, one can set $q = 1$ in the quantum product, with the outcome written as $\ast$, and look at the $\bZ/2$-graded map
\begin{equation} \label{eq:quantum-product-c1-2}
H^*(M;\bC) \longrightarrow H^*(M;\bC), \quad
x \longmapsto c_1(M) \ast x.
\end{equation}
This invariant has a deep influence on the symplectic topology of $M$. For instance, if $L$ is a monotone Lagrangian submanifold which is {\em Spin}, the Poincar\'e dual of $[L]$ is (zero or) an eigenvector of \eqref{eq:quantum-product-c1-2}, for the eigenvalue given by counting Maslov $2$ holomorphic discs through a generic point of $L$; this follows e.g.\ from \cite[Lemma 9.2]{ritter-smith12}. 

A closely related structure is the (small) quantum connection, the degree $0$ endomorphism
\begin{equation} \label{eq:quantum-connection}
\nabla_{u\partial_q}: H^*(M;\bC)[q^{\pm 1},u] \longrightarrow H^*(M;\bC)[q^{\pm 1},u], \quad
\nabla_{u\partial_q}x = u\partial_q x + q^{-1} c_1(M) \ast_q x,
\end{equation}
where $u$ is another formal variable of degree $2$. Again, this is closely connected to the structure of the Fukaya category of $M$, see e.g.\ \cite{hugtenburg24}. One can invert $u$, or basically equivalently, set $u = 1$ and reduce the grading to $\bZ/2$. This yields a $\bZ/2$-graded connection
\begin{equation} \label{eq:u-equals-1-connection}
\nabla_{\partial_q}: H^*(M;\bC)[q^{\pm 1}] \longrightarrow H^*(M;\bC)[q^{\pm 1}], \quad
\nabla_{\partial_q}x = \partial_q x + q^{-1} c_1(M) \ast_q x,
\end{equation}
which relates more easily to the classical theory of linear differential equations. From the definition, $\nabla_{\partial_q}$ has a regular singularity at $q = 0$, with unipotent monodromy. The singularity at $q = \infty$ is irregular in all known cases; its structure has been the subject of much attention, e.g.\ in Dubrovin's conjecture \cite{dubrovin98}. One recent result is the following:

\begin{theorem} \cite{chen24c} \label{th:exponential-type}
(i) In a Laurent expansion around $q = \infty$, which means working over $\bC((q^{-1}))$, the connection $\nabla_{\partial_q}$ is isomorphic to a direct sum
\begin{equation} \label{eq:exponential-type}
\bigoplus_\lambda\; \big(\partial_q + \lambda I + q^{-1} A_{\lambda}\big).
\end{equation}
Here $\lambda$ are the eigenvalues of \eqref{eq:quantum-product-c1-2}; the rank of each piece is the dimension of the corresponding generalized eigenspace; $I$ is the identity; and the $A_\lambda$ are constant ($q$-independent) matrices.

(ii) The $A_\lambda$ have rational eigenvalues; equivalently, $\exp(2\pi i A_\lambda)$ is quasi-unipotent (its eigenvalues are roots of unity).
\end{theorem}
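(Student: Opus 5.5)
Part (i) follows from classical formal reduction theory at an irregular singular point. Part (ii) is the arithmetic content, which I would get from reduction mod $p$.

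For (i), set $t = q^{-1}$, so $q^{-1} c_1(M)\ast_q x$ becomes a connection matrix that is polynomial in $t$. By the degree constraint, $c_1(M)\ast_q$ is homogeneous of degree $2$ in a graded sense. Concretely, writing $C = c_1(M)\ast$ (the $q=1$ specialization), the $q$-linear map is $q^{-1}c_1\ast_q = q^{\mathrm{gr}/2}\, C\, q^{-\mathrm{gr}/2}\, q^{-1}\cdot q$, up to the grading operator $\mu$. Gauge transforming by $q^{\mu}$, where $\mu$ is half the cohomological degree, turns $\nabla_{\partial_q}$ into
\[
\partial_q + C + q^{-1}\mu
\]
with $C$ and $\mu$ constant. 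This is a connection with a pole of order two at $\infty$ (Poincaré rank one) and constant leading term $C$. The standard splitting lemma (Hukuhara--Turrittin, or directly the block-diagonalization via the Sylvester equation) then splits it over $\bC((q^{-1}))$ according to the generalized eigenspaces of $C$. Inside each block, the leading term is $\lambda I + N$ with $N$ nilpotent. Since $\partial_q + \lambda I$ can be factored off, it remains to show that each block is formally equivalent to $\partial_q + q^{-1}A_\lambda$, i.e.\ regular singular at $\infty$. This is not automatic when $N\neq 0$: it requires that no ramified exponential factors appear. I would show it by exhibiting a $\bC[[q^{-1}]]$-lattice stable under $q\nabla_{\partial_q}$ after removing $\lambda$. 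Given that lattice, the Levelt normal form yields a constant $A_\lambda$.

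The nilpotent part of (i) is where reduction mod $p$ enters, together with all of (ii). Work over $\bZ[\text{finitely many denominators}]$ and reduce modulo large primes $p$. By the relation with quantum Steenrod operations ($p$-curvature equals the quantum Steenrod operation $Q\Sigma_{c_1}$, following Seidel--Wilkins / Chen), the $p$-curvature of the reduced connection is computed by a covariantly constant endomorphism whose characteristic polynomial is expressed through $C^p - C$ type Frobenius relations. Consequently, on each $\lambda$-block, after twisting away $\lambda$ (which shifts the $p$-curvature by the scalar $\lambda^p-\lambda$), the $p$-curvature is nilpotent for almost all $p$. Katz's theorem (nilpotent $p$-curvature for almost all $p$ implies regular singularities with rational exponents) then gives both conclusions. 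First, each twisted block is regular singular at $\infty$, which handles the $N$ issue and produces $A_\lambda$. Second, the residue $A_\lambda$ has rational eigenvalues, so $\exp(2\pi i A_\lambda)$ is quasi-unipotent.

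The main obstacle is the arithmetic input: proving that the $p$-curvature of the $\lambda$-summand is nilpotent mod $p$ for almost all $p$. Two difficulties arise. The eigenvalues $\lambda$ are algebraic numbers, so one must work in a number field and reduce at primes of it. One also needs the identification of the $p$-curvature with the quantum Steenrod operation, plus the fact that the latter's eigenvalues are the Frobenius-twisted eigenvalues of $C$. I would first try to derive the latter from the covariant constancy of quantum Steenrod operations together with their $q=0$ classical value $c_1^p$. This gives the characteristic polynomial relation from which nilpotency on each block follows.
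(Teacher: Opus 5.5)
Your architecture is the one the paper uses (following \cite{chen24c}):
\begin{itemize}
\item split over the formal neighbourhood of $q=\infty$ along the generalized eigenspaces of $c_1(M)\ast$;
\item show that each block twisted by $\lambda$ has nilpotent $p$-curvature modulo every maximal ideal of a suitable finitely generated $R\subset\bC$;
\item apply Katz's theorem for Laurent series (Proposition \ref{th:katz}(i)), which gives both the constant $A_\lambda$ in (i) and the rationality in (ii).
\end{itemize}
Your gauge transformation by $q^{\mu}$ followed by the splitting lemma can replace the paper's $u$-adic splitting (Lemma \ref{th:splitting-lemma}(i)). You should note that the block-diagonalizing transformation is defined over $R[[q^{-1}]]$ once eigenvalue differences are invertible in $R$, so it can be reduced mod $\frakm$. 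There is also a small slip: $(\nabla_{\partial_q}-\lambda)^p=\nabla_{\partial_q}^p-\lambda^p$, so the shift is $\lambda^p$, not $\lambda^p-\lambda$.

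The genuine gap is the step you yourself flag as the main obstacle. You propose to get the eigenvalues of $Q\Sigma_{q^{-1}c_1(M)}$ near $q=\infty$ from covariant constancy together with the classical value $\mathit{St}(c_1(M))=c_1(M)^p-u^{p-1}c_1(M)$ at $q=0$. That cannot work. Since $[\nabla_{uq\partial_q},q^p]=pu\,q^p=0$, those two properties determine a covariantly constant endomorphism only modulo $q^p$; this is exactly the content of \cite[Lemma 1.2]{seidel-wilkins21}. For example, $Q\Sigma_{c_1(M)}+a\,q^p$ with $a\in\bF_p$ has both properties, but $q^{-p}$ times it has eigenvalues shifted by $a$ at $q=\infty$. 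The nilpotent $q^0$-term $c_1^p-c_1$ carries no information there.

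The missing ingredient is multiplicativity, Property \ref{th:quantum-properties}(v) together with (i). It gives $f(Q\Sigma_{q^{-1}c_1(M)})=Q\Sigma_{f(q^{-1}c_1(M))}=0$, hence $f(\nabla_{\partial_q}^p)=0$ mod $\frakm$ (Corollary \ref{th:congruence}). It also gives idempotents $Q\Sigma_{e_\lambda}$ with $(Q\Sigma_{q^{-1}c_1(M)}-\lambda^p)^N\circ Q\Sigma_{e_\lambda}=Q\Sigma_{(q^{-1}c_1(M)-\lambda)^N\ast_q e_\lambda}=0$ for large $N$.

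Even with this, ``nilpotency on each block follows'' needs an argument matching your characteristic $0$ blocks with the mod $p$ data. The paper uses uniqueness of the $\nabla$-invariant splitting to show that its reduction is the one cut out by the $Q\Sigma_{e_\lambda}$ (Lemma \ref{th:splitting-lemma}(ii)). In your lattice setup there is a shortcut. On the reduced $\lambda$-block lattice $L$, the $p$-curvature preserves $L$ and induces $(\bar\lambda+\bar N)^p$ on $L/q^{-1}L$. Its eigenvalues are constant roots of $\bar f$, so its characteristic polynomial has constant coefficients. It therefore equals its reduction, $(z-\bar\lambda^p)^{\mathrm{rk}\,L}$, which gives the required nilpotency.
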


From the Hukuhara-Turrittin-Levelt theorem which classifies formal linear differential equations, one knows that the normal form \eqref{eq:exponential-type} is unique in a suitable sense; one way to express that is to say that the $\exp(2\pi i A_\lambda)$ are unique up to conjugation. In particular, (ii) is independent of any ambiguities in the choice of $A_\lambda$. The same holds for further results to be stated below.

Prior to \cite{chen24c}, Theorem \ref{th:exponential-type} had been proved in \cite{pomerleano-seidel23} under the added assumption that $M$ contains a smooth anticanonical divisor, in the following sense:

\begin{definition} \label{th:divisor}
A smooth anticanonical divisor is a symplectic hypersurface $D \subset M$ Poincar{\'e} dual to $c_1(M)$, such that the natural Liouville manifold structure on $M \setminus D$ is actually Weinstein.
\end{definition}

In that situation, one has more information:

\begin{proposition} \cite[Theorem 1.2.4]{pomerleano-seidel23} \label{th:jordan-bound}
Suppose that $M$ contains a smooth symplectic anticanonical divisor. Then, the size of the Jordan blocks of each $A_\lambda$ in \eqref{eq:exponential-type} is bounded as follows: $\leq n+1$ for integer eigenvalues of $A_\lambda$, and $\leq n$ for the non-integer ones.
\end{proposition}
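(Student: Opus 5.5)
The idea is to get the bound from the geometry of $M \setminus D$, following the strategy of \cite{pomerleano-seidel23}. We realize the quantum connection near $q=\infty$ through the Floer theory of the Weinstein manifold $M\setminus D$, and then read off the Jordan block sizes of $A_\lambda$ as monodromy data of a geometric object of dimension $n$.

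\emph{Step 1: the relative connection.} Using $D$, we build a Floer-theoretic ``relative'' version of \eqref{eq:quantum-connection}. It lives on the $S^1$-equivariant symplectic cohomology of $M\setminus D$, or on the log/relative quantum cohomology, deformed by the class of $D$, which gives a family over the $q$-line. The point is that the connection has a lattice over $\bC[q^{-1}]$ coming from a filtration by winding number around $D$. Along this lattice the connection has a regular singularity at $q^{-1}=0$ once the exponential factors $e^{-\lambda q}$ have been removed.

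\emph{Step 2: compare with \eqref{eq:exponential-type}.} After the Fourier--Laplace transform in $q$, the irregular singularity at $\infty$ of $\nabla_{\partial_q}$ becomes a connection with regular singularities at the points $\lambda$. The monodromy of that transformed connection around $\lambda$ is conjugate to $\exp(2\pi i A_\lambda)$, up to the standard twist by $-1$. So the sizes of the Jordan blocks of $A_\lambda$, split by integer and non-integer eigenvalues, match the Jordan blocks of local monodromies of a Gauss--Manin-type system. That system is attached to a Lefschetz-type fibration $M\setminus D \to \bC$, or to its Floer-theoretic mirror (the category-level vanishing cycle structure).

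\emph{Step 3: bound via a weight filtration.} On the Gauss--Manin side we use a monodromy-weight argument. The logarithm $N$ of the unipotent part of the monodromy shifts a filtration of length $n+1$: cohomological degree $0,\dots,2n$ on the fibre side, or the filtration of symplectic cohomology by degree. Hence $N^{n+1}=0$. This gives blocks of size $\le n+1$ on the eigenvalue-$1$ part, i.e.\ for integer eigenvalues of $A_\lambda$.

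For non-integer eigenvalues, the eigenvalue-$\ne 1$ part only sees vanishing cohomology of the $(n-1)$-dimensional fibres, the fixed parts being trivial there. The filtration then has length $n$, which gives blocks of size $\le n$. In Floer terms this is the statement that the non-unipotent part is detected by the complement of $D$ relative to its boundary, which drops one unit of length.

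\emph{Main obstacle.} The hardest part is Step 3's identification: proving that the nilpotent part of the (quasi-unipotent) monodromy respects a filtration of exactly the stated length, with the extra drop for non-integer eigenvalues. I would first try the degree filtration on the $u$-adic (equivariant) Floer complex, together with the Weinstein condition, which bounds the index range of Reeb orbits/generators by $n$. I would then check that each application of the nilpotent operator strictly increases this filtration.
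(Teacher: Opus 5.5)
Drop Step 1: the normal form \eqref{eq:exponential-type} is Theorem \ref{th:exponential-type}, which the statement presupposes. The real problem is Step 3, which carries the whole content of the proposition and has a genuine gap. Nothing makes the logarithm $N$ of the monodromy shift a cohomological-degree filtration, for three reasons. First, in characteristic $0$ no Hodge or weight structure is available on these Floer-theoretic or categorical objects. Second, $N$ is not induced by any chain-level operation on a Floer complex; it only appears after a formal gauge transformation to a regular-singular normal form. Third, nothing forces $N$ to respect cohomological degree: the splitting into $\lambda$-summands already mixes cohomological degrees, and only the total degree (with $q$ of degree $2$), or the $\bZ/2$-grading after $u=1$, survives.

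Degree bounds do enter, but they control a different, algebraic operator. The summands of \eqref{eq:deformed-sh} other than $H^*(M)[q]$ live in degrees below $2n$, so $q^n f(a_q)=0$ (Corollary \ref{th:torsion-1b}), hence $q^n=0$ on $\mathit{SH}^*_{q,1/f}(M,D;\bF_p)$. Turning nilpotence of $q$ into a statement about monodromy is exactly what requires reduction mod $p$, which your plan never uses. The chain of inputs is as follows. Under the categorical Fourier--Laplace transform, $q$ corresponds to $-\nabla_{u\partial_t}$ (see \eqref{eq:categorical-transform-2b}), so $q^p$ is the $p$-curvature of the Gauss--Manin connection of the smooth proper family $\scrA_{t,1/f}$ from Section \ref{subsec:central}. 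A Cartier/Fontaine--Laffaille comparison of Hochschild and periodic cyclic homology in characteristic $p$ (Theorem \ref{th:q-fontaine-laffaille}, resting on \cite{petrov-vaintrob-vologodsky17}) upgrades $q^n=0$ in the non-equivariant theory to $q^{np}=0$ on $\mathit{SH}^*_{u^{\pm 1},q,1/f}(M,D;\bF_p)$; see the proof of Corollary \ref{th:torsion-3}. Katz's criterion, Proposition \ref{th:katz}(ii), then bounds by $n$ the Jordan blocks of the local monodromies of the dual connection at the roots of $f$. The paper stresses that all results of this kind go through reduction mod $p$ plus Katz. The Weinstein property enters through the categorical hypotheses (generation, smoothness, Hochschild homology in a bounded range), not through index bounds on Reeb orbits acting on $N$.

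Your Step 2 is also wrong exactly where the asymmetry between $n+1$ and $n$ comes from. By stationary phase, the formal monodromy $\exp(2\pi i A_\lambda)$ of the $\lambda$-summand at $q=\infty$ matches the monodromy on the \emph{vanishing} cycles $\phi_{t-\lambda}$ of the Fourier-dual module. It does not match the local monodromy on nearby cycles $\psi_{t-\lambda}$. Away from eigenvalue $1$ the two agree, which gives $\le n$ for the non-integer eigenvalues of $A_\lambda$. On the unipotent parts, $T_\phi-1=\mathrm{can}\circ\mathrm{var}$ and $T_\psi-1=\mathrm{var}\circ\mathrm{can}$. So $(T_\psi-1)^n=0$ only yields $(T_\phi-1)^{n+1}=\mathrm{can}\circ(T_\psi-1)^n\circ\mathrm{var}=0$, which is the bound $\le n+1$ for integer eigenvalues.

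Your explanation runs the other way: a length-$(n+1)$ filtration on the unipotent part, losing one step on the non-unipotent part via vanishing cohomology of $(n-1)$-dimensional fibres. It also relies on a Lefschetz fibration on $M\setminus D$ whose Gauss--Manin system would be the Fourier dual. No such fibration is available in general; the object playing that role is the categorical family $\scrA_t$ of derived fibres of the deformation class.
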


It is currently unknown whether Proposition \ref{th:jordan-bound} holds without the extra assumption. 

\subsection{New results\label{subsec:quantum-new}}
We refine the method of \cite{chen24c} to obtain a different bound on Jordan blocks.

\begin{theorem} \label{th:jordan-bound-2} (Proved in Section \ref{subsec:splittings})
Take the action of \eqref{eq:quantum-product-c1-2} on the generalized $\lambda$-eigenspace: its nilpotent part determines a conjugacy class of nilpotent matrices. Then, the nilpotent part of $A_\lambda$ lies in the closure of that conjugacy class. 
\end{theorem}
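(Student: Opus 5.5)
The plan is to reduce mod $p$ and compare two computations of the $p$-curvature of $\nabla_{\partial_q}$, one from the normal form at $q=\infty$ and one from quantum Steenrod operations. The orbit-closure statement will be recast as rank inequalities. Concretely, let $N_\lambda$ be the nilpotent part of $A_\lambda$ and $M_\lambda$ the nilpotent part of $c_1(M)\ast$ on the generalized $\lambda$-eigenspace. For nilpotent matrices of the same size, $N_\lambda$ lies in the closure of the conjugacy class of $M_\lambda$ if and only if
\begin{equation*}
\mathrm{rank}(N_\lambda^k) \leq \mathrm{rank}(M_\lambda^k) \quad \text{for all } k \geq 1.
\end{equation*}
All the data ($\lambda$, the gauge transformation to the normal form \eqref{eq:exponential-type}, and $A_\lambda$) are defined over some finitely generated ring $R \subset \bC$. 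For all sufficiently large primes $p$, and a suitable maximal ideal of $R$ over $p$ with residue field $k_p$, ranks of the relevant fixed matrices are unchanged by reduction. So it suffices to prove the inequalities over $k_p$ for infinitely many $p$.

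\textbf{Step 1: $p$-curvature from the normal form.} By Theorem \ref{th:exponential-type}(ii), after discarding finitely many $p$ the eigenvalues $a$ of $A_\lambda$ lie in $\bF_p$. For the piece $\partial_q + \lambda + q^{-1}A_\lambda$ over $k_p((q^{-1}))$, the summands commute, so a direct computation of the $p$-curvature gives
\begin{equation*}
\psi_\lambda = \lambda^p + q^{-p}\,(A_\lambda^p - A_\lambda).
\end{equation*}
On the $a$-generalized eigenspace with $a \in \bF_p$, the operator $A^p - A$ restricts to $N^p - N$, which equals $-N(1-N^{p-1})$. For $p$ larger than the rank, this is conjugate to $-N$. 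Hence the nilpotent part of $\psi_\lambda$ has the same Jordan type as $N_\lambda$. Gauge equivalence conjugates $p$-curvature, so this Jordan type is an invariant of $\nabla_{\partial_q}$ mod $p$ near $q=\infty$.

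\textbf{Step 2: $p$-curvature from quantum Steenrod operations.} This is the refinement of \cite{chen24c} and the main obstacle. Chen identifies the $p$-curvature with the quantum Steenrod operation $Q_{c_1}$. I would aim to prove a stronger, Artin--Schreier type formula. It should say that, over the field $k_p(q)$ (that is, at generic $q$ rather than formally at $\infty$), the $p$-curvature is conjugate to an expression built from $c_1\ast$ in which the nilpotent part on each generalized eigenspace appears through $M^p - M$, with Frobenius-twisted eigenvalue $\lambda^p$. The nilpotent part would then have the Jordan type of $M_\lambda$. I would first try to obtain this by computing $Q_{c_1}$ via the covariant constancy of quantum Steenrod operations with respect to $\nabla$, together with the classical relation between $\mathrm{Sq}$-type operations on $c_1$ and $c_1^p$. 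The expected output is $Q_{c_1} = (c_1\ast)^p - q^{p-1}\,c_1\ast$ up to conjugation, in the appropriate normalization. I expect most of the work to lie in controlling the correction terms in this identity.

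\textbf{Step 3: semicontinuity.} Passing from $k_p(q)$ to $k_p((q^{-1}))$ and taking the limiting Jordan type at $\infty$ can only make ranks of powers drop. This is the usual upper semicontinuity of $\mathrm{rank}$ under specialization, since generic ranks are attained on an open set and formal completion at $\infty$ sees the specialized type. So the $\infty$-type found in Step 1 lies in the orbit closure of the generic type found in Step 2. Combining the two steps gives $\mathrm{rank}(N_\lambda^k) \leq \mathrm{rank}(M_\lambda^k)$ over $k_p$ for infinitely many $p$, and hence over $\bC$.
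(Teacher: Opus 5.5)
Your architecture matches the paper's: compute the mod $p$ $p$-curvature once from the formal structure at $q=\infty$ and once from quantum Steenrod operations, compare the two, then lift to characteristic $0$. But both halves have genuine gaps.

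\textbf{Step 2 is the real content, and it is missing.} The Jordan type you aim for there is the right one, but the route and the formula are not. Contrary to what you say, \cite{chen24c} only shows that $q^p\nabla_{u\partial_q}^p-Q\Sigma_{c_1(M)}$ is nilpotent. That gives no control over Jordan types. The exact identity is Proposition \ref{th:jae-conjecture}, and the missing idea is the map $\mathit{QSt}(b)=Q\Sigma_b(1)$.
\begin{itemize}
\item $\mathit{QSt}$ becomes an isomorphism after inverting $u$ and completing in $q$ (Lemma \ref{th:extended-qst-1}, from the leading term \eqref{eq:classical-st}).
\item The difference above lowers cohomological degree (Chen's argument), so it kills $1$. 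It commutes with every $Q\Sigma_b$, so it kills every $Q\Sigma_b(1)$, and therefore it vanishes.
\item By Properties \ref{th:quantum-properties}(i),(v), the same map intertwines $c_1(M)\ast_q$ with $Q\Sigma_{c_1(M)}$ (Lemma \ref{th:extended-qst-2}).
\item After a Frobenius-twisted rescaling, $(Q\Sigma_{q^{-1}c_1(M)})_{u=1}$ is therefore conjugate over $F(q)$ to $c_1(M)\ast$ itself. On the image of $Q\Sigma_{e_\lambda}$, its nilpotent part has the type of $M_\lambda$ (Corollary \ref{th:conjugacy}).
\item You also need Lemma \ref{th:no-theta} to discard $\theta$, and Lemma \ref{th:splitting-lemma}(ii) to know that the $Q\Sigma_{e_\lambda}$-splitting is the reduction of the formal splitting at $\infty$.
\end{itemize}
No correction terms need controlling. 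Your expected identity $Q_{c_1}=(c_1\ast)^p-q^{p-1}c_1\ast$, even up to conjugacy, cannot hold. Its spectrum is $\{q^p(\lambda^p-\lambda)\}$, while that of $Q\Sigma_{c_1(M)}$ is $\{q^p\lambda^p\}$; for $\bC P^1$ these are $\{0\}$ versus $\{\pm 2q^p\}$.

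\textbf{Steps 1 and 3 also fail as written.} The splitting at $\infty$ into $\lambda$-pieces is integral once eigenvalue differences are inverted. The further gauge transformation to $\partial_q+\lambda+q^{-1}A_\lambda$ is not integral: at order $k$ it divides by $k+a_i-a_j$, which involves infinitely many integers. So it does not reduce mod $p$, and the conclusion of Step 1 is false in general.

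For example, take $\nabla_{s\partial_s}=s\partial_s+\begin{pmatrix}0 & s/(1-s)\\ 0 & 1/2\end{pmatrix}$ over $\bZ[\tfrac12]((s))$. It is gauge equivalent over $\bC((s))$ to $s\partial_s+\mathrm{diag}(0,\tfrac12)$, with gauge coefficients $-2/(2k-1)$, so $N=0$. Yet for every odd $p$, its $p$-curvature $(\nabla_{s\partial_s})^p-\nabla_{s\partial_s}$ is the nonzero nilpotent matrix with upper right entry $-s^{(p+1)/2}/(1-s^p)$.

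Step 3 cannot repair this. Ranks of powers do not change under the field extension $k_p(q)\subset k_p((q^{-1}))$, so nothing specializes there. Indeed, Steps 1 and 2 as stated would make $N_\lambda$ and $M_\lambda$ conjugate, which is more than the theorem asserts.

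The inequality has to come from a lattice, as in Proposition \ref{th:orbit-closure}:
\begin{itemize}
\item Truncate the gauge transformation (Lemma \ref{th:nilpotent-form}). Over an enlarged $\tilde R$, the $\lambda$-piece becomes $\partial_q+\lambda+q^{-1}(D+N)+O(q^{-2})$, with $D$ rational and non-resonant.
\item Then $q^p$ times the $p$-curvature of the modified connection is a matrix over $F[[q^{-1}]]$ with constant term $\bar N^p-\bar N$, which is conjugate to $\bar N$.
\item By Lemma \ref{th:constant-term}, this constant term lies in the closure of the orbit of the full $p$-curvature, which by Step 2 is the orbit of $\bar M_\lambda$.
\item Lemma \ref{th:orbit-characteristic} lifts this statement, valid for all $\frakm$, to characteristic $0$.
\end{itemize}
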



\begin{corollary} \label{th:semisimple}
If \eqref{eq:quantum-product-c1-2} is diagonalizable, so are the $A_\lambda$ in \eqref{eq:exponential-type}; equivalently, the $\exp(2\pi i A_\lambda)$ have finite order.
\end{corollary}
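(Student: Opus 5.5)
The corollary should follow directly from Theorem \ref{th:jordan-bound-2}, combined with Theorem \ref{th:exponential-type}(ii) for the second formulation.

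First, suppose \eqref{eq:quantum-product-c1-2} is diagonalizable. Then on each generalized $\lambda$-eigenspace its nilpotent part is zero. So the conjugacy class singled out in Theorem \ref{th:jordan-bound-2} is that of the zero matrix. This class is a single point, hence already closed. Theorem \ref{th:jordan-bound-2} then says the nilpotent part of $A_\lambda$ lies in the closure $\{0\}$, so it vanishes. By the Jordan decomposition, $A_\lambda$ equals its semisimple part, i.e.\ $A_\lambda$ is diagonalizable.

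Second, the equivalence with the monodromy formulation. If $A_\lambda$ is diagonalizable, then so is $\exp(2\pi i A_\lambda)$. By Theorem \ref{th:exponential-type}(ii), its eigenvalues are $\exp(2\pi i \mu)$ for rational $\mu$, hence roots of unity. A diagonalizable matrix whose eigenvalues are roots of unity has finite order.

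Conversely, suppose $\exp(2\pi i A_\lambda)$ has finite order. Then it is diagonalizable. Write $A_\lambda = S + N$ with $S$ semisimple, $N$ nilpotent and $[S,N]=0$. Then
\begin{equation*}
\exp(2\pi i A_\lambda) = \exp(2\pi i S)\exp(2\pi i N)
\end{equation*}
is its multiplicative Jordan decomposition, so the unipotent part $\exp(2\pi i N)$ must be the identity. Since $N$ is nilpotent, $\exp(2\pi i N) = I$ forces $N = 0$ (take the logarithm, which is a polynomial in $\exp(2\pi i N) - I$). Hence $A_\lambda$ is diagonalizable.

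The non-uniqueness of the $A_\lambda$ is harmless here. The $\exp(2\pi i A_\lambda)$ are well defined up to conjugation, as remarked after Theorem \ref{th:exponential-type}, and both diagonalizability and finite order are conjugation-invariant.

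There is no real obstacle in this argument: all the substance is in Theorem \ref{th:jordan-bound-2}. The only point needing care is the step that the closure of the zero orbit is $\{0\}$, which is immediate.
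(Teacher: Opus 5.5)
Your proof is correct and follows the paper's approach. The paper gives no separate proof, treating the corollary as an immediate consequence of Theorem \ref{th:jordan-bound-2}: the zero orbit is closed, so the nilpotent part of each $A_\lambda$ vanishes. The rationality in Theorem \ref{th:exponential-type}(ii) then gives the equivalent finite-order formulation, exactly as you argue.
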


Note that the monodromy of the quantum connection in its standard sense (either in a formal expansion around $q = 0$; or more analytically, letting $q$ go around any loop in $\bC^*$) is conjugate to $x \mapsto \exp(-2\pi i c_1(M)) \smile x$, hence always has a Jordan block of size $n+1$. The two statements do not contradict each other, because the formal power series in $q^{-1}$ which appear in Theorem \ref{th:exponential-type} are not locally convergent (Stokes phenomenon).

\begin{remark} \label{th:singularities}
It is instructive to compare the statements above with ones in singularity theory, which concern the Gauss-Manin connection of a polynomial $W \in \bC[x_1,\dots,x_n]$ with an isolated critical point at $x = 0$. This is related to the quantum connection through mirror symmetry, taking concrete shape via results such as e.g.\ \cite{smith25}. However, the analogy is imperfect in two ways: first, mirror superpotentials may have non-isolated critical points; secondly, what's actually relevant is the Fourier-Laplace transform of the Gauss-Manin connection of the mirror (see e.g.\ \cite[Section 2.2]{pomerleano-seidel23} for discussion), a step which we have suppressed for simplicity.

(i) The Gauss-Manin connection is regular singular; its monodromy is quasi-unipotent; and the Jordan blocks of the monodromy are of size $\leq n$. This is an instance of the general monodromy theorem in algebraic geometry. Compare Theorem \ref{th:exponential-type} and Proposition \ref{th:jordan-bound}.

(ii) Let $\mathit{Jac}(W) = \bC[[x_1,\dots,x_n]]/(\partial_1W,\dots,\partial_nW)$ be the Jacobian ring. Take the nilpotent endomorphism
\begin{equation} \label{eq:w}
W: \mathit{Jac}(W) \longrightarrow \mathit{Jac}(W).
\end{equation}
A Hodge-theoretic argument shows that the nilpotent part of the monodromy of the Gauss-Manin connection lies in the closure of the conjugacy class of \eqref{eq:w}
(this is due to Varchenko \cite{varchenko81}; expository accounts are in \cite[Theorem 14.19]{agv2}, \cite[Section 7.4]{kulikov}; a precursor result is \cite{scherk}). The comparison is with Theorem \ref{th:jordan-bound-2}; Corollary \ref{th:semisimple} parallels the special case of a quasi-homogeneous singularity, which is when \eqref{eq:w} vanishes \cite{saito71}, and where the monodromy has finite order. In fact, one can derive Varchenko's theorem in a way which parallels our proof of Theorem \ref{th:jordan-bound-2}, avoiding Hodge theory; 
see Section \ref{sec:appendix}.
%

(iii) There is a singularity theory result which lacks a parallel for quantum cohomology. One has $W^n \in \mathit{Jac}(W)$ by \cite{briancon-skoda74}
; in contrast, it seems that we do not have a dimension-dependent upper bound on the Jordan block size of \eqref{eq:quantum-product-c1-2} (or equivalently, to use terminology which will become more important below, on the multiplicity of zeros of its minimal polynomial).
\end{remark}

The proofs of all theorems stated so far (both known and new) have one aspect in common, namely the use of reduction mod $p$, which is then translated back into properties of the quantum connection via Katz' method \cite{katz70}. The mechanism underlying Theorem \ref{th:jordan-bound-2} is:

\begin{proposition} \label{th:jae-conjecture} (Proved in Section \ref{subsec:jae})
Let's work in cohomology with $\bF_p$-coefficients, for some prime $p$. Then, the $p$-curvature of $\nabla_{u\partial_q}$ (which is just the $p$-th power of that operation) agrees with a specific quantum Steenrod endomorphism:
\begin{equation} \label{eq:p-curvature-is-qsigma}
\nabla_{u\partial_q}^p = Q\Sigma_{q^{-1}c_1(M)}.
\end{equation}
\end{proposition}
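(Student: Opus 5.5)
The plan is to follow the strategy J.\ H.\ Lee used for $p$-curvature in divisor directions, adapted to the monotone setting where the single variable $q$ is paired with $c_1(M)$. Both sides of \eqref{eq:p-curvature-is-qsigma} are $\bF_p[q^{\pm 1},u]$-linear endomorphisms of $H^*(M;\bF_p)[q^{\pm1},u]$.

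For the left side, note that in characteristic $p$ we have $(u\partial_q)^p = u^p\partial_q^p = 0$ on polynomials. Hence $\nabla_{u\partial_q}^p$ is $q$-linear, and expanding $(u\partial_q + q^{-1}c_1(M)\ast_q)^p$ gives an expression in $u$-derivatives of iterated quantum products with $c_1(M)$. Writing $c_1(M)\ast_q = \sum_A q^{c_1(A)}\,(c_1(M)\ast_A)$, the divisor axiom turns each $\partial_q$ acting on a $q^{c_1(A)}$ coefficient into a factor $c_1(A)$. This is the same as inserting an extra marked point constrained to $c_1(M)$. So I would first rewrite $\nabla_{u\partial_q}^p$, class by class in $H_2(M)$, as a sum of counts of genus-zero curves. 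Each such curve carries $p$ marked points mapped to $c_1(M)$, arranged linearly, i.e.\ at the nodes of chains of spheres, with weights coming from the $u$-expansion.

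For the right side, I would take $Q\Sigma_{q^{-1}c_1(M)}$ in its standard definition: $\bZ/p$-equivariant counts of spheres carrying $p$ marked points at the $p$-th roots of unity, all constrained to (a cycle representing) $c_1(M)$, together with input and output at $0$ and $\infty$. The comparison is then done in two steps.

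(1) First check the classical term $A=0$. Since $c_1(M)$ has degree $2$, its total Steenrod power is $c_1^p - u^{p-1}c_1$, up to the normalization conventions. On the other side, the $q$-independent part of $\nabla^p$ is $(q^{-1}c_1\smile)^p$ plus the contribution $u^{p-1}\cdot(\text{derivative terms})$, which by the identity $(q\partial_q)^p = q\partial_q$ produces exactly $-u^{p-1}q^{-p}\,c_1$ times $q$-degree factors. This fixes the normalization by $q^{-1}$.

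(2) For general $A$, run an equivariant cobordism argument. Consider the one-parameter family of configurations of the $p$ marked points that interpolates between the symmetric configuration and a maximally degenerate one. In the degenerate configuration the points bubble off successively along a chain, which is exactly the combinatorics of the left-hand side. Only the $\bZ/p$-fixed locus of the equivariant parameter space contributes modulo $p$, since free orbits cancel. Its boundary strata then produce the $u$-weighted sum computed above.

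An alternative shortcut, if available, is to use the covariant constancy of quantum Steenrod operations (Seidel--Wilkins). That result says both sides commute with $\nabla_{u\partial_q}$, so their difference is a flat endomorphism. One could then try to show it vanishes by combining step (1) with a $q$-degree/grading argument. I would try this first, but I expect the flat-section argument alone to be insufficient without control over the lowest-order term in $q$. So the cobordism in (2) is likely needed.

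The main obstacle is step (2): setting up equivariant transversality for the $\bZ/p$-symmetric moduli spaces in the monotone setting, and correctly identifying the boundary contributions of the degeneration with the divisor-axiom expansion of $\nabla^p$, including the $u$-powers and signs modulo $p$. I would handle transversality with equivariant perturbation data pulled back from $S^\infty$ (or Borel models), exactly as in the construction of quantum Steenrod operations. Monotonicity rules out the bubbling of negative-index components that would otherwise spoil the boundary analysis.
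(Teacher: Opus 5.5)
\textbf{There is a genuine gap.} Your main route, the equivariant cobordism of step (2), is not a proof. The identification it requires is essentially the whole content of the statement, and you leave it open as ``the main obstacle''. Some of its ingredients are also inaccurate:
\begin{itemize}
\item Expanding $(u\partial_q + q^{-1}c_1(M)\ast_q)^p$ and using the divisor axiom gives, at order $u^k$, counts with varying numbers of $c_1(M)$-insertions and falling-factorial weights in $c_1(A)$. It does not give chains each carrying exactly $p$ insertions.
\item A chain configuration $|z_0| \ll |z_1| \ll \cdots \ll |z_{p-1}|$ is not invariant under cyclic relabeling. So it cannot arise as a boundary stratum of the $\bZ/p$-fixed locus, which is where your ``free orbits cancel'' step would concentrate all contributions.
\item Nothing in the proposal explains how the Borel-equivariant parameters $u,\theta$ of $Q\Sigma_{c_1(M)}$ would match the $u$-powers in $\nabla_{u\partial_q}^p$.
\end{itemize}

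\textbf{The shortcut you set aside is the right starting point.} However, your diagnosis of what it lacks is off. Flatness, the $q^0$ comparison and grading alone only show that the difference is nilpotent; that is Chen's weaker result. What closes the argument is not more control of the lowest $q$-order. Here is how the paper completes it.

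Put $\Delta = q^p\nabla_{u\partial_q}^p - Q\Sigma_{c_1(M)}$, extended $\theta$-linearly.
\begin{itemize}
\item Covariant constancy, together with agreement at $q^0$ (both equal $c_1(M)^p - u^{p-1}c_1(M)$), gives $\Delta = O(q^p)$ by the Seidel--Wilkins order-by-order lemma.
\item You did not use this ingredient: both sides reduce to $c_1(M)^{\ast_q p}\ast_q$ at $u=\theta=0$. Hence every term of $\Delta$ is divisible by $q^p$ and by $u$ or $\theta$. Since $\Delta$ has degree $2p$, it strictly lowers cohomological degree, so $\Delta(1)=0$.
\item The key idea is that $\Delta$ commutes with \emph{every} $Q\Sigma_b$, not just with the connection. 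For $q^p\nabla_{u\partial_q}^p = \nabla_{uq\partial_q}^p - u^{p-1}\nabla_{uq\partial_q}$ this is covariant constancy. For $Q\Sigma_{c_1(M)}$ it is the composition law $Q\Sigma_b\circ Q\Sigma_c = \pm Q\Sigma_{b\ast_q c}$, whose sign is trivial because $|c_1(M)|=2$. Therefore $\Delta(Q\Sigma_b(1)) = Q\Sigma_b(\Delta(1)) = 0$ for all $b$.
\item Finally, consider $q^{k/p}b \mapsto q^kQ\Sigma_b(1)$, extended $(u^{\pm 1},\theta)$-linearly and completed in $q$. It is an isomorphism onto $H^*(M;\bF_p)[u^{\pm1},\theta][[q]]$. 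The reason is that at $q=0$ it is the total Steenrod power, whose leading term $\pm u^{(p-1)|b|/2}\mathit{Frob}(b)$ is invertible once $u$ is inverted.
\end{itemize}
So $\Delta$ vanishes on a spanning set, hence identically. No new moduli spaces are needed beyond covariant constancy and the composition law.
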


Quantum Steenrod operations were introduced by Fukaya \cite{fukaya93} and further studied in \cite{wilkins18, seidel-wilkins21}. The proof of Theorem \ref{th:exponential-type} used a weaker statement \cite[Proposition 3.3]{chen24c}, namely that the difference between the two sides of \eqref{eq:p-curvature-is-qsigma} is nilpotent. Proposition \ref{th:jae-conjecture} is an instance of a conjecture due to Etingof and J. H. Lee. That conjecture was first proved in \cite{lee23b} in a different situation, namely, for the equivariant quantum connection on the Springer resolution; recently, it has also been proved for Calabi-Yau threefolds \cite{bai-lee-pomerleano26}. 

Before continuing, we introduce some more notation. Write $f(z)$ for the minimal polynomial of \eqref{eq:quantum-product-c1-2}, or equivalently of \eqref{eq:quantum-product-c1-3}:
\begin{equation} \label{eq:minimal-polynomial}
\parbox{37em}{$f(z)$ is the lowest degree monic polynomial such that $f(c_1(M)) = 0$ in the ring $(H^*(M;\bC), \ast_1)$, or equivalently $f(q^{-1}c_1(M)) = 0$ in $(H^*(M;\bC)[q^{\pm 1}], \ast_q)$.}
\end{equation}
Because of the contribution from the classical cup product, the degree of $f(z)$ is $\geq n+1$, where as usual $n = \mathrm{dim}_{\bC}(M)$. Also, since \eqref{eq:quantum-product-c1-2} preserves the integer lattice in $H^*(M;\bC)$, $f(z)$ has integer coefficients (by Gauss' Lemma). Occasionally, we will encounter the homogeneous version $f_q(z) \in \bZ[q,z]$:
\begin{equation} \label{eq:f-q}
\text{if $f(z) = \sum_k b_k z^k$,$\;$ then $\;f_q(z) = \sum_k q^{\mathrm{deg}(f)-k} b_k z^k$,}
\end{equation}
which satisfies
\begin{equation} \label{eq:f-q-c1}
f_q(c_1(M)) = 0\; \text{ in } (H^*(M;\bC)[q],\ast_q).
\end{equation}

Here's the application to the quantum connection. It's a consequence of Proposition \ref{th:jae-conjecture}, but its statement does not refer to quantum Steenrod operations:

\begin{corollary} \label{th:congruence} (Proved in Section \ref{subsec:jae})
Let $f$ be the minimal polynomial \eqref{eq:minimal-polynomial}, and $p$ a prime such that
\begin{equation} \label{eq:no-torsion}
\parbox{37em}{
$H^*(M;\bZ)$ has no $p$-torsion.
}
\end{equation}
Take \eqref{eq:u-equals-1-connection} acting on integer cohomology, and insert its $p$-th power into $f$. The resulting map
\begin{equation}
f(\nabla_{\partial_q}^p): H^*(M;\bZ)[q^{\pm 1}] \longrightarrow H^*(M;\bZ)[q^{\pm 1}]
\end{equation}
is zero mod $p$, by which we mean that it lands in $pH^*(M;\bZ)[q^{\pm 1}] \subset H^*(M;\bZ)[q^{\pm 1}]$.
\end{corollary}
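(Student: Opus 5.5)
Reduce mod $p$ and invoke Proposition \ref{th:jae-conjecture}, then show that the relevant quantum Steenrod operation satisfies the same polynomial relation as $q^{-1}c_1(M)$.

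\emph{Reduction mod $p$.} By \eqref{eq:no-torsion}, $H^*(M;\bZ)\otimes\bF_p = H^*(M;\bF_p)$. So it suffices to prove that $f(\nabla_{\partial_q}^p)=0$ on $H^*(M;\bF_p)[q^{\pm 1}]$, where $f$ is reduced mod $p$ (its coefficients are integers, as noted after \eqref{eq:minimal-polynomial}).

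\emph{Passing from $u=1$ to the graded connection.} We then go back to $\nabla_{u\partial_q}$ on $H^*(M;\bF_p)[q^{\pm 1},u]$. Setting $u=1$ is a ring map commuting with the connection, so it is enough to show $f(\nabla_{u\partial_q}^p)=0$ there, or a homogeneous version of this. By Proposition \ref{th:jae-conjecture}, $\nabla_{u\partial_q}^p = Q\Sigma_{q^{-1}c_1(M)}$. The task is therefore to show that
\begin{equation*}
f\big(Q\Sigma_{q^{-1}c_1(M)}\big)=0 .
\end{equation*}

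\emph{The polynomial relation for $Q\Sigma$.} Here we use two known structural facts about quantum Steenrod operations from \cite{seidel-wilkins21}.
\begin{enumerate}
\item[(a)] Quantum Steenrod operations commute with each other and with quantum multiplication by arbitrary classes.
\item[(b)] The map $b\mapsto Q\Sigma_b$ is additive, and it is multiplicative with respect to the quantum product: $Q\Sigma_{b\ast b'} = Q\Sigma_b\circ Q\Sigma_{b'}$. This follows from the Cartan-type relation / the covariant constancy proven there. The constant term is the identity, $Q\Sigma_1=\mathrm{id}$.
\end{enumerate}
Multiplicativity needs care, since $Q\Sigma$ with coefficient $q^{-1}$ involves the $q\mapsto q^p$ Frobenius twist. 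Granting (b), the coefficients $b_k\in\bF_p$ are fixed by Frobenius, so
\begin{equation*}
f\big(Q\Sigma_{q^{-1}c_1}\big)=Q\Sigma_{f(q^{-1}c_1)} = Q\Sigma_0 = 0,
\end{equation*}
using \eqref{eq:minimal-polynomial} reduced mod $p$.

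\emph{Fallback if (b) is only available up to nilpotents.} If full multiplicativity is not available, the alternative is to use the homogeneous version $f_q$ from \eqref{eq:f-q-c1}. One would show that $Q\Sigma$, viewed as an endomorphism of the $\bF_p[q^{\pm1}]$-algebra $QH^*(M;\bF_p)[u]$, is an algebra map composed with Frobenius. Concretely, $Q\Sigma_b(x) = \Sigma_b\ast x$ for a class $\Sigma_b$ depending $p$-linearly on $b$, with $\Sigma_{b b'} = \Sigma_b\ast\Sigma_{b'}$ up to terms divisible by $u$, and $u$-terms controlled by degree. Then $f(\nabla^p)$ is $u$-linear, of degree $\deg f\cdot 2p$, and vanishes at $u=0$. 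Since the degree is bounded, this should still force vanishing after setting $u=1$, because of the homogeneity in $q$ and $u$.

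\emph{Main obstacle.} The hard step is establishing the algebra-homomorphism property $Q\Sigma_{b\ast b'}=Q\Sigma_b\, Q\Sigma_{b'}$ exactly, rather than just modulo nilpotents. That is precisely the improvement of Proposition \ref{th:jae-conjecture} over \cite[Proposition 3.3]{chen24c}. I would try to deduce it from \eqref{eq:p-curvature-is-qsigma} itself, applied to the connections $\nabla_{u\partial_{t}}$ in other directions $t$ of the big quantum connection. Since $p$-curvature is $p$-linear in the vector field and the connection is flat, the $p$-curvatures in different directions commute and compose compatibly. Alternatively, if $H^2$ has rank one, one can argue directly via $Q\Sigma_{c_1}$ and its powers.
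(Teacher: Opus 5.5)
Your main line is exactly the paper's proof. Proposition \ref{th:jae-conjecture} gives $\nabla_{u\partial_q}^p = Q\Sigma_{q^{-1}c_1(M)}$. Properties \ref{th:quantum-properties}(i) and (v) then turn $f(Q\Sigma_{q^{-1}c_1(M)})$ into $Q\Sigma_{f(q^{-1}c_1(M))}$. This vanishes because $f(q^{-1}c_1(M))=0$ in quantum cohomology with $\bF_p$-coefficients, and one finally sets $u=1$.

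The multiplicativity you call the main obstacle is not one. It is also not what separates Proposition \ref{th:jae-conjecture} from \cite[Proposition 3.3]{chen24c}.
\begin{itemize}
\item It is \cite[Proposition 4.8]{seidel-wilkins21}, recorded in the paper as Property \ref{th:quantum-properties}(v). Its sign $(-1)^{|b|\,|c|\,p(p-1)/2}$ is trivial here, since $c_1(M)$ has even degree.
\item The actual improvement over \cite{chen24c} is the exact, rather than nilpotent, vanishing of $q^p\nabla_{u\partial_q}^p - Q\Sigma_{c_1(M)}$. You already take that as given when you invoke Proposition \ref{th:jae-conjecture}.
\item So the fallback and the detour through the big quantum connection should simply be dropped. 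The fallback would not work anyway: vanishing at $u=0$ plus degree bookkeeping does not force vanishing at $u=1$.
\item Your claim (a) is false as stated. $Q\Sigma_b$ commutes with $\nabla_{uq\partial_q}$, not with $c_1(M)\ast_q\cdot$; already for $\bC P^1$ the two fail to commute. You never use (a), so this does not affect the argument.
\end{itemize}

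The one step to tighten is ``\eqref{eq:minimal-polynomial} reduced mod $p$''. The polynomial $f$ kills $c_1(M)$ only over $\bC$, so in integral quantum cohomology $f(c_1(M))$ is merely a torsion class. It is \eqref{eq:no-torsion} that makes this torsion prime to $p$, and hence zero with $\bF_p$-coefficients. That is where the hypothesis is really used. For the final passage back to $pH^*(M;\bZ)[q^{\pm 1}]$, the universal coefficient injection $H^*(M;\bZ)\otimes\bF_p \hookrightarrow H^*(M;\bF_p)$ suffices without it.
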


Corollary \ref{th:congruence} is really about cohomology with $\bF_p$-coefficients; we have preferred to state it as a congruence in integer cohomology, to emphasize that it provides infinitely many constraints on the same operation $\nabla_{\partial_q}$. The next result is more purely a characteristic $p$ statement.

\begin{corollary} \label{th:trivial} (Proved in Section \ref{subsec:splittings})
Take a finite field $F$, and suppose that the corresponding version of \eqref{eq:quantum-product-c1-2},
\begin{equation} \label{eq:quantum-product-c1-3}
H^*(M;F) \longrightarrow H^*(M;F), \;\; x \longmapsto c_1(M) \ast x, 
\end{equation}
is diagonalizable over $F$. Then the connection $\nabla_{\partial_q}$ on $H^*(M;F)[q^{\pm 1}]$ is isomorphic to 
\begin{equation} \label{eq:trivial-splitting}
\bigoplus_\lambda\; (\partial_q + \lambda I).
\end{equation} 
Here, $\lambda$ are the eigenvalues of \eqref{eq:quantum-product-c1-3}; and each piece of \eqref{eq:trivial-splitting} lives on a free $F[q^{\pm 1}]$-module whose rank is equal to the dimension of the corresponding eigenspace.
\end{corollary}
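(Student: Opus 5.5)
We first reduce to a statement about $p$-curvature and then appeal to the standard theory of connections in characteristic $p$. Let $p = \mathrm{char}(F)$. By Proposition \ref{th:jae-conjecture}, extended from $\bF_p$ to $F$ by base change, the $p$-curvature of $\nabla_{u\partial_q}$ is $Q\Sigma_{q^{-1}c_1(M)}$. Setting $u=1$, the $p$-curvature $\psi = \nabla_{\partial_q}^p$ of \eqref{eq:u-equals-1-connection} on $E = H^*(M;F)[q^{\pm 1}]$ is therefore a specialization of a quantum Steenrod operation. The first step is to identify $\psi$ up to Frobenius twist. We will argue that $\psi$ satisfies $g(\psi)=0$, where $g(z)=\prod_\lambda(z-\lambda)$ is the minimal polynomial of \eqref{eq:quantum-product-c1-3}. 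This is the characteristic-$p$ input behind Corollary \ref{th:congruence}. The argument: $Q\Sigma_b$ is multiplicative in $b$ with respect to $\ast_q$ up to Frobenius-linearity, it has leading term $b^{\ast p}$ in the appropriate sense, and the Frobenius-twisted relation $g^{(p)}$ is satisfied. Since $F$ is finite, each $\lambda\in F$ satisfies $\lambda^{p^r}=\lambda$ for $r = [F:\bF_p]$. Hence $\psi^{p^{r-1}\cdots}$-type iterates, in particular $\psi$ itself after composing with the Frobenius twist, are annihilated by $\prod_\lambda(z-\lambda^p)$, which is separable.

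Next, $\psi$ is $F[q^{\pm1}]$-linear and horizontal, i.e.\ it commutes with $\nabla_{\partial_q}$. We decompose $E$ into the generalized eigenspaces of $\psi$. Over the ring, these are the kernels $E_\mu = \ker(\psi - \mu^p)$, where $\mu$ runs over the eigenvalues of \eqref{eq:quantum-product-c1-3}; note that $\mu\mapsto\mu^p$ is injective on $F$. Because $\prod(z-\mu^p)$ is separable with constant coefficients, Lagrange idempotents in $F[\psi]$ split $E = \bigoplus_\mu E_\mu$ as connections. Each $E_\mu$ is a direct summand of a free module over the PID $F[q^{\pm1}]$, hence free. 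We then check that its rank equals the dimension of the $\mu$-eigenspace. For this, reduce $q\mapsto 1$: modulo $q-1$, the operator $\psi$ specializes to a Frobenius-twisted version of $c_1\ast$, and the ranks match.

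Finally, on $E_\mu$ consider $\nabla' = \nabla_{\partial_q} + \mu I$. Its $p$-curvature is $\psi - (-\mu)^p\cdots$; more simply, $(\partial_q-\mu)^p = \partial_q^p - \mu^p$ in characteristic $p$ with $\mu$ constant. Here the sign conventions must be matched to \eqref{eq:trivial-splitting}: we twist by $\partial_q+\lambda$ so that the $p$-curvature of the twisted connection vanishes on $E_\mu$. By Cartier's theorem, a connection with vanishing $p$-curvature on a free module over $F[q^{\pm1}]$ is spanned by its horizontal sections, $E_\mu \cong (\ker\nabla')\otimes_{F[q^{\pm p}]}F[q^{\pm1}]$, and $\ker\nabla'$ is a projective, hence free, $F[q^{\pm p}]$-module. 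Choosing a horizontal basis gives $E_\mu\cong (F[q^{\pm1}]^{\oplus m},\partial_q+\lambda I)$, which is \eqref{eq:trivial-splitting}.

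The main obstacle is the first step: proving that $\psi$ is annihilated by a separable polynomial with constant eigenvalues $\lambda^p$ (or the appropriate sign). This amounts to showing that diagonalizability of $c_1\ast$ forces $Q\Sigma_{q^{-1}c_1}$ to be semisimple with eigenvalues $\lambda^p$, with no $q$-dependence. I would try to establish this using the relation $Q\Sigma_{b}\circ(b\ast)$-commutation and the mod-$p$ identity $f(\nabla^p)\equiv 0$ of Corollary \ref{th:congruence}, in its $F$-version for the separable minimal polynomial over $F$.
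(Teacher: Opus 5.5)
Your splitting step takes a genuinely different route from the paper, and that part is sound. The paper splits using the idempotents $(Q\Sigma_{e_\lambda})_{u=1}$. When $H^*(M;\bZ)$ has $p$-torsion these may have $\theta$-components, which is why the paper works on $H^*(M;F)[\theta,q^{\pm1}]$ and extracts its even part. You instead use Lagrange idempotents in $F[\psi]$. These are $\theta$-free, parity-preserving and commute with $\nabla_{\partial_q}$, and each $\ker(\psi-\mu^p)$ is free over the PID $F[q^{\pm1}]$.

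The step you call the main obstacle is immediate: it is the proof of Corollary \ref{th:congruence} with $f$ replaced by the minimal polynomial $g$ over $F$. Proposition \ref{th:jae-conjecture} and Properties \ref{th:quantum-properties}(i),(v) give $g^{(p)}(\nabla_{u\partial_q}^p)=Q\Sigma_{g(q^{-1}c_1(M))}=0$ with $g^{(p)}(z)=\prod_\lambda(z-\lambda^p)$. No torsion hypothesis is needed, since $g(q^{-1}c_1(M))=0$ over $F$ by definition. The Frobenius-iterate discussion is unnecessary; in fact $g^{(p)}=g$, because $c_1(M)\ast$ is defined over $\bF_p$. In the last step the twist must be $\nabla_{\partial_q}-\mu I$, whose $p$-curvature is $\psi-\mu^p$, not $\nabla_{\partial_q}+\mu I$.

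The gap is the rank count, which is part of the statement. Your justification, that modulo $q-1$ the operator $\psi$ "specializes to a Frobenius-twisted version of $c_1(M)\ast$", follows from none of the available properties and is false as stated. Take $M=\bC P^1$, $p=3$, in the basis $(1,\mathit{PD}([\mathit{point}]))$. Computing directly, or from \eqref{eq:qsigma-p1}, gives
\[
\psi=\begin{pmatrix} -q^{-1} & -q \\ q^{-3}-q^{-1} & q^{-1}\end{pmatrix},\qquad \psi|_{q=1}=\begin{pmatrix} -1 & -1\\ 0 & 1\end{pmatrix},
\]
whereas $(c_1(M)\ast)^3=c_1(M)\ast=\begin{pmatrix} 0 & -1\\ -1 & 0\end{pmatrix}$ mod $3$. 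Only the characteristic polynomials agree, and that is exactly what needs proof. The paper gets it from the Steenrod side. After $q$-completion, $(\mathit{QSt})_{u=1}$ and the Frobenius-semilinear map $Q$ identify the image of $(Q\Sigma_{e_\lambda})_{u=1}$ with the $\lambda^p$-eigenspace, as in diagram \eqref{eq:bigq-elambda}; that eigenspace has the same dimension as the $\lambda$-eigenspace.

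Within your framework there is a short repair by homogeneity:
\begin{itemize}
\item $\nabla_{u\partial_q}^p$ is a degree $0$ endomorphism of the free graded $F[u,q^{\pm1}]$-module $H^*(M;F)[u,q^{\pm1}]$. So the coefficients of its characteristic polynomial lie in the degree $0$ part $F[u/q]$.
\item At $u=1$ your splitting shows these coefficients are constants, namely those of $\prod_\mu(z-\mu^p)^{r_\mu}$ with $r_\mu=\mathrm{rank}\,E_\mu$.
\item A polynomial in $u/q$ that becomes constant at $u=1$ is constant. So the coefficients equal their $u=0$ values.
\item By Property \ref{th:p-curvature-properties}(iii), the $u=0$ values are those of $\det\big(z-(q^{-1}c_1(M)\ast_q)^p\big)=\prod_\lambda(z-\lambda^p)^{m_\lambda}$.
\item Injectivity of Frobenius on $F$ then gives $r_\mu=m_\mu$.
\end{itemize}
With this repair your argument stays $\theta$-free and can be run on each parity separately. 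It would then give the $\bZ/2$-graded refinement that Remark \ref{th:why-theta} obtains only under \eqref{eq:no-torsion}.
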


\begin{remark}
In both Theorem \ref{th:exponential-type} and \ref{th:jordan-bound-2}, the statement applies separately to each part of the $\bZ/2$-graded cohomology. For instance, if \eqref{eq:quantum-product-c1-2} is diagonalizable on $H^{\mathrm{odd}}(M;\bC)$, then the quantum connection on $H^{\mathrm{odd}}(M;\bC)((q^{-1}))$ is isomorphic to \eqref{eq:exponential-type} with diagonalizable $A_\lambda$. (We have not included that in the main statements, to keep things simple.) In contrast, the proof of Corollary \ref{th:trivial} does not respect the $\bZ/2$-grading, so that the isomorphism which transforms $\nabla_{\partial_q}$ into \eqref{eq:trivial-splitting} could in principle mix odd and even degrees. One can impose mild additional assumptions to avoid that, see Remark \ref{th:why-theta}, and then the resulting statement again applies to each part separately.
\end{remark}

%
%

\subsection{The relative situation\label{subsec:relative}}
We define the subject of this section, the quantum connection relative to a smooth anticanonical divisor, through symplectic cohomology, following \cite{pomerleano-seidel23, pomerleano-seidel24}. 

\begin{remark}
Quite likely, there is an equivalent definition in terms of punctured Gromov-Witten invariants, and that would be useful for computations. However, the symplectic cohomology perspective forms the necessary starting point for our argument, because it connects to the corresponding deformation of the wrapped Fukaya category of the complement. There are related papers \cite{borman-sheridan-varolgunes22, borman-el-alami-sheridan24, borman-el-alami-sheridan24b} which study deformed symplectic cohomology more generally for divisors with normal crossings.
\end{remark}

So, let $D \subset M$ be a smooth anticanonical divisor (Definition \ref{th:divisor}). The deformed symplectic cohomology $\mathit{SH}^*_q(M,D;\bZ)$ is a graded $\bZ[q]$-module obtained from a deformed version of the chain complex underlying the symplectic cohomology $\mathit{SH}^*(M \setminus D;\bZ)$. For clarity, we will always include the coefficient ring in the notation; for this initial discussion, we'll choose integer coefficients, but will later switch to $\bF_p$. It is easy to describe deformed symplectic cohomology additively (as a graded abelian group) \cite[Theorem 1.2.1]{pomerleano-seidel23}:
\begin{equation} \label{eq:deformed-sh}
\mathit{SH}^*_q(M,D;\bZ) \iso H^*(M;\bZ)[q] \oplus H^*(D;\bZ)z \oplus H^*(D;\bZ)z^2 \oplus \cdots
\end{equation}
The powers of $z$ are just notation which serves to distinguish the different copies of $H^*(D;\bZ)$; there is no $z$-linearity. 

The $\bZ[q]$-module structure on $\mathit{SH}^*_q(M,D;\bZ)$ has the following partial description in terms of \eqref{eq:deformed-sh}. $H^*(M;\bZ)[q]$ is a submodule (with the standard $q$-action); moreover, any element of $\mathit{SH}^*_q(M,D;\bZ)$ is mapped to that submodule by a sufficiently high power of $q$. Therefore, we get an isomorphism  (\cite[Corollary 1.2.2]{pomerleano-seidel24}, or \cite{borman-el-alami-sheridan24} in a slightly different setup) 
\begin{equation} \label{eq:invert-q}
H^*(M;\bZ)[q^{\pm 1}] \stackrel{\iso}{\longrightarrow} \bZ[q^{\pm 1}] \otimes_{\bZ[q]} \mathit{SH}^*_q(M,D;\bZ).
\end{equation}

We'll need two pieces of additional data. The first is a module structure over quantum cohomology (Sections \ref{sec:modulestructure} and \ref{sec:compatibility}):
\begin{equation} \label{eq:circ-module}
\begin{aligned}
& \bullet_q: H^*(M;\bZ)[q] \otimes_{\bZ[q]} \mathit{SH}^*_q(M,D;\bZ) \longrightarrow \mathit{SH}^*_q(M,D;\bZ), \\
& x_1 \bullet_q (x_2 \bullet_q y) = (x_1 \ast_q x_2) \bullet_q y.
\end{aligned}
\end{equation}
%
%
The second, related, structure is map (see \cite[Section 8]{pomerleano-seidel24} or the exposition in Section \ref{sec:kappa}) 
\begin{equation} \label{eq:kappa-map}
a_{q}: \mathit{SH}^*_q(M,D;\bZ) \longrightarrow \mathit{SH}^*_q(M,D;\bZ).
\end{equation}
This endomorphism plays a role parallel to \eqref{eq:quantum-product-c1}, without requiring us to invert $q$. That is formalized by the next result:

\begin{lemma} (Proved in Section \ref{sec:modulestructure}) \label{th:kappa-and-quantum-product}
For $x \in \mathit{SH}^*_q(M,D;\bZ)$,
\begin{equation}
q\,a_{q}(x) = c_1(M) \bullet_q x.
\end{equation}
\end{lemma}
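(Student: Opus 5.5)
We will prove the identity at chain level, up to an explicit chain homotopy, by comparing the two operations through their defining moduli spaces of Floer trajectories on $M \setminus D$ with intersection-multiplicity weights at $D$.

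\textbf{Defining the two operations.} Recall that the deformed Floer differential counts cylinders $u$ in $M$ with weight $q^{u \cdot D}$. The module action $x \bullet_q y$, for a cycle $x$ representing a class in $H^*(M;\bZ)$, counts Floer cylinders with one interior marked point constrained to a pseudo-cycle representing $x$; again each is weighted by $q^{u\cdot D}$. The map $a_q$ is defined in \cite[Section 8]{pomerleano-seidel24} by counting cylinders with one interior marked point which is required to map to $D$, with weight $q^{u\cdot D - 1}$. The intersection with $D$ at the marked point uses up one unit of intersection number, so $q^{-1}$ is harmless: a curve meeting $D$ at the constrained point has $u\cdot D \geq 1$.

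\textbf{The comparison.} Since $D$ is Poincar\'e dual to $c_1(M)$, we take the pseudo-cycle representing $c_1(M)$ in the definition of $\bullet_q$ to be $D$ itself, or a small pushoff of $D$. With the constraint literally on $D$, we get
\[
c_1(M)\bullet_q y = \sum_u q^{u\cdot D}\,(\ldots) = q \sum_u q^{u\cdot D-1}(\ldots) = q\,a_q(y).
\]
To justify this choice we will:
\begin{enumerate}
\item choose almost complex structures for which $D$ is $J$-holomorphic, as is already required in the setup of deformed symplectic cohomology;
\item show that incidence conditions at $D$ achieve transversality. The cylinders not contained in $D$ meet $D$ positively and in isolated points, and the marked-point constraint $u(z)\in D$ cuts out a manifold of the right dimension.
\end{enumerate}

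\textbf{Obstacles.} Two issues arise.
\begin{itemize}
\item The positions of the Hamiltonian orbits near $D$: the cylinders lie in $M$ but have asymptotics in $M\setminus D$. We will handle this with the standard positivity of intersections and the a priori $u\cdot D$ bounds already used for \eqref{eq:deformed-sh}.
\item Independence of the choice of representative. Replacing a generic pseudo-cycle for $c_1(M)$ by $D$ must not change $\bullet_q$ on cohomology. We will use a cobordism between pseudo-cycles, namely a chain $C$ with $\partial C = D - Z$ for a generic cycle $Z$, in $M$. The induced homotopy counts cylinders through $C$, weighted by $q^{u\cdot D}$. This shows that the two chain maps differ by a chain homotopy, compatible with the deformed differential.
\end{itemize}

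\textbf{Main difficulty.} The hard step is the compactness and transversality analysis for cylinders whose constrained point lies on $D$ while other intersections with $D$ are only counted, not constrained. One danger is bubbling of spheres inside $D$ at the marked point. We will rule this out, or show that it contributes zero, using monotonicity and dimension counting: spheres in $D$ have $c_1(M)$-degree at least $1$, hence positive codimension. The argument then reduces to the setup of \cite[Section 8]{pomerleano-seidel24}, which we will follow closely.
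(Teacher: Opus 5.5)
Your proposal is correct and takes essentially the same route as the paper. Like you, the paper sets $(P,c_P)=(D,\mathit{inclusion})$ and observes that $u(z^*)\in D$ together with $u^{-1}(D)=\Sigma$ forces $z^*$ to be a point of $\Sigma$, so that with the same Floer data $b_{C_q}(D)$ literally equals $q\,a_{C_q}$; it then re-checks the stratification and bubbling conditions for this non-transverse choice, excluding bubbling at $z^*$ by a dimension bound $\mathrm{deg}(x^-)-\mathrm{deg}(x^+)+2m-4$, which is two below the expected dimension and uses $\langle c_1(M),[w]\rangle = w\cdot D\leq m^*$, and finally passes to a transverse representative by the usual pseudo-chain cobordism argument, applied to a small perturbation of $D$ transverse to $D$.
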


\begin{remark} \label{th:pair-of-pants}
A possible alternative approach might be to first construct a pair-of-pants product on deformed symplectic cohomology, 
\begin{equation}
\label{eq:relative-quantum-product}
\mathit{SH}^*_q(M,D;\bZ) \otimes \mathit{SH}^*_q(M,D;\bZ) \stackrel{\ast_q}{\longrightarrow} \mathit{SH}^*_q(M,D;\bZ),
\end{equation}
and show that on $H^*(M;\bZ)[q] \subset \mathit{SH}^*_q(M,D;\bZ)$ it is the quantum product. Then, the map \eqref{eq:circ-module} would be defined by restricting the pair-of-pants product. Next, \eqref{eq:kappa-map} would be defined as the pair-of-pants-product with a specific class in deformed symplectic cohomology, namely 
\begin{equation} \label{eq:1z}
1 z \in H^0(D;\bZ)z \subset \mathit{SH}^0_q(M,D;\bZ). 
\end{equation}
Finally, Lemma \ref{th:kappa-and-quantum-product} would be a consequence of the fact that 
\begin{equation} \label{eq:q-times-z}
q (1 z) = c_1(M), 
\end{equation}
which is a special case of \cite[Lemma 8.2.1]{pomerleano-seidel23}. Even though this provides a more unified picture, we have preferred not to follow that approach, since the pair-of-pants product is not important for our main topic. 
\end{remark}

Here's an easy consequence (we follow our exposition of quantum cohomology by stating it with complex coefficients).

\begin{corollary} (Proved in Section \ref{sec:applications}) \label{th:torsion-1}
Let $f$ be the minimal polynomial \eqref{eq:minimal-polynomial}. Then 
\begin{equation} \label{eq:f-kappa}
q^n f(a_{q}) = 0
\end{equation}
as an endomorphism of $\mathit{SH}^*_q(M,D;\bC)$ (meaning, $f(a_{q})$ is a linear combination of iterates of $a_{q}$).
\end{corollary}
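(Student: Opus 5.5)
The plan is to first show that a suitable power of $q$ times $f(a_q)$ vanishes, using Lemma \ref{th:kappa-and-quantum-product}, and then to improve that power to $q^n$ by a grading argument about $q$-torsion in $\mathit{SH}^*_q(M,D;\bC)$.

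\emph{Step 1: an unrefined annihilation.} The map $a_q$ is $\bC[q]$-linear, and by \eqref{eq:circ-module} the operation $\bullet_q$ is $\bC[q]$-linear and associative. Iterating Lemma \ref{th:kappa-and-quantum-product} then gives, for every $k \geq 0$,
\begin{equation*}
q^k a_q^k(x) = c_1(M)^{\ast_q k} \bullet_q x .
\end{equation*}
For example, $q^2 a_q^2(x) = q\,a_q(q\,a_q x) = c_1 \bullet_q (c_1 \bullet_q x) = (c_1 \ast_q c_1)\bullet_q x$. Write $d = \deg(f)$ and multiply $f(a_q)$ by $q^d$. Term by term this turns it into $f_q(c_1(M)) \bullet_q (-)$, with $f_q$ the homogeneous version \eqref{eq:f-q}. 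By \eqref{eq:f-q-c1} that class vanishes in $H^*(M;\bC)[q]$. Hence $q^{d} f(a_q) = 0$, so every element in the image of $f(a_q)$ is $q$-torsion.

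\emph{Step 2: a bound on the exponent of $q$-torsion.} It remains to show that any $q$-torsion element $y \in \mathit{SH}^*_q(M,D;\bC)$ satisfies $q^n y = 0$. Applied to $y = f(a_q)(x)$, this gives \eqref{eq:f-kappa}. I would argue by degrees, using the additive description \eqref{eq:deformed-sh}.

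First, the summand $H^*(M;\bC)[q]$ is a free $\bC[q]$-submodule, and it injects under the localization \eqref{eq:invert-q}. So a nonzero torsion element can never lie in it.

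Second, the summands $H^*(D;\bC)z^k$ sit in the same degrees as $H^*(D)$, namely $0$ through $2n-2$. This is consistent with $1z$ having degree $0$ in \eqref{eq:1z} and with $q(1z)=c_1(M)$ in \eqref{eq:q-times-z}. Consequently, in every degree $> 2n-2$, the group $\mathit{SH}^*_q$ coincides with the degree-$m$ part of $H^*(M;\bC)[q]$, and so contains no nonzero torsion.

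Now take a homogeneous torsion element $y$ of degree $m \geq 0$. If $q^k y \neq 0$, then $q^k y$ is a nonzero torsion element of degree $m+2k$. By the previous point this forces $m + 2k \leq 2n-2$, hence $k \leq n-1$. Therefore $q^n y = 0$. Since $a_q$ has degree $0$, the map $f(a_q)$ preserves degree, so it suffices to treat homogeneous elements.

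\emph{Main obstacle.} The delicate point is the grading bookkeeping in Step 2. I need to confirm from the construction in \cite{pomerleano-seidel23} that the $z^k$-summands really occupy degrees $[0,2n-2]$ with no shift depending on $k$. I also need that \eqref{eq:deformed-sh} is only an additive splitting, while the degree-wise statement "no torsion above degree $2n-2$" still holds for the module structure. The latter follows because in those degrees the whole group is the free submodule. If the grading conventions turned out different, the fallback would be to filter by the power of $z$ and use that $q$ maps $H^*(D)z^{k}$ into lower filtration levels, mimicking \eqref{eq:q-times-z}.
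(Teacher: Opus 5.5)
Your proposal is correct and follows the paper's argument. Step 1 matches it: Lemma \ref{th:kappa-and-quantum-product} gives $q^{\deg f} f(a_q) = f_q(c_1(M))\bullet_q \cdot = 0$. Step 2 is the same degree consideration based on \eqref{eq:deformed-sh}; the paper simply notes that $q^n f(a_q)$ has degree $2n$ and so lands in the $q$-torsion-free summand $H^*(M;\bC)[q]$, rather than bounding the $q$-exponent of an arbitrary torsion element, and the grading facts you flag as the main obstacle are exactly the ones the paper relies on.
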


There is also an $S^1$-equivariant (with respect to loop rotation) version of the theory, denoted by $\mathit{SH}^*_{u,q}(M,D;\bZ)$. As a graded $\bZ[u]$-module, it can be computed as follows \cite[Corollary 1.2.4]{pomerleano-seidel24}:
\begin{equation} \label{eq:deformed-sh-2}
\mathit{SH}^*_{u,q}(M,D;\bZ) \iso H^*(M;\bZ)[u,q] \oplus H^*(D;\bZ)[u]z \oplus H^*(D;\bZ)[u]z^2 \oplus \cdots
\end{equation}
Forgetting the equivariant structure relates this to \eqref{eq:deformed-sh} in the obvious way. It also carries a $\bZ[q]$-module structure; as before, the module structure is standard on the first summand in \eqref{eq:deformed-sh-2}, and the inclusion of that summand induces an isomorphism \cite[Corollary 1.2.6]{pomerleano-seidel24}
\begin{equation} \label{eq:invert-q-2}
H^*(M;\bZ)[u,q^{\pm 1}] \stackrel{\iso}{\longrightarrow} \bZ[q^{\pm 1}] \otimes_{\bZ[q]} \mathit{SH}^*_{u,q}(M,D;\bZ).
\end{equation}
Finally, $\mathit{SH}^*_{u,q}(M,D;\bZ)$ carries a connection $\nabla_{u\partial_q}$, which fits into a commutative diagram with the quantum connection, after multiplication by $q$ \cite[Proposition 1.2.8]{pomerleano-seidel24}:
\begin{equation} \label{eq:connections}
\xymatrix{
H^*(M;\bZ)[u,q] \ar[d]_-{q\nabla_{u\partial_q}} \ar@{^{(}->}[rr] && \mathit{SH}_{u,q}^*(M,D;\bZ) \ar[d]^-{q\nabla_{u\partial_q}}
\\
H^{*+2}(M;\bZ)[u,q] \ar@{^{(}->}[rr] && \mathit{SH}_{u,q}^{*+2}(M,D;\bZ)
}
\end{equation}
By construction \cite[Section 9]{pomerleano-seidel24}, $a_{q}$ is the non-equivariant part of the connection; meaning that there is a commutative diagram
\begin{equation} \label{eq:kappa-and-connection}
\xymatrix{
\ar[d]_-{\nabla_{u\partial_q}}
\mathit{SH}_{u,q}^*(M,D;\bZ) \ar@{->>}[rr]^-{u=0} &&
\ar[d]^-{a_{q}}
\mathit{SH}_{q}^*(M,D;\bZ)
\\
\mathit{SH}_{u,q}^*(M,D;\bZ) \ar@{->>}[rr]^-{u=0} &&
\mathit{SH}_{q}^*(M,D;\bZ)
}
\end{equation}

\begin{remark} \label{th:relative-etingof-lee}
There is an analogue of the Etingof-Lee $p$-curvature conjecture in the relative context. This would say that the $p$-curvature of the connection, extended to $\mathit{SH}^*_{u,q}(M,D;\bF_p)[\theta]$, can be written as the symplectic cohomology analogue of a quantum Steenrod operation (a natural equivariant lift of the $p$-th power of $a_q$; there might be more such operations here than in the quantum cohomology context, because rotations of the cylinder are nontrivial, which is why we wrote ``a'' and not ``the'' natural equivariant lift). As far as the authors can see, the strategy of proof of Proposition \ref{th:jae-conjecture} does not readily carry over to the relative version.%
%
%
%
%
\end{remark}

Because inverting $q$ brings us back to the classical quantum connection, the results from Section \ref{subsec:quantum-new} have implications in the relative context. For instance:

\begin{corollary} \label{th:torsion-2}
Let $f$ be the minimal polynomial \eqref{eq:minimal-polynomial}. Take $p$ such that \eqref{eq:no-torsion} holds. Then for each $x \in \mathit{SH}^*_{u,q}(M,D;\bZ)$ there is an $A \geq 0$ such that 
\begin{equation} \label{eq:a-constant}
q^A f(\nabla_{u\partial_q}^p) x \in p\,\mathit{SH}^*_{u,q}(M,D;\bZ).
\end{equation}
\end{corollary}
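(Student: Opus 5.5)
The plan is to reduce the statement to Corollary \ref{th:congruence}, via the localization isomorphism \eqref{eq:invert-q-2} and the commutative diagram \eqref{eq:connections}. The main difficulty is that the connection on $\mathit{SH}^*_{u,q}(M,D;\bZ)$ is only comparable with the quantum connection after multiplying by $q$, and Corollary \ref{th:congruence} is stated with $u=1$ rather than in the $u$-graded form. Both issues should be absorbed into the factor $q^A$.

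\textbf{Step 1: the $u$-graded congruence on $H^*(M;\bZ)[u,q^{\pm 1}]$.} Since everything is graded with $\deg u = \deg q = 2$, the $u=1$ version of Corollary \ref{th:congruence} is equivalent to its homogeneous $u$-version. Concretely, $f(\nabla_{\partial_q}^p)\equiv 0$ mod $p$ should translate into the vanishing mod $p$ of $\sum_k b_k u^{p(\deg f - k)}\nabla_{u\partial_q}^{pk}$, after suitable multiplication by powers of $q$ for homogeneity. Alternatively, I would argue directly from Proposition \ref{th:jae-conjecture} with $u$ kept. In that approach, $\nabla_{u\partial_q}^p = Q\Sigma_{q^{-1}c_1}$, and the quantum Steenrod operation satisfies $f$-type relations mod $p$ modulo torsion. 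I take as given that the argument proving Corollary \ref{th:congruence} in Section \ref{subsec:jae} works $u$-linearly. The outcome is that $f(\nabla_{u\partial_q}^p)$, with $f$ homogenized appropriately in $u$ (this is how I read $f$ applied to a degree-$2$ operator), kills $H^*(M;\bF_p)[u,q^{\pm 1}]$. Since \eqref{eq:no-torsion} holds, this is equivalent to $f(\nabla_{u\partial_q}^p)$ landing in $pH^*(M;\bZ)[u,q^{\pm1}]$.

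\textbf{Step 2: transfer to $\mathit{SH}$.} Let $x\in \mathit{SH}^*_{u,q}(M,D;\bZ)$. Set $V=\mathit{SH}^*_{u,q}(M,D;\bZ)$ and $V[q^{-1}] = \bZ[q^{\pm1}]\otimes_{\bZ[q]} V$. The diagram \eqref{eq:connections} says $q\nabla$ on $V$ restricts to $q\nabla$ on $H^*(M;\bZ)[u,q]$. Since $q$ is invertible on $V[q^{-1}]$, the connection $\nabla_{u\partial_q}$ extends uniquely to $V[q^{-1}]$, because $\nabla(q^{-1}y)=q^{-1}\nabla y - uq^{-2}y$. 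Under \eqref{eq:invert-q-2}, this extension agrees with the quantum connection on $H^*(M;\bZ)[u,q^{\pm1}]$.

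By Step 1, the image of $f(\nabla^p)x$ in $V[q^{-1}]$ lies in $pV[q^{-1}]$. This means that $q^{B}f(\nabla^p)x - p y$ maps to zero in $V[q^{-1}]$ for some $B$ and some $y\in V$. Hence it is killed by $q^{C}$ for some $C$. Taking $A=B+C$ gives $q^A f(\nabla^p)x \in pV$.

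\textbf{Main obstacle.} The delicate point is Step 1, namely making sure the congruence of Corollary \ref{th:congruence} holds with $u$ retained rather than only at $u=1$. If the homogeneity argument fails because of $u$-torsion issues, I would instead invert $u$ as well, using that $V$ has no $u$-torsion by \eqref{eq:deformed-sh-2}. Then $q$-localization followed by $u=1$ specialization loses nothing beyond the powers of $q$ allowed in the statement.
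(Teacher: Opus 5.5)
Your argument is correct, and it differs from the paper's in where the reduction mod $p$ takes place.

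\textbf{Your route versus the paper's.} You stay integral throughout. You transport the integral congruence on $H^*(M;\bZ)[u,q^{\pm 1}]$ through the integral localization isomorphism \eqref{eq:invert-q-2}, using \eqref{eq:connections} and the Leibniz rule to identify the extended connection on $V[q^{-1}]$ with the quantum connection. This gives $f(\nabla_{u\partial_q}^p)x \in p\,V[q^{-1}]$, and you then clear denominators and absorb the $q$-torsion into $q^A$. The paper instead reduces $\mathit{SH}$ itself mod $p$. It uses the $\bF_p$-coefficient version of \eqref{eq:invert-q-2} to show that $f(\nabla_{u\partial_q}^p)y$ is $q$-torsion for every $y \in \mathit{SH}^*_{u,q}(M,D;\bF_p)$. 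It then returns to $\bZ$ through the long exact coefficient sequence \eqref{eq:sh-coefficient}.

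Your version needs only the integral statement \eqref{eq:invert-q-2} exactly as given. It also needs no compatibility of the coefficient sequence with $q$ and $\nabla_{u\partial_q}$. The paper's version proves a stronger mod $p$ statement, valid for all classes and not only for reductions of integral ones. That stronger form is what Remark \ref{th:bound} uses to make $A$ uniform via finite generation over the Weyl algebra.

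\textbf{A correction to Step 1.} $\nabla_{u\partial_q}$ has degree $0$, not $2$. Both $u\partial_q$ and $q^{-1}c_1(M)\ast_q$ have degree $0$, because $u$ and $q$ both have degree $2$. So $f(\nabla_{u\partial_q}^p)$ is already homogeneous, and no homogenization in $u$ or $q$ is wanted. In fact, the operator $\sum_k b_k u^{p(\deg f-k)}\nabla_{u\partial_q}^{pk}$ that you wrote would be inhomogeneous, and it does not vanish. What is needed is exactly
\[
f(\nabla_{u\partial_q}^p) = f(Q\Sigma_{q^{-1}c_1(M)}) = Q\Sigma_{f(q^{-1}c_1(M))} = 0
\]
on $H^*(M;\bF_p)[u,\theta,q^{\pm 1}]$. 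This is how the proof of Corollary \ref{th:congruence} proceeds, with $u$ retained, before it sets $u=1$. So your ``main obstacle'' is not actually an obstacle, and the fallback of inverting $u$ is unnecessary.
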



\begin{proof}
Corollary \ref{th:congruence} can equivalently be stated as saying that $f(\nabla_{u\partial_q}^p)$, as an endomorphism of $H^*(M;\bF_p)[q^{\pm 1},u]$, is zero (indeed, when we prove it, this is the form in which it will appear). From that and the $\bF_p$-coefficient version of \eqref{eq:invert-q-2}, it follows that for any $x \in \mathit{SH}^*_{u,q}(M,D;\bF_p)$, $f(\nabla_{u\partial_q}^p)x$ is $q$-torsion. To get back to integer coefficients, one uses the long exact coefficient sequence
\begin{equation} \label{eq:sh-coefficient}
\cdots
\rightarrow \mathit{SH}^*_{u,q}(M,D;\bZ) \stackrel{p}{\longrightarrow} \mathit{SH}^*_{u,q}(M,D;\bZ) 
\longrightarrow \mathit{SH}^*_{u,q}(M,D;\bF_p) \rightarrow \cdots
\end{equation}
\end{proof}

Actually, the constant $A$ in \eqref{eq:a-constant} can be taken to be independent of $x$, because of a suitable finite generation property of $\mathit{SH}^*_{u,q}(M,D;\bF_p)$; and by a more precise version of that argument, one should be able to get an explicit bound on $A$.
We will explain the idea in Remark \ref{th:bound}, but will not carry out the argument completely; in any case, the resulting bound \eqref{eq:weak-bound} is likely far from optimal, compare Conjecture \ref{th:stronger}.
%
%
%

\subsection{The mod $p$ Fontaine-Laffaille structure\label{subsec:fl}}
The following discussion applies only to symplectic cohomology with mod $p$ coefficients. We need to consider certain algebraic modifications of that theory. As before, let $f$ be the minimal polynomial \eqref{eq:minimal-polynomial}. Define
\begin{align}
\label{eq:invert-f}
\mathit{SH}^*_{q,1/f}(M,D;\bF_p) & \stackrel{\mathrm{def}}{=} \bF_p[a_{q},1/f(a_{q})] \otimes_{\bF_p[a_{q}]} \mathit{SH}_q^*(M,D;\bF_p), \\
\label{eq:invert-f-2}
\mathit{SH}^*_{u,q,1/f}(M,D;\bF_p) & \stackrel{\mathrm{def}}{=} \bF_p[\nabla_{u\partial_q},1/f(\nabla_{u\partial_q})] \otimes_{\bF_p[\nabla_{u\partial_q}]} \mathit{SH}^*_{u,q}(M,D;\bF_p).
\end{align}
Here, \eqref{eq:invert-f-2} is equipped with the following action of $q$:
\begin{equation} \label{eq:leibniz-rule}
q \big( f(\nabla_{u\partial_q}\big)^{-k} x) = f(\nabla_{u\partial_q})^{-k} (qx) +
uk f(\nabla_{u\partial_q})^{-k-1} f'(\nabla_{u\partial_q}) x.
\end{equation}
Finally, one can invert $u$ to form
\begin{align}
\label{eq:tate-sh}
\mathit{SH}^*_{u^{\pm 1},q,1/f}(M,D;\bF_p) & \stackrel{\mathrm{def}}{=} \bF_p[u^{\pm 1}] \otimes_{\bF_p[u]}
\mathit{SH}^*_{u,q,1/f}(M,D;\bF_p).
\end{align}

\begin{remark}
In the algebra generated by $q$ and $\nabla_{u\partial_q}$, one has $[\nabla_{u\partial_q},q] = u$, and therefore 
\begin{equation} \label{eq:f-leibniz}
[f(\nabla_{u\partial_q}),q] = u \,f'(\nabla_{u\partial_q}). 
\end{equation}
The need for that to continue to hold, after inverting $f(\nabla_{u\partial_q})$, prescribes the formula \eqref{eq:leibniz-rule}. For readers unfamiliar with this kind of Weyl algebra, it may be helpful to pass to the Fourier-Laplace dual generators, setting $t = \nabla_{u\partial_q}$, $\nabla_{u\partial_t} = -q$. In that case, \eqref{eq:f-leibniz} is written as the more familiar differentiation rule $[\nabla_{u\partial_t},f(t)] = uf'(t)$, and correspondingly for \eqref{eq:leibniz-rule}. 
\end{remark}

In this context, there is a highly nontrivial relation between the equivariant and non-equivariant theories. It is obtained from the mod $p$ Fontaine-Laffaille structure for smooth and proper families of $A_\infty$-categories constructed in \cite{petrov-vaintrob-vologodsky17}, by applying the strategy of \cite{pomerleano-seidel23}. The outcome is:

\begin{theorem} (Proved in Section \ref{sec:the-end}) \label{th:q-fontaine-laffaille}
Suppose that:
\begin{equation} \label{eq:large-p}
\parbox{37em}{$p$ is an odd prime with $p \geq n+3$, where $n = \mathrm{dim}_{\bC}(M)$; and moreover, \eqref{eq:no-torsion} holds.}
\end{equation}
Then there is a canonical degree $0$ map between \eqref{eq:invert-f} and \eqref{eq:tate-sh},
\begin{equation} \label{eq:q-fontaine-laffaille}
\Phi: \mathit{SH}^*_{q,1/f}(M,D;\bF_p) \longrightarrow \mathit{SH}^*_{u^{\pm 1},q,1/f}(M,D;\bF_p),
\end{equation}
such that
\begin{align}
\label{eq:q-fontaine-laffaille-1} & \nabla_{u\partial_q}^p \Phi(x) = \Phi(a_{q}(x)), \\
\label{eq:q-fontaine-laffaille-2} & q\Phi(x) = \nabla_{u\partial_q}^{p-1} \Phi(q x).
\end{align}
This map is injective, and its image satisfies
\begin{equation} \label{eq:q-fontaine-laffaille-3}
\bF_p[u,u^{-1}] \otimes_{\bF_p} \big(\mathit{im}(\Phi) \oplus \nabla_{u\partial_q} \mathit{im}(\Phi) \oplus \cdots \oplus \nabla_{u\partial_q}^{p-1} \mathit{im}(\Phi)\big)
= \mathit{SH}^*_{u^{\pm 1},q,1/f}(M,D;\bF_p).
\end{equation}
\end{theorem}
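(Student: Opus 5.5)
The plan is to deduce the statement from the mod $p$ Fontaine-Laffaille structure of \cite{petrov-vaintrob-vologodsky17}, applied to the family of wrapped Fukaya categories of $M \setminus D$ deformed by the divisor, following the strategy of \cite{pomerleano-seidel23}.

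\emph{Step 1: categorical models.} Over $\bF_p[q]$ there is a family of $A_\infty$-categories, the relative (deformed) Fukaya category. I will identify $\mathit{SH}^*_q(M,D;\bF_p)$ with its Hochschild cohomology, or better with its Hochschild homology via a Calabi-Yau type duality. Under the same identification, $\mathit{SH}^*_{u,q}(M,D;\bF_p)$ should be negative cyclic homology, with $\nabla_{u\partial_q}$ the Getzler-Gauss-Manin connection and $a_q$ its $u=0$ part, as in \eqref{eq:kappa-and-connection}. The family is not smooth and proper over $\bF_p[q]$, so one cannot apply \cite{petrov-vaintrob-vologodsky17} directly. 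Following \cite{pomerleano-seidel23}, I will change variables to the Fourier-Laplace dual $t = \nabla_{u\partial_q}$ and localize away from $f(t)=0$. This is exactly what \eqref{eq:invert-f} and \eqref{eq:invert-f-2} do. Corollary \ref{th:torsion-1}, together with Lemma \ref{th:kappa-and-quantum-product} and \eqref{eq:invert-q}, shows that after inverting $f(a_q)$ the relevant modules become finitely generated over $\bF_p[t,1/f(t)]$. The aim is that they appear as the periodic cyclic homology and Hochschild homology of a smooth proper family parametrized by $t$ over $\mathrm{Spec}\,\bF_p[t,1/f(t)]$. The hypothesis \eqref{eq:no-torsion} ensures that the integer lift is flat, so the family lifts to $\bZ/p^2$, which \cite{petrov-vaintrob-vologodsky17} requires.

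\emph{Step 2: apply Petrov-Vaintrob-Vologodsky.} For a smooth proper family over $S$ with a lift mod $p^2$ and with $p$ larger than the relevant amplitude, which is where $p \geq n+3$ enters through the degree range of Hochschild homology, their theorem gives an isomorphism. It goes from the Frobenius pullback of Hochschild homology to the Tate/periodic cyclic homology, the latter taken as a module with connection with respect to the conjugate filtration. That isomorphism, written in the $t$-coordinate, is my $\Phi$.

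\emph{Step 3: translate the properties.} Frobenius linearity over the base becomes $t^p\Phi = \Phi t$, that is \eqref{eq:q-fontaine-laffaille-1}, because $a_q$ corresponds to multiplication by $t$ on the non-equivariant side. The statement that the image is horizontal for the connection up to $p$-curvature corresponds to \eqref{eq:q-fontaine-laffaille-2}, after converting back from $\nabla_{u\partial_t} = -q$ and using \eqref{eq:leibniz-rule}. A Cartier-type isomorphism $F^*$ is free of rank $p$ over the image on the $\bF_p[t]$ versus $\bF_p[t^p]$ side, which gives \eqref{eq:q-fontaine-laffaille-3} and injectivity.

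\emph{Main obstacle.} The hardest step is Step 1, and specifically the input from \cite{pomerleano-seidel23} that makes the localized family smooth and proper. This is the nontriviality of the localized category, since $t$ is the eigenvalue of the Fukaya-theoretic $c_1$-action. Checking that this construction works over $\bF_p$ and matches the Floer-theoretic $\nabla$ and $a_q$ compatibly with \eqref{eq:connections} is where most of the work lies. Signs and normalizations in \eqref{eq:q-fontaine-laffaille-2} also need care.
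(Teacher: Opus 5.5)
Your outline has the same architecture as the paper: deformed wrapped Fukaya category, open-closed isomorphisms, passage to a family over the $t$-line localized at $f(t)$, then \cite{petrov-vaintrob-vologodsky17}. However, the step you defer as the ``main obstacle'' is where this theorem's content lies, and you assign the wrong ingredients to it.

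\textbf{Smoothness with the minimal polynomial.} \cite{pomerleano-seidel23} only gives smoothness after inverting \emph{some} polynomial. To use the minimal polynomial $f$, you must verify hypothesis (iv) of Corollary \ref{th:fontaine-laffaille-2}: $q^B f([\partial_q\mu_{\scrA_q}])=0$ in Hochschild \emph{cohomology} $\mathit{HH}^*(\scrA_q)$. This is what makes $\scrA_{t,1/f}$ smooth over $\bF_p[t,1/f(t)]$. Note that $\scrA_{t,1/f}$ is built from the undeformed $\scrA$ and a central cocycle $W$, after strictifying the $A_\infty$-deformation over $\bZ/p^2$, which uses $\mathit{HH}^{<0}(\scrA)=0$. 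The paper proves (iv) in three steps:
\begin{enumerate}
\item the mod $p$ relation $q^n f(a_q)=0$ on $\mathit{SH}^*_q(M,D;\bF_p)$ (Corollary \ref{th:torsion-1b});
\item transport to $\mathit{HH}_*(\scrA_q)$ by Lemma \ref{th:compatibility-oc};
\item passage to $\mathit{HH}^*$ via faithfulness of the action (Lemma \ref{th:cy}, from $x\mapsto x\cdot\mathit{OC}_q^{-1}(1)$ being an isomorphism).
\end{enumerate}
You instead cite Corollary \ref{th:torsion-1}, which has complex coefficients, to get ``finite generation''. That is a different hypothesis, handled as in \cite{pomerleano-seidel23}, and not the one at stake. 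Relatedly, \eqref{eq:no-torsion} does not supply the $\bZ/p^2$ lift; that lift is automatic because the Borman--Sheridan deformation is defined over $\bZ$. Its actual role is to make $f_q(c_1(M))=0$ hold with $\bF_p$-coefficients, which is what Corollary \ref{th:torsion-1b} needs.

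\textbf{Negative powers of $q$.} The categorical Fourier transform, composed with the open-closed maps, does not identify $\mathit{HH}_*(\scrA_{t,1/f})$ and $\mathit{HP}_*(\scrA_{t,1/f})$ with $\mathit{SH}^*_{q,1/f}$ and $\mathit{SH}^*_{u^{\pm1},q,1/f}$. It identifies them, up to shift, with the negative-$q$ groups $q^{-1}\mathit{SH}^*_{q^{-1},1/f}$ and $q^{-1}\mathit{SH}^*_{u^{\pm1},q^{-1},1/f}$. Getting back to the groups in the statement requires Lemmas \ref{th:les} and \ref{th:les-2}:
\begin{itemize}
\item after inverting $f$, the $q^{\pm1}$-localized theory is acyclic, again by Corollary \ref{th:torsion-1b}, since $q$ is invertible there;
\item hence the connecting map of $0\to C_q\to C_{q^{\pm1}}\to q^{-1}C_{q^{-1}}\to 0$ is an isomorphism;
\item a $u$-filtration argument gives the equivariant version.
\end{itemize}
This is the real role of the minimal-polynomial relation you invoke. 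Saying that \eqref{eq:invert-f} and \eqref{eq:invert-f-2} ``are exactly'' the localization of the $t$-family skips this step.

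\textbf{Translating \eqref{eq:q-fontaine-laffaille-2}.} This is not horizontality up to $p$-curvature. It is the translation of $\nabla_{u\partial_t}\Phi(x)=t^{p-1}\Phi(\iota_{\partial_\epsilon}x)$, whose right side comes from the inverse-Cartier (Kodaira--Spencer) term of the connection on the Frobenius pullback. The dictionary is $-\nabla_{u\partial_t}\leftrightarrow q$, $-\iota_{\partial_\epsilon}\leftrightarrow q$, $t\leftrightarrow\nabla_{u\partial_q}$.
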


One can use Lemma \ref{th:kappa-and-quantum-product} to write a consequence of \eqref{eq:q-fontaine-laffaille-1} and \eqref{eq:q-fontaine-laffaille-2}, which involves the quantum module structure rather than $a_q$:
\begin{equation}
\nabla_{u\partial_q} ( q \,\Phi(x) ) = \Phi(c_1(M) \bullet_q x).
\end{equation}

\begin{remark} \label{th:p-m}
In algebraic geometry, the mod $p$ Fontaine-Laffaille structure is only the first truncation of a richer structure (integral $p$-adic Hodge theory, e.g.\ \cite{faltings89}). One can speculate on what such a structure would do in symplectic topology. Let's work with coefficients in $\bZ/p^m$, for some $m \geq 1$. Take the chain complex underlying $\mathit{SH}^*_{u,q}(M,D;\bZ/p^m)$, and change both the differential and connection by multiplying the $u^k$ term with $p^k$. Denote the outcome by $\overline{\mathit{SH}}_{u,q}^{*}(M,D;\bZ/p^m)$, and the connection by $\overline{\nabla}_{pu\partial_q}$ (the last-mentioned notation is motivated by 
$[\overline{\nabla}_{pu\partial_q}, q] = pu$).
A plausible generalization of \eqref{eq:q-fontaine-laffaille} might be to have a $u$-linear map
\begin{equation} \label{eq:q-fontaine-laffaille-m}
\Phi: \overline{\mathit{SH}}^{*}_{u^{\pm 1}, q,1/f}(M,D;\bZ/p^m) \longrightarrow \mathit{SH}^*_{u^{\pm 1},q,1/f}(M,D;\bZ/p^m),
\end{equation}
where on the domain we have inverted $f(\overline{\nabla}_{pu\partial_q})$,
satisfying
\label{eq:q-fontaine-laffaille-1m} 
$\nabla_{u\partial_q}^p \Phi(x) = \Phi(\overline{\nabla}_{pu\partial_q} x)$
and \eqref{eq:q-fontaine-laffaille-2}. 
It remains to be seen whether such a map exists, and what implications that might have (we will look at the simplest example in Remark \ref{th:speculation}).
\end{remark}

Here is a concrete consequence of Theorem \ref{th:q-fontaine-laffaille} for the relative quantum connection:

\begin{corollary} (Proved in Section \ref{sec:the-end}) \label{th:torsion-3}
Suppose \eqref{eq:large-p} holds. Then for each $x \in \mathit{SH}^*_{u,q}(M,D;\bZ)$ there is a $B \geq 0$ such that 
\begin{equation} \label{eq:b-constant}
q^{np} f(\nabla_{u\partial_q}^p)^Bx \in p\,\mathit{SH}^*_{u,q}(M,D;\bZ)
\end{equation}
\end{corollary}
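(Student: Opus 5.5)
The plan is to reduce to a statement with $\bF_p$-coefficients, prove it inside the localized, $u$-inverted module $\mathit{SH}^*_{u^{\pm 1},q,1/f}(M,D;\bF_p)$ using Theorem \ref{th:q-fontaine-laffaille}, and then pull it back to $\mathit{SH}^*_{u,q}(M,D;\bF_p)$.

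\emph{Reduction to $\bF_p$.} As in the proof of Corollary \ref{th:torsion-2}, the coefficient sequence \eqref{eq:sh-coefficient} shows that an element of $\mathit{SH}^*_{u,q}(M,D;\bZ)$ lies in $p\,\mathit{SH}^*_{u,q}(M,D;\bZ)$ if and only if its image in $\mathit{SH}^*_{u,q}(M,D;\bF_p)$ vanishes. It therefore suffices to show: for every $\bar x \in \mathit{SH}^*_{u,q}(M,D;\bF_p)$ there is a $B$ with $q^{np} f(\nabla_{u\partial_q}^p)^B \bar x = 0$. Two characteristic $p$ facts will be used throughout.
\begin{itemize}
\item Since $[\nabla_{u\partial_q},q]=u$, we get $[\nabla_{u\partial_q}^p,q] = p u\nabla_{u\partial_q}^{p-1} = 0$, and similarly $[\nabla_{u\partial_q},q^p]=0$.
\item Since $f$ has integer coefficients, the Frobenius identity gives $f(\nabla_{u\partial_q}^p) = f(\nabla_{u\partial_q})^p$. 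Hence inverting $f(\nabla_{u\partial_q})$ is the same as inverting $f(\nabla_{u\partial_q}^p)$, and $f(\nabla_{u\partial_q}^p)$ commutes with $q$.
\end{itemize}

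\emph{Step 1: $q^n$ kills the non-equivariant localization.} I would first show that $q^n$ acts as zero on $\mathit{SH}^*_{q,1/f}(M,D;\bF_p)$. This should follow from the mod $p$ version of Corollary \ref{th:torsion-1}, namely $q^n f(a_q)=0$: since $f(a_q)$ becomes invertible and commutes with $q$, the factor $f(a_q)$ can be cancelled. The mod $p$ version needs checking. Its proof goes through Lemma \ref{th:kappa-and-quantum-product} and \eqref{eq:f-q-c1}, which are integral statements, and the no-torsion hypothesis \eqref{eq:no-torsion} should let the argument pass to $\bF_p$. I would revisit that proof to confirm that the exponent $n$ survives the reduction.

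\emph{Step 2: transport to the equivariant side via $\Phi$.} The key algebraic observation comes from combining \eqref{eq:q-fontaine-laffaille-1} and \eqref{eq:q-fontaine-laffaille-2}:
\begin{equation*}
\nabla_{u\partial_q} q\,\Phi(y) = \nabla_{u\partial_q}^p\Phi(qy) = \Phi(q\,a_q y).
\end{equation*}
Since $a_q$ is $q$-linear, iterating gives $(\nabla_{u\partial_q} q)^n\Phi(x) = \Phi(q^n a_q^n x) = 0$ by Step 1. I then want to convert this into $q^{np}\Phi(x)=0$. Here I would use a Jacobson-type identity in the characteristic $p$ Weyl algebra, $(q\nabla_{u\partial_q})^p = q^p\nabla_{u\partial_q}^p + u^{p-1}q\nabla_{u\partial_q}$. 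Alternatively, I would expand $q^{np}\Phi(x)$ by repeatedly applying \eqref{eq:q-fontaine-laffaille-2} together with $[q,\nabla_{u\partial_q}^k] = -ku\nabla_{u\partial_q}^{k-1}$. Either way, the aim is to write $q^{np}\Phi(x)$ as a combination of terms $\nabla_{u\partial_q}^j\Phi(q^{m}z)$ with $m\ge n$, all of which vanish by Step 1. This bookkeeping is the step I expect to be the main obstacle, and it is presumably where the precise exponent $np$ comes from.

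\emph{Step 3: extend to the whole module and descend.} Once $q^{np}$ kills $\mathit{im}(\Phi)$, it kills all of $\mathit{SH}^*_{u^{\pm 1},q,1/f}(M,D;\bF_p)$. Indeed, by \eqref{eq:q-fontaine-laffaille-3} that module is spanned over $\bF_p[u^{\pm1}]$ by the $\nabla_{u\partial_q}^i\,\mathit{im}(\Phi)$, and $q^p$ commutes with $\nabla_{u\partial_q}$. Now take $\bar x \in \mathit{SH}^*_{u,q}(M,D;\bF_p)$. Its image in the localized, $u$-inverted module is killed by $q^{np}$, so some $u^C f(\nabla_{u\partial_q})^{B'}$ annihilates $q^{np}\bar x$. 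Since $\mathit{SH}^*_{u,q}(M,D;\bF_p)$ is $u$-torsion free by \eqref{eq:deformed-sh-2} (using \eqref{eq:no-torsion}), the factor $u^C$ can be dropped. Finally, using $f(\nabla_{u\partial_q}^p) = f(\nabla_{u\partial_q})^p$ and the fact that it commutes with $q$, we obtain $q^{np} f(\nabla_{u\partial_q}^p)^B\bar x = 0$ for any $B$ with $pB \ge B'$. This is \eqref{eq:b-constant}.
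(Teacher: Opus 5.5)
Your reduction to $\bF_p$, Step 1 and Step 3 coincide with the paper's proof. Step 1 is exactly Corollary \ref{th:torsion-1b}: the exponent $n$ survives by the same degree argument as in Corollary \ref{th:torsion-1}, once $f_q(c_1(M))=0$ is known mod $p$ from \eqref{eq:no-torsion}. Step 3 is the paper's closing argument, using \eqref{eq:q-fontaine-laffaille-3}, $[\nabla_{u\partial_q},q^p]=0$, $u$-torsion-freeness, $f(\nabla_{u\partial_q})^{pB}=f(\nabla_{u\partial_q}^p)^B$ and \eqref{eq:sh-coefficient}.

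The gap is Step 2. It is the one non-formal step, and you leave it as a plan. Your first suggested route does not close it on its own. Combine your Jacobson identity $(q\nabla_{u\partial_q})^p = q^p\nabla_{u\partial_q}^p + u^{p-1}q\nabla_{u\partial_q}$ with $q\nabla_{u\partial_q}=\nabla_{u\partial_q}q-u$, your formula $\nabla_{u\partial_q}q\,\Phi(y)=\Phi(q\,a_q y)$, and \eqref{eq:q-fontaine-laffaille-1}. This only gives
\[
q^p\,\Phi(a_q y) = \Phi\bigl(q\,((q a_q)^{p-1} - u^{p-1})\,a_q y\bigr),
\]
that is, control of $q^p$ on $\Phi(\mathit{im}\, a_q)$. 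But $a_q$ need not be surjective on $\mathit{SH}^*_{q,1/f}(M,D;\bF_p)$. It is invertible there when $f(0)=0$, but for $M=\bC P^1$ it is multiplication by the non-unit $y+y^{-1}$ on $\Omega^1_{Y^{\mathrm{reg}}}$, which is not onto.

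What is needed is a formula for $q^p\Phi(x)$ itself. This is the paper's Corollary \ref{th:q-p-curvature}:
\[
q^p\,\Phi(x) = \Phi(q N_q x), \qquad N_q = q^{p-1}a_q^{p-1} - u^{p-1},
\]
with $N_q$ $q$-linear. The paper obtains it along your second route. Iterating \eqref{eq:q-fontaine-laffaille-2} using $q\nabla_{u\partial_q}^m = \nabla_{u\partial_q}^m q - mu\nabla_{u\partial_q}^{m-1}$ gives
\[
q^k\Phi(x) = \sum_i c_{k,i}\,u^i\,\nabla_{u\partial_q}^{k(p-1)-ip}\,\Phi(q^{k-i}x), \qquad \sum_i c_{k,i}z^i = \prod_{j=1}^{k-1}(1+jz).
\]
For $k=p$ this polynomial is $1-z^{p-1}$.

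A shorter derivation in your own terms is
\[
q^{p-1}\nabla_{u\partial_q}^{p-1} = \prod_{j=0}^{p-2}(q\nabla_{u\partial_q} - ju) = \prod_{j=1}^{p-1}(\nabla_{u\partial_q}q - ju) = (\nabla_{u\partial_q}q)^{p-1} - u^{p-1}.
\]
Hence $q^p\Phi(x) = q^{p-1}\nabla_{u\partial_q}^{p-1}\Phi(qx) = \Phi\bigl(((qa_q)^{p-1} - u^{p-1})qx\bigr)$. Iterating gives $q^{np}\Phi(x) = \Phi(q^nN_q^nx)$, which vanishes by Step 1; this is where the exponent $np$ comes from. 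With this inserted, your argument is complete and is the paper's.
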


It is possible to take $B$ independent of $x$, because of the finite generation property mentioned before (see Remark \ref{th:bound}); but that observation is not particularly useful, as it does not provide any bound on $B$. 

It is instructive to compare Corollary \ref{th:torsion-3}, which relies on the Fontaine-Laffaille map and therefore ultimately on the categorical methods from \cite{pomerleano-seidel23}, with Corollary \ref{th:torsion-2}, which used quantum Steenrod operations as in \cite{chen24c}. Qualitatively these statements are of the same kind, but quantitatively they are different, and neither can be obtained from the other. There is a possible common strengthening:

\begin{conjecture} \label{th:stronger}
Assume $p$ is chosen so that \eqref{eq:no-torsion} holds. Then
\begin{equation} 
q^{np} f(\nabla_{u\partial_q}^p): \mathit{SH}^*_{u,q}(M,D;\bZ) \longrightarrow \mathit{SH}^*_{u,q}(M,D;\bZ) \;\;\text{ is zero mod $p$.}
\end{equation}
\end{conjecture}

One potential path towards this starts with the relative Etingof-Lee conjecture from Remark \ref{th:relative-etingof-lee} (see Remark \ref{th:more} for further discussion).

{\em Structure of the paper.} Sections \ref{sec:steenrod}--\ref{sec:apply} discuss the effect of quantum Steenrod operations on the quantum connection. Sections \ref{sec:algebra}--\ref{sec:proof} concern deformed symplectic cohomology, as well as the categorical background required for Theorem \ref{th:q-fontaine-laffaille}. Section \ref{sec:p1} looks at the example $M = \bC P^1$, and compares our results with the outcome of independent computations. The final Section \ref{sec:appendix} shows how the material on $p$-curvature from Section \ref{sec:p-curvature} can be used in singularity theory (see the discussion in Remark \ref{th:singularities}); this is independent of any symplectic geometry considerations.

{\em Acknowledgments.} The authors would like to thank Sasha Petrov for patiently answering questions about \cite{petrov-vaintrob-vologodsky17} and Fontaine-Laffaille structures. D.P. was partially supported by NSF grant DMS-2306204.

\section{Quantum Steenrod\label{sec:steenrod}}
This section starts by reviewing the formalism of quantum Steenrod operations, and then specializes to the operation associated with the first Chern class. The main result (Corollary \ref{th:conjugacy}) compares that operation with the quantum product \eqref{eq:quantum-product-c1-3}.

\subsection{Basic properties}
Let $F$ be a finite field, with $p = \mathrm{char}(F)$, and $\mathit{Frob}: F \rightarrow F$, $\mathit{Frob}(x) = x^p$ its Frobenius automorphism. Following \cite{seidel-wilkins21}, the quantum Steenrod operation associated to $b \in H^*(M;F)[q]$ is a $(u,\theta,q)$-linear map
\begin{equation} \label{eq:quantum-steenrod-endomorphism}
Q\Sigma_b: H^*(M;F)[u,\theta,q] \longrightarrow H^*(M;F)[u,\theta,q].
\end{equation}
The degree of \eqref{eq:quantum-steenrod-endomorphism} is $p$ times that of $b$. The extra formal variable $\theta$ has degree $1$, and satisfies $\theta^2 = u$ if $p = 2$, respectively $\theta^2 = 0$ for $p>2$. It appears here in the context of the cohomology ring of the cyclic group $C_p$,
\begin{equation}
H^*_{C_p}(\mathit{point};F) = H^*(BC_p;F) \iso F[u,\theta].
\end{equation}
We have allowed coefficients in any $F$, rather than just $\bF_p$ as in the literature. This causes occasional appearances of the Frobenius in formulae; apart from that, there is nothing significantly different, and indeed the operations are completely determined by their restriction to $\bF_p \subset F$. 
Let's recall some basic facts.

\begin{properties} \label{th:quantum-properties}
(i) $Q\Sigma_{b+c} = Q\Sigma_b + Q\Sigma_c$, and $Q\Sigma_{g(q)b} = g(q)^p\,Q\Sigma_b$ for $g \in F[q]$.

(ii) For $b \in H^*(M;F)$, $Q\Sigma_b(x) = \mathit{St}(b)x + O(q)$. Here, $\mathit{St}(b) \in H^*(M;F)[u,\theta]$ is the classical Steenrod $p$-th power of $b$ (see \cite[Equation (1.6)]{seidel23} for conventions); and we take the cup product of that with $x$. This expresses the fact that the $q^0$ term of $Q\Sigma$ is a version of the definition of Steenrod operations. 

(iii) $Q\Sigma_b(x) = (b^{\ast_q p}) \ast_q x + O(u,\theta)$, where $b^{\ast_q p}$ is the $p$-th power with respect to the quantum product. This is due to the definition of $Q\Sigma$ as an equivariant version of the $(p+1)$-fold quantum product.

(iv) $Q\Sigma_b$ commutes with $\nabla_{uq\partial_q} = q\nabla_{u\partial_q}$. Here, we have multiplied by $q$ merely to avoid the appearance of negative powers of $q$; and the connection has been tacitly extended to be $\theta$-linear. This is the main result of \cite{seidel-wilkins21}.

(v) $Q\Sigma_b \circ Q\Sigma_c = (-1)^{|b|\, |c|\,\frac{p(p-1)}{2}}\, Q\Sigma_{b \ast_q c}$. This is \cite[Proposition 4.8]{seidel-wilkins21}. Moreover, $Q\Sigma_1 = \mathit{id}$.
\end{properties}

To make full use of (ii), it is convenient to add some basic facts about classical Steenrod operations. For any $b \in H^*(M;F)$, 
\begin{equation} \label{eq:classical-st}
\mathit{St}(b) = \pm u^{(p-1)|b|/2} \mathit{Frob}(b) + \text{elements of }
H^k(M;F)[u,\theta], \text{ for $k > |b|$.}
\end{equation}
Here $\mathit{Frob}$ is Frobenius acting on $F$-coefficients, compare the second part of (i); and if $p = 2$ and $|b|$ is odd, we set $u^{1/2} = \theta$. Finally, consider the special case where
$b \in H^2(M;\bF_p)$ is the mod $p$ reduction of a class in $H^2(M;\bZ)$. Then, the only nonzero Steenrod operations are (in the classical notation) $P^0(b) = b$ and $P^1(b) = b^p$, which with our conventions means 
\begin{equation} \label{eq:degree-2-st}
\mathit{St}(b) = b^p - u^{p-1} b. 
\end{equation}

\begin{lemma} \label{th:no-theta}
Suppose that \eqref{eq:no-torsion} holds, as well as the following additional assumption:
\begin{equation}
\label{eq:no-theta-2} \parbox{37em}{$p>2$, or $b$ has even degree.}
\end{equation}
Then $Q\Sigma_b$ has trivial $\theta$-component, which means that it maps $H^*(M;F)[u,q]$ to itself. 
\end{lemma}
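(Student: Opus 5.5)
The plan is to show that $Q\Sigma_b$ is the mod $p$ reduction of an operation defined with integer coefficients. The $\theta$-component should then vanish for a purely algebraic reason: $H^*(BC_p;\bZ) \iso \bZ[u]/(pu)$ is concentrated in even degrees. Its image under reduction mod $p$ is therefore contained in $\bF_p[u] \subset \bF_p[u,\theta]$, and no odd class such as $\theta u^k$ can arise.

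\emph{Step 1: reduction to integral inputs.} Since the operations are determined by their restriction to $\bF_p \subset F$ and are $F$-linear up to Frobenius twists (Properties \ref{th:quantum-properties}(i)), it suffices to treat $F = \bF_p$. By \eqref{eq:no-torsion} and the universal coefficient theorem, $H^*(M;\bZ)$ is free over $\bZ_{(p)}$ after localization, and $H^*(M;\bF_p) = H^*(M;\bZ)\otimes \bF_p$. Hence both $b$ and the input $x$ can be taken to be reductions of integral classes $\tilde b, \tilde x$; for $b$ this uses additivity in $b$ from Properties \ref{th:quantum-properties}(i).

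\emph{Step 2: the integral operation.} I would revisit the construction of $Q\Sigma_b$ from \cite{seidel-wilkins21}. It counts genus zero curves with $p+1$ marked points, $p$ of which are cyclically arranged, parametrized over the Borel construction $EC_p \times_{C_p} (\text{domain moduli})$, with the $p$ inputs constrained by pseudocycles representing $b$. The same count works over $\bZ$. The input is the chain $\tilde b^{\otimes p}$, on which $C_p$ acts by cyclic permutation with the Koszul sign $(-1)^{|b|^2(p-1)}$.

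The sign is where hypothesis \eqref{eq:no-theta-2} enters. If $p$ is odd, or $|b|$ is even, this sign is $+1$, so the coefficient system on $BC_p$ is untwisted. The output then lies in
\begin{equation*}
H^*_{C_p}(M;\bZ)[q] \iso \big(H^*(M;\bZ)\otimes H^*(BC_p;\bZ)\big)[q],
\end{equation*}
where the Künneth isomorphism holds because $H^*(M;\bZ)$ has no $p$-torsion. (For $p=2$ and $|b|$ odd, the coefficient system is the sign representation. The twisted cohomology of $BC_2$ lives in odd degrees, which is exactly where the $\theta$-terms come from, so the hypothesis is genuinely needed.)

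\emph{Step 3: comparison.} Reduction of coefficients commutes with the chain-level construction, so $Q\Sigma_b(x)$ is the image of the integral operation evaluated on $\tilde x$ under
\begin{equation*}
H^*(M;\bZ)\otimes\bZ[u]/(pu) \longrightarrow H^*(M;\bF_p)\otimes \bF_p[u,\theta].
\end{equation*}
This image lies in $H^*(M;\bF_p)[u]$, which gives the lemma after extending $q$-linearly.

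The main obstacle I expect is Step 2. One has to check that the equivariant transversality and orientation setup of \cite{seidel-wilkins21}, usually written with $\bF_p$ coefficients, really produces integral (Borel-equivariant pseudocycle) classes. In particular, the orientation conventions must give exactly the untwisted $C_p$-action in the cases allowed by \eqref{eq:no-theta-2}. My first attempt would be to work throughout with pseudocycles, which are naturally integral, and to track the orientation sign of the cyclic permutation on $M^p$. The rest of the argument is formal.
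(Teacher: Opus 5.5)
Your argument is correct and is essentially the paper's proof: lift $b$ and $x$, using \eqref{eq:no-torsion}, to a coefficient ring on which the quantum Steenrod operation is defined with untwisted $C_p$-coefficients, and observe that nothing in odd group-cohomology degree survives reduction to $\bF_p$. The only difference is that the paper uses the existing $\bZ/p^2$-coefficient operations from \cite{seidel25} instead of integral ones. There $H^{\mathrm{odd}}(BC_p;\bZ/p^2)\cong\bF_p$ is nonzero but maps to zero under reduction, and citing that construction lets the paper skip re-checking the integral equivariant orientation setup, which you flag as your main obstacle.
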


\begin{proof}
It is enough to show this for $F = \bF_p$. One can define quantum Steenrod operations with $(\bZ/p^2)$-coefficients (see the discussion in \cite[Section 1.4]{seidel25}). Under assumption \eqref{eq:no-theta-2}, they take the following form:
\begin{equation} \label{eq:p-square-diagram}
\xymatrix{
\ar[d] H_{C_p}^*(M;\bZ/p^2)[q] \ar[rr]^-{Q\Sigma_{\tilde{b}}} && H^*_{C_p}(M;\bZ/p^2)[q] \ar[d] \\
H^*_{C_p}(M;\bF_p)[q] \ar[rr]^-{Q\Sigma_b} && H_{C_p}^*(M;\bF_p)[q] 
}
\end{equation}
Here, $H^*_{C_p}$ is equivariant cohomology with respect to the trivial $C_p$-action, so $H^*_{C_p}(M;\bF_p) = H^*(M;\bF_p)[u,\theta]$. In the top row, we have $\tilde{b} \in H^*(M;\bZ/p^2)[q]$ and the mod $p^2$ operations; in the bottom row, its mod $p$ reduction $b$ and the usual quantum Steenrod. 

Let's spell out what happens on the group cohomology level. We have
\begin{equation}
H^*_{C_p}(\mathit{point};\bZ/p^2) = \begin{cases} \bZ/p^2 & * = 0, \\ \bF_p & * > 0. \end{cases}
\end{equation}
The reduction map $H^*_{C_p}(\mathit{point};\bZ/p^2) \rightarrow H^*_{C_p}(\mathit{point};\bF_p)$ is an isomorphism in positive even degrees, and zero in odd degrees. Together with \eqref{eq:no-torsion}, this means that any $x \in H^*(M;\bF_p)[u,q] \subset H^*_{C_p}(M;\bF_p)[q]$ can be lifted to some $\tilde{x} \in H^*_{C_p}(M;\bZ/p^2)[q]$. Again appealing to \eqref{eq:no-torsion}, any $b \in H^*(M;\bF_p)[q]$ can be lifted to $\tilde{b} \in H^*(M;\bZ/p^2)[q]$. From \eqref{eq:p-square-diagram} we then see that $Q\Sigma_b(x)$, as the reduction of $Q\Sigma_{\tilde{b}}(\tilde{x})$, has trivial $\theta$-component.
\end{proof}

\begin{remark} \label{th:twisted-coefficients}
In the classical construction of Steenrod operations, one encounters equivariant cohomology with twisted coefficients, involving the sign of permutations. This appears because of the need to make $b^{\otimes p}$ into an equivariant class. The same happens for the mod $p^2$ quantum Steenrod operations: the general version of \eqref{eq:p-square-diagram} has as its top row
\begin{equation} \label{eq:twisted-steenrod}
H_{C_p}^*(M;\mathit{sgn}^a)[q] \xrightarrow{Q\Sigma_{\tilde{b}}} H^*_{C_p}(M;\mathit{sgn}^{(a+|\tilde{b}|)})[q],
\end{equation}
Here, $\mathit{sgn}: C_p \rightarrow \{\pm 1\} \subset (\bZ/p^2)^\times$ is the sign of a cyclic permutation, and $\mathit{sgn}^a$ its $a$-th power, hence trivial for even $a$. For $p>2$, we have that $\mathit{sgn}$ itself is trivial. For $p = 2$ and $|\tilde{b}|$ even, one can take $a = 0$ and then again, only trivial coefficients appear in \eqref{eq:twisted-steenrod}. This was the situation in \eqref{eq:p-square-diagram}, and explains the condition \eqref{eq:no-theta-2}. Let's quickly look at the remaining case $p = 2$ and $|b|$ odd. The forgetful map
\begin{equation}
H^*_{C_2}(\mathit{point};\mathit{sgn}) \longrightarrow H^*_{C_2}(\mathit{point};\bF_2)
\end{equation}
is zero in even degrees, and an isomorphism in odd degrees. In terms of quantum Steenrod operations, still assuming \eqref{eq:no-torsion}, this leads to the opposite conclusion: only the terms of $Q\Sigma_b$ which involve multiplying by $\theta$ are nonzero.
\end{remark}

\subsection{Comparison with the quantum product}
Define
\begin{equation} \label{eq:qst}
\begin{aligned}
& \mathit{QSt}: H^{*/p}(M;F)[q^{1/p}] \longrightarrow H^*(M;F)[u,\theta,q], \\
& \mathit{QSt}(q^{k/p} b) = q^k Q\Sigma_b(1).
\end{aligned}
\end{equation}
Here, $H^{*/p}$ means that we multiply the natural grading by $p$. The same applies to $q$, which is defined to have degree $2p$ on the domain of the map (so that the root $q^{1/p}$ has degree $2$). The outcome is that \eqref{eq:qst} is a graded map. 

At this point, we invert $u$ (for $p = 2$, read this as $F[u^{\pm 1},\theta] = F[\theta^{\pm 1}]$), and then take the graded completion with respect to $q$. When applied to $H^*(M;F)[u,\theta,q]$, the outcome is the graded space $H^*(M;F)[u^{\pm 1},\theta][[q]]$ of formal power series in $q$ with coefficients in $H^*(M;F)[u^{\pm 1},\theta]$. To spell this out: elements of $H^*(M;F)[u^{\pm 1},\theta][[q]]$ of degree $d$ are series
\begin{equation}
c = \sum_{m \geq 0,\; i \in \bZ} q^m (u^i c_{m,i} + u^i\theta\, c_{m,i}'), \quad
\left\{
\begin{aligned}
& c_{m,i} \in H^{d-2i-2m}(M;F), \\ & c_{m,i}' \in H^{d-2i-2m-1}(M;F).
\end{aligned}
\right.
\end{equation}
Note that for grading reasons, only finitely many $i$ are relevant for each value of $m$. The same inverting-and-completing can be applied to $H^{*/p}(M;F)[u,\theta,q^{1/p}]$, with the grading as in \eqref{eq:qst}, leading to a space $H^{*/p}(M;F)[u^{\pm 1},\theta][[q^{1/p}]]$. This time, elements of degree $d$ are of the form
\begin{equation} \label{eq:b-series}
b = \sum_{m \geq 0,\; i \in \bZ} q^{m/p} (u^i b_{m,i} + u^i\theta\, b_{m,i}'), \quad
\left\{
\begin{aligned}
& b_{m,i} \in H^{(d-2i-2m)/p}(M;F), \\ & b_{m,i}' \in H^{(d-2i-2m-1)/p}(M;F).
\end{aligned}
\right.
\end{equation}
We can extend \eqref{eq:qst} to a $(u,\theta)$-linear map 
\begin{equation} \label{eq:extended-qst}
\mathit{QSt}: H^{*/p}(M;F)[u^{\pm 1},\theta][[q^{1/p}]] \longrightarrow H^*(M;F)[u^{\pm 1},\theta][[q]].
\end{equation}
Concretely, this is defined by applying $b \mapsto Q\Sigma_b(1)$ to each coefficient in \eqref{eq:b-series}, and treating powers of $q^{1/p}$ as in \eqref{eq:qst}.


\begin{lemma} \label{th:extended-qst-1}
The map \eqref{eq:extended-qst} is a (Frobenius-twisted) isomorphism. 
\end{lemma}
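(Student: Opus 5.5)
The plan is to show that \eqref{eq:extended-qst} is a filtered map whose associated graded map is bijective. We then lift bijectivity to the completed spaces by successive approximation.

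\emph{Semilinearity.} The map is additive and $(u,\theta)$-linear by construction. By Properties \ref{th:quantum-properties}(i) it satisfies $\mathit{QSt}(\lambda b) = \mathit{Frob}(\lambda)\,\mathit{QSt}(b)$ for $\lambda \in F$. This is the sense of "Frobenius-twisted", so it remains to prove bijectivity of the underlying map of sets.

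\emph{Leading term.} Take $b \in H^k(M;F)$. By Properties \ref{th:quantum-properties}(ii) and \eqref{eq:classical-st},
\[
\mathit{QSt}(q^{m/p} u^i b) = \pm\, q^m u^{i+(p-1)k/2}\,\mathit{Frob}(b) + (\text{terms in } q^m H^{>k}(M;F)[u^{\pm 1},\theta]) + O(q^{m+1}),
\]
with $u^{1/2}$ read as $\theta$ when $p=2$ and $k$ is odd. A $\theta b'$ term behaves the same way after multiplying by $\theta$. On the target, filter the degree $d$ part lexicographically by the pair $(m,k)$, where $m$ is the $q$-exponent and $k$ the cohomological degree of the coefficient. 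Filter the domain by the same pair, with $m$ the exponent of $q^{1/p}$ and $k=|b|$. The display shows that $\mathit{QSt}$ preserves this filtration. On the associated graded pieces it is the map $q^{m/p}u^i b \mapsto \pm q^m u^{i+(p-1)k/2}\mathit{Frob}(b)$, together with its $\theta$-analogue.

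\emph{Associated graded is bijective.} Fix the total degree $d$. The graded map sends the summand indexed by $(m,k,i)$ to the summand indexed by $(m,k,j)$ with $j = i+(p-1)k/2$. Domain degree is $2m+2i+pk$ and target degree is $2m+2j+k$; these agree, so the assignment is compatible with degree. It is a bijection of indices because $i$ is recovered as $j-(p-1)k/2$. When $p$ is odd this is an integer. When $p=2$ and $k$ is odd, the half-integer power is absorbed by swapping the $\theta$ and non-$\theta$ components, which is legitimate since $\theta^2=u$ is invertible in $F[\theta^{\pm1}]$. On each summand the map is $\pm\mathit{Frob}$ applied to $H^k(M;F)$, and this is bijective because $F$ is finite, hence perfect. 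So the associated graded map is an isomorphism.

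\emph{Passing to the completion.} This is the only step needing care. For fixed $d$ and $m$ only finitely many $k\le 2n$ occur, so each $q$-order is handled in finitely many correction steps. For surjectivity, given a target element we solve for its lowest $(m,k)$-coefficient using the graded inverse, subtract the image, and repeat. The result is a possibly infinite series in $q^{1/p}$. This is meaningful because the map is defined coefficientwise and so commutes with $q$-adically convergent sums, and the leftover error tends to zero $q$-adically. For injectivity, the lowest nonzero $(m,k)$-term of a nonzero $b$ maps to a nonzero lowest term of $\mathit{QSt}(b)$.

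The main point to check carefully is that the lexicographic filtration is exhaustive and complete in each degree, so that the standard filtered-isomorphism argument applies. This holds since the $k$-direction is finite and the $m$-direction is $q$-adically complete by construction.
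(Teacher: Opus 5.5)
Your proposal is correct and follows essentially the same route as the paper, which uses \eqref{eq:classical-st} to see that the $q=0$ reduction $b \mapsto \mathit{St}(b)$ is an isomorphism, and then passes to the whole map via the complete filtration by powers of $q^{1/p}$ and $q$. Your lexicographic $(m,k)$-filtration merges these two steps into one, and it spells out the triangularity in cohomological degree that the paper leaves implicit in its appeal to \eqref{eq:classical-st}.
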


\begin{proof}
As a consequence of \eqref{eq:classical-st}, the $q = 0$ reduction is an isomorphism. Using the complete decreasing filtration by powers of $q^{1/p}$ (on the domain) and $q$ (on the target), it follows that the entire map is an isomorphism.
\end{proof}

\begin{lemma} \label{th:extended-qst-2}
The map \eqref{eq:extended-qst} fits into a commutative diagram
\begin{equation} \label{eq:qst-diagram}
\xymatrix{
H^{*/p}(M;F)[u^{\pm 1},\theta][[q^{1/p}]] 
\ar[d]_-{c_1(M) \ast_q \cdot} 
\ar[rr]^-{\mathit{QSt}}_-{\iso} 
&&
H^*(M;F)[u^{\pm 1},\theta][[q]] 
\ar[d]^-{Q\Sigma_{c_1(M)}} \\
H^{*/p}(M;F)[u^{\pm 1},\theta][[q^{1/p}]] 
\ar[rr]^-{\mathit{QSt}}_-{\iso} 
&& 
H^*(M;F)[u^{\pm 1},\theta][[q]]
}
\end{equation}
where in the left $\downarrow$ the quantum product has been extended linearly with respect to $(u^{\pm 1},\theta)$.
\end{lemma}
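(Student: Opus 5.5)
Since every map in \eqref{eq:qst-diagram} is $(u^{\pm 1},\theta)$-linear and continuous for the $q^{1/p}$- and $q$-adic topologies, it suffices to check commutativity on the generators $q^{k/p}b$ with $b \in H^*(M;F)$. Going through the top right corner gives
\[
Q\Sigma_{c_1(M)}\big(\mathit{QSt}(q^{k/p}b)\big) = Q\Sigma_{c_1(M)}\big(q^k Q\Sigma_b(1)\big) = q^k\, Q\Sigma_{c_1(M)}\big(Q\Sigma_b(1)\big).
\]
Here $Q\Sigma_{c_1(M)}$ is $q$-linear, and its extension to the completion is the continuous one. By the composition rule in Properties \ref{th:quantum-properties}(v), this equals $\pm q^k\, Q\Sigma_{c_1(M)\ast_q b}(Q\Sigma_1(1)) = \pm q^k Q\Sigma_{c_1(M)\ast_q b}(1)$. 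The sign is $(-1)^{2|b|\,p(p-1)/2} = +1$ because $|c_1(M)| = 2$, so this term is simply $q^k Q\Sigma_{c_1(M)\ast_q b}(1)$.

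Going down and then across, we have to evaluate $\mathit{QSt}(q^{k/p}(c_1(M) \ast_q b))$, where the quantum product lives on the left-hand side, in the variable $q^{1/p}$. I interpret the left vertical arrow as the quantum product in which the Novikov variable is $q^{1/p}$, i.e.\ with curves of Chern number $m$ weighted by $q^{m/p}$. This is forced by the gradings, since $q^{1/p}$ has degree $2$ there. Write $c_1(M) \ast_q b = \sum_m q^m (c_1(M)\ast b)_m$ in the original variable, with $(c_1(M)\ast b)_m \in H^*(M;F)$. On the left side this becomes $\sum_m q^{m/p}(c_1(M)\ast b)_m$. Applying $\mathit{QSt}$ termwise gives
\[
\sum_m q^{k+m} Q\Sigma_{(c_1(M)\ast b)_m}(1).
\]
On the other hand, additivity and the Frobenius-semilinearity rule in Properties \ref{th:quantum-properties}(i) give
\[
Q\Sigma_{c_1(M)\ast_q b} = \sum_m Q\Sigma_{q^m (c_1(M)\ast b)_m} = \sum_m q^{mp}\, Q\Sigma_{(c_1(M)\ast b)_m}.
\]

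The two sides therefore disagree as written: one has $q^{k+m}$ and the other $q^{k+mp}$. Sorting out this discrepancy is the main obstacle, and the fix lies in the grading convention for the left column. The correct reading is that on the left the quantum product uses $q^{1/p}$ as its Novikov variable, so $c_1(M)\ast_{q^{1/p}} b = \sum_m q^{m/p}(c_1(M)\ast b)_m$. Then $\mathit{QSt}(q^{m/p}\, x) = q^m Q\Sigma_x(1)$, whereas $Q\Sigma_{q^m x}(1) = q^{mp}Q\Sigma_x(1)$; so to match I must pair the top right route with $Q\Sigma_{c_1(M)\ast_q b}$ computed in the original variable $q$. I would resolve this by returning to the definition of $Q\Sigma_b$ for $b$ with $q$-dependence, and checking on the degree count that the grading in \eqref{eq:qst} (where $q$ has degree $2p$ on the domain) forces the identification $q_{\mathrm{left}} = q^p$. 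Under that identification the Chern-number-$m$ term carries $q_{\mathrm{left}}^{m/p} = q^m$, and Properties \ref{th:quantum-properties}(i) converts this to $q^{mp}$ inside $Q\Sigma$, consistent with $\mathit{QSt}(q^{k/p}\,\cdot) = q^k Q\Sigma_{\cdot}(1)$.

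Once the variables are matched in this way, the diagram commutes on generators, and hence on the whole completion by linearity and continuity. Continuity holds because both vertical maps preserve the respective $q$-adic filtrations. Lemma \ref{th:extended-qst-1} is not needed for commutativity itself; it only justifies labelling the horizontal arrows as isomorphisms.
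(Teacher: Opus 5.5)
Your upper route is fine: by Property \ref{th:quantum-properties}(v), with sign $+1$ since $|c_1(M)|=2$, one gets $Q\Sigma_{c_1(M)}\mathit{QSt}(q^{k/p}b) = q^k\,Q\Sigma_{c_1(M)\ast_q b}(1)$. The gap is in the left column.

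You read the left arrow as a quantum product whose Chern number $m$ contributions carry $q^{m/p}$, and you say the grading forces this. It forces the opposite. On the domain, a class in $H^j(M;F)$ has degree $pj$ and $q$ has degree $2p$. In $c_1(M)\ast_q b=\sum_m q^m (c_1(M)\ast b)_m$, the coefficient $(c_1(M)\ast b)_m\in H^{|b|+2-2m}(M;F)$ therefore has degree $p(|b|+2-2m)$. Since the left arrow has degree $2p$, homogeneity requires the factor $q^m=(q^{1/p})^{pm}$, of degree $2pm$. A factor $q^{m/p}$, of degree $2m$, would make the left arrow inhomogeneous for $m>0$.

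Under your reading the diagram really does not commute: the mismatch $q^{k+m}$ versus $q^{k+mp}$ that you found is genuine. Your last paragraph does not repair it. It re-asserts the $q^{m/p}$ weighting, and after your ``identification $q_{\mathrm{left}}=q^p$'' the Chern number $m$ term is still $q_{\mathrm{left}}^{m/p}$. $\mathit{QSt}$ sends that to $q^m$, not $q^{mp}$. So the proposal ends with an unresolved discrepancy and only a promise to resolve it.

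The missing point is the correct convention. The left arrow is the ordinary quantum product $c_1(M)\ast_q\cdot$, using the same variable $q=(q^{1/p})^p$ as the domain, extended $(u^{\pm1},\theta,q^{1/p})$-linearly. Then the lower route gives $\mathit{QSt}\big(q^{k/p}\sum_m q^m(c_1(M)\ast b)_m\big)=\sum_m q^{k+pm}Q\Sigma_{(c_1(M)\ast b)_m}(1)$. Property \ref{th:quantum-properties}(i) rewrites this as $q^k\sum_m Q\Sigma_{q^m(c_1(M)\ast b)_m}(1)=q^k\,Q\Sigma_{c_1(M)\ast_q b}(1)$, which matches the upper route.

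Put differently, \eqref{eq:qst} is designed so that $\mathit{QSt}(c)=Q\Sigma_c(1)$ for every $c\in H^*(M;F)[q]$. With that, the paper's one-line argument $Q\Sigma_{c_1(M)}\mathit{QSt}(b)=Q\Sigma_{c_1(M)}Q\Sigma_b(1)=Q\Sigma_{c_1(M)\ast_q b}(1)=\mathit{QSt}(c_1(M)\ast_q b)$, plus linearity and continuity, is the whole proof.
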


\begin{proof}
This is an immediate consequence of Properties \ref{th:quantum-properties}(i) and (v): for $b \in H^*(M;F)$ we have $Q\Sigma_{c_1(M)} Q\mathit{St}(b) = Q\Sigma_{c_1(M)} Q\Sigma_b(1) = Q\Sigma_{c_1(M) \ast_q b}(1) = Q\mathit{St}(c_1(M) \ast_q b)$; the general case follows from that by $(u,\theta,q^{1/p})$-linearity.
\end{proof}

Assume that:
\begin{equation} \label{eq:all-eigenvalues}
\parbox{37em}{All eigenvalues $\lambda$ of \eqref{eq:quantum-product-c1-3} lie in $F$.} 
\end{equation}
Then, there are degree zero elements $e_{\lambda} \in H^*(M;F)[q^{\pm 1}]$, which are idempotents for the quantum product, and project to the corresponding generalized eigenspaces. Consider the operations
\begin{equation} \label{eq:quantum-ei}
Q\Sigma_{e_\lambda}: H^*(M;F)[u,\theta,q^{\pm 1}] \longrightarrow H^*(M;F)[u,\theta,q^{\pm 1}],
\end{equation}
defined by the obvious extension of \eqref{eq:quantum-steenrod-endomorphism} to classes $b$ which have negative powers of $q$. The $Q\Sigma_{e_\lambda}$ are mutually orthogonal projections, and add up to the identity, by Property \ref{th:quantum-properties}(v). The same argument as in Lemma \ref{th:extended-qst-2} (applied to spaces which allow finitely many inverse powers of $q$ respectively $q^{1/p}$) shows:

\begin{lemma} \label{th:extended-qst-3}
The map \eqref{eq:quantum-ei} sits in a commutative diagram
\begin{equation}
\xymatrix{
H^{*/p}(M;F)[u^{\pm 1},\theta]((q^{1/p}))
\ar[d]_-{e_{\lambda} \ast_q\; \cdot } 
\ar[rr]^-{\mathit{QSt}}_-{\iso} 
&&
H^*(M;F)[u^{\pm 1},\theta]((q)) 
\ar[d]^-{Q\Sigma_{e_\lambda}} \\
H^{*/p}(M;F)[u^{\pm 1},\theta]((q^{1/p}))
\ar[rr]^-{\mathit{QSt}}_-{\iso} 
&& 
H^*(M;F)[u^{\pm 1},\theta]((q)). 
}
\end{equation}
\end{lemma}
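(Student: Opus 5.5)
The plan is to argue exactly as for Lemma \ref{th:extended-qst-2}, after first making sure the map $\mathit{QSt}$ makes sense, and is still an isomorphism, once finitely many negative powers of $q^{1/p}$ (respectively $q$) are allowed.

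For the extension, set $\mathit{QSt}(q^{k/p} b) = q^k Q\Sigma_b(1)$ for all $k \in \bZ$, and extend $(u^{\pm 1},\theta)$-linearly, coefficientwise on Laurent series. Since $\mathit{QSt}$ is $q^{1/p}$-semilinear, meaning $\mathit{QSt}(q^{1/p} c) = q\,\mathit{QSt}(c)$, the extension is well-defined. Every element of $H^{*/p}(M;F)[u^{\pm 1},\theta]((q^{1/p}))$ is $q^{-N/p}$ times an element of the completed space without negative powers. Hence the isomorphism of Lemma \ref{th:extended-qst-1} propagates to the Laurent versions: injectivity and surjectivity both follow after multiplying by a high power of $q$.

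Next, we check that $Q\Sigma_{e_\lambda}$ is defined on the Laurent target and is compatible with this semilinearity. Write $e_\lambda = q^{-m} \tilde e_\lambda$ with $\tilde e_\lambda \in H^*(M;F)[q]$. Following Property \ref{th:quantum-properties}(i), we set $Q\Sigma_{e_\lambda} = q^{-mp} Q\Sigma_{\tilde e_\lambda}$; this is the ``obvious extension'' mentioned before the lemma. Property \ref{th:quantum-properties}(v) continues to hold for classes with negative powers of $q$: clear denominators, apply (v) to the polynomial classes, and use (i) to pull out the $p$-th powers of $q$. The same applies to the extension of (i) to $g \in F[q^{\pm 1}]$.

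With these facts the computation is immediate. For $b \in H^*(M;F)$ and $k \in \bZ$,
\begin{equation*}
Q\Sigma_{e_\lambda}\,\mathit{QSt}(q^{k/p} b) = q^k Q\Sigma_{e_\lambda} Q\Sigma_b(1) = \pm q^k Q\Sigma_{e_\lambda \ast_q b}(1).
\end{equation*}
We then need this to equal $\mathit{QSt}(e_\lambda \ast_q q^{k/p} b)$. Here $e_\lambda \ast_q b$ is a Laurent polynomial in $q$ with coefficients in $H^*(M;F)$, so $q^{k/p} e_\lambda \ast_q b$ involves powers $q^{k/p + j}$. Inside $\mathit{QSt}$, a factor $q^j$ becomes $q^{pj}$, matching $Q\Sigma_{q^j c} = q^{pj} Q\Sigma_c$ from (i). Frobenius twists of the coefficients of $e_\lambda$ are handled the same way.

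The sign in (v) is $+1$ because $e_\lambda$ has degree $0$. Once the identity holds on the generators $q^{k/p} b$, it extends to everything by $(u^{\pm 1},\theta)$-linearity and $q$-adic continuity.

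The only point needing care, and the one I expect to be the main obstacle, is the bookkeeping for negative powers and semilinearity. Concretely, one must make sure that the left vertical map (quantum product by $e_\lambda$, in which $q$ appears with its original weight) matches the substitution $q \mapsto q^{1/p}$ on the domain. Our conventions make the domain's $q^{1/p}$ play the role of the original $q$ in the quantum product. Checking this against the grading convention of \eqref{eq:qst}, where $q$ has degree $2p$, is where I would focus.
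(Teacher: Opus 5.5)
Your proposal is correct and matches the paper's argument. The paper simply reruns the proof of Lemma \ref{th:extended-qst-2}: it applies Properties \ref{th:quantum-properties}(i) and (v) to $Q\Sigma_{e_\lambda}Q\Sigma_b(1)$ and extends by $(u,\theta,q^{1/p})$-linearity, on spaces allowing finitely many negative powers of $q^{1/p}$ and $q$.

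One small slip in your last paragraph. It is the domain's $q=(q^{1/p})^p$ that is the quantum-product variable, and it is $q^{1/p}$ that corresponds to the target's $q$. Your explicit computation, sending $q^j$ to $q^{pj}$, already uses this correct convention.
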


From now on, let's assume \eqref{eq:no-torsion} and also a stronger version of \eqref{eq:no-theta-2}:
\begin{equation} \label{eq:no-theta-3}
\parbox{37em}{$p>2$, or $H^*(M;F)$ is zero in odd degrees.}
\end{equation}
Lemma \ref{th:no-theta} tells us that we can consider quantum Steenrod operations without using the variable $\theta$. As for the variable $u$, rather than inverting it, we set it equal to $1$. The resulting versions of $Q\Sigma_{c_1(M)}$, $Q\Sigma_{e_\lambda}$ and \eqref{eq:extended-qst} are $\bZ/2$-graded maps
\begin{align}
\label{eq:qsigmac1-u-1}
& (Q\Sigma_{c_1(M)})_{u=1}: H^*(M;F)[q] \longrightarrow H^*(M;F)[q], 
\\ 
\label{eq:qsigmaelambda-u-1}
& (Q\Sigma_{e_\lambda})_{u=1}: H^*(M;F)[q^{\pm 1}] \longrightarrow H^*(M;F)[q^{\pm 1}], 
\\
\label{eq:qst-u-1}
& (\mathit{QSt})_{u=1}: H^*(M;F)[[q^{1/p}]] \stackrel{\iso}{\longrightarrow} H^*(M;F)[[q]].
\end{align}
In \eqref{eq:qst-u-1}, it was not necessary to write the domain as $H^{*/p}$: if $p$ is odd, this is because the grading is mod $2$; and for $p = 2$, everything is concentrated in even degrees by \eqref{eq:no-theta-3}. The following result, while somewhat technical, is a key ingredient in the proof of Theorem \ref{th:jordan-bound-2}. Because that theorem concerns the behaviour near $q = \infty$, it naturally involves the corresponding formal expansion of \eqref{eq:qsigmac1-u-1} and \eqref{eq:qsigmaelambda-u-1}, which means working with Laurent series in $q^{-1}$.

\begin{corollary} \label{th:conjugacy}
Suppose that \eqref{eq:no-torsion}, \eqref{eq:all-eigenvalues}, and \eqref{eq:no-theta-3} hold.

(i) The action of $(Q\Sigma_{q^{-1}c_1(M)})_{u = 1} = q^{-p}(Q\Sigma_{c_1(M)})_{u=1}$ on $H^*(M;F)((q^{-1}))$ is conjugate to the $F((q^{-1}))$-linear extension of \eqref{eq:quantum-product-c1-3}.

(ii) For any eigenvalue $\lambda$ of \eqref{eq:quantum-product-c1-3}, consider the action of $(Q\Sigma_{q^{-1}c_1(M)})_{u = 1} - \lambda^p I$ on the direct summand of $H^*(M;F)((q^{-1}))$ which is the image of the idempotent endomorphism $(Q\Sigma_{e_{\lambda}})_{u = 1}$. Then, that (nilpotent) action is conjugate to the $F((q^{-1}))$-linear extension of the action of $(c_1(M) \ast \cdot) - \lambda I$ on the generalized $\lambda$-eigenspace of \eqref{eq:quantum-product-c1-3}.
\end{corollary}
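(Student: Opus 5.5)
The plan is to take the isomorphism $\mathit{QSt}$ from Lemmas \ref{th:extended-qst-1}--\ref{th:extended-qst-3}, with $u=1$ as in \eqref{eq:qst-u-1}, and use it as an explicit conjugating matrix. I would restrict it to polynomials and then pass to $F((q^{-1}))$. By definition \eqref{eq:qst}, $\mathit{QSt}$ sends $H^*(M;F)[q^{1/p}]$ into $H^*(M;F)[q]$, since each $Q\Sigma_b(1)$ is polynomial in $q$. It is semilinear with respect to the ring isomorphism $F[s]\to F[q]$, $s=q^{1/p}\mapsto q$, twisted by $\mathit{Frob}$ on coefficients. Choose an $F$-basis of $H^*(M;F)$ coming from $\bF_p$, possible by \eqref{eq:no-torsion}. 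Then $\mathit{QSt}$ is given by a matrix $P(q)$ with entries in $F[q]$, in the sense that $\mathit{QSt}(\sum_k b_k(s))=P(q)\,\mathit{Frob}(b)(q)$. Its reduction $P(0)$ is invertible, as in the proof of Lemma \ref{th:extended-qst-1}. Hence $\det P(q)\in F[q]$ is nonzero, and $P$ is invertible over $F(q)$, and in particular over $F((q^{-1}))$.

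Next I would translate the commutative square of Lemma \ref{th:extended-qst-2} (at $u=1$, without $\theta$ by Lemma \ref{th:no-theta} and \eqref{eq:no-theta-3}) into a matrix identity. Let $L(q)$ be the matrix of $c_1(M)\ast_q$ in the chosen basis. On the domain the quantum variable is $q=s^p$, so the left vertical arrow has matrix $L(s^p)$. Pushing through the semilinear $\mathit{QSt}$ gives
\[
(Q\Sigma_{c_1(M)})_{u=1}\;=\;P(q)\,L^{\mathit{Frob}}(q^p)\,P(q)^{-1}.
\]
Here $L^{\mathit{Frob}}=L$, because the structure constants of $c_1(M)\ast_q$ lie in $\bF_p$: they are reductions of integer Gromov--Witten counts. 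This identity holds on polynomials, hence over $F(q)$ and over $F((q^{-1}))$. Multiplying by $q^{-p}$ shows that $(Q\Sigma_{q^{-1}c_1(M)})_{u=1}$ is conjugate to $q^{-p}L(q^p)$, the operator $x\mapsto t^{-1}c_1(M)\ast_t x$ with $t=q^p$.

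It remains to remove the $q$-dependence by rescaling. Let $D$ act on $H^j(M;F)$ by $t^{-\lfloor j/2\rfloor}$. A rational curve in class $A$ contributes with weight $t^{\int_A c_1}$ and lowers degree by $2\int_A c_1$. By monotonicity \eqref{eq:monotone} one then checks $D\,(t^{-1}c_1\ast_t)\,D^{-1}=c_1\ast_1$, the constant operator \eqref{eq:quantum-product-c1-3}. The degree shifts here are even, so the floor causes no trouble on odd classes. Since $D$ is invertible over $F((q^{-1}))$, this proves (i).

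For (ii) I would use Lemma \ref{th:extended-qst-3} in the same way. It gives $(Q\Sigma_{e_\lambda})_{u=1}=P\,E^{\mathit{Frob}}_\lambda(q^p)\,P^{-1}$ over $F((q^{-1}))$, where $E_\lambda$ is the matrix of $e_\lambda\ast_q$. Now $E_\lambda^{\mathit{Frob}}$ is the idempotent of $c_1\ast_q$ for the generalized eigenvalue $\lambda^p$, because Frobenius fixes $L$ and sends $\lambda$-eigendata to $\lambda^p$-eigendata. The same conjugation by $P$ and $D$ then identifies the nilpotent part of $(Q\Sigma_{q^{-1}c_1})_{u=1}-\lambda^p$ on $\mathrm{im}(Q\Sigma_{e_\lambda})$ with $c_1\ast-\lambda^p$ on the generalized $\lambda^p$-eigenspace. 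Finally, $\mathit{Frob}$ is a field automorphism fixing the $\bF_p$-matrix $L$, so it carries the generalized $\lambda$-eigenspace to the $\lambda^p$-one with the same Jordan type. This gives (ii).

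The main obstacle is bookkeeping rather than substance. One must get the Frobenius twist and the substitution $s\mapsto q$ (domain $q=s^p$) exactly right when converting the semilinear square into a matrix identity. One must also ensure that invertibility of $P$ is only needed over $F((q^{-1}))$, where a nonzero determinant suffices even though $\det P$ need not be a unit in $F[q]$.
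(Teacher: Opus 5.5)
Your argument is correct, and it uses the same ingredients as the paper:
\begin{enumerate}
\item the commutative squares of Lemmas \ref{th:extended-qst-2} and \ref{th:extended-qst-3};
\item a Frobenius twist combined with the degree rescaling $t=q^p$ (the paper packages both into the single map $Q(q^rc)=q^{p(r+\sigma(|c|))}\mathit{Frob}(c)$, which satisfies $Q(b\ast_q c)=Q(b)\ast Q(c)$);
\item Frobenius again, to pass from the generalized $\lambda^p$-eigenspace to the $\lambda$-eigenspace.
\end{enumerate}

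Where you differ is in how you reach $F((q^{-1}))$. The paper works over $F((q))$, where $\mathit{QSt}$ is an isomorphism by the $q$-adic argument of Lemma \ref{th:extended-qst-1}, and then descends. The operator is defined over $F(q)$ and is conjugate over $F((q))$ to a constant matrix. Comparing Jordan normal forms then gives conjugacy over $F(q)$, hence over $F((q^{-1}))$; this uses \eqref{eq:all-eigenvalues} in (i) and nilpotency in (ii). You instead observe that $\mathit{QSt}$ restricted to polynomials is a matrix $P(q)$ over $F[q]$ with $P(0)$ invertible. So $\det P\neq 0$, and $P$ is already an explicit conjugating matrix over $F(q)$. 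This skips the descent step, and your proof of (i) visibly does not use \eqref{eq:all-eigenvalues}. The paper's map $Q$, in exchange, makes the Frobenius and rescaling bookkeeping more conceptual than your matrix manipulations.

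There is one small slip. With $D$ acting on $H^j$ by $t^{-\lfloor j/2\rfloor}$, the correct identity is $D^{-1}(t^{-1}c_1\ast_t)D=c_1\ast$. Your version $D(t^{-1}c_1\ast_t)D^{-1}$ produces factors $t^{2\int_A c_1-2}$ instead. This is harmless for conjugacy. Also, choosing an $\bF_p$-rational basis of $H^*(M;F)$ does not actually require \eqref{eq:no-torsion}.
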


\begin{proof}
(i) We start by working with Laurent series in $q$ rather than $q^{-1}$. Take the $\bZ/2$-graded isomorphism
\begin{equation}
\begin{aligned}
& Q: H^*(M;F)((q^{1/p})) \stackrel{\iso}{\longrightarrow} H^*(M;F)((q)), \\
& Q(q^r c) = q^{p(r + \sigma(|c|))} \mathit{Frob}(c) \;\; \text{ for $c \in H^*(M;F)$,} \\
& \qquad \text{where }\sigma(k) = k/2 \text{ for even $k$}, \;\; \sigma(k) = (k-1)/2 \text{ for odd $k$.}
\end{aligned}
\end{equation}
Because of the way in which the quantum product is graded,
\begin{equation} \label{eq:big-q-map-2}
Q(b \ast_q c) = Q(b) \ast Q(c) \text{ if $b$ or $c$ are of even degree.}
\end{equation}
Note that $Q(q^{-1}c_1(M)) = c_1(M)$. As a consequence of this, the $u = 1$ version of \eqref{eq:qst-diagram}, and \eqref{eq:big-q-map-2}, we have a commutative diagram
\begin{equation} \label{eq:forget-u-2}
\xymatrix{
H^*(M;F)((q)) \ar[d]_-{c_1(M) \ast\; \cdot\,} 
&&
H^*(M;F)((q^{1/p}))
\ar[ll]^-{\iso}_-{Q}
\ar[d]_-{q^{-1}c_1(M) \ast_q\; \cdot}
\ar[rr]_-{\iso}^-{(\mathit{QSt})_{u=1}}
&&
H^*(M;F)((q))
\ar[d]^-{(Q\Sigma_{q^{-1}c_1(M)})_{u=1}}
\\
H^*(M;F)((q)) && 
H^*(M;F)((q^{1/p})) 
\ar[ll]^-{\iso}_-{Q}
\ar[rr]_-{\iso}^-{(\mathit{QSt})_{u=1}}
&&
H^*(M;F)((q))
}
\end{equation}
The leftmost $\downarrow$ is \eqref{eq:quantum-product-c1-3} extended trivially to $F((q))$-coefficients. We have $Q(q^{1/p}b) = qQ(b)$ and $(\mathit{QSt})_{u=1}(q^{1/p}b) = q(\mathit{QSt})_{u=1}(b)$. As a consequence, the composition of the two horizontal isomorphisms in \eqref{eq:forget-u-2} is $F((q))$-linear. Hence, the rightmost $\downarrow$ is conjugate over $F((q))$ to the leftmost one. To get from here to the desired result, one starts with rational functions $F(q)$ and the map
\begin{equation}
(Q\Sigma_{q^{-1}c_1(M)})_{u=1}: H^*(M;F)(q) \longrightarrow H^*(M;F)(q).
\end{equation}
From the previous argument, we know that this becomes conjugate to \eqref{eq:quantum-product-c1-3} after extending constants to $F((q))$. Note that because of \eqref{eq:all-eigenvalues}, its eigenvalues lie in $F$, hence a fortiori in $F(q)$. By comparing Jordan normal forms, one sees that conjugacy already holds in $F(q)$; from there, one extends constants to $F((q^{-1}))$.
%

%
(ii) Because $e_\lambda$ lies in degree $0$, We have $Q(e_\lambda) = \mathit{Frob}(e_\lambda)_{q = 1}= (e_{\lambda^p})_{q=1}$. As in \eqref{eq:forget-u-2}, there is a diagram
\begin{equation} \label{eq:bigq-elambda}
\xymatrix{
H^*(M;F)((q)) \ar[d]_-{(e_{\lambda^p})_{q=1} \ast \cdot} 
&&
H^*(M;F)((q^{1/p}))
\ar[ll]^-{\iso}_-{Q}
\ar[d]_-{e_{\lambda} \ast_q\; \cdot}
\ar[rr]_-{\iso}^-{(\mathit{QSt})_{u=1}}
&&
H^*(M;F)((q))
\ar[d]^-{(Q\Sigma_{e_{\lambda}})_{u=1}}
\\
H^*(M;F)((q)) && 
H^*(M;F)((q^{1/p}))
\ar[ll]^-{\iso}_-{Q}
\ar[rr]_-{\iso}^-{(\mathit{QSt})_{u=1}}
&&
H^*(M;F)((q)).
}
\end{equation}
The $\downarrow$ are projections (idempotent endomorphisms), and commute with the corresponding maps in \eqref{eq:forget-u-2}. We can therefore restrict the diagram \eqref{eq:forget-u-2} to the images of those projections. Let's write down those restrictions, for the left and right columns only, and subtract $\lambda^p$ times the identity:
\begin{equation}
\label{eq:forget-u-3}
\xymatrix{
\mathit{im}\big( (e_{\lambda^p})_{q=1} \ast \cdot \big) \ar[d]_-{c_1(M) \ast\; \cdot\, - \lambda^p I} \ar@{<->}[rr]^-{\iso}
&&
\mathit{im}\big( (Q\Sigma_{e_\lambda})_{u=1} \big)
\ar[d]^-{(Q\Sigma_{q^{-1}c_1(M)})_{u=1} - \lambda^p I}
\\
\mathit{im}\big( (e_{\lambda^p})_{q=1} \ast \cdot \big)
\ar@{<->}[rr]^-{\iso}
&&
\mathit{im}\big( (Q\Sigma_{e_\lambda})_{u=1} \big)
}
\end{equation}
On the left hand side, what one has is: take the restriction of \eqref{eq:quantum-product-c1-3} to the generalized $\lambda^p$-eigenspace, consider its nilpotent part, and then extend that trivially to the variable $q$. This nilpotent part is conjugate to its counterpart for the $\lambda$-eigenspace, because one can use the action of Frobenius to compare the two. This proves the $F((q))$-version of the desired statement. The last step is to switch fields, as before: the two vertical maps in \eqref{eq:forget-u-3} are actually defined over $F(q)$; we know that they become conjugate over $F((q))$, and being nilpotent, they therefore must already be conjugate over $F(q)$; from that, one gets conjugacy over $F((q^{-1}))$.
\end{proof}

\section{$P$-curvature\label{sec:p-curvature}}
This section is entirely elementary: it concerns general linear differential equations (connections) and their reduction to characteristic $p$. The main result (Proposition \ref{th:katz}) refines the classical relation between $p$-curvature and monodromy from \cite{katz70}, giving more precise control over the conjugacy class of the monodromy.

\subsection{Nilpotent $p$-curvature and its consequences\label{subsec:nilpotent}}
Take a connection $\nabla_{\partial_q}$ of rank $r$ over a field $F$ of characteristic $p$. Here, we mean that the connection is defined on the free rank $r$ module over one of the rings
\begin{equation} \label{eq:rings}
F[q], \;\; F[q^{\pm 1}],\;\; F(q), \;\; F((q)),
\end{equation}
and satisfies the Leibniz identity $\nabla_{\partial_q}(g(q)x) = g(q) \nabla_{\partial_q}x + (\partial_q g) x$. We will be specific about the choice of ring \eqref{eq:rings} only when it matters for the proofs or subsequent applications. The $p$-curvature is the $p$-th iterate $\nabla_{\partial_q}^p$, which  (nontrivially) is a $q$-linear endomorphism. Equivalently, this can be written in terms of $\nabla_{q\partial_q} = q\nabla_{\partial_q}$ as
\begin{equation} \label{eq:p-curvature-2}
q^p (\nabla_{\partial_q})^p = (\nabla_{q\partial_q})^p - \nabla_{q\partial_q}.
\end{equation}

%
%

\begin{lemma} \label{th:zero-p-curvature}
If the $p$-curvature is zero, the connection is equivalent (related by a change of basis) to the trivial one.
\end{lemma}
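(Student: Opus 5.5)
The plan is to write down an explicit projection onto horizontal sections, using Cartier's classical formula. Suppose $\nabla_{\partial_q}^p = 0$. The formula is
\begin{equation*}
P = \sum_{k=0}^{p-1} \frac{(-q)^k}{k!}\, \nabla_{\partial_q}^k,
\end{equation*}
which makes sense since $k!$ is invertible in $F$ for $k<p$. I will treat the rings $F[q]$ and $F((q))$ first, where expansion around $q=0$ is available. The cases $F[q^{\pm 1}]$ and $F(q)$ will be handled afterwards by recentering at a point $q=c$ with $c \neq 0$, or, more robustly, by the descent argument in the third paragraph.

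\emph{Step 1: $P$ lands in the horizontal sections.} I expand $\nabla_{\partial_q} P$ using the Leibniz rule $\nabla_{\partial_q}((-q)^k y) = -k(-q)^{k-1}y + (-q)^k\nabla_{\partial_q} y$. The resulting sum telescopes to the top term, a multiple of $(-q)^{p-1}\nabla_{\partial_q}^p$, which vanishes because the $p$-curvature is zero. So the image of $P$ lies in the kernel $K = \ker(\nabla_{\partial_q})$. Note that $K$ is a module over the constants $F[q^p]$, respectively $F((q^p))$.

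\emph{Step 2: generation and freeness.} First, $P(x) \equiv x \bmod q$. From this I want to show that $K$ generates the whole module $E$ over the base ring, and that $E \cong \text{(base ring)} \otimes_{F[q^p]} K$. To do this, I take a basis $e_1,\dots,e_r$ of $E$ and set $f_i = P(e_i)$. The matrix expressing the $f_i$ in terms of the $e_i$ is the identity modulo $q$. Over $F((q))$ or $F[[q]]$ that already makes it invertible, so the $f_i$ form a horizontal basis and the connection is trivial.

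Over the polynomial or Laurent rings, invertibility mod $q$ is not enough, so I plan a descent argument instead. Consider the map $\mathrm{base} \otimes_{F[q^p]} K \to E$ (or the analogous map over $F[q^{\pm p}]$). I will show that it is an isomorphism using the identity
\begin{equation*}
x = \sum_k \frac{q^k}{k!}\, P(\nabla_{\partial_q}^k x),
\end{equation*}
which is the Taylor formula and can be checked by the same telescoping as in Step 1. This identity gives surjectivity. Injectivity follows because $1,q,\dots,q^{p-1}$ remain linearly independent over the ring of constants on horizontal elements: apply $\nabla_{\partial_q}$ repeatedly to a relation $\sum_k q^k y_k = 0$ with $y_k \in K$, which peels off the top coefficient. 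This is exactly Cartier descent along the Frobenius $F[q^p]\subset F[q]$.

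\emph{Step 3: the main obstacle.} Once $E \cong F[q]\otimes_{F[q^p]} K$, I still need $K$ to be free over $F[q^p]$ (respectively $F[q^{\pm p}]$ or $F(q^p)$). I expect this to be the delicate point. For $F(q)$ and $F((q))$ it is automatic, since the ring of constants is a field. For $F[q]$ and $F[q^{\pm 1}]$, the module $K$ is a direct summand of $E$ viewed as a module over the constants, hence finitely generated projective over a PID, hence free. Its rank is $r$ by comparing ranks after tensoring. A basis of $K$ is then a horizontal basis of $E$, which gives the desired equivalence with the trivial connection.
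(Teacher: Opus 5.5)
Your argument is correct and is essentially the paper's. The paper cites Katz's Theorem 5.1, namely the unique decomposition $v = v_0 + q v_1 + \cdots + q^{p-1} v_{p-1}$ with $\nabla_{\partial_q} v_k = 0$, and then uses freeness of $\ker(\nabla_{\partial_q})$ over $F[q^p]$; your Cartier projector $P$, the Taylor identity and the peeling-off argument are precisely the proof of that decomposition.

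One correction: drop the shortcut in Step 2 for $F((q))$, where you claim the matrix of the $f_i = P(e_i)$ is the identity modulo $q$. That only holds when the connection matrix has no pole at $q = 0$. For example, $\partial_q + a q^{-1}$ with $a \in \bF_p^\times$ has zero $p$-curvature, yet $P(1) = \sum_{k=0}^{p-1} (-1)^k \binom{a}{k} = 0$, viewing $a$ as an integer in $\{1,\dots,p-1\}$. Your descent argument, which Step 3 already applies to $F((q))$, handles that case correctly.
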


This is a consequence of \cite[Theorem 5.1]{katz70}. Let's explain the point concretely for connections over $F[q]$ (the other situations are parallel and even a bit simpler). The argument in \cite{katz70} shows that any $v \in F[q]^r$ can be uniquely written as a linear combination 
\begin{equation} \label{eq:katz}
v = v_0 + qv_1 + \cdots + q^{p-1}v_{p-1}, \quad \text{ where } \nabla_{\partial_q}v_k = 0.
\end{equation}
The space $\mathit{ker}(\nabla_{\partial_q}) \subset F[q]^r$ of covariantly constant sections is an $F[q^p]$-submodule, hence necessarily free of finite rank. If $(v_k)$ is a basis of $\mathit{ker}(\nabla_{\partial_q})$ over $F[q^p]$, then \eqref{eq:katz} shows that it is also a basis of $F[q]^r$ over $F[q]$. In that basis, the connection is obviously trivial.

\begin{remark}
Over $F(q)$ or $F((q))$, a classification of connections based on their $p$-curvature has been obtained in \cite{vanderput95} (see \cite[Section 2]{vanderput96} for a concise exposition).
\end{remark}
%
%

Our next topic is the relation between characteristic $0$ and $p>0$. On the characteristic $0$ side, we work in this context:
\begin{equation} \label{eq:number-ring}
\text{$R \subset \bC$ is a finitely generated ring.}
\end{equation}

\begin{properties} \label{th:r-properties}
(i) If $\frakm \subset R$ is a maximal ideal, then $F = R/\frakm$ is a field which is finitely generated as a ring, and hence finite.

(ii) For sufficiently large $p$, there exists $\frakm$ such that $F = R/\frakm$ has characteristic $p$ \cite[Exercise 2]{serre}.

(iii) $R$ is a Jacobson ring \cite[Tag OOGC]{stacks}, hence the intersection of all maximal ideals is zero.
\end{properties}

The fundamental use of reduction mod $p$ is the following:

\begin{proposition} \label{th:katz}
(i) Let $\nabla_{\partial_q}$ be a connection defined over $R((q))$. Suppose that for every $\frakm$, the induced connection with $F = (R/\frakm)$-coefficients has nilpotent $p$-curvature, which means that $\nabla_{\partial_q}^{pk} \equiv 0$ mod $\frakm$ for some $k$ (note that $p$ depends on $\frakm$, and so does $k$). Then $\nabla_{\partial_q}$ has a regular singularity, with quasi-unipotent monodromy.

(ii) In the same situation, let's make the stronger assumption that there is some $k$ independent of $\frakm$, so that $\nabla_{\partial_q}^{pk} = 0$ mod $\frakm$ for all $\frakm$. Then the Jordan blocks of the monodromy of $\nabla_{\partial_q}$ are of size $\leq k$.
\end{proposition}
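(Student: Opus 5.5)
The plan is to follow Katz's method \cite{katz70}: combine the Turrittin--Levelt normal form over $\bC((q))$ with a spreading-out argument over a finitely generated ring, and then read off the monodromy from a mod $\frakm$ computation.

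\emph{Step 1 (spreading out).} Over $\bar{\bC}((q^{1/e}))$, for some ramification $e$, the Hukuhara--Turrittin--Levelt theorem puts the connection into the normal form
\begin{equation*}
\bigoplus_j \big(\partial_q + \phi_j'(q^{-1/e}) + q^{-1}A_j\big),
\end{equation*}
where the $\phi_j$ are polar parts and the $A_j$ are constant matrices. The gauge transformation realizing this, together with the $\phi_j$ and $A_j$, involves only finitely many coefficients at each order. The delicate point is to make sure that a finitely generated $R' \supset R$ suffices to the order needed. I would follow Katz's route for this. First, use a cyclic vector to reduce to a scalar operator $L = \sum_i g_i(q)\,\partial_q^i$ over $R((q))$, after possibly inverting finitely many elements. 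Then, for part (i), prove regularity directly via the Newton polygon: if some slope were positive, the leading part of $L$ along that slope would give a nonzero ``initial'' characteristic polynomial with nonzero roots. For all $\frakm$ outside a proper closed subset, this survives mod $\frakm$, and it forces the $p$-curvature to have a nonzero eigenvalue. That contradicts nilpotence. (Property \ref{th:r-properties}(iii) and (ii) guarantee that enough $\frakm$ of every large residue characteristic exist.)

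\emph{Step 2 (regular case).} Once the singularity is regular, a gauge transformation defined over $R'((q))$ (Frobenius method) brings the connection into the form $\partial_q + q^{-1}A$ with $A$ a constant matrix over $R'$. We may also arrange that no two eigenvalues of $A$ differ by a nonzero integer; this is the step where one must be careful that the shearing transformations are defined over $R'$. For such a connection, formula \eqref{eq:p-curvature-2} gives
\begin{equation*}
q^p\nabla_{\partial_q}^p = A^p - A \pmod{\frakm}.
\end{equation*}
Note that the relevant reduction of the gauge transformation is only defined for $\frakm$ avoiding finitely many denominators, but for part (ii) the bound on $k$ must be used at those $\frakm$. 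Nilpotence of $A^p - A$ mod $\frakm$ means that every eigenvalue $\alpha$ of $A$ satisfies $\alpha^p \equiv \alpha$, i.e.\ $\alpha$ lies in $\bF_p$ mod $\frakm$, for almost all $\frakm$. By Katz's lemma (a Chebotarev/density argument over the finitely generated field generated by $\alpha$), this forces $\alpha \in \bQ$. Hence $\exp(2\pi i A)$ is quasi-unipotent, which proves (i).

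\emph{Step 3 (part (ii)).} Write $A = S + N$ for the Jordan decomposition, with $S$ semisimple with rational eigenvalues and $N$ nilpotent. After extending $R'$ and inverting the denominators of the eigenvalues, for almost all $\frakm$ we have $S^p \equiv S$ mod $\frakm$, since the rational eigenvalues reduce into $\bF_p$ and $S$ is diagonalizable over $R'$. Since $S$ and $N$ commute and $N^p = 0$ once $p > r$, we get
\begin{equation*}
A^p - A \equiv S + N^p - S - N = -N.
\end{equation*}
The assumption gives $(A^p - A)^k \equiv 0$, hence $N^k \equiv 0$ mod $\frakm$ for all such $\frakm$. By the Jacobson property, Property \ref{th:r-properties}(iii), this gives $N^k = 0$ in $R'$. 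Since the monodromy is conjugate to $\exp(-2\pi i A)$, whose unipotent part has the same Jordan block sizes as $N$, all Jordan blocks have size $\leq k$.

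The main obstacle is the bookkeeping in Steps 1--2: the gauge transformations must be realized over a single finitely generated ring, and the $p$-curvature must be compared before and after these transformations. The $p$-curvature is gauge-covariant, hence conjugation-invariant, so nilpotence of $\nabla_{\partial_q}^{pk}$ transfers across the gauge transformation, as long as we discard finitely many bad $\frakm$. Discarding them is harmless because the Jacobson argument only requires infinitely many (indeed a dense set of) $\frakm$.
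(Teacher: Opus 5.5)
Your overall plan (slopes for regularity, Katz's lemma for rationality of residue eigenvalues, the Jacobson property for the Jordan bound) follows \cite[Theorem 13.0]{katz70} and \cite[Appendix A]{chen24c}, which the paper cites rather than reproving. However, Step 2 rests on a false claim. In general there is no gauge transformation over $R'((q))$, with $R' \subset \bC$ finitely generated, that brings a regular singular connection to the exact form $\partial_q + q^{-1}A$. The denominators you must avoid are also not finitely many. In the Frobenius method, the $q^k$-coefficient of the transformation is obtained by inverting $k + \mathrm{ad}(A)$, for every $k \geq 1$. If $A$ is nilpotent, that determinant is $k^{r^2}$, so every prime occurs in some denominator.

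No cleverer choice of transformation avoids this, even under the hypotheses of the proposition. Take $R = \bZ$ and the rank one connection $\partial_q + \sum_{n \geq 1} b_n q^{n-1}$, with $b_n = \sum_{\ell \text{ prime},\, \ell^2 \mid n} \ell$. A direct computation with Lemma \ref{th:scalar-p-curvature} shows that its $p$-curvature mod $p$ is $\sum_{n\geq 1}(b_n^p - b_{np})q^{(n-1)p} \equiv \sum_{n \geq 1}(b_n - b_{np})q^{(n-1)p}$. This vanishes because $b_{np} - b_n \in \{0,p\}$. On the other hand, any $\bC((q))$-isomorphism with some $\partial_q + q^{-1}A$ forces $A = m \in \bZ$. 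It is then multiplication by $c\,q^m\exp(-\sum_n b_n q^n/n)$. By the Dieudonn\'e--Dwork criterion this series is not $p$-integral for any $p$, since $b_{p^2} - b_p = p \not\equiv 0 \bmod p^2$. Hence it is not defined over any finitely generated subring of $\bC$. So the identity $q^p\nabla_{\partial_q}^p \equiv A^p - A$ mod $\frakm$, which Steps 2 and 3 both use, is simply not available.

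The missing idea is that you only need the $q^{-1}$-term of the normal form. Truncate the complex normalizing transformation at a high power of $q$, as in Lemma \ref{th:nilpotent-form}. This gives a Laurent polynomial transformation, invertible over some finitely generated $\tilde{R} \supset R$, which produces $\partial_q + q^{-1}A + O(1)$, with $A = D + N$ non-resonant in part (ii). The maximal ideals of $\tilde{R}$ still satisfy your hypothesis, because they contract to maximal ideals of $R$.

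With this form, $q^p\nabla_{\partial_q}^p = (q\nabla_{\partial_q})^p - q\nabla_{\partial_q}$ is a matrix over $F[[q]]$ whose $q^0$-term is $\bar{A}^p - \bar{A}$, which equals $\bar{N}^p - \bar{N}$ in part (ii). Since $(q^p\nabla_{\partial_q}^p)^k = q^{pk}\nabla_{\partial_q}^{pk}$, nilpotence (respectively vanishing of the $k$-th power) passes to that constant term by setting $q = 0$; compare Lemma \ref{th:constant-term}. After this replacement, your Katz-lemma argument for (i) and your Jacobson argument for (ii) go through. Non-resonance then lets you remove the $O(1)$ term over $\bC$, so the monodromy is conjugate to $\exp(-2\pi i(D+N))$. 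This is exactly how the paper argues in the proof of Proposition \ref{th:orbit-closure}.
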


This is classical for rings of functions on affine algebraic curves, in which case it applies to the monodromy around each puncture \cite[Theorem 13.0]{katz70}. The adaptation to the Laurent series case is carried out in \cite[Appendix A]{chen24c}. 

\subsection{The unipotent part of the monodromy\label{subsec:nilpotent-2}}
We will need a technical result, which allows one to transform connections with a regular singularity into ones with a non-resonant simple pole. This would be routine if we were working over a field, but it requires a little extra care in our context.

\begin{lemma} \label{th:nilpotent-form}
Let $\nabla_{\partial_q}$ be a connection defined over $R((q))$, which is regular singular with quasi-unipotent monodromy. Then there is a larger ring $\tilde{R} \supset R$ of the same kind \eqref{eq:number-ring}, and an $\tilde{R}((q))$-base change $\tilde{\Phi}$, such that the transformed connection is of the form
\begin{equation} \label{eq:nilpotent-form}
\tilde{\Phi}^{-1} \nabla_{\partial_q} \tilde{\Phi} = \partial_q + (D+N) q^{-1} + O(1), 
\end{equation}
where $D$ is a diagonal matrix with rational entries, which is non-resonant (no two diagonal coefficients differ by a nonzero integer); and $N$ a nilpotent matrix which commutes with $D$. The notation $O(1)$ stands for powers $q^0$ and higher.
\end{lemma}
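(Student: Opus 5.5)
We would follow the classical reduction of a regular singular connection to a simple pole with non-resonant residue, keeping all gauge transformations defined over a finitely generated subring of $\bC$. The coefficients of the Laurent series are infinitely many elements of $R$, so the main point is to arrange that only finitely many new elements, such as denominators and eigenvalues, ever need to be adjoined.

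\emph{Step 1: reduce to a simple pole.} Over $\bC((q))$, regularity means there is a base change $\Phi_0 \in GL_r(\bC((q)))$ with $\Phi_0^{-1}\nabla_{\partial_q}\Phi_0 = \partial_q + q^{-1}B(q)$, where $B \in \mathrm{Mat}_r(\bC[[q]])$. We want such a $\Phi_0$ over $R((q))$, possibly after enlarging $R$. For this we would use the standard cyclic-vector/Moser-type algorithm (as in Deligne or Babbitt--Varadarajan). It proceeds by finitely many shearing transformations $\mathrm{diag}(q^{a_1},\dots,q^{a_r})$ combined with constant matrix changes. Each step only requires solving linear equations in the leading coefficient matrices, so it involves finitely many elements and inverses of nonzero elements of $R$. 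Adjoining those inverses keeps $R$ finitely generated. Regularity guarantees that this terminates after finitely many steps.

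\emph{Step 2: normalize the residue.} Quasi-unipotence, together with the identification of the monodromy as $\exp(-2\pi i B(0))$ up to integer shifts, tells us that $B(0)$ has rational eigenvalues. Adjoin finitely many elements so that $B(0)$ has Jordan form over $\tilde R$: its eigenvalues are rational, so this only requires inverting finitely many denominators arising from eigenvector computations. We then conjugate by a constant matrix to get $B(0) = D' + N'$, with $D'$ diagonal and rational and $N'$ nilpotent commuting with $D'$. Resonances are removed by the usual shearing: if eigenvalues differ by positive integers, apply $\mathrm{diag}(q^{\pm1})$ on the corresponding generalized eigenspaces. Each shearing lowers the total integer spread, so finitely many steps make the residue non-resonant. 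Each step is again defined over $R$ with finitely many adjoined inverses, because we must recompute the block decomposition after the shift. We would also arrange that the new residue stays block-compatible, which is standard.

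\emph{Step 3: the remaining expansion, and the main obstacle.} After Step 2 the connection is $\partial_q + (D+N)q^{-1} + O(1)$. The statement only asks for this, so we do not need to remove the higher-order terms. Removing them would normally require infinitely many divisions by $(k + d_i - d_j)$, which is exactly where a finitely generated ring is problematic. The main care point is therefore in Steps 1--2: checking that the shearing and constant conjugations, which act on all Laurent coefficients, never require inverting infinitely many elements. This holds because each is either a monomial or a constant matrix, invertible over $\tilde R$ after finitely many adjunctions. Together, Steps 1--3 yield the desired $\tilde\Phi$ over $\tilde R((q))$.
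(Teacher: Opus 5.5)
Your proposal is correct, but it takes a different route from the paper.

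\textbf{Your route.} You re-run the classical reduction over a finitely generated ring: Moser or cyclic-vector reduction to a simple pole, then Jordan form of the residue, then resonance-removing shearings. At each step you check that only finitely many elements are adjoined.

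\textbf{The paper's route.} The paper re-derives none of this. It takes the $\bC((q))$-base change $\Phi$ supplied by the classical theory, with inverse $\Psi$. It truncates both modulo $q^M$ with $M \geq 4N$, where $N$ bounds the pole orders of the connection matrix, $\Phi$ and $\Psi$. Then $\tilde\Phi\tilde\Psi = \mathit{Id} + \Delta$ with $\Delta = O(q^{M-N})$. Hence $\tilde\Phi^{-1} = \tilde\Psi(\mathit{Id} - \Delta + \Delta^2 - \cdots) = \Psi + O(q^{M-2N})$, and $\tilde\Phi^{-1}\nabla_{\partial_q}\tilde\Phi = \Psi\nabla_{\partial_q}\Phi + O(q^{M-4N})$. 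This still has the required $q^{-1}$ term and $O(1)$ remainder. One then adjoins the finitely many coefficients of the Laurent polynomials $\tilde\Phi$ and $\tilde\Psi$, and the geometric series shows $\tilde\Phi^{-1}$ is defined over $\tilde R((q))$.

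\textbf{Comparison.} The key observation is the one in your Step 3: the normal form constrains only finitely many coefficients. The paper applies it to the gauge transformation itself, so the argument does not care how $\Phi$ was obtained.

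Your route needs two facts that you assert but do not justify.
\begin{enumerate}
\item The algorithms are rational: they run over $K = \mathrm{Frac}(R)$, apart from adjoining the rational eigenvalues of the residue.
\item They terminate over $K((q))$, knowing only that the connection is regular over $\bC((q))$. This is true because Moser's reducibility criterion is a condition on the given coefficients, insensitive to enlarging the constant field. The same holds for Fuchs' criterion applied to the operator attached to a cyclic vector chosen over $K((q))$. But this should be stated explicitly.
\end{enumerate}

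In return, your version produces $\tilde R$ as a localization of $R$, with the residue already in Jordan form. That is a little more than needed, since the proof of Proposition \ref{th:orbit-closure} tolerates arbitrary enlargements of $R$.
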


\begin{proof}
The assumption on $\nabla_{\partial_q}$ means (as part of the elementary theory of regular singular points) that there is $\bC((q))$-base change $\Phi$, which satisfies all the properties we have stated. The problem is that, even though at any order in $q$ only finitely elements of $\bC$ appear in $\Phi$, the entire series might not be defined over a finitely generated ring. We circumvent this by a suitable truncation. Let's expand the connection, the transformation, and its inverse, as
\begin{equation}
\begin{aligned}
& \nabla_{\partial_q} = \partial_q + A_{-N} q^{-N} + A_{-N+1} q^{-N+1} + \cdots \\
& \Phi = \Phi_{-N} q^{-N} + \Phi_{-N+1} q^{-N+1} + \cdots \\
& \Psi = \Phi^{-1} = \Psi_{-N} q^{-N} + \Psi_{-N+1} q^{-N+1} + \cdots
\end{aligned}
\end{equation}
for some $N>0$. Let $\tilde{\Phi}$, $\tilde{\Psi}$ be the result of truncating $\Phi$, $\Psi$ modulo $q^M$, for some $M>0$. Then
\begin{equation}
\tilde{\Phi} \tilde{\Psi} = (\Phi + O(q^M))(\Psi + O(q^M)) = \mathit{Id} + O(q^{M-N}).
\end{equation}
Suppose $M>N$. Then $\tilde{\Phi}$ is an isomorphism, and its inverse is approximately equal to $\Psi$. Namely, if we write $\tilde{\Phi} \tilde{\Psi} = \mathit{Id} + \Delta$, with $\Delta = O(q^{M-N})$, then
\begin{equation} \label{eq:tilde-inverse}
\tilde{\Phi}^{-1} = \tilde{\Psi} (\mathit{Id} - \Delta + \Delta^2 - \cdots) =
(\Psi + O(q^M)) (\mathit{Id} + O(q^{M-N})) = \Psi + O(q^{M-2N}).
\end{equation}
Therefore
\begin{equation} \label{eq:tilde-transform}
\tilde{\Phi}^{-1} \circ \nabla_{\partial_q} \circ \tilde{\Phi} = (\Psi + O(q^{M-2N})) \circ \nabla_{\partial_q} \circ (\Phi + O(q^M)) = \Psi \circ \nabla_{\partial_q} \circ \Phi + O(q^{M-4N}).
\end{equation}
This shows that for $M \geq 4N$, the connection \eqref{eq:tilde-transform} still satisfies our conditions. Now, $\tilde{\Phi}$ and $\tilde{\Psi}$ are polynomial, hence involve only finitely many elements of $\bC$. One can enlarge $R$ to an $\tilde{R}$ containing those elements. Then, $\Delta$ has coefficients in $\tilde{R}$, and hence so does \eqref{eq:tilde-inverse}.
\end{proof}

Next, let's recall some elementary facts about nilpotent matrices. Over any field $K$, denote by $\scrO_{\pi}(K)$ the conjugacy class of nilpotent $(r \times r)$-matrices which corresponds to a given partition $\pi$ of $r$, and $\bar\scrO_{\pi}(K)$ its Zariski closure. 
Concretely, if our partition is given by $\sum_j \pi_j = r$, then $\bar{\scrO}_\pi(K)$ consists of those matrices $N$ such that
\begin{equation} \label{eq:kernel-dimension}
\mathrm{dim}(\mathit{ker}\, N^i) \geq d_i(\pi) \stackrel{\mathrm{def}}{=} \sum_j \mathrm{min}(\pi_j,i).
\end{equation}
Equivalently,
\begin{equation} \label{eq:minors}
\text{all $(r-d_i(\pi)+1)$-minors of $N^i$ are zero.}
\end{equation}
Note that \eqref{eq:minors} is a system of polynomial equations in the coefficients of $N$, with integer coefficients.

\begin{lemma} \label{th:constant-term}
Consider a nilpotent $(r \times r)$-matrix over $K[[q]]$, $N = N_0 + qN_1 + q^2N^2 + \cdots$. If $N$ belongs to $\bar{\scrO}_\pi(K((q)))$ for some $\pi$, then $N_0$ belongs to $\bar{\scrO}_{\pi}(K)$.
\end{lemma}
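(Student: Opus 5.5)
The plan is to use the minor description \eqref{eq:minors} of $\bar{\scrO}_\pi$ and reduce modulo $q$. The closure $\bar{\scrO}_\pi(K((q)))$ is cut out by the condition that all $(r-d_i(\pi)+1)$-minors of $N^i$ vanish, for every $i$. These are polynomial equations with integer coefficients in the matrix entries, so they make sense uniformly over any field.

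First, since $N$ has entries in $K[[q]]$, so does every power $N^i$. Its constant term is $(N^i)_0 = N_0^i$, because reduction modulo $q$ is a ring homomorphism $K[[q]] \to K$, applied entrywise to matrices. Second, each minor of $N^i$ is a polynomial with integer coefficients in the entries of $N^i$. Hence it lies in $K[[q]]$, and its reduction modulo $q$ is the corresponding minor of $N_0^i$.

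By hypothesis, $N \in \bar{\scrO}_\pi(K((q)))$, so all $(r-d_i(\pi)+1)$-minors of $N^i$ vanish as elements of $K((q))$. They therefore vanish in $K[[q]] \subset K((q))$, and so their constant terms vanish. By the second step, this says that all $(r-d_i(\pi)+1)$-minors of $N_0^i$ are zero for every $i$. Equivalently, $\mathrm{dim}(\mathit{ker}\, N_0^i) \geq d_i(\pi)$.

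It remains to check that $N_0$ is nilpotent, so that the characterization \eqref{eq:kernel-dimension} of $\bar{\scrO}_\pi(K)$ applies. Since $N$ is nilpotent of size $r$, we have $N^r = 0$, and taking constant terms gives $N_0^r = 0$. (Alternatively, take $i = r$: then $d_r(\pi) = r$, so the minor condition forces $N_0^r = 0$ directly.) Hence $N_0 \in \bar{\scrO}_\pi(K)$.

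There is no real obstacle. The only point requiring care is that the closure is defined by equations with integer coefficients, so that these equations are compatible with the specialization $q \mapsto 0$; this is exactly what \eqref{eq:minors} provides. Geometrically, the argument is the statement that a closed subscheme of the space of matrices over $\bZ$ contains the specialization of any of its $K((q))$-points whose entries happen to lie in $K[[q]]$.
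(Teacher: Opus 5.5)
Your proof is correct and follows the same route as the paper, which simply takes the integer-coefficient minor equations \eqref{eq:minors} defining $\bar{\scrO}_\pi(K((q)))$ and sets $q = 0$. Your argument spells out why this specialization is legitimate: the entries of $N^i$ lie in $K[[q]]$, and reduction mod $q$ is a ring homomorphism that commutes with taking powers and minors.
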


This is obvious by taking the equations \eqref{eq:minors} that define $\bar\scrO_\pi(K((q)))$, and then setting $q = 0$. 

\begin{lemma} \label{th:orbit-characteristic}
Let $N$ be a nilpotent matrix over $R$, a ring in the class \eqref{eq:number-ring}. Suppose that for all $\frakm$, the mod $\frakm$ reduction of $N$ lies in $\bar{\scrO}_\pi(F)$, for some fixed $\pi$ independent of $\frakm$. Then $N$ lies in $\bar{\scrO}_\pi(\bC)$.
\end{lemma}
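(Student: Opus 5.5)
The plan is to use the polynomial description \eqref{eq:minors} of $\bar{\scrO}_\pi$ together with Properties \ref{th:r-properties}(iii). Fix $i$, and consider the matrix $N^i$, which has entries in $R$. By \eqref{eq:kernel-dimension} and \eqref{eq:minors}, membership of a nilpotent matrix in $\bar{\scrO}_\pi(K)$ is equivalent, for every $i$, to the vanishing of all $(r-d_i(\pi)+1)$-minors of its $i$-th power. Each such minor of $N^i$ is a polynomial with integer coefficients in the entries of $N$, so it defines an element $m \in R$.

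Reduction mod $\frakm$ is a ring homomorphism $R \rightarrow F = R/\frakm$. It therefore commutes with taking powers of matrices and with taking minors, so the image of $m$ in $F$ is the corresponding minor of $(N \bmod \frakm)^i$. The hypothesis that $N \bmod \frakm$ lies in $\bar{\scrO}_\pi(F)$ says exactly that this image is zero. Hence $m \in \frakm$ for every maximal ideal $\frakm$.

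By Properties \ref{th:r-properties}(iii), $R$ is Jacobson, so the intersection of all maximal ideals is zero, and therefore $m = 0$ in $R \subset \bC$. This holds for all minors and all $i$, so $N$, viewed as a complex matrix, satisfies the equations \eqref{eq:minors}. It therefore lies in $\bar{\scrO}_\pi(\bC)$.

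The only point needing care is that the description \eqref{eq:minors} of the closure is valid uniformly over every field $K$, including finite fields of any characteristic. The condition $\dim \ker N^i \geq d_i(\pi)$ is equivalent to $\mathrm{rank}\, N^i \leq r - d_i(\pi)$, which is equivalent to the vanishing of the minors of that size. This is linear algebra over an arbitrary field, and the paper takes the kernel-dimension description of the closure as given over any $K$. So no genuine obstacle is expected.
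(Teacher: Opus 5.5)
Your proposal is correct and follows the same argument as the paper: each minor in \eqref{eq:minors} is an integer polynomial in the entries of $N$, so it vanishes modulo every maximal ideal, and hence vanishes in $R$ by the Jacobson property (Property \ref{th:r-properties}(iii)). You were also right to work with the concrete description \eqref{eq:kernel-dimension}/\eqref{eq:minors} as the definition of $\bar{\scrO}_\pi(K)$ over every field, including finite ones, since that is how the paper uses it.
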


This is true because $N$ satisfies \eqref{eq:minors} modulo any $\frakm$, hence must satisfy those equations outright, by Property \ref{th:r-properties}(iii).

With those preliminaries at hand, we turn to our aim, which refines Proposition \ref{th:katz}(ii).

\begin{proposition} \label{th:orbit-closure}
Let $\nabla_{\partial_q}$ be a connection over $R((q))$. Suppose that its reductions modulo $\frakm$ have the property that their $p$-curvature lies in $\bar\scrO_{\pi}(F)$, for some fixed $\pi$ independent of $\frakm$. Then, over $\bC((q))$, the connection is equivalent to one of the form
$\partial_q + q^{-1}A$, where: $A$ has rational eigenvalues, and its nilpotent part lies in $\bar\scrO_{\pi}(\bC)$.
\end{proposition}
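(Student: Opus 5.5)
The plan is to first reduce to the non-resonant simple-pole normal form, and then read off the $p$-curvature of that normal form directly. Elements of $\bar\scrO_\pi(F)$ are nilpotent, so the hypothesis gives nilpotent $p$-curvature modulo every $\frakm$. Proposition \ref{th:katz}(i) then says the connection is regular singular with quasi-unipotent monodromy. Lemma \ref{th:nilpotent-form} lets us enlarge $R$ to $\tilde R$ and pass to a gauge-equivalent connection
\[
\partial_q + (D+N)q^{-1} + O(1)
\]
over $\tilde R((q))$, with $D$ rational, diagonal and non-resonant, and $N$ nilpotent and commuting with $D$. The hypothesis must survive this step. Every maximal ideal $\tilde\frakm$ of $\tilde R$ contracts to a maximal ideal of $R$: the field $\tilde R/\tilde\frakm$ is finite by Property \ref{th:r-properties}(i), so its subring $R/(\tilde\frakm\cap R)$ is a finite domain, hence a field. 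Gauge transformation conjugates the $p$-curvature, over $F((q))$, by the reduced base change. The condition \eqref{eq:minors} is conjugation invariant, so the transformed $p$-curvature still lies in $\bar\scrO_\pi(F((q)))$. We may also discard the finitely many $\frakm$ whose characteristic divides a denominator of $D$, or is smaller than the differences of entries of $D$; we invert those denominators in $\tilde R$ without losing the finitely generated property.

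Next we compute the $p$-curvature of the normal form modulo $\tilde\frakm$, using \eqref{eq:p-curvature-2}. Write $\nabla_{q\partial_q} = q\partial_q + (D+N) + O(q)$. Then
\[
q^p\nabla_{\partial_q}^p = (\nabla_{q\partial_q})^p - \nabla_{q\partial_q}.
\]
At order $q^0$, where $q\partial_q$ kills constants, this equals $(D+N)^p - (D+N)$. Since $D$ and $N$ commute, this is $D^p - D + N^p - N$ modulo $p$. The entries of $D$ are rational with denominators prime to $p$, so they lie in $\bF_p$ and $D^p = D$. Once $p \geq r$ we also have $N^p = 0$. Hence the leading coefficient is $-N$. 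The other terms carry positive powers of $q$, so $q^p\nabla_{\partial_q}^p = -N + O(q)$ lies in $F[[q]]$.

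Now $q^p\nabla_{\partial_q}^p$ is a nonzero scalar multiple of the $p$-curvature, so it still lies in $\bar\scrO_\pi(F((q)))$. It is nilpotent with coefficients in $F[[q]]$, so Lemma \ref{th:constant-term} applies and shows that $-N$, hence $N$, lies in $\bar\scrO_\pi(F)$. This holds for every $\tilde\frakm$ outside a finite set. The restriction to $p \geq r$ and the removal of bad primes can be absorbed by localizing $\tilde R$ at finitely many integers, after which the statement holds for all maximal ideals. Lemma \ref{th:orbit-characteristic} then gives $N \in \bar\scrO_\pi(\bC)$.

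Finally, a non-resonant regular singular connection $\partial_q + (D+N)q^{-1} + O(1)$ is gauge equivalent over $\bC[[q]]$ to $\partial_q + (D+N)q^{-1}$. This is the standard recursive solution of the equations $[D+N, G_k] + kG_k = \text{known}$. Non-resonance means the operator $X\mapsto [D+N,X]+kX$ has eigenvalues $d_i-d_j+k\neq 0$ for $k\geq1$, so it is invertible and each $G_k$ can be solved for. Take $A = D+N$: its eigenvalues are rational and its nilpotent part is $N$. I expect the main obstacle to be the order-zero $p$-curvature computation. It has to be done carefully, making sure that the higher-order terms of the connection cannot feed back into the $q^0$ term once we have multiplied by $q^p$.
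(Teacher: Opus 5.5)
Your proposal is correct and follows essentially the same route as the paper. You reduce to the non-resonant form via Proposition~\ref{th:katz}(i) and Lemma~\ref{th:nilpotent-form}, read off the constant term $\bar N^p-\bar N$ of $q^p\bar\nabla_{\partial_q}^p$, apply Lemmas~\ref{th:constant-term} and~\ref{th:orbit-characteristic}, and then remove the $O(1)$ term by non-resonance.

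The only difference is how you dispose of $\bar N^p$. You restrict to $p\geq r$ so that $\bar N^p=0$. The maximal ideals this discards are generally infinitely many rather than a finite set, but your localization at finitely many integers handles that correctly. The paper instead uses that $\bar N^p-\bar N$ is conjugate to $\bar N$ for every $p$, so no primes need to be discarded. Also, the denominators of $D$ are automatically units in $\tilde R$, since $D$ is defined over it, so inverting them is unnecessary.
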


\begin{proof}
Suppose we pass to a larger ring $\tilde{R} \supset R$ of the same kind. For any maximal ideal $\tilde\frakm \subset \tilde{R}$, the composition 
\begin{equation}
R \hookrightarrow \tilde{R} \twoheadrightarrow \tilde{F} = \tilde{R}/\tilde{\frakm}
\end{equation}
is a map to a field, so factors through $F = R/\frakm$ for some $\frakm$, followed by $F \hookrightarrow \tilde{F}$. As a consequence, the $p$-curvature of the reduction mod $\tilde{\frakm}$ still lies in $\bar\scrO_{\pi}(\tilde{F})$. The purpose of passing to $\tilde{R}$ is to apply Proposition \ref{th:katz}(i) and Lemma \ref{th:nilpotent-form}, which yield a transformation $\tilde{\Phi}$ that simplifies the connection.

For simplicity, we will suppress both the enlargement of rings and the transformation from the notation. So let's assume that the original connection already had the form
\begin{equation} \label{eq:rational-pole}
\nabla_{\partial_q} = \partial_q + q^{-1}(D + N) + O(1)
\end{equation}
with $(D,N)$ defined over $R$ and having the same properties as in \eqref{eq:nilpotent-form}. Denote the reduction modulo some $\frakm$ by
\begin{equation}
\bar\nabla_{\partial_q} = \partial_q + q^{-1}(\bar{D} + \bar{N}) + O(1).
\end{equation}
Since the coefficients of the diagonal matrix $D$ are rational numbers, they will be mapped to elements of $\bF_p$ under any reduction to $F$, which means that $\bar{D}^p = \bar{D}$. Following \eqref{eq:p-curvature-2}, we compute 
\begin{equation}
\begin{aligned}
q^p\bar\nabla_{\partial_q}^p & = (q\partial_q + \bar{D} + \bar{N} + O(q) )^p - (q\partial_q + \bar{D} + \bar{N} + O(q) )
\\ \qquad \qquad & = \bar{N}^p - \bar{N} + O(q).
\end{aligned}
\end{equation}
It is an elementary fact that multiplication with an invertible element of the ground field maps each nilpotent orbit to itself. By assumption, the $p$-curvature $\bar\nabla_{\partial_q}^p$ lies in $\bar\scrO_{\pi}(F((q)))$; hence, so does $q^p\bar\nabla_{\partial_q}^p$. By Lemma \ref{th:constant-term}, this implies that that $\bar{N}^p-\bar{N}$ lies in $\bar\scrO_{\pi}(F)$. Another elementary fact about nilpotent matrices, clear from looking at single Jordan blocks, is that $\bar{N}^p-\bar{N}$ lies in the same conjugacy class as $\bar{N}$. We therefore get $\bar{N} \in \bar\scrO_{\pi}(F)$. By Lemma \ref{th:orbit-characteristic} it follows that 
\begin{equation} \label{eq:n-conjugacy-class}
N \in \bar\scrO_{\pi}(\bC). 
\end{equation}
Lemma \ref{th:nilpotent-form} also tells us that $D$ is non-resonant. Therefore, as part of the elementary theory of regular singular points, one can find a base change over $\bC$ of the form $I + O(q)$, which preserves the $q^{-1}$ term of the connection, and gets rid of the remaining $O(1)$ term, meaning that it tranforms our connection into $\partial_q + q^{-1}(D+N)$.
\end{proof}

\section{\label{sec:apply}Applications to the quantum connection} 

We now return to the mod $p$ reduction of the quantum connection. By a fairly direct use of the properties of quantum Steenrod operations, this leads to a proof of Proposition \ref{th:jae-conjecture} (and its most immediate consequence, Corollary \ref{th:congruence}). The next part of our argument follows \cite{chen24c} to show how certain splittings of the quantum connection near $q = \infty$, when reduced mod $p$, are related to quantum Steenrod operations. Then, we bring Section \ref{sec:p-curvature} to bear, and use that to derive the remaining results from Section \ref{subsec:quantum-new}.

\subsection{$P$-curvature of the quantum connection\label{subsec:jae}}
We need to slightly adjust the notions from Section \ref{sec:p-curvature} to the presence of the additional variable $u$. For now, it will be sufficient to take $F = \bF_p$. The $p$-curvature of the quantum connection is defined to be $\nabla_{u\partial_q}^p$. To avoid inverse powers of $q$, we prefer to write it as in \eqref{eq:p-curvature-2}, which means to consider the (degree $2p$) map
\begin{equation} \label{eq:p-curvature-alt}
q^p \nabla_{u\partial_q}^p = (\nabla_{uq\partial_q})^p - u^{p-1} \nabla_{uq\partial_q} \,:\,
H^*(M;\bF_p)[u,q] \longrightarrow H^*(M;\bF_p)[u,q].
\end{equation}

\begin{properties} \label{th:p-curvature-properties}
(i) $q^p \nabla_{u\partial_q}^p$ is $(u,q)$-linear. This is obvious for $u$, and true for $q$ because it's a general property of $p$-curvature.

(ii) $q^p \nabla_{u\partial_q}^p x = (c_1(M)^p - u^{p-1} c_1(M)) x + O(q)$. This follows from \eqref{eq:p-curvature-alt} and the fact that $\nabla_{uq\partial_q}$ preserves the filtration by powers of $q$, with the associated graded map being $x \mapsto uq\partial_q x + c_1(M)x$.

(iii) $q^p\nabla_{u\partial_q}^p(x) = c_1(M)^{\ast_q p} \ast_q x + O(u)$. This follows from $\nabla_{uq\partial_q}(x) = c_1(M) \ast_q x + O(u)$.

(iv) $q^p\nabla_{u\partial_q}^p$ commutes with $\nabla_{uq\partial_q}$. This follows directly from \eqref{eq:p-curvature-alt}.

(v) The $\theta$-linear extension of $q^p\nabla_{u\partial_q}^p$ commutes with $Q\Sigma_b$ for all $b$. This follows from \eqref{eq:p-curvature-alt} and Property \ref{th:quantum-properties}(iv).
\end{properties}

\begin{proof}[Proof of Proposition \ref{th:jae-conjecture}]
Having extended the $p$-curvature $\theta$-linearly, we consider the difference
\begin{equation} \label{eq:difference}
q^p\nabla_{u\partial_q}^p - Q\Sigma_{c_1(M)}.
\end{equation}
The first step is to show that \eqref{eq:difference} is strictly decreasing with respect to the grading of cohomology classes. This is \cite[Proposition 3.3]{chen24c}, whose proof we recall here. Both $q^p\nabla_{u\partial_q}^p$ and $Q\Sigma_{c_1(M)}$ commute with $\nabla_{uq\partial_q}$, by Properties \ref{th:quantum-properties}(iv) and \ref{th:p-curvature-properties}(iv). They also have the same $q = 0$ term, by Property \ref{th:quantum-properties}(ii) combined with \eqref{eq:degree-2-st} on one hand, and Property \ref{th:p-curvature-properties}(ii) on the other hand. From those two facts, it follows (by a simple order-by-order argument) that \eqref{eq:difference} is of order $O(q^p)$; see \cite[Lemma 1.2]{seidel-wilkins21}. On the other hand, by comparing Properties \ref{th:quantum-properties}(iii) and \ref{th:p-curvature-properties}(iii), we know that \eqref{eq:difference} becomes zero if we set $u$ and $\theta$ to zero. Altogether, these facts mean that any nontrivial term in \eqref{eq:difference} must be a multiple of $q^{p+1}$ or $q^p u$ or $q^p \theta$. Because the operation has degree $2p$, any such term must decrease the degree of cohomology classes.

Since \eqref{eq:difference} is strictly decreasing with respect to cohomological degrees, it must vanish if we apply it to the class $1$. From Properties \ref{th:quantum-properties}(v) and \ref{th:p-curvature-properties}(v), we see that for $b \in H^*(M;\bF_p)[q]$,
\begin{equation}
\big(q^p\nabla_{u\partial_q}^p - Q\Sigma_{c_1(M)}\big) Q\mathit{St}(b) = 
\big(q^p\nabla_{u\partial_q}^p - Q\Sigma_{c_1(M)}\big) Q\Sigma_b(1) = 
Q\Sigma_b \big(q^p\nabla_{u\partial_q}^p - Q\Sigma_{c_1(M)}\big)(1).
\end{equation}
Hence \eqref{eq:difference} vanishes on all classes $Q\mathit{St}(b)$. By $\bF_p[u^{\pm 1},\theta][[q]]$-linearity, it also vanishes on the image of \eqref{eq:extended-qst}, hence by Lemma \ref{th:extended-qst-1} on all of $H^*(M;\bF_p)[u^{\pm 1},\theta][[q]]$. One can then restrict back to $H^*(M;\bF_p)[u,\theta,q]$.
\end{proof}

\begin{proof}[Proof of Corollary \ref{th:congruence}]
On $H^*(M;\bF_p)[u,\theta,q^{\pm 1}]$, Proposition \ref{th:jae-conjecture} and Property \ref{th:quantum-properties}(v) imply that
\begin{equation}
f(\nabla_{u\partial_q}^p) = f(q^{-p}Q\Sigma_{c_1(M)}) = f(Q\Sigma_{q^{-1}c_1(M)}) = Q\Sigma_{f(q^{-1}c_1(M))}.
\end{equation}
The relation $f(q^{-1}c_1(M)) = 0$ holds in quantum cohomology with $\bF_p$-coefficients (for a simple reason: by definition, it holds integrally after multiplication by some natural number; by assumption \eqref{eq:no-torsion} that number can be chosen coprime to $p$, hence becomes invertible in $\bF_p$). Setting $u = 1$ shows that $f(\nabla_{\partial_q}^p) = 0$ as an endomorphism of $H^*(M;\bF_p)[q^{\pm 1}]$, which is equivalent to the result as stated.
\end{proof}

\subsection{Splittings\label{subsec:splittings}}
Take $R$ as in \eqref{eq:number-ring}. We assume that:
\begin{equation} \label{eq:differences-are-invertible}
\parbox{37em}{all eigenvalues $\lambda$ of \eqref{eq:quantum-product-c1} lie in $R$; and the difference of any two eigenvalues lies in $R^\times$.}
\end{equation}
We then have a decomposition of $H^*(M;R)[q^{\pm 1}]$ into generalized eigenspaces, induced by quantum idempotents $e_\lambda \in H^*(M;R)[q^{\pm 1}]$. We also assume:
\begin{equation} \label{eq:p-large}
\parbox{37em}{for any $R \twoheadrightarrow F = R/\frakm$, we have $p = \mathrm{char}(F) > 2$; and $H^*(M;\bZ)$ has no $p$-torsion.}
\end{equation}
This is unproblematic, since one can enlarge any given $R$ by inverting $2$ as well as those odd primes for which there is torsion in cohomology. 

We will work with the graded $u$-adic completion of $H^*(M;R)[q^{\pm 1},u]$, which is $H^*(M;R)[q^{\pm 1}][[u]]$; in other words, the graded space of power series in $u$ whose coefficients lie in $H^*(M;R)[q^{\pm 1}]$. 

\begin{lemma} \label{th:splitting-lemma} 
(i) There is a unique splitting of free graded $R[q^{\pm 1}][[u]]$-modules,
\begin{equation} \label{eq:splitting-lemma}
H^*(M;R)[q^{\pm 1}][[u]] \iso \bigoplus_{\lambda} H_u^{\lambda},
\end{equation}
whose reduction to $u = 0$ recovers the generalized eigenspace decomposition of $H^*(M;R)[q^{\pm 1}]$ with respect to \eqref{eq:quantum-product-c1}, and which is invariant under $\nabla_{u\partial_q}$.

(ii) Take the reduction of \eqref{eq:splitting-lemma} to coefficients in $F = R/\frakm$ for some $\frakm$. The outcome agrees with the splitting given by the idempotent endomorphisms $Q\Sigma_{e_\lambda}$, where we have reduced $e_\lambda$ mod $\frakm$. Note that, by \eqref{eq:p-large} and Lemma \ref{th:no-theta}, these endomorphisms have no $\theta$-component, hence act on $H^*(M;F)[q^{\pm 1},u]$ (and its completion).
\end{lemma}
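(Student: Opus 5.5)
Part (i) will be proved by a $u$-adic order-by-order construction, in the style of a splitting lemma for connections whose leading term has separated eigenvalues. The starting point is to write
\[
\nabla_{u\partial_q} = u\partial_q + K, \qquad K = q^{-1}c_1(M)\ast_q\cdot .
\]
Here $K$ is $R[q^{\pm 1}]$-linear. We look for a $\nabla_{u\partial_q}$-invariant decomposition of the form $P_u = \sum_k u^k P_k$, namely a complete family of orthogonal idempotents $P_{\lambda,u}$ in $\mathrm{End}(H^*(M;R)[q^{\pm 1}])[[u]]$, with $P_{\lambda,0}$ equal to quantum multiplication by $e_\lambda$. Invariance of the summand $\mathit{im}(P_{\lambda,u})$ under $\nabla_{u\partial_q}$ amounts to the equation
\[
(1-P_{\lambda,u})\,\nabla_{u\partial_q}\,P_{\lambda,u}=0 .
\]
Expanded in $u$, it reads $[K,P_{\lambda,k}] = (\text{terms involving } \partial_q P_{\lambda,j},\ P_{\lambda,j} \text{ with } j<k)$. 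I would organize the induction as a gauge transformation $g = I + uG_1 + u^2G_2+\cdots$ that block-diagonalizes $\nabla_{u\partial_q}$ relative to the $u=0$ eigenspace decomposition. At order $u^k$, the off-diagonal block $G_k^{\lambda\mu}$ is determined by a Sylvester equation
\[
K_\lambda X - X K_\mu = Y ,
\]
where $K_\lambda$ is the restriction of $K$ to the generalized $\lambda$-eigenspace. Since $K_\lambda-\lambda$ and $K_\mu-\mu$ are nilpotent and $\lambda-\mu\in R^\times$ by \eqref{eq:differences-are-invertible}, the operator $X\mapsto K_\lambda X - XK_\mu$ is invertible over $R[q^{\pm 1}]$. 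Its inverse is given by a finite Neumann series in the nilpotent parts divided by powers of $\lambda-\mu$, so no denominators beyond $R^\times$ arise. The $\partial_q$-terms are harmless because everything stays in $R[q^{\pm 1}]$.

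Uniqueness in (i) would follow from the same invertibility. Given two such splittings, the second is obtained from the first by a transformation $I+O(u)$. The off-diagonal components of that transformation must satisfy the homogeneous Sylvester equations order by order, so they vanish and the two splittings have the same summands.

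For part (ii), I would use uniqueness over $F$, since the argument for (i) works verbatim with $F$ in place of $R$ (the differences $\lambda-\mu$ stay invertible mod $\frakm$). It therefore suffices to check three things about the splitting defined by the projections $Q\Sigma_{e_\lambda}$, which act on $H^*(M;F)[q^{\pm 1},u]$ by Lemma \ref{th:no-theta}:
\begin{itemize}[label={}]
\item it is a splitting into orthogonal idempotents, which holds by Properties \ref{th:quantum-properties}(v);
\item it is invariant under $\nabla_{u\partial_q}$, since $Q\Sigma_{e_\lambda}$ commutes with $q\nabla_{u\partial_q}$ by Properties \ref{th:quantum-properties}(iv), and hence with $\nabla_{u\partial_q}$ after inverting $q$;
\item its $u=0$ reduction is the generalized eigenspace decomposition.
\end{itemize}
The main obstacle is the last point. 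By Properties \ref{th:quantum-properties}(iii), at $u=\theta=0$ we have $Q\Sigma_{e_\lambda}=e_\lambda^{\ast_q p}\ast_q\cdot=e_\lambda\ast_q\cdot$, because $e_\lambda$ is idempotent. One must be careful that the extension to classes with negative $q$-powers is compatible with this; Properties \ref{th:quantum-properties}(i) takes care of it, since multiplying by $q^p$ makes the class polynomial. One also has to make sure that the reduction mod $\frakm$ of the $R$-splitting is still invariant and reduces correctly at $u=0$, which is immediate because the construction commutes with base change. Uniqueness over $F$ then forces the two splittings to agree.
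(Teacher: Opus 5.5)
Your proposal is correct and follows essentially the paper's route. Uniqueness comes from order-by-order vanishing of off-diagonal blocks, using invertibility of $X \mapsto K_\lambda X - X K_\mu$; the paper applies this to the projection onto a summand of the second splitting, which commutes with the block-diagonal connection, rather than to an intertwining transformation $I+O(u)$. Part (ii) then follows from uniqueness over $F$ together with Properties \ref{th:quantum-properties}(iii)--(v). The only addition is that you sketch existence via the standard Sylvester-equation gauge construction, which the paper simply cites from \cite{chen24c}.
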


\begin{proof}
(i) This a version of \cite[Lemma 2.1]{chen24c}. The existence part can also be expressed as follows: by a change of basis over $R[q^{\pm 1}][[u]]$, the connection can be brought into a split form 
\begin{equation} \label{eq:block}
u\partial_q + A_u, \quad A_u = \bigoplus_{\lambda} A_u^{\lambda},
\end{equation}
with the following property: in the $u$-expansion $A_u^{\lambda} = A^{\lambda}_0 + uA^{\lambda}_1 + \cdots$, the leading term is $A^\lambda_0 = \lambda I + \text{(\em nilpotent endomorphism)}$. We will not further explain the existence part here, referring instead to \cite{chen24c}; but we will spell out the proof of uniqueness, since that is more important for the rest of our argument.

Let $\Phi_u$ be a graded $R[q^{\pm 1}][[u]]$-linear endomorphism which commutes with \eqref{eq:block}, or equivalently 
\begin{equation} \label{eq:covariant-phi}
u\partial_q \Phi_u = \Phi_u A_u - A_u \Phi_u.
\end{equation}
Expand it as $\Phi_u = \Phi_0 + u\Phi_1 + \cdots$, and its blocks with respect to \eqref{eq:block} as $\Phi^{\mu\lambda}_u = \Phi_0^{\mu\lambda} + u\Phi_1^{\mu\lambda} + \cdots$. At $u = 0$ we have $\Phi_0 A_0 - A_0 \Phi_0 = 0$, and therefore
\begin{equation} \label{eq:0-order}
\Phi_0^{\mu\lambda} A_0^{\lambda} - A_0^{\mu} \Phi_0^{\mu\lambda} = 0 \;\; \Leftrightarrow \;\; \Phi_0^{\mu\lambda} (A_0^{\lambda}-\lambda I) - (A_0^{\mu}-\mu I) \Phi_0^{\mu\lambda} = (\mu-\lambda) \Phi_0^{\mu\lambda}.
\end{equation}
For $\lambda \neq \mu$, we have that $\mu-\lambda \in R^\times$ by \eqref{eq:differences-are-invertible}, while $(A^\lambda_0-\lambda I)$ and $(A_0^\mu-\mu I)$ are nilpotent; from that and \eqref{eq:0-order}, one gets $\Phi_0^{\mu\lambda} = 0$. Repeat the argument inductively: suppose that $\Phi_0,\dots,\Phi_{m-1}$ are block diagonal. Then, the equation \eqref{eq:covariant-phi} yields $\Phi_m^{\mu\lambda} A_0^{\lambda} - A_0^{\mu} \Phi_m^{\mu\lambda} = 0$ for $\mu \neq \lambda$, which as before implies $\Phi_m^{\mu\lambda} = 0$. It follows that the entirety of $\Phi$ is block diagonal.

The application to uniqueness is as follows. Given another splitting of $H^*(M;R)[q^{\pm 1}][[u]]$ with the properties from (i), but which is potentially different from \eqref{eq:block}, take $\Phi$ to be the projection to the $\mu$-summand. By the argument above, that projection must be the direct sum of idempotent endomorphisms, one for each $H_u^\lambda$. Moreover, by assumption on the $u = 0$ behaviour of the splitting under consideration, we have: $\Phi_0^{\lambda\lambda} = 0$ for $\lambda \neq \mu$, and then idempotence ensures that $\Phi^{\lambda\lambda} = 0$; and $\Phi^{\mu\mu}_0 = I$, and then it follows that $\Phi^{\mu\mu} = I$. Hence, $\Phi$ is in fact equal to the projection to the $\mu$-summand in \eqref{eq:block}, which means our splitting wasn't different after all.

(ii) This is a version of \cite[Proposition 2.5 and Lemma 4.3]{chen24c}. Note that, as a consequence of \eqref{eq:differences-are-invertible} and \eqref{eq:p-large}, the eigenvalues of the quantum product with $c_1(M)$ remain distinct after reduction to $F$, and the splitting into generalized eigenspaces is also compatible with that reduction. By the uniqueness argument from (i) applied to $F$-coefficients, there is a unique splitting of $H^*(M;F)[q^{\pm 1}][[u]]$ which is compatible with $\nabla_{u\partial_q}$, and which at $u = 0$ recovers the generalized eigenspace decomposition. The splitting given by $Q\Sigma_{e_\lambda}$ satisfies the required conditions: first of all, because of \eqref{eq:p-large} we can apply Lemma \ref{th:no-theta}, so $Q\Sigma_{e_\lambda}$ has no $\theta$-term and acts on our space; the resulting splitting is compatible with $\nabla_{u\partial_q}$, by Property \ref{th:quantum-properties}(iv); and the $u = 0$ reduction of $Q\Sigma_{e_\lambda}$ is quantum product with $e_\lambda^{\ast_q p} = e_\lambda$, see Property \ref{th:quantum-properties}(iii), which by definition is projection to the corresponding generalized eigenspace. Hence, that splitting must agree with the reduction of \eqref{eq:splitting-lemma}.
\end{proof}

\begin{remark}
To see the relation with the way results are formulated in \cite{chen24c}, it is useful to apply a change of variables. Namely, write $Q = u/q$, so that $H^*(M)[q^{\pm 1},u] \iso H^*(M)[q^{\pm 1},Q]$. In the new variables,
$u\partial_q = Qq\partial_q - Q^2\partial_Q$. We adjust our connection so that it differentiates in $Q$-direction:
\begin{equation}
\nabla_{\partial_Q} = -Q^{-2} \nabla_{u\partial_q} = \partial_Q - Q^{-2}(c_1(M) \ast_q \cdot) - Q^{-1}q\partial_q.
\end{equation}
Since $q$ has degree $2$, the endomorphism $\mathit{Gr} = -q\partial_q$ is a ``grading operator'': on the degree $d$ part of our space, it acts by $k$ on $H^{d+2k}(M)q^{-k}$. The same change of variables, applied to the completion, shows that we are working formally around $Q = 0$:
\begin{equation}
H^*(M)[q^{\pm 1}][[u]] = H^*(M)[q^{\pm 1}][[Q]].
\end{equation}
\end{remark}

Let's invert $u$, which means passing to $H^*(M;R)[q^{\pm 1}]((u))$; these are graded Laurent series in $u$ with coefficients in $H^*(M;R)[q^{\pm 1}]$. For grading reasons, increasingly positive powers of $u$ are necessarily accompanied by increasingly negative powers of $q$; so the space can equivalently be written as $H^*(M;R)[u^{\pm 1}]((q^{-1}))$. At the cost of reducing the grading mod $2$, one can therefore set $u = 1$, which yields $H^*(M;R)((q^{-1}))$; more concretely, the part of $H^*(M;R)[q^{\pm 1}]((u))$ in each degree $d$ is canonically isomorphic to $H^*(M;R)((q^{-1}))$ in degree ($d$ mod $2$). From \eqref{eq:splitting-lemma} one inherits a decomposition into $\bZ/2$-graded $R((q^{-1}))$-modules,
\begin{equation} \label{eq:g-decomposition}
H^*(M;R)((q^{-1})) \iso \bigoplus_{\lambda} H^\lambda,
\end{equation}
which is invariant under $\nabla_{\partial_q}$. As a consequence of Lemma \ref{th:splitting-lemma}(ii), the reduction mod $\frakm$ of that splitting agrees with the one obtained from $(Q\Sigma_{e_\lambda})_{u = 1}$.

\begin{corollary} \label{th:p-curvature-constraint}
On each summand in \eqref{eq:g-decomposition}, consider the modified connection 
\begin{equation} \label{eq:modified-connection}
\nabla_{\partial_q}|H^\lambda - \lambda I. 
\end{equation}
(i) After reduction modulo $\frakm$, the $p$-curvature of this connection is nilpotent. 

(ii) More precisely, if the action of \eqref{eq:quantum-product-c1-2} on the generalized $\lambda$-eigenspace has nilpotent part lying in $\bar\scrO_{\pi_\lambda}(\bC)$ for some partition $\pi_\lambda$, then the $p$-curvature of \eqref{eq:modified-connection} lies in $\bar\scrO_{\pi_\lambda}(F((q^{-1})))$.
\end{corollary}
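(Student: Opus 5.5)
The plan is to identify the $p$-curvature of \eqref{eq:modified-connection} with a quantum Steenrod operation, and then read off its conjugacy class from Corollary \ref{th:conjugacy}(ii).

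First step: reduce mod $\frakm$, with $F = R/\frakm$, and set $u=1$. Proposition \ref{th:jae-conjecture} holds over any finite field $F$ of characteristic $p$, since the operations are determined by their restriction to $\bF_p$. It then gives $\nabla_{\partial_q}^p = (Q\Sigma_{q^{-1}c_1(M)})_{u=1}$ on $H^*(M;F)((q^{-1}))$. By \eqref{eq:p-large} and Lemma \ref{th:no-theta}, this has no $\theta$-component, so the $u=1$ specialization makes sense.

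Second step: compute the $p$-curvature of \eqref{eq:modified-connection}. Subtracting the scalar $\lambda I$ commutes with everything, and $\lambda$ reduces to an element of $F$. Hence the $p$-curvature is $(\nabla_{\partial_q}-\lambda)^p - (\partial_q\text{-part})$, and in characteristic $p$ it equals $\nabla_{\partial_q}^p - \lambda^p I$ restricted to $H^\lambda$. To justify this cleanly, note that $(\nabla-\lambda)^p = \nabla^p - \lambda^p$ because $\lambda$ is a constant scalar commuting with $\nabla$. By the remark after \eqref{eq:g-decomposition}, which rests on Lemma \ref{th:splitting-lemma}(ii), the mod $\frakm$ reduction of $H^\lambda$ is the image of $(Q\Sigma_{e_\lambda})_{u=1}$. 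So the $p$-curvature in question is exactly the operator in Corollary \ref{th:conjugacy}(ii).

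Third step: check the hypotheses of Corollary \ref{th:conjugacy}.
\begin{enumerate}
\item Condition \eqref{eq:no-torsion} and \eqref{eq:no-theta-3} follow from \eqref{eq:p-large}.
\item Condition \eqref{eq:all-eigenvalues} holds because all eigenvalues lie in $R$, hence reduce into $F$.
\item Distinct eigenvalues stay distinct mod $\frakm$ by \eqref{eq:differences-are-invertible}, so the generalized eigenspaces reduce correctly.
\end{enumerate}
Corollary \ref{th:conjugacy}(ii) then says the $p$-curvature is conjugate over $F((q^{-1}))$ to the constant extension of the nilpotent part of $c_1(M)\ast$ on the generalized $\bar\lambda$-eigenspace over $F$. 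This already gives (i).

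For (ii), it remains to show that this mod $\frakm$ nilpotent matrix lies in $\bar\scrO_{\pi_\lambda}(F)$. The nilpotent part $N$ of $c_1\ast - \lambda$ on the $\lambda$-eigenspace is defined over $R$, since the idempotent $e_\lambda$ is defined over $R$. Its membership in $\bar\scrO_{\pi_\lambda}(\bC)$ is expressed by the integral polynomial equations \eqref{eq:minors}, which then also hold for its reduction. One must check that the eigenspace is a free $R$-module whose reduction is the $F$-eigenspace; enlarging $R$ if necessary to make it free should handle this. Finally, the closure condition is invariant under conjugation and under extending constants from $F$ to $F((q^{-1}))$.

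The main obstacle is the bookkeeping rather than any real difficulty. One point is the $u=1$ specialization of the completed splitting and its identification with $(Q\Sigma_{e_\lambda})_{u=1}$ on Laurent series in $q^{-1}$. The other is the integrality of the eigenspace decomposition, needed so that reduction mod $\frakm$ commutes with taking the nilpotent part.
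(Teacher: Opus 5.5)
Your proposal is correct and follows the paper's proof essentially step by step. Proposition \ref{th:jae-conjecture} identifies the $p$-curvature of the modified connection with $(Q\Sigma_{q^{-1}c_1(M)})_{u=1}-\lambda^p I$ on the image of $(Q\Sigma_{e_\lambda})_{u=1}$, Corollary \ref{th:conjugacy}(ii) then gives (i), and for (ii) the equations \eqref{eq:minors}, which hold over $\bC$, hold over $R$ and hence after reduction mod $\frakm$.

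One small point: you do not need to enlarge $R$, and doing so would lose some $\frakm$. The generalized eigenspace over $R$ is the image of an idempotent, hence projective, so the minor conditions can be checked after localizing at $\frakm$, where it becomes free.
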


\begin{proof}
(i) From Proposition \ref{th:jae-conjecture} (extended to $F$-coefficients, which is unproblematic), we see that the $p$-curvature of the mod $\frakm$ reduction of \eqref{eq:modified-connection} is 
\begin{equation} \label{eq:modified-p-curvature}
(\nabla_{\partial_q} - \lambda I)^p = \nabla_{\partial_q}^p - \lambda^p I = (Q\Sigma_{q^{-1}c_1(M)})_{u=1} - \lambda^p I = (Q\Sigma_{q^{-1}c_1(M)-\lambda})_{u=1},
\end{equation}
as an endomorphism of the image of the projection $(Q\Sigma_{e_\lambda})_{u= 1}$. By Corollary \ref{th:conjugacy}(ii), which applies here thanks to the assumption \eqref{eq:p-large}, this is conjugate to the ($q$-linear extension of) the action of $c_1(M) \ast \cdot - \lambda I$ on the generalized $\lambda$-eigenspace in $H^*(M;F)$. Clearly, that is nilpotent.

(ii) Consider the nilpotent endomorphism $c_1(M) \ast \cdot - \lambda I$ of the generalized $\lambda$-eigenspace inside $H^*(M;R)$. After passing to $\bC$-coefficients, that endomorphism lies in $\bar{\scrO}_{\pi_\lambda}(\bC)$, hence satisfies the equations \eqref{eq:minors} associated to $\pi_\lambda$. Of course, it must then satisfy those equations already over $R$, which means that it lies in $\bar\scrO_{\pi_\lambda}(R)$. After reduction mod $\pi$, it still satisfies the same equations, hence lies in $\bar\scrO_{\pi_\lambda}(F)$. This and the argument from (i) yield the desired result.
\end{proof}

\begin{proof}[Proof of Theorem \ref{th:jordan-bound-2}]
The first step is to restrict coefficients to a ring $R$ as in \eqref{eq:number-ring}. We choose this so that \eqref{eq:differences-are-invertible} and \eqref{eq:p-large} are satisfied. Take the quantum connection $\nabla_{\partial_q}$ over $R((q^{-1}))$, and split it into direct summands \eqref{eq:g-decomposition}. On each summand, we consider the correspondingly modified connection \eqref{eq:modified-connection}.

Next comes reduction to positive characteristic. For any $\frakm \subset R$, the connection with coefficients in $F = R/\frakm$ induced by \eqref{eq:modified-connection} has $p$-curvature in $\bar\scrO_{\pi_\lambda}(F((q)))$. This is Corollary \ref{th:p-curvature-constraint}, from which we borrow the notation $\pi_{\lambda}$.

Finally, return to characteristic $0$. We apply Proposition \ref{th:orbit-closure}, but with a change of variables to $q^{-1}$ instead of $q$. Given the previous results about $p$-curvature, this tells us that \eqref{eq:modified-connection} is isomorphic over $\bC((q^{-1}))$ to some $\partial_q + q^{-1}A_{\lambda}$, where the nilpotent part of $A_{\lambda}$ lies in $\bar{\scrO}_{\pi_\lambda}(\bC$; and that of course means that $\nabla_{\partial_q}|H^\lambda$ itself is isomorphic to $\partial_q + \lambda I + q^{-1}A_{\lambda}$. After passing to complex coefficients, this gives the constraint on \eqref{eq:exponential-type} which we have claimed.
\end{proof}

\begin{proof}[Proof of Corollary \ref{th:trivial}]
We consider the $u = 1$ version of the quantum connection (with $F$-coefficients), but include the variable $\theta$ (with $\theta^2 = 0$ for $p>2$, respectively $\theta^2 = 1$ for $p = 2$). This means we are considering the $\bZ/2$-graded $(q,\theta)$-linear map
\begin{equation} \label{eq:theta-connection}
\nabla_{\partial_q}: H^*(M;F)[\theta, q^{\pm 1}] \longrightarrow H^*(M;F)[\theta,q^{\pm 1}].
\end{equation}
By assumption, there are quantum idempotents $e_{\lambda} \in H^*(M;F)[q^{\pm 1}]$ which project to the eigenspaces of \eqref{eq:quantum-product-c1-3}, and these satisfy
\begin{equation} \label{eq:semisimple-c1}
(q^{-1}c_1(M) - \lambda) \ast_q e_{\lambda} = 0.
\end{equation}

Take the $\bZ/2$-graded decomposition of $H^*(M;F)[\theta,q^{\pm 1}]$ given by $(Q\Sigma_{e_\lambda})_{u=1}$. On each piece, the $p$-curvature of the modified connection $\nabla_{\partial_q} - \lambda I$ is again given by \eqref{eq:modified-p-curvature}. From \eqref{eq:semisimple-c1} and Property \ref{th:quantum-properties}(v) one gets
\begin{equation}
(Q\Sigma_{q^{-1}c_1(M)-\lambda})_{u=1} \circ (Q\Sigma_{e_\lambda})_{u=1} = 0,
\end{equation}
so the $p$-curvature is zero. One applies Lemma \ref{th:zero-p-curvature} to conclude that the modified connection is isomorphic over $F[q^{\pm 1}]$ to the trivial one. Hence, on each summand, $\nabla_{\partial_q}$ is isomorphic to $\partial_q + \lambda I$.

We still have to do some minor cleanup. The argument above applies separately to each part of the $\bZ/2$-graded connection \eqref{eq:theta-connection}. Let's say that we look only at the even degree part, which is 
\begin{equation} \label{eq:odd-even}
H^{\mathrm{even}}(M;F)[q^{\pm 1}] \oplus \theta H^{\mathrm{odd}}(M;F)[q^{\pm 1}] \iso H^*(M;F)[q^{\pm 1}],
\end{equation}
compatibly with $\nabla_{\partial_q}$. Hence, the outcome is a statement about the (ungraded) space on the right hand side of \eqref{eq:odd-even}. Finally, from \eqref{eq:bigq-elambda} one sees that the $F[q^{\pm 1}]$-rank of the piece of \eqref{eq:odd-even} given by $(Q\Sigma_{e_\lambda})_{u=1}$ equals that of the $\lambda^p$-eigenspace of \eqref{eq:quantum-product-c1-3}, which is the same as that of the $\lambda$-eigenspace (since this is just a computation of the rank of a free module, it can be carried out in the $q$-completion).
\end{proof}

\begin{remark} \label{th:why-theta}
Corollary \ref{th:trivial} allows those $p = \mathrm{char}(F)$ for which $H^*(M;\bZ)$ has $p$-torsion. We have therefore designed the argument to avoid Lemma \ref{th:no-theta}, which is why $\theta$ appeared. If \eqref{eq:no-torsion} holds then using $\theta$ is unnecessary, and one gets a sharper result, which applies to the odd and even degree parts of the quantum connection separately.
\end{remark}

\section{Homological algebra\label{sec:algebra}}
Here, we review and extend the noncommutative geometry part of \cite{pomerleano-seidel23}. On a technical level, the main difference is that part of our argument involves working over a coefficient ring, rather than field. This is required in order to construct the mod $p$ Fontaine-Laffaille structure from \cite{petrov-vaintrob-vologodsky17}, since that depends on a lifting from $\bF_p$ to $\bZ/p^2$; the outcome is packaged as Corollary \ref{th:fontaine-laffaille-2} (if the reader is willing to accept the input from algebra as a black box, they can skip everything except that statement).

\begin{conventions}
(i) When defining $A_\infty$-structures or dga structures over a commutative ring, we always assume that the underlying chain complexes are homotopically projective, and hence also homotopically flat ($K$-projective and $K$-flat, in the language of \cite{spaltenstein88}). In particular, quasi-isomorphisms are chain homotopy equivalences. 

(ii) $A_\infty$-structures are understood to be cohomologically unital (unless strict unitality is specified). Dga structures are always strictly unital.
\end{conventions}

\subsection{Central cocycles\label{subsec:central}}
We start in the classical context of dg algebras, following \cite[Section 3.1]{pomerleano-seidel23}. Let $\scrA$ be a dga over $\bF_p$, together with an element $W \in \scrA^0$ which is a cocycle ($d_\scrA W = 0$) and central ($Wa = aW$ for all $a$). Take the curved deformation $\scrA_q$ (where as usual $q$ is a formal variable of degree $2$) obtained by thinking of $qW$ as curvature term, and leaving the remaining structure (differential, product) unchanged. The chain complex underlying the Hochschild homology $\mathit{HH}_*(\scrA_q)$ will be denoted by $A_q = \mathit{C}_*(\scrA_q)$; like any cohomology theory associated to a formal deformation, this complex is defined so as to be $q$-adically complete. We will need a number of variations. First, there is a version with coefficients in negative powers of $q$:
\begin{equation} \label{eq:negative-q}
q^{-1}A_{q^{-1}} \stackrel{\mathrm{def}}{=} q^{-1}\bF_p[q^{-1}] \otimes_{\bF_p[q]} A_q,
\end{equation}
where $q^{-1}\bF_p[q^{-1}] = \bF_p((q))/\bF_p[[q]]$ (one could write the tensor product as over $\bF_p[[q]]$; there is no difference between the two). Next, the central element $W$ defines a class in the Hochschild cohomology of $\scrA_q$,
\begin{equation} \label{eq:w-class}
[W] \in \mathit{HH}^0(\scrA_q).
\end{equation}
As part of the general action of Hochschild cohomology on Hochschild homology, $W$ acts on $A_q$, by a $q$-linear endomorphism which we write as $\iota_W$; and that clearly carries over to $q^{-1}A_{q^{-1}}$. Given a nonzero polynomial $f(z) \in \bF_p[z]$, we introduce the localised version
\begin{equation} \label{eq:localise-t}
q^{-1}A_{q^{-1},1/f} \stackrel{\mathrm{def}}{=} \bF_p[\iota_W,1/f(\iota_W)] \otimes_{\bF_p[\iota_W]} q^{-1}A_{q^{-1}}.
\end{equation}

A similar, but slightly more complicated, discussion applies to cyclic homology. Let $A_{u,q}$ be the chain complex underlying the negative cyclic homology of $\scrA_q$; this is both $q$-adically and $u$-adically complete. The analogue of \eqref{eq:negative-q} is
\begin{equation} \label{eq:negative-q-2}
q^{-1}A_{u,q^{-1}} \stackrel{\mathrm{def}}{=} q^{-1}\bF_p[q^{-1}] \hat\otimes_{\bF_p[q]} A_{u,q},
\end{equation}
where $\hat{\otimes}$ means that the tensor product should be $u$-adically completed. These complexes carry Getzler-Gauss-Manin connections \cite{getzler95}, which are degree $0$ endomorphisms $\nabla_{u\partial_q}$. We use them to form the counterpart of \eqref{eq:localise-t}, namely
\begin{equation} \label{eq:localise-t-2}
q^{-1}A_{u,q^{-1},1/f} \stackrel{\mathrm{def}}{=} \bF_p[\nabla_{u\partial_q},1/f(\nabla_{u\partial_q})] \hat\otimes_{\bF_p[\nabla_{u\partial_q}]} A_{u,q^{-1}},
\end{equation}
with $t$ acting by $\nabla_{u\partial_q}$, and the same meaning for $\hat\otimes$ as before. Note that the ordering of the operations \eqref{eq:negative-q-2} and \eqref{eq:localise-t-2} matters, since $q$ and $\nabla_{u\partial_q}$ do not commute. The final step is to invert the variable $u$, as in the definition of periodic cyclic homology:
\begin{equation} \label{eq:localise-t-3}
q^{-1}A_{u^{\pm 1},q^{-1},1/f} \stackrel{\mathrm{def}}{=} \bF_p[u^{\pm 1}] \otimes_{\bF_p[u]} q^{-1} A_{u,q^{-1},1/f}.
\end{equation}

The following theorem introduces the mod $p$ Fontaine-Laffaille structure from \cite{petrov-vaintrob-vologodsky17}. 

\begin{theorem} \label{th:fontaine-laffaille}
Let $p$ be an odd prime; $\scrA$ a dga over $\bF_p$, with a central cocycle $W$; and $f(t) \in \bF_p[t]$ a nonzero polynomial. Suppose that:

(i) $\scrA$ admits a lift to $\bZ/p^2$, and $W$ correspondingly lifts to a central cocycle. 

(ii) $H^*(\scrA)$ is of finite rank over $\bF_p[W]$.

(iii) $\scrA$ is homologically smooth over $\bF_p$. 

(iv) In the Hochschild cohomology ring $\mathit{HH}^*(\scrA)$, the class \eqref{eq:w-class} satisfies $q^B f([W]) = 0$ for some $B \geq 0$.

(v) The Hochschild homology $\mathit{HH}_*(\scrA)$ is concentrated in degrees $[3-p,\dots,p-3]$.

Then there is a graded map (generally, depending on the choice of lift in (i), as well as a lift of $f$ to a polynomial with $(\bZ/p^2\bZ)$-coefficients)
\begin{equation} \label{eq:dg-fontaine-laffaille-1}
\Phi: H^*(q^{-1}A_{q^{-1},1/f}) \longrightarrow H^*(q^{-1}A_{u^{\pm 1},q^{-1},1/f}), 
\end{equation}
with the properties
\begin{equation} \label{eq:dg-fontaine-laffaille-1b}
\nabla_{u\partial_q}^p \Phi(x) = \Phi(\iota_W x), \quad 
q\Phi(x) = \nabla_{u\partial_q}^{p-1} \Phi(qx);
\end{equation}
this map is injective, and its image satisfies
\begin{equation} \label{eq:dg-fontaine-laffaille-1c}
\bF_p[u^{\pm 1}] \otimes_{\bF_p} \big( \mathit{im}(\Phi) \oplus \nabla_{u\partial_q} \mathit{im}(\Phi) \oplus \cdots \oplus \nabla_{u\partial_q}^{p-1} \mathit{im}(\Phi) \big) =
H^*(q^{-1}A_{u^{\pm 1},q^{-1},1/f}).
\end{equation}
\end{theorem}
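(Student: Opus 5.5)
The plan is to deduce the statement from the Petrov--Vaintrob--Vologodsky mod $p$ Fontaine-Laffaille theorem for smooth and proper families of categories, the same way \cite{pomerleano-seidel23} deduced its characteristic-zero results from degeneration properties. The curved deformation $\scrA_q$, with curvature $qW$, is a family over the formal disc in $q$. Because $W$ is central, its category of matrix factorizations (equivalently, the generic-fibre part after inverting $q$) localizes over $\bF_p[W]$ and splits according to the roots of $f$. We therefore work over the base ring $S = \bF_p[t,1/f(t)]$, where $t$ acts by $\iota_W$ on Hochschild homology and by $\nabla_{u\partial_q}$ on cyclic homology. This is the Fourier--Laplace picture from the Remark: $t=\nabla_{u\partial_q}$ and $\nabla_{u\partial_t}=-q$.

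\textbf{Step 1: build a family over $S$.} Using (ii) and (iv), the localized category $\scrA\otimes_{\bF_p[W]}S$ is a smooth family over $\mathrm{Spec}(S)$ with proper fibres. Smoothness comes from (iii). Properness comes from finite rank over $\bF_p[W]$, with $f(W)$ inverted so that the support of the $q$-torsion is discarded. We then identify $H^*(q^{-1}A_{q^{-1},1/f})$ with the relative Hochschild homology of this family over $S$. The negative powers $q^{-1}\bF_p[q^{-1}]$ act as the Fourier dual of functions in $t$: $q^{-k}$ corresponds to the divided powers $\partial_t^{k}$ acting on the dual, which is consistent with \eqref{eq:leibniz-rule}. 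Likewise, $H^*(q^{-1}A_{u^{\pm1},q^{-1},1/f})$ is identified with relative periodic cyclic homology together with its Gauss--Manin connection $\nabla_{u\partial_t}=-q$.

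\textbf{Step 2: apply PVV.} Assumption (i) gives a lift of the family over $S$ to $\bZ/p^2$, and also a lift of Frobenius on $S$, by sending $t\mapsto t^p$ and lifting $f$. Assumption (v) is exactly the Hodge-amplitude hypothesis of \cite{petrov-vaintrob-vologodsky17}, namely amplitude less than $p-2$ after the shift conventions; this is where oddness of $p$ and the bound $3-p,\dots,p-3$ enter. PVV then provides a Frobenius-semilinear map $\Phi$ from Frobenius-twisted Hochschild homology to periodic cyclic homology. This map is an isomorphism after linearizing along Frobenius on the base (the inverse Cartier / Fontaine-Laffaille property). Translated through Step 1, semilinearity with respect to $t\mapsto t^p$ becomes $\nabla^p\Phi=\Phi\iota_W$. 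Compatibility with connections (Katz: Frobenius pullback of a connection has $p$-curvature-type structure, $\partial_{t}$ acting through $t^{p-1}$) becomes $q\Phi(x)=\nabla^{p-1}\Phi(qx)$. Finally, ``isomorphism after base change along $S\to S$, $t\mapsto t^p$'' becomes \eqref{eq:dg-fontaine-laffaille-1c}, since $S$ is free over $\bF_p[t^p,1/f(t)^p]$ with basis $1,t,\dots,t^{p-1}$. Injectivity follows from this direct sum decomposition.

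\textbf{Main obstacle.} The hardest part is Step 1 in characteristic $p$. Specifically, we must show that the $q$-adic curved Hochschild and cyclic complexes, once $q^{-1}$-coefficients are taken and $f$ is inverted, compute the correct relative invariants over $S$, including the $u$-adic completions and the noncommutativity of $q$ with $\nabla$. My first attempt would redo the comparison from \cite[Section 3]{pomerleano-seidel23} at chain level with $\bF_p$ coefficients. That comparison uses the central cocycle to view $\scrA$ as a dga over $\bF_p[W]$, then matches Getzler's connection with the $\partial_t$-connection via the ``$u\partial_q\leftrightarrow t$'' dictionary. This is the step where the characteristic-zero argument must avoid division by integers. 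Checking that the dictionary only involves the commutator relation $[\nabla,q]=u$, rather than exponentials, should make it work over $\bF_p$.
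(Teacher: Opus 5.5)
Your architecture is the paper's: realize $t$ as $W$ on a family over $S=\bF_p[t,1/f(t)]$, apply \cite{petrov-vaintrob-vologodsky17}, and translate back by the categorical Fourier transform. Under that transform, $t$ becomes $\iota_W$ (resp.\ $\nabla_{u\partial_q}$) and $\nabla_{u\partial_t}$ becomes $-q$. The gap is in the step where the paper actually has to work: verifying the hypotheses of \cite{petrov-vaintrob-vologodsky17} \emph{for the family over $S$}.

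\emph{Smoothness.} ``Smoothness comes from (iii)'' is false, because smoothness over $\bF_p$ does not give smoothness over $\bF_p[t]$ or $S$. For $\scrA=\bF_p[x]$ and $W=x^2$, the algebra is smooth over $\bF_p$. But over $\bF_p[t]$ (via $t\mapsto x^2$), the diagonal bimodule over $\bF_p[x,y]/(x^2-y^2)$ has an infinite $2$-periodic resolution, and the fibre at the critical value $t=0$ is $\bF_p[x]/(x^2)$. Relative smoothness is exactly what (iv) is for: it ensures that inverting this particular $f$ discards all singular fibres. The paper deduces it from (iii) and (iv) via \cite[Proposition 3.1.12]{pomerleano-seidel23}. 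Properness, by contrast, is just (ii) after localization, and (iv) plays no role there, contrary to your attribution.

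\emph{Amplitude.} (v) is not ``exactly'' the amplitude hypothesis. What is needed is that the \emph{relative} Hochschild homology over $S$ lies in $[3-p,p-3]$, whereas (v) concerns $\mathit{HH}_*(\scrA)$ over $\bF_p$. The paper gets the upper bound as in \cite[Corollary 3.1.13]{pomerleano-seidel23}. It gets the lower bound from relative smoothness: the Shklyarov pairing is then nondegenerate, so the relative Hochschild complex is self-dual over the principal ideal domain $S$, and universal coefficients turn the upper bound into a lower one.

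\emph{Hochschild-side dictionary.} Your dictionary on the Hochschild side is wrong, and the second identity in \eqref{eq:dg-fontaine-laffaille-1b} depends on it. The paper uses the Koszul model $\scrA_t=\scrA[t,\epsilon]$ with $|\epsilon|=-1$ and $d(a\epsilon)=(da)\epsilon+(t-W)a$. This model is free over $\bF_p[t]$, which is not automatic for $\scrA\otimes_{\bF_p[W]}S$. Its Kodaira--Spencer class is the derivation $\partial_\epsilon$, and \cite[Proposition 3.1.6]{pomerleano-seidel23} matches $-\iota_{\partial_\epsilon}$ with $q$ and $t$ with $\iota_W$. Nothing is matched with ``divided powers of $\partial_t$''.

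The factor $t^{p-1}$ does not come from Katz either. On the Frobenius pullback, the source connection is the canonical one plus $t^{(p-1)/p}\iota_{\partial_\epsilon}$, the inverse Cartier transform of the Higgs field, coming from $\tfrac1p\,dt=t^{(p-1)/p}\,d(t^{1/p})$. This yields $\nabla_{u\partial_t}\Phi(x)=t^{p-1}\Phi(\iota_{\partial_\epsilon}x)$. Only via $q\leftrightarrow-\iota_{\partial_\epsilon}$ does this become $q\Phi(x)=\nabla_{u\partial_q}^{p-1}\Phi(qx)$.

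Finally, the chain-level comparison you single out as the main obstacle is simply quoted in the paper, from \cite[Proposition 3.1.6, Theorem 3.1.14, proof of Corollary 3.1.15]{pomerleano-seidel23}, once this model is used.
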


\begin{proof}
A basic construction \cite[Equation (3.1.2)]{pomerleano-seidel23} yields a dga $\scrA_t$ over $\bF_p[t]$, which is quasi-isomorphic to $\scrA$ in such a way that $t$ times the identity corresponds to $W$ in cohomology:
\begin{equation} \label{eq:a-t}
\begin{aligned}
& \scrA_t = \scrA[t, \epsilon], \\
& d_{\scrA_t}a = d_{\scrA}a, \;\; d_{\scrA_t}(a\epsilon) = (d_{\scrA}a)\epsilon + (t-W)a
\;\; \text{for } a \in \scrA,
\end{aligned}
\end{equation}
where the formal variable $\epsilon$ has degree $-1$ (and therefore $\epsilon^2 = 0$). We look at
\begin{equation} \label{eq:invert-f-t}
\scrA_{t,1/f} = \bF_p[t,1/f(t)] \otimes_{\bF_p[t]} \scrA_t,
\end{equation}
which has the following properties.

{\em (i') Let $\tilde{f} \in (\bZ/p^2)[t]$ be a lift of $f$. Then $\scrA_{t,1/f}$ admits a lift to $(\bZ/p^2\bZ)[t,1/\tilde{f}(t)]$.} Indeed, the lift of $(\scrA,W)$ to $\bZ/p^2$ provided by assumption (i) also yields a corresponding lift of \eqref{eq:a-t}, and one can then invert $\tilde{f}$ as in \eqref{eq:invert-f-t}.

{\em (ii') $H^*(\scrA_{t,1/f})$ is of finite rank of $\bF_p[t,1/f(t)]$.} From (ii), we know that $H^*(\scrA_t)$ is of finite rank over $\bF_p[t]$. Inverting $f(t)$ is unproblematic.

{\em (iii') $\scrA_{t,1/f}$ is smooth over $\bF_p[t,1/f(t)]$.} This follows from (iii) and (iv) by \cite[Proposition 3.1.12]{pomerleano-seidel23}.

{\em (iv') The Hochschild homology of $\scrA_{t,1/f}$, defined over $\bF_p[t,1/f(t)]$, is concentrated in degrees $[3-p,\dots,p-3]$.} The upper bound follows from (iii) and (v) as in \cite[Corollary 3.1.13]{pomerleano-seidel23}, but the other part of the argument has to be elaborated on a little. The smoothness property from (iii') implies the nondegeneracy of the Shklyarov pairing on the Hochschild complex, which therefore provides a quasi-isomorphism between that complex and its dual over $\bF_p[t,1/f(t)]$. Using that duality and the universal coefficient theorem (for complexes of free modules, over the principal ideal domain $\bF_p[t,1/f]$), one converts the previous upper bound into a lower bound.
%

At this point, we have all the properties required in order to apply \cite[Theorem 1]{petrov-vaintrob-vologodsky17} to \eqref{eq:invert-f-t}. As formulated in \cite[Equation (1.8)]{petrov-vaintrob-vologodsky17}, the outcome is an isomorphism from the Frobenius-pullback of Hochschild homology to periodic cyclic homology. Spelling out the Frobenius in our case, we get a $u$-linear graded isomorphism
\begin{equation} \label{eq:fontaine-laffaille}
\begin{aligned}
& \Phi: \bF_p[t^{1/p}] \otimes_{\bF_p[t]} \mathit{HH}_*(\scrA_{t,1/f})[u,u^{-1}] \stackrel{\iso}{\longrightarrow} \mathit{HP}_*(\scrA_{t,1/f}), \\
& \Phi(t^{1/p}g \otimes x) = t \Phi(g \otimes x).
\end{aligned}
\end{equation}
The target of \eqref{eq:fontaine-laffaille} carries the Getzler-Gauss-Manin connection $\nabla_{u\partial_t}$ with respect to the variable $t$. Maybe more surprisingly, there is a corresponding connection on the source side, defined as follows. Differentiating the dg structure of $\scrA_{t,1/f}$ with respect to $t$ gives rise to a class in $\mathit{HH}^2(\scrA_{t,1/f})$ (the Kodaira-Spencer class). More concretely, one sees from \eqref{eq:a-t} that this class is represented by the degree $1$ derivation 
\begin{equation} \label{eq:partial-epsilon}
\begin{aligned}
& \partial_\epsilon: \scrA_t \longrightarrow \scrA_t, \\
& \partial_\epsilon(a) = 0, \;\; \partial_\epsilon(a\epsilon) = a.
\end{aligned}
\end{equation}
One then defines the connection $\nabla_{u\partial_{t^{1/p}}}$ on the domain of \eqref{eq:fontaine-laffaille} by 
\begin{equation} \label{eq:frobenius-connection}
\begin{aligned}
& \nabla_{u\partial_{t^{1/p}}}(g(t^{1/p}) \otimes x) = u\frac{\partial g}{\partial (t^{1/p})} \otimes x + t^{(p-1)/p} g \otimes \iota_{\partial_\epsilon} x \\ & 
\quad \text{for } g \in \bF_p[t^{1/p}], \; x \in \mathit{HH}_*(\scrA_{t,1/f})[u,u^{-1}];
\end{aligned}
\end{equation}
where $\iota_{\partial_\epsilon}$ denotes the action of \eqref{eq:partial-epsilon} on Hochschild homology. The first term in \eqref{eq:frobenius-connection} is the canonical connection on the Frobenius pullback; this works because we are in characteristic $p$, where $\partial_{t^{1/p}}(t) = pt^{(p-1)/p} = 0$. The second term in \eqref{eq:frobenius-connection} is an instance of the inverse Cartier transform from \cite[Section 1.1]{petrov-vaintrob-vologodsky17}; the power of $t$ which appears there comes from the elementary computation
\begin{equation}
{\textstyle\frac{1}{p}} dt = t^{(p-1)/p} d(t^{1/p}).
\end{equation}
Compatibility between the operations $\nabla_{u\partial_{t^{1/p}}}$ and $\nabla_{u\partial_t}$ on the two sides of \eqref{eq:fontaine-laffaille} is part of the statement of \cite[Theorem 1.1]{petrov-vaintrob-vologodsky17}. One can simplify the formulation by looking at the restriction of \eqref{eq:fontaine-laffaille} to $g = 1$ and $u^0$. That restriction is a graded map
\begin{equation} \label{eq:fontaine-laffaille-2}
\Phi: \mathit{HH}_*(\scrA_{t,1/f}) \longrightarrow \mathit{HP}_*(\scrA_{t,1/f}), 
\end{equation}
which is injective, and whose image satisfies 
\begin{equation} \label{eq:fl-property-1}
\bF_p[u,u^{-1}] \otimes_{\bF_p} \big(\mathit{im}(\Phi) \oplus t\,\mathit{im}(\Phi) \oplus \cdots
\oplus t^{p-1}\,\mathit{im}(\Phi)\big) = \mathit{HP}_*(\scrA_{t,1/f}).
\end{equation}
Compatibility with the $t$-action and connections can be expressed as
\begin{equation} \label{eq:fl-property-2}
\begin{aligned}
& t^p \Phi(x) = \Phi(tx), \quad
& \nabla_{u\partial_t} \Phi(x) = t^{p-1} \Phi(\iota_{\partial_\epsilon} x).
\end{aligned}
\end{equation}
The second part of \eqref{eq:fl-property-2} is obtained by starting with \eqref{eq:frobenius-connection} and writing 
\begin{equation}
\nabla_{u\partial_t} \Phi(1 \otimes x) = \Phi(\nabla_{u\partial_t^{1/p}}(1 \otimes x)) =
\Phi(t^{(p-1)/p} \otimes \iota_{\partial_\epsilon} x) = t^{p-1} \Phi(1 \otimes \iota_{\partial_\epsilon} x).
\end{equation}

We need one more ingredient, the categorical Fourier transform. The first version, for Hochschild homology \cite[Proposition 3.1.6]{pomerleano-seidel23}, says that the Hochschild complex of $\scrA_t$ is quasi-isomorphic to \eqref{eq:negative-q}. Passing to cohomology and inverting $f(t)$, one finds that 
\begin{equation} \label{eq:categorical-transform-1}
\mathit{HH}_*(\scrA_{t,1/f}) \iso H^{*-2}(q^{-1}A_{q^{-1},1/f}).
\end{equation}
Under that isomorphism, 
\begin{equation} \label{eq:categorical-transform-1b}
\parbox{37em}{
$-\iota_{\partial_\epsilon}$ (on the left) corresponds to the action of $q$ (on the right). Similarly, the action of $t$ corresponds to $\iota_W$.
}
\end{equation}
There is also a categorical Fourier transform for cyclic homology \cite[Theorem 3.1.14]{pomerleano-seidel23}. It says that the negative cyclic complex of $\scrA_t$ is quasi-isomorphic to \eqref{eq:negative-q-2}. A slightly more tricky algebraic argument \cite[Proof of Corollary 3.1.15]{pomerleano-seidel23} allows us to pass to a version with $f(t)$ inverted. Finally, inverting $u$ as well gives us a description of the periodic cyclic homology,
\begin{equation} \label{eq:categorical-transform-2}
\mathit{HP}_*(\scrA_{t,1/f}) \iso H^{*-2}(q^{-1}A_{u^{\pm 1},q^{-1},1/f}),
\end{equation}
such that:
\begin{equation} \label{eq:categorical-transform-2b}
\parbox{37em}{
$-\nabla_{u\partial_t}$ corresponds to the action of $q$; and the action of $t$ corresponds to $\nabla_{u\partial_q}$.}
\end{equation}
One applies \eqref{eq:categorical-transform-1} and \eqref{eq:categorical-transform-2} to the source and target of \eqref{eq:fontaine-laffaille-2}, which yields \eqref{eq:dg-fontaine-laffaille-1}. In view of \eqref{eq:categorical-transform-1b} and \eqref{eq:categorical-transform-2b}, the property \eqref{eq:fl-property-2} translates into \eqref{eq:dg-fontaine-laffaille-1b}; and \eqref{eq:fl-property-1} into \eqref{eq:dg-fontaine-laffaille-1c}.
\end{proof}

\subsection{$A_\infty$-deformations\label{subsec:ainfty-deformations}}
We need to apply the construction above to a wider class of deformations. This is done via a suitable quasi-isomorphism. The argument follows \cite[Sections 3.2--3.3]{pomerleano-seidel23}, but several points need to revisited so that they work in our context.

\begin{lemma} \label{th:general-1}
(Replaces the first part of \cite[Lemma 3.2.5]{pomerleano-seidel23}, as well as the use of the Yoneda embedding in the proof of \cite[Proposition 3.37]{pomerleano-seidel23}) Let $\scrA$ be an $A_\infty$-category (over some ring). Then there is a dg category $\scrB$ and a quasi-isomorphism $\scrA \rightarrow \scrB$ (over the same ring).
\end{lemma}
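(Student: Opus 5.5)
The approach is a strictification: replace $\scrA$ by a dg category built from it functorially, in a way that needs only the homotopy projectivity of the morphism complexes from our Conventions, rather than field coefficients. Over a field one would use the Yoneda embedding into the dg category of $A_\infty$-modules. Over a ring, the obstacle is that quasi-isomorphisms need not be invertible, and cohomological unitality complicates the identification of $\scrA$ with its image. I would instead take $\scrB$ to be the cobar–bar construction: the dg category $\Omega B\scrA$. It has the same objects as $\scrA$, and its morphisms are freely generated as a category, under composition, by the shifted reduced bar complex $s^{-1}\bar{B}\scrA$, with differential induced from the coderivation on $B\scrA$.

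The first step is to pass to a strictly unital model, if strict units are needed for the reduced bar construction. This can be done by the standard formal-diffeomorphism argument, which works over any ring once the morphism complexes are homotopically projective. Alternatively, one can avoid this step by using the non-reduced bar construction together with an added unit.

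Next I write down the canonical $A_\infty$-functor $\scrA \to \Omega B\scrA$. It sends $(a_d,\dots,a_1)$ to the generator $s^{-1}[a_d|\cdots|a_1]$. Its $A_\infty$-equations hold by construction, because the differential on the cobar side is exactly designed to encode the $\mu^d$.

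The main step, and the expected obstacle, is showing this functor is a quasi-isomorphism. I would filter both sides by word length, that is, by tensor length in the bar and cobar directions. On the associated graded, the differential reduces to the one induced by $\mu^1$ together with the one coming from the bar–cobar pairing. The resulting complex is, on each length piece, the standard acyclic Koszul-type complex augmented to $\scrA$. A contracting homotopy for it can be written down explicitly, using the unit or the free generators. Because the homotopy is explicit, no field hypothesis is needed, only flatness, so that tensor powers preserve acyclicity and the pieces remain homotopically projective.

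Convergence of the spectral sequence requires care, since the filtration is exhaustive but unbounded. I would handle this by using the grading together with the fact that the filtration is increasing and exhaustive, so that a colimit argument applies. Alternatively, one can directly produce a homotopy inverse $\Omega B\scrA \to \scrA$ as an $A_\infty$-functor, given by the counit composed with projection, and check that one of the composites is the identity on the nose. Since quasi-isomorphisms of homotopically projective complexes are chain homotopy equivalences, this suffices.
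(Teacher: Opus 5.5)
The gap is in how you handle units, and that is exactly what this lemma exists to settle.

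Your word-length filtration argument is fine over any ring. What it proves, though, is only that $\scrA \to \overline{\Omega B\scrA}$ (the augmentation ideal) is a quasi-isomorphism.

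With the non-reduced bar construction, $\Omega B\scrA$ is free as a category, so it contains the identities $1_X$. These are cocycles spanning a summand $R\cdot 1_X$ that the differential preserves. Hence $H^*(\hom_{\Omega B\scrA}(X,X)) \iso R \oplus H^*(\scrA(X,X))$. Your functor lands in the augmentation ideal and sends $[e_X]$ to an idempotent different from $[1_X]$, so it is not a quasi-isomorphism. In effect $\Omega B\scrA$ models the unitalization $\scrA^+$, and ``adding a unit'' creates the extra summand rather than removing it.

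The normalized alternative has its own problem. The endomorphism algebras are in general not augmented. So after choosing a graded splitting $\scrA(X,X) = R e_X \oplus \bar\scrA(X,X)$, the $e_X$-components of the $\mu^d$ on $\bar\scrA$ (including $d=1$) make $\bar B\scrA$ a \emph{curved} dg cocategory. The cobar differential then needs an extra term sending words to multiples of $1_X$. Without it your canonical functor fails the $A_\infty$-equations; for $\bR[x]/(x^2+1)$ you would be building a model of $\bR[x]/(x^2)$. That part is repairable.

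The more serious problem is your first step. You need a strictly unital model over a ring, with $R e_X$ a graded direct summand, and you only assert it. The formal-diffeomorphism argument is carried out over a field. Over a ring you must, among other things, produce a chain-level homotopy $\mu^2(e_X,\cdot) \htp \mathit{id}$. This does not follow from $[e_X]$ being a cohomological unit: over $\bZ$, a chain endomorphism of a K-projective complex that induces the identity on cohomology need not be homotopic to the identity, because of $\mathrm{Ext}$-terms.

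Moreover, obtaining strictly unital models over $\bZ/p^2$ is precisely what this lemma is used for in the proof of Corollary \ref{th:fontaine-laffaille-2}, where it feeds Lemma \ref{th:general-2}. So your first step assumes the substance of the statement.

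The paper sidesteps units entirely. The Yoneda embedding $\scrA \to \scrC = \mathit{mod}(\scrA)$ is a quasi-isomorphism over rings \cite{lyubashenko-manzyuk08b}, and strict units come for free from the dg target. The price is that morphism complexes in $\scrC$ are built from infinite products and may fail to be K-projective. The paper fixes this with a cofibrant (or quasi-free) replacement $\scrB \to \scrC$. It then uses transfer along chain homotopy equivalences to turn $\scrB \to \scrC \to \scrA$ into $\scrA \to \scrB$.

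Your fallback, exhibiting a functor back with one composite equal to the identity, would also not suffice. It only gives split injectivity on cohomology, not a quasi-isomorphism.
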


\begin{proof}
The Yoneda embedding yields a quasi-isomorphic $A_\infty$-functor $\scrA$ into the $A_\infty$-module category $\scrC = \mathit{mod}(\scrA)$ \cite[Corollary A.8]{lyubashenko-manzyuk08b}. The underlying chain maps are homotopy equivalences (true in general, but particularly easy to see in our setup because of the $K$-projectivity assumption on $\scrA$). Hence, one can use transfer (see e.g.\ \cite{markl04}) to find an inverse quasi-isomorphism $\scrC \rightarrow \scrA$.

Strictly speaking, $\scrC$ is not a dg category in our sense: because the definition of morphism spaces involves infinite direct products, the resulting chain complexes may not be $K$-projective (this was not a problem for the transfer argument cited above, since that involves only explicit formulae). To fix that, one uses a quasi-isomorphism $\scrB \rightarrow \scrC$ purely in the dg world, where $\scrB$ is cofibrant in the sense of a suitable model category structure. Then, the morphism spaces in $\scrB$ are themselves cofibrant complexes, meaning $K$-projective. (For an exposition see e.g.\ \cite[Lemmas 3.6--3.7]{belmans13} or \cite[Remark 4.7]{canonaco-ornaghi-stellari24}. As pointed out in the second reference, readers wishing to avoid the abstract theory of cofibrant replacements may instead use the more classical quasi-free resolutions of \cite[Lemma 13.5]{drinfeld02}.) Finally, one combines the two steps into a quasi-isomorphism $\scrB \rightarrow \scrA$ and then, to get the result as stated, again uses transfer to reverse direction.
\end{proof}

An $A_\infty$-deformation $\scrA_q$ of an $A_\infty$-category $\scrA$ is given by deformations of the $A_\infty$-operations, $\mu_{\scrA_q}^d = \mu_{\scrA}^d + q\mu_{\scrA_q}^{d,1} + \cdots$; this includes a curvature term $d = 0$, whose $q$-constant part is trivial. Differentiating in $q$-direction gives a Hochschild cocycle, whose cohomology class is called the Kaledin class,
 \begin{equation} \label{eq:kaledin-class}
[\partial_q \mu_{\scrA_q}] \in \mathit{HH}^0(\scrA_q).
\end{equation}

\begin{lemma} \label{th:general-2}
(Replaces the second part of \cite[Lemma 3.2.5]{pomerleano-seidel23})
We again work over an arbitrary coefficient ring. Take a quasi-isomorphism $\scrF: \scrA \rightarrow \scrB$ of $A_\infty$-categories, where $\scrB$ is strictly unital. Given a deformation $\scrA_q$ of $\scrA$, there is a strictly unital deformation $\scrB_q$ of $\scrB$ and a filtered quasi-isomorphism $\scrF_q: \scrA_q \rightarrow \scrB_q$, which extends the previous one.
\end{lemma}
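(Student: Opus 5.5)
The plan is to transport the deformation along $\scrF$ order by order in $q$, using the fact that $\scrF$ induces a quasi-isomorphism on Hochschild cochain complexes. First, since all chain complexes are $K$-projective, $\scrF$ has a quasi-inverse $\scrG: \scrB \rightarrow \scrA$ together with homotopies. One can then choose $\scrF$ and $\scrG$ to be strictly unital on the $\scrB$ side. To do this, use the standard strict-unitalization step over a ring: replace $\scrA$ by a formal-diffeomorphic, strictly unital model, which is legitimate because the cohomological unit is represented by cocycles and the complexes are $K$-projective. The key algebraic input is that pullback and pushforward along $\scrF$ give mutually inverse quasi-isomorphisms
\begin{equation*}
\scrF^*: \mathit{CC}^*(\scrB,\scrB) \longrightarrow \mathit{CC}^*(\scrA,\scrB) \longleftarrow \mathit{CC}^*(\scrA,\scrA): \scrF_*.
\end{equation*}
This holds over any ring under $K$-projectivity, via the length filtration of Hochschild cochains, which is complete and has associated graded made of Hom complexes between tensor products of morphism spaces. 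The same holds for the normalized (strictly unital) Hochschild complexes of $\scrB$, which are quasi-isomorphic to the full ones.

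Second, the deformation itself will be built inductively. Suppose $\scrB_q$ and $\scrF_q$ have been constructed modulo $q^{k}$, strictly unital and satisfying the $A_\infty$-functor equations modulo $q^k$. Extending the operations $\mu_{\scrB_q}$ to order $q^k$ by arbitrary normalized cochains, the failure of the $A_\infty$ relations at order $k$ is a normalized Hochschild cocycle $o_B$ in $\mathit{CC}^{2}(\scrB,\scrB)$. Extending $\scrF_q$ arbitrarily gives a cochain $o_F \in \mathit{CC}^1(\scrA,\scrB)$ measuring the failure of the functor equation. Together these satisfy $\delta o_F = \scrF_* o_A - \scrF^* o_B$, where $o_A = 0$ because $\scrA_q$ is already a deformation. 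More invariantly, the pair $(o_B, o_F)$ is a cocycle in the mapping cone of $\scrF^*$, shifted appropriately. This cone is acyclic, so the pair is a coboundary $\delta(\beta,\phi)$ with $\beta$ normalized. Correcting the order-$q^k$ terms of $\mu_{\scrB_q}$ by $\beta$ and of $\scrF_q$ by $\phi$ kills both obstructions simultaneously.

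Curvature terms ($d = 0$) are included in this bookkeeping automatically, since they sit in the Hochschild complex as the length-zero part. Strict unitality of $\scrB_q$ is preserved because all corrections are taken in the normalized subcomplex. Passing to the limit over $k$ yields $\scrB_q$ and $\scrF_q$, both $q$-adically complete. $\scrF_q$ is a filtered quasi-isomorphism because its $q^0$ term is $\scrF$: a map of complete $q$-filtered complexes that is a quasi-isomorphism on the associated graded is a filtered quasi-isomorphism.

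The main obstacle is the acyclicity of the cone of $\scrF^*$ over a general ring, in the normalized version. I would handle this by the length filtration together with the $K$-projectivity convention: on each graded piece, the map is composition with tensor powers of the chain homotopy equivalence $\scrF^1$, hence a homotopy equivalence. Completeness of the product-totalized filtration then gives the quasi-isomorphism, via a Mittag-Leffler argument since the pieces are surjective inverse systems.
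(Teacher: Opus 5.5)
Your proposal is correct and is essentially the paper's argument: construct $\scrB_q$ and $\scrF_q$ order by order in $q$, with the obstruction at each step a cocycle in $\mathit{Cone}\big(\bar{C}^*(\scrB) \xrightarrow{\scrF^*} C^*(\scrA,\scrB)\big)$, which is acyclic because $\scrF^*$ is a quasi-isomorphism. Two small points. First, the preliminary strict-unitalization of $\scrA$ and the choice of a quasi-inverse are unnecessary, and replacing $\scrA$ would even interfere with $\scrF_q$ extending the given $\scrF$. Second, the length-filtration comparison should be run on the full Hochschild complex and then combined with the normalized-versus-full quasi-isomorphism. It cannot be applied to $\bar{C}^*(\scrB)$ directly, because its associated graded pieces are not related to those of $C^*(\scrA,\scrB)$ by tensor powers of $\scrF^1$.
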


\begin{proof}[Sketch of proof]
Let $C^*(\scrB)$ be the chain complex underlying the Hochschild cohomology $\mathit{HH}^*(\scrB)$, and $\bar{C}^*(\scrB) \subset C^*(\scrB)$ its reduced (unital) version. Through $\scrF$, one can consider $\scrB$ as an $A_\infty$-bimodule over $\scrA$. There is a corresponding Hochschild complex $C^*(\scrA,\scrB)$, underlying $\mathit{HH}^*(\scrA,\scrB)$, the Hochschild cohomology with coefficients in the $\scrA$-bimodule $\scrB$. This comes with a natural pullback map $\scrF^*: C^*(\scrB) \rightarrow C^*(\scrA,\scrB)$. The construction of both $\scrB_q$ and $\scrF_q$ proceeds order by order in $q$. At each step, the obstruction is a cohomology class in the cone complex
\begin{equation}
\mathit{Cone}\big( \bar{C}^*(\scrB) \xrightarrow{\scrF^*} C^*(\scrA,\scrB) \big);
\end{equation}
In our situation $\scrF^*$ is a quasi-isomorphism, so the cone is acyclic.
\end{proof}

From now on, consider an $A_\infty$-algebra $\scrA$ over $\bF_p$, and a deformation $\scrA_q$. We take the chain complex underlying the Hochschild homology of $\scrA_q$; then introduce a $q$-negative version as in \eqref{eq:negative-q}; and then invert $f$ as in \eqref{eq:localise-t}, but using the action of \eqref{eq:kaledin-class} instead of the previous $W$ (which is indeed the correct generalization). One similarly introduces modified versions of the chain complex underlying negative cyclic homology, using the $A_\infty$-version of the Getzler-Gauss-Manin connection. For all of these complexes, we use the same notation as before: starting with $A_q$ and $A_{u,q}$ for the Hochschild and cyclic complexes, and then as in \eqref{eq:invert-q}, \eqref{eq:invert-f}--\eqref{eq:localise-t-3} for the modified versions.

\begin{corollary} \label{th:fontaine-laffaille-2}
Let $p$ be an odd prime; $\scrA$ an $A_\infty$-algebra over $\bF_p$, with a deformation $\scrA_q$; and $f(t) \in \bF_p[t]$ a nonzero polynomial. Suppose that:

(i) $\scrA$ and its deformation $\scrA_q$ admit lifts $\tilde{\scrA}$, $\tilde{\scrA}_q$ to $\bZ/p^2$. 

(ii) $H^*(\scrA)$ is of finite rank over $\bF_p[t]$, where $t$ acts by multiplication with $[\mu_{\scrA_q}^{0,1}] \in H^0(\scrA^0)$.

(iii) $\scrA$ is homologically smooth over $\bF_p$. 

(iv) The class \eqref{eq:kaledin-class} satisfies $q^B f([\partial_q\mu_{\scrA_q}]) = 0 \in \mathit{HH}^{2B}(\scrA_q)$, for some $B \geq 0$.

(v) As in the corresponding assumption for Theorem \ref{th:fontaine-laffaille}.

(vi) $\mathit{HH}^*(\scrA) = 0$ for $\ast < 0$.

Then, the same conclusion as in Theorem \ref{th:fontaine-laffaille} holds, replacing $\iota_W$ by the action of \eqref{eq:kaledin-class}.
\end{corollary}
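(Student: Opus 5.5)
The plan is to reduce Corollary \ref{th:fontaine-laffaille-2} to Theorem \ref{th:fontaine-laffaille}. We replace the $A_\infty$-deformation $\scrA_q$ by a quasi-isomorphic one built from a dga with a central cocycle, and check that every hypothesis and every piece of structure transfers.

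\textbf{Strictification and the lift to $\bZ/p^2$.} First apply Lemma \ref{th:general-1} to $\tilde{\scrA}$, a one-object $A_\infty$-category over $\bZ/p^2$. This gives a dga $\tilde{\scrB}$ and a quasi-isomorphism $\tilde{\scrA}\rightarrow\tilde{\scrB}$. Lemma \ref{th:general-2} then gives a strictly unital deformation $\tilde{\scrB}_q$ and a filtered quasi-isomorphism $\tilde{\scrA}_q\rightarrow\tilde{\scrB}_q$. Because of the $K$-projectivity conventions, reducing mod $p$ yields the same data over $\bF_p$, compatibly with the lifts.

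\textbf{Reducing to a curvature-only deformation.} Next, following \cite[Section 3.3]{pomerleano-seidel23}, I would show that $\scrB_q$ (and $\tilde{\scrB}_q$) is equivalent, as a deformation, to one where only the curvature term $qW$ is nontrivial and $W$ is a central cocycle representing the Kaledin class \eqref{eq:kaledin-class}. The mechanism is order-by-order gauge equivalence. The Kaledin class lives in $\mathit{HH}^0$. The higher-order terms of the deformation are controlled by $\mathit{HH}^{*}$ in negative degrees, since $q$ has degree $2$ and the $q^k$ coefficient must have degree $2-2k$. Assumption (vi), $\mathit{HH}^{<0}(\scrA)=0$, kills these obstructions. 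The same argument should run over $\bZ/p^2$, which requires the vanishing of $\mathit{HH}^{<0}(\tilde{\scrA})$. I expect this to follow from (vi) by the coefficient long exact sequence associated to $0\rightarrow\bF_p\rightarrow\bZ/p^2\rightarrow\bF_p\rightarrow 0$. A remaining point is realizing $W$ as a strictly central element rather than merely a Hochschild class. For this I would use the dga model of \cite[Lemma 3.2.5]{pomerleano-seidel23}: replace $\scrB$ by a dga quasi-isomorphic to it in which a class in $\mathit{HH}^0$ becomes a central cocycle. Concretely, one option is to pass to the endomorphism dga of the diagonal bimodule, where Hochschild cochains act centrally up to homotopy, and rectify from there.

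\textbf{Transferring hypotheses and conclusion.} Hochschild and cyclic homology, together with the Getzler--Gauss--Manin connection and the cap action of $\mathit{HH}^*$, are invariant under filtered quasi-isomorphisms of deformations. Hence the complexes $q^{-1}A_{q^{-1},1/f}$ and $q^{-1}A_{u^{\pm 1},q^{-1},1/f}$ are identified, with the action of \eqref{eq:kaledin-class} corresponding to $\iota_W$. Hypotheses (ii), (iii), (iv), (v) are invariant in the same way; here $t$ acting by $[\mu^{0,1}]$ corresponds to multiplication by $W$. Theorem \ref{th:fontaine-laffaille} then applies, and its conclusion transports back.

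The main obstacle is the strictification step over $\bZ/p^2$: producing a central cocycle $W$, compatible with the reduction, whose curvature deformation is equivalent to $\tilde{\scrA}_q$. The concerns are working over a ring rather than a field and needing the vanishing of obstructions integrally. I would handle it by running the order-by-order argument on Hochschild complexes of $K$-projective modules and invoking (vi) together with the Bockstein sequence.
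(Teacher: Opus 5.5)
Your skeleton matches the paper's. You strictify $\tilde{\scrA}$ by Lemma \ref{th:general-1} over $\bZ/p^2$, transfer the deformation by Lemma \ref{th:general-2}, get $\mathit{HH}^{<0}(\tilde{\scrA})=0$ from (vi) via the coefficient long exact sequence, and land in Theorem \ref{th:fontaine-laffaille}. The gap is in the step you yourself flag as the main obstacle: producing a \emph{strictly} central cocycle $W$ over $\bZ/p^2$ that reduces correctly mod $p$. Your proposed mechanism does not work, for three reasons.
\begin{itemize}
\item The endomorphism dga of the diagonal bimodule is a model for the Hochschild cochain algebra $C^*(\scrA,\scrA)$, not for $\scrA$. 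Homotopy-centrality there tells you nothing about a dga model of $\scrA$.
\item Lemma 3.2.5 of \cite{pomerleano-seidel23} only strictifies the algebra and transfers the deformation; its two halves are exactly what Lemmas \ref{th:general-1} and \ref{th:general-2} replace. It does not make a prescribed class central.
\item More fundamentally, a class in $\mathit{HH}^0$ cannot in general be realized as a strictly central cocycle in any dga model. For a strictly central $W$ the Gerstenhaber bracket $[W,W]$ vanishes already on the cochain level, whereas for an arbitrary class it is a possibly nonzero element of $\mathit{HH}^{-1}$.
\end{itemize}
So (vi) has to be used precisely at this step, not only in your gauge-equivalence step. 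Your ordering is also circular: you first gauge to a curvature-only deformation, then make $W$ central. But on a dga, a deformation whose only new term is the curvature $qW$ satisfies the curved $A_\infty$-equations only if $W$ is already a central cocycle. The two have to be achieved together.

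The paper does both at once by applying a $\bZ/p^2$-version of \cite[Proposition 3.3.8]{pomerleano-seidel23}, whose hypothesis is exactly $\mathit{HH}^{<0}(\tilde{\scrA})=0$. That argument produces an $A_\infty$-structure which is linear over $(\bZ/p^2)[t]$, with $t$ corresponding to the Kaledin class. It then strictifies this structure: instead of the Yoneda embedding used in \cite{pomerleano-seidel23}, the paper applies Lemma \ref{th:general-1} a second time, \emph{over the ring $(\bZ/p^2)[t]$}. The output is a dga over $(\bZ/p^2)[t]$. There $t$ is a scalar, so $W=t\cdot 1$ is automatically a strictly central cocycle, compatibly with reduction mod $p$. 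This strictification over the polynomial ring, which makes centrality automatic, is the idea your proposal is missing.
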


\begin{proof} This follows the same strategy as \cite[Corollary 3.3.9]{pomerleano-seidel23}. All the statements are quasi-isomorphism invariant. Therefore, in view of Lemma \ref{th:general-1} applied with the coefficient ring $\bZ/p^2$, we may replace the given $\tilde{\scrA}$, and its mod $p$ reduction $\scrA$, by strictly unital $A_\infty$-structures (in fact dg structures, but that's not relevant at this point). Next, we apply Lemma \ref{th:general-2}, again with $\bZ/p^2$ as a ring, to transfer the given deformation to the strictly unital version. To save on notation, let's therefore assume from now on that all structures involved were strictly unital in the first place.

The next step is to apply a $\bZ/p^2$-coefficient version of \cite[Proposition 3.3.8]{pomerleano-seidel23} to $\tilde{\scrA}$ (this needs strict unitality, which is why we had to arrange that first). The assumption here is that the Hochschild cohomology $\mathit{HH}^*(\tilde{\scrA})$ is zero in negative degrees. We know this holds for $\scrA$, by (vi); the desired property then follows from the long exact sequence
\begin{equation} \label{eq:coefficient-les}
\cdots \rightarrow \mathit{HH}^*(\scrA) \stackrel{p}{\longrightarrow} \mathit{HH}^*(\tilde{\scrA}) \xrightarrow{\text{mod $p$ reduction}} \mathit{HH}^*(\scrA) \rightarrow \cdots
\end{equation}
The proof of \cite[Proposition 3.3.8]{pomerleano-seidel23} includes an appeal to the Yoneda embedding, which we replace with a second use of Lemma \ref{th:general-1}, this time over the ring $(\bZ/p^2)[t]$. Otherwise, the argument goes through as written. The outcome is a quasi-isomorphism which reduces us to the situation in Theorem \ref{th:fontaine-laffaille}. 
\end{proof}

\begin{remark}
In our application, all algebraic structures come from Floer theory, and are in fact defined over $\bZ$. One could use that to simplify some of the arguments above. The rings $\bZ$, $\bZ[t]$, and $\bZ[t,1/f(t)]$ all have finite global dimension, so one could use unbounded complexes of projective modules instead of the more general homotopically projective notion. Also, there is no real need for the step involving \eqref{eq:coefficient-les}, since the Hochschild cohomology with $\bZ/p^2$-coefficients can be determined geometrically through the closed-open string map. 
We have preferred to give a purely algebraic discussion in this section, because that is more self-contained.
\end{remark}

\section{Symplectic cohomology and its deformation\label{sec:sh}}
This section carries out the required constructions in Hamiltonian Floer theory. The first four parts are preparatory, summarizing material from \cite{pomerleano-seidel24}. 
Based on that, we introduce the map \eqref{eq:circ-module}, a deformed version of the Floer-theoretic ``cap product''; show that it turns deformed symplectic cohomology into a module over quantum cohomology; and finally, relate this structure to the endomorphism \eqref{eq:kappa-map}, by proving Lemma \ref{th:kappa-and-quantum-product}, which is then used to prove Corollary \ref{th:torsion-1b}. The convention throughout most of this section is that all constructions are carried out with $\bZ$-coefficients; only at the end do we switch to coefficients $\bC$ or $\bF_p$ (for the proof of Corollary \ref{th:torsion-1}), and a mod $p$ version of it).
%

\subsection{Symplectic cohomology}
We recall the Floer-theoretic setup from \cite[Section 2]{pomerleano-seidel24}. 
\begin{equation}
\parbox{37em}{$D \subset M$ is a smooth symplectic hypersurface Poincar{\'e} dual to $c_1(M)$.}
\end{equation}
(This is weaker than asking for a smooth symplectic divisor, but sufficient for the present discussion.) In a neighbourhood of $D$ there is a Hamiltonian circle action $(\rho_t)$ which fixes $D$ pointwise, and rotates the normal bundle anticlockwise. We denote its moment map by $h$ (the normalization conventions are that $\rho_t$ rotates the normal bundle with angle $2\pi t$; and $(\partial_t\rho_t)_{t=0}$ is the Hamiltonian vector field of $h$).
%
%

\begin{definition} \label{defn:complexclass} 
(i) Take $\sigma \in (\mathbb{Q} \setminus \mathbb{Z})_{>0} = (\mathbb{Q} \setminus \mathbb{Z}) \cap \mathbb{R}_{>0}$.  We say that a function $H: M \to \mathbb{R}$ is of \emph{slope} $\sigma > 0$ if, in some neighborhood of $D$, $H + \sigma h$ is constant.  

(ii) A compatible almost complex structure $J$ is \emph{locally $S^1$-invariant} if $(\rho_t)$ preserves $J$ in some neighborhood of $D$. This implies that $D$ is a $J$-complex submanifold.
\end{definition}

Let $\bar{H} = (\bar{H}_t): S^1 \times M \to \mathbb{R}$ be a time-dependent Hamiltonian which is of the same slope $\sigma$ for all times $t \in S^1$. We assume that: 
\begin{align} \label{eq:genericHams} 
\parbox{37em}{the one-periodic orbits $x: S^1 \rightarrow M$ of $\overline{H}$ which lie in $M \setminus D$, are nondegenerate (additionally, there are constant periodic orbits at every point of $D$, which form a Morse-Bott family).}
\end{align} 
Choose a $t$-dependent almost complex structure $\bar{J} = (\bar{J}_t)$ which is locally $S^1$-invariant for each $t$. Fixing two periodic orbits $x^\pm$, the Floer equation is
\begin{equation}
\label{eq:floer}
\begin{cases}
u = u(s,t): \mathbb{R} \times S^1 \longrightarrow M, \\
\lim_{s \to \pm \infty} u = x^\pm, \\
\partial_s u + \bar{J}_t (\partial_t u - X_{\bar{H}_t}) = 0.
\end{cases}
\end{equation} 
The Floer complex $\mathit{CF}(\bar{H},\bar{J})$ is freely generated by one-periodic orbits which lie in $M \setminus D$. For a sufficiently generic choice of $\bar{J}$, the Floer differential $\delta$ counts Floer trajectories in $M \setminus D$ (solutions of \eqref{eq:floer} contribute to the coefficient of $\delta$ which sends $x^+$ to $x^-$). Choose, once and for all, a $\smooth$ section of the anticanonical bundle of $M$, for some compatible almost complex structure, which cuts out $D$ positively and transversally. Then, each generator $x$ carries a Conley-Zehnder index $\mathrm{deg}(x)$, which gives our complex a $\bZ$-grading. Note that since the space $M \setminus D$ is non-compact, and the Hamiltonian has additional periodic orbits in $D$, some analysis is required to show that the standard compactness results hold. This relies on positivity of intersections; we refer to \cite[Section 2.1--2.3]{pomerleano-seidel24} for details. 

Suppose we are given time-dependent Hamiltonians $\bar{H}^\pm = (\bar{H}^{\pm}_t)$ of slopes $\sigma^{\pm}$, where $\sigma^- \geq \sigma^+$, and almost complex structures $\bar{J}^{\pm} = (\bar{J}^\pm_t)$, as in Definition \ref{defn:complexclass}, which are  sufficiently generic for defining the Floer complexes. We then choose an interpolating family of Hamiltonians $H = (H_{s,t})$ and compatible almost complex structures $J = (J_{s,t})$, such that:
\begin{itemize} \itemsep .5 em 
\item Each $H_{s,t}$ is constant along $D$, and has vanishing derivative along $D$; this ensures that the corresponding Hamiltonian vector field $X_{H_{s,t}}$ is zero along $D$. Each $J_{s,t}$ makes $D$ into an almost complex submanifold. (Note that these conditions are weaker than those in Definition \ref{defn:complexclass}.) 

\item $H_{s,t} = \bar{H}^\pm_t$ and $J_{s,t} = \bar{J}^\pm_t$ for $\pm s \gg 0$. 
\end{itemize} 
The continuation map equation replaces the last line in \eqref{eq:floer} with
\begin{equation}
\label{eq:continuation}
\partial_s u + J_{s,t} (\partial_t u - X_{H_{s,t}}) = 0.
\end{equation}
For sufficiently generic $(H, J)$, counting solutions to \eqref{eq:continuation} defines a continuation map
\begin{equation} 
\mathit{CF}^*(\bar{H}^{+},\bar{J}^+) \longrightarrow \mathit{CF}^*(\bar{H}^{-},\bar{J}^-). 
\end{equation}
We again refer to \cite[Section 2.3]{pomerleano-seidel24} for compactness of the moduli spaces. As a consequence of standard arguments about continuation maps and their compositions, Floer cohomology at any given slope is well-defined up to canonical isomorphism:
\begin{equation} 
\mathit{HF}^*(\sigma) \stackrel{\mathrm{def}}{=} H^*(\mathit{CF}^*(\bar{H},\bar{J})), 
\end{equation} 
where $\bar{H}$ is any Hamiltonian of slope $\sigma$ satisfying \eqref{eq:genericHams}, and $\bar{J}$ is generic. Moreover, these Floer groups come with canonical continuation maps increasing the slope. To define symplectic cohomology, one takes the slope to infinity: 
\begin{align} \label{eq:directlimit} 
\mathit{SH}^*(M\setminus D) \stackrel{\mathrm{def}}{=} \varinjlim_\sigma\, \mathit{HF}^*(\sigma). 
\end{align}

We need a chain level version of \eqref{eq:directlimit}. Fix an increasing sequence of slopes $\sigma_w$, $w = 0,1,\dots$, which go to $\infty$. For each $w$, we choose a pair $(\bar{H}_w,\bar{J}_w)$ with slope $\sigma_w$, which defines the Floer complex $\mathit{CF}^*(w) = \mathit{CF}^*(\bar{H}_w,\bar{J}_w)$. The telescope construction is the cochain complex
\begin{equation} \label{eq:telescope}
C = \Big( \bigoplus_{w \geq 0} CF(w) \Big) \oplus \eta \Big( \bigoplus_{w \geq 0} \mathit{CF}(w) \Big),
\end{equation}
where $\eta$ is a formal variable of degree $-1$. The differential $d_C$ is a sum of three components: 
\begin{itemize} \itemsep .5 em  
\item  On each piece \( \mathit{CF}(w) \), we have the Floer differential $\delta$. On each piece \( \eta \mathit{CF}(w) \), we have the shifted version $-\delta$. 
\item For each slope, one chooses a continuation map
\begin{equation} \label{eq:w-continuation}
  \xymatrix{
    \eta \mathit{CF}^*(w) \ar[r] 
    & \mathit{CF}^*(w+1).
  }
\end{equation}
\item Finally, there is a purely ``algebraic" term $-\!\mathit{id}: 
\eta \mathit{CF}^*(w) \rightarrow \mathit{CF}^*(w)$.
\end{itemize} 
On cohomology one recovers \eqref{eq:directlimit}, $H^*(C,d_C) = \mathit{SH}^*(M \setminus D)$.

\subsection{Deformed symplectic cohomology}
Assume that all data needed to define the telescope complex \eqref{eq:telescope} have been fixed, where the slopes satisfy the following more specific condition: 
\begin{equation}
\label{eq:integerinbetween}
\text{there is at least one integer between } \sigma_w \text{ and } \sigma_{w+1}.
\end{equation}
We wish to construct a formal deformation
\begin{equation}
\label{eq:Cq}
C_q = C[[q]], \quad d_{C_q} = d_C + O(q): C_q \longrightarrow C_q
\end{equation}
where $q$, as usual, is a variable of degree $2$. The construction involves certain parameter spaces $\overline{\frakD}_m$, $m \geq 1$, which we now recall. Consider divisors of degree $m$ on the cylinder, which means (unordered) collections of marked points with multiplicities, written as
\begin{equation} \label{eq:divisor}
\Sigma = (\Sigma_z)_{z \in \mathbb{R} \times S^1} \quad \text{with } \Sigma_z \geq 0,\quad \sum_z \Sigma_z = m.
\end{equation}
Such divisors, up to $\bR$-translation, are parametrized by
\begin{equation}
\frakD_m = \mathit{Sym}^m(\mathbb{R} \times S^1)/\mathbb{R}.
\end{equation}
Given a partition $\Pi$ of $m$, let $\frakD_\Pi \subset \frakD_m$ be the locus where the marked points in $\Sigma$ coincide exactly as dictated by $\Pi$; it is is a locally closed submanifold with $\mathrm{dim}\,\frakD_\Pi = 2|\Pi|-1$. These loci form a stratification of $\frakD_m$. The main stratum, associated to $\Pi = (1,\dots,1)$, is the open subset where $\Sigma$ consists of $m$ pairwise distinct points. 

One compactifies $\frakD_m$ to a manifold with corners $\overline{\frakD}_m$, by allowing the cylinder to break into several components, each carrying a nontrivial divisor:
\begin{equation} 
\overline{\frakD}_m = \!\!\!\! \coprod_{\substack{R \geq 1 \\ m^1 + \cdots + m^R = m}} \!\!\!\! \frakD_{m^1} \times \cdots \times \frakD_{m^R}. \label{eq:thecompactificationDm}
\end{equation} 

Over $\frakD_m$ there is a universal curve: a fibration whose fibres are cylinders, identified with $\bR \times S^1$ up to translation, and carrying a divisor \eqref{eq:divisor}. The universal family extends to $\overline{\frakD}_m$; over each product in \eqref{eq:thecompactificationDm}, the fibres of this extension are disjoint unions of the corresponding universal curves over the $\frakD_{m^k}$ factors.

Fix $w\geq 0$, $m>0$. On the universal curve over $\frakD_m$, choose continuation map data which agree with 
\begin{equation} \label{eq:fibrewise-asymptotics}
(\bar{H}_w, \bar{J}_w) \text{ for $s \gg 0$, respectively with } (\bar{H}_{w+m}, \bar{J}_{w+m}) \text{ for $s \ll 0$}.
\end{equation}
For the purposes of transversality, we require that the region over which \eqref{eq:fibrewise-asymptotics} holds, on each fibre $C$ of the universal curve, is disjoint from the divisor $\Sigma$. These data should extend smoothly to $\overline{\frakD}_m$, compatibly with the boundary decomposition \eqref{eq:thecompactificationDm} (see \cite[Section 2.3]{pomerleano-seidel24} for a precise formulation of this consistency condition). 
%
%
Take periodic orbits $x^{+}$ for $\bar{H}_w$ and $x^{-}$ for $\bar{H}_{w+m}$, both in $M \setminus D$. We consider the space $\frakD_m(x^-,x^+)$ of pairs $(r,u)$, where: \begin{itemize} 
\itemsep.5em 
\item $r$ is a point in $\frakD_m$. Write $C$ for the fibre of the universal curve at that point, and let $\Sigma$ be its natural divisor;
\item $u:C \to M$ is a solution to the continuation equation for the chosen datum on $C$, with limits $x^{\pm}$, and which satisfies
\begin{equation} \label{eq:intersectd}
u^{-1}(D) = \Sigma \text{ as an equality of divisors.} 
\end{equation}
This means that $u^{-1}(D)$ is the support of $\Sigma$, and that the intersection multiplicity at any point $z$ is equal to $\Sigma_z$.
\end{itemize}
The space $\frakD_m(x^-,x^+)$ has expected dimension 
\begin{align} 
\label{eq:vdim} 
\operatorname{deg}(x^{-})-\operatorname{deg}(x^{+})+2m-1.
\end{align}
When setting up the analysis for these spaces, one actually considers each stratum $\frakD_{\Pi}$, and the associated condition \eqref{eq:intersectd}, separately. If \eqref{eq:vdim} is $\leq 1$, which is the situation that matters to us, transversality will ensure that only the main stratum gives a nonempty moduli space. 

Next, we state the precise transversality requirements placed on our choice of data (essentially the same as in \cite[Section 2.3]{pomerleano-seidel24}, even though we have chosen a different order of presentation). We will not explain how these technical conditions enter into the construction; they are recorded for reference in Section \ref{sec:modulestructure} below.

\begin{condition}[Stratification] \label{th:stratification}
Take a partition $\Pi$ of $m$. Consider all $(r,u)$ where $r \in \frakD_\Pi$, and which satisfy a weakened version of \eqref{eq:intersectd}: 
\begin{equation} \label{eq:mult-leq}
u^{-1}(D) \leq \Sigma \text{ as divisors.}
\end{equation}
(In other words, the intersection multiplicity of $u$ and $D$ at any point $z$ is $\leq \Sigma_z$. This includes allowing multiplicity zero, which is when $u(z) \notin D$. Hence, part of \eqref{eq:mult-leq} is that $u^{-1}(D)$ is a subset of, but not necessarily equal to, the support of $\Sigma$.) For every $\Pi$ and every choice of intersection multiplicities, we require that the space is regular, hence has the expected dimension 
\begin{equation}
\mathrm{deg}(x^-) - \mathrm{deg}(x^+) + 2|\Pi| - 1.
\end{equation}
\end{condition}

\begin{condition}[Generic bubbling] \label{th:bubble}
Consider the main stratum (where all points of $\Sigma$ are distinct). Look at solutions $(r,u)$ in this stratum, satisfying \eqref{eq:mult-leq}, which means: at each point $z$ in $\Sigma$, we either have $u(z) \notin D$, or else $u$ intersects $D$ transversally (with multiplicity $1$). In addition, we require that there should be a point $z$ in $\Sigma$ with $u(z) \notin D$, and a sphere bubble $v: \bC P^1 \rightarrow M$, pseudo-holomorphic for the almost complex structure associated to $(r,z)$, such that $v \cdot D = 1$, and $u(z)$ lies on the image of $v$ (which is a codimension $2$ condition). These spaces should be empty whenever \eqref{eq:vdim} is $\leq 1$.
\end{condition}

Under these conditions, the zero-dimensional spaces $\frakD_m(x^-,x^+)$ are regular and finite \cite[Proposition 2.3.2(ii)]{pomerleano-seidel24}. Counting points in those moduli spaces gives rise to operations
\begin{equation} \label{eq:dm-map}
d_m: \mathit{CF}^*(\bar{H}_w,\bar{J}_w) \longrightarrow CF^{*+1-2m}(\bar{H}_{w+m},\bar{J}_{w+m}).
\end{equation} 
We extend this by taking $d_0 = \delta$ the Floer differential on $\mathit{CF}(w)$. Then \cite[Proposition 2.3.2(iii)]{pomerleano-seidel24} 
\begin{equation} 
\label{eq:dsquarestozero} 
\sum_{i+j = m} d_i d_{j}=0.
\end{equation} 
Geometrically, the terms with $i,j>0$ come from the codimension one faces of \eqref{eq:thecompactificationDm}, and the remaining ones from splitting off of Floer trajectories. 

We will also need a variant, which starts with the space parametrizing divisors \eqref{eq:divisor} without dividing by translation,
\begin{equation}
\frakD_m^{\dag} = \mathit{Sym}^m(\mathbb{R} \times S^1). 
\end{equation}
The corresponding compactification is
\begin{equation} \label{eq:c-strata}
\bar\frakD_m^\dag = \!\!\!\! \coprod_{\substack{R \geq 1,\; 1 \leq c \leq R \\ m^1 + \cdots + m^R = m}}
\frakD_{m^1} \times \cdots \times \frakD_{m^c}^\dag \times \cdots \times \frakD_{m^R}.
\end{equation}
We again choose continuation map data on the universal curve, with the analogue of \eqref{eq:fibrewise-asymptotics} being that they agree with
\begin{align} \label{eq:avoidrepetition} 
(\bar{H}_w,\bar{J}_w) \text{ for } s \gg 0, \text{ and with } (\bar{H}_{m+w+1},\bar{J}_{m+w+1}) \text{ for } s \ll 0. 
\end{align} 
As before, the data are chosen to extend smoothly over the compactification, compatibly with the boundary decompositions (hence with the previous choices over $\frakD_m$) in \eqref{eq:c-strata}. In the special case $m = 0$, where $\frakD_m^\dag = \mathit{point}$, we use the data previously used to define the maps \eqref{eq:w-continuation} in the telescope construction. 
Generically, counting points in the resulting zero-dimensional spaces $\frakD_m^\dag(x^-,x^+)$ defines operations
\begin{equation} 
d_m^{\dag}: \mathit{CF}^*(\bar{H}_w,\bar{J}_w) \longrightarrow CF^{*-2m}(\bar{H}_{w+m+1},\bar{J}_{w+m+1})
\end{equation}
which for $m = 0$ agree with \eqref{eq:w-continuation}. These satisfy
\begin{equation} \label{eq:cm-equation}
\sum_{i+j=m} d_i^\dag d_{j} - d_i d_{j}^\dag = 0;
\end{equation}
except for the Floer cylinder bubbling ($i = 0$ or $j = 0$), the two possible terms in \eqref{eq:cm-equation} reflect the fact that the $\frakD^\dag$ factor can appear either on the left or right in the product structure of the boundary faces \eqref{eq:c-strata} of codimension $1$ (with $R = 2$).

Finally, we assemble these operations into the differential \eqref{eq:Cq}:
\begin{itemize} \itemsep .5 em 
\item  on each piece \( \mathit{CF}(w) \) the maps $d_m$ contribute a term 
\begin{equation} 
q^m d_m: \mathit{CF}(w)[[q]] \longrightarrow \mathit{CF}(w+m)[[q]]
\end{equation} 
Correspondingly, we have $-q^m d_m: \eta\mathit{CF}(w)[[q]] \rightarrow \eta\mathit{CF}(w+m)[[q]]$.

\item The operations $d_m^{\dag}$ contribute terms 
\begin{equation} 
q^m d_m^\dag: \eta \mathit{CF}(w)[[q]] \longrightarrow \mathit{CF}(w+m+1)[[q]]. 
\end{equation}

\item We use the same algebraic component $-\!\mathit{id}: \eta \mathit{CF}^*(w)[[q]] \to \mathit{CF}^*(w)[[q]]$ as before.
\end{itemize}
The relations \eqref{eq:dsquarestozero} and \eqref{eq:cm-equation} imply $d_{C_q}^2 = 0$, and 
\begin{equation} 
\mathit{SH}^*_q(M,D) \stackrel{\mathrm{def}}{=} H^*(C_q,d_{C_{q}}). 
\end{equation}

\subsection{The equivariant version\label{subseq:equivariant}}
$S^1$-equivariant deformed symplectic cohomology $\mathit{SH}^*_{u,q}(M,D)$ is constructed in \cite[Section 7]{pomerleano-seidel24}. Here, we will only give a brief reminder, so as to fix the notation. As usual, $u$ is a formal variable of degree $2$. The (undeformed) symplectic cohomology has an $S^1$-equivariant version, where $S^1$ rotates the parametrization of the circle. The underlying graded $\bZ[[u]]$-module is
\begin{equation}
C_u = C[[u]], \quad d_{C_u} = d_C + O(u): C_u \longrightarrow C_u.
\end{equation}
The $u^1$ term of the differential is constructed from two pieces of data, which are maps
\begin{align}
\label{eq:bv}
& d_0^1: \mathit{CF}^*(w) \longrightarrow \mathit{CF}^{*-1}(w), \\
\label{eq:bv-2}
& d_0^{1,\dag}: \mathit{CF}^*(w) \longrightarrow \mathit{CF}^{*-2}(w+1).
\end{align}
The more familiar \eqref{eq:bv} is the BV or loop rotation operator for the time-dependent Hamiltonian $\bar{H}_w = \bar{H}_{w,t}$. It is defined using a family of continuation map equations parametrized by $\theta \in S^1$, whose asymptotic behaviour is
\begin{equation} \label{eq:theta-rotation}
(\bar{H}_{w,t},\bar{J}_{w,t}) \text{ for } s \gg 0, \text{ and } (\bar{H}_{w,t-\theta},\bar{J}_{w,t-\theta}) \text{ for } s \ll 0. 
\end{equation}
The condition on limits of solutions is modified accordingly, to $\lim_{s \rightarrow -\infty} u(s,t) = x^-(t-\theta)$. The BV operator commutes with continuation maps up to chain homotopy, and \eqref{eq:bv-2} is precisely that homotopy. In the next step, one has that
\begin{equation}
d_0^1 d_0^1: \mathit{CF}^*(w) \longrightarrow \mathit{CF}^{*-2}(w)
\end{equation}
is nullhomotopic. The construction of the homotopy involves a parameter space which (when compactified) is a solid torus, whose boundary $S^1 \times S^1$ corresponds to the product of two copies of the space underlying $d_0^1$. The outcome is part of the $u^2$ term of $d_{C_u}$, and so on.

Deformed $S^1$-equivariant symplectic cohomology combines that idea with the $q$-deformation. The underlying graded $\bZ[[u,q]]$-module is 
\begin{equation}
C_{u,q} = C_q[[u]] = C_u[[q]] = C[[u,q]], \quad d_{C_{u,q}} = d_{C_q} + O(u) = d_{C_u} + O(q):
C_{u,q} \longrightarrow C_{u,q}.
\end{equation}
We'll only look at the first piece of new geometric information. Consider the compositions
\begin{equation} \label{eq:u1q1}
d_1 d_0^1, \;\; d_0^1 d_1: \mathit{CF}^*(w) \longrightarrow \mathit{CF}^{*-2}(w+1),
\end{equation}
where $d_1$ is as in \eqref{eq:dm-map}. Each of them can be thought of as a single operation defined using cylinders broken into two pieces, and parameters $(t,\theta) \in S^1 \times S^1$. One piece carries a marked point (the unique point of $\Sigma$) which, breaking translation-invariance, is of the form $z = (0,t)$ for an arbitrary $t \in S^1$; this is $d_1$. The other piece, underlying $d_0^1$, is the one where the rotation of the asymptotic behaviour \eqref{eq:theta-rotation}, parametrized by $\theta$, takes place. Here's a schematic picture of the situation, as well as what happens when the two pieces are glued together:
\begin{equation}
\includegraphics{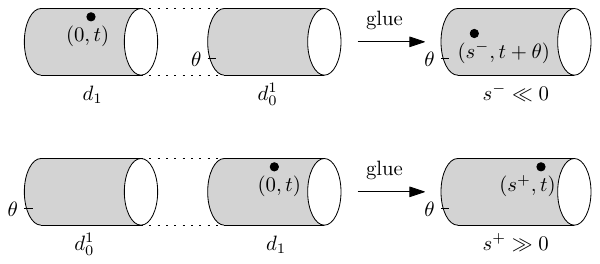}
\end{equation}
One can interpolate between the two compositions by an operation with parameter space $\bR \times S^1 \times S^1$ (its ends will be identified with the previous $S^1 \times S^1$ in two different ways, but that is unproblematic). The outcome is a chain homotopy between the two maps in \eqref{eq:u1q1}, denoted by
\begin{equation}
d_1^1: \mathit{CF}^*(w) \longrightarrow \mathit{CF}^{*-3}(w+1),
\end{equation}
which enters into the $u^1q^1$ terms of $d_{C_{u,q}}$.

\subsection{The endomorphism\label{sec:kappa}}
Our next task is to recall, from \cite[Section 8.1]{pomerleano-seidel24}, the construction of the endomorphism \eqref{eq:kappa-map}; geometrically, it is very similar to the definition of $d_{C_q}$. We use cylinders equipped with a divisor \eqref{eq:divisor}, which is allowed to be empty, together with one more point $z^* \in \bR \times \{0\}$. The pair $(\Sigma,z^*)$ is considered up to common $\bR$-translation, so the parameter spaces, for $m \geq 0$, are
\begin{equation}
\frakA_m = (\mathit{Sym}^m(\bR \times S^1) \times \bR)/\bR;
\end{equation}
they comes with compactifications $\bar\frakA_m$ as before. For each $w,m \geq 0$, we choose continuation map data on the universal curve over $\frakA_m$. The asymptotics are \eqref{eq:avoidrepetition}, and instead of \eqref{eq:intersectd} we ask that
\begin{equation} \label{eq:star-intersection}
u^{-1}(D) = \Sigma + z^*.
\end{equation}
In particular, $u(z^*) \in D$, and the intersection multiplicity there is $\Sigma_{z^*} + 1$. Let $\frakA_m(x^-,x^+)$ be the resulting moduli space of pairs $(r,u)$. The outcome of counting points in zero-dimensional moduli spaces are maps (for $w,m \geq 0$)
\begin{equation} \label{eq:iota-m-map}
\begin{aligned}
& a_m: \mathit{CF}^*(w) \longrightarrow \mathit{CF}^{*-2m}(w+m+1), \\
& \!\!\! \sum_{i+j = m} d_i a_j - a_i d_j = 0.
\end{aligned}
\end{equation}

\begin{remark} 
One can break translation-invariance by setting $z^* = (0,0)$. This gives an isomorphism $\frakA_m \iso \mathit{Sym}^m(\bR \times S^1) = \frakD_m^\dag$, and that extends to compactifications. This explains the similarity between \eqref{eq:cm-equation} and \eqref{eq:iota-m-map}. However, it would not be useful to make that identification of parameter spaces in our discussion, because each appears as part of a different larger structure.
\end{remark}

Again, there is a version where one does not divide by translation,
\begin{equation}
\frakA_m^\dag = \mathit{Sym}^m(\bR \times S^1) \times \bR.
\end{equation}
The compactification is
\begin{equation} \label{eq:a-dag-compactified}
\begin{aligned}
\bar\frakA_m^\dag & = \!\!\!\!
\coprod_{\substack{R \geq 1,\; 1 \leq c \leq R \\ m^1 + \cdots + m^R = m}} 
\frakD_{m^1} \times \cdots \times \frakA_{m^c}^\dag \times \cdots \times \frakD_{m^R}
\\ &
\sqcup \!\!\! \coprod_{\substack{R > 1,\; 1 \leq b < c \leq R \\ m^1 + \cdots + m^R = m}} 
\frakD_{m^1} \times \cdots \times \frakA_{m^b} \times \cdots \times \frakD_{m^c}^\dag \times \cdots \times \frakD_{m^R} 
\\ &
\sqcup \!\!\! \coprod_{\substack{R > 1,\; 1 \leq c < b\leq R \\ m^1 + \cdots + m^R = m}} \frakD_{m^1} \times \cdots \times \frakD_{m^c}^\dag \times \cdots \times \frakA_{m^b} \times \cdots \times \frakD_{m^R}. 
\end{aligned}
\end{equation}
The boundary faces in the second line of \eqref{eq:a-dag-compactified} appear as limits when $z^* \rightarrow -\infty$, and correspondingly for $z^* \rightarrow +\infty$ and the third line. For the choice of continuation data on the universal curve, one modifies \eqref{eq:avoidrepetition} to 
\begin{align} \label{eq:avoidrepetition2} 
(\bar{H}_w,\bar{J}_w) \text{ for } s \gg 0, \text{ and } (\bar{H}_{m+w+2},\bar{J}_{m+w+2}) \text{ for } s \ll 0. 
\end{align} 
Using these spaces one produces maps
\begin{equation} \label{eq:iota-m-map-2}
\begin{aligned}
& a_m^{\dag}: \mathit{CF}^*(w) \longrightarrow \mathit{CF}^{*-2m-1}(w+m+2), \\
& \!\!\! \sum_{i+j=m} d_i^\dag a_j - a_i d_j^\dag + a_i^\dag d_j + d_i a_j^\dag  = 0.
\end{aligned}
\end{equation}
This equation contains terms $d_0 = \delta$ corresponding to splitting off a Floer cylinder. The remaining terms come from the codimension $1$ faces in \eqref{eq:a-dag-compactified}.
\begin{itemize} \itemsep.5em
\item The faces of the form $\frakA_i \times \frakD_j^\dag$ respectively $\frakD_i^\dag \times \frakA_i$, which appear when a cylinder carrying the marked point $z^*$ splits off on the left or right, lead to terms $a_i d_j^\dag$ and $d_i^\dag a_j$.

\item If a cylinder splits off on the left or the right, carrying part of $\Sigma$ but not the marked point $z^{\ast}$, we get faces of the form $\frakD_i \times \frakA_j^\dag$ or $\frakA_i^{\dag} \times \frakD_j$, leading to terms $d_i a_j^\dag$, $a_i^\dag d_j$.
\end{itemize} 
Take
\begin{equation} \label{eq:define-a}
\begin{aligned}
& a_{C_q}: C_q \longrightarrow C_q, \\
& a_{C_q}(x) = \sum_m q^m a_m(x), \;\;
a_{C_q}(\eta x) = \sum_m q^m (\eta a_m(x) + a_m^\dag(x))
\quad \text{for } x \in \mathit{CF}^*(w)[[q]].
\end{aligned}
\end{equation}
It follows from \eqref{eq:iota-m-map}, \eqref{eq:iota-m-map-2} that this is a chain map. We define \eqref{eq:kappa-map} to be the induced map on cohomology.

\subsection{The module structure\label{sec:modulestructure}}
For the first time, our exposition moves beyond the literature (to a small extent: a construction analogous to the one we'll explain, but using the thimble instead of the cylinder, is in \cite[Section 3.1]{pomerleano-seidel24}). Therefore, we will give more details than before. The construction depends on a choice of pseudo-cycle:
\begin{equation} \label{eq:transverse-pseudo-cycle}
\parbox{37em}{
Let $c_P: P \rightarrow M$ be a pseudo-cycle which is transverse to $D$, in the following sense. First of all, the map $c_P$ is transverse to $D$. Let $\Omega_P \subset M$ be the limit set of $P$ \cite[Definition 6.5.1]{mcduff-salamon}. By definition, this is of dimension $\leq \mathrm{dim}(P) - 2$, which means that it is contained in the image of a smooth map from a manifold of that dimension to $D$. Our second condition is that the subset $\Omega_P \cap D \subset D$ should be of dimension $\leq \mathrm{dim}(P) - 4$, in the same sense; this is equivalent to the notion of ``weak transversality'' in \cite[Definition 6.5.10]{mcduff-salamon}.
}
\end{equation}
We need a bit of classical pseudo-holomorphic curve background (see e.g.\ \cite[Chapter 6]{mcduff-salamon} for this kind of argument in general, and \cite[Lemma 3.3.2]{pomerleano-seidel24} for the more specific application here).

\begin{lemma} \label{th:bubble-dim}
Fix an integer $e \geq 1$. For $J$ which makes $D$ into an almost complex submanifold, let $\bar\frakM_e(P)$ be the space of genus zero two-pointed $J$-holomorphic stable maps $v: (S,\zeta_0,\zeta_1) \rightarrow M$, with $v \cdot D = e$, such that $v(\zeta_1) \in \overline{c_P(P)}$. Consider the evaluation map at the other marked point,
\begin{equation} \label{eq:1-evaluation}
\bar\frakM_e(P) \longrightarrow M, \quad v \longmapsto v(\zeta_0).
\end{equation}
For generic $J$, the image of \eqref{eq:1-evaluation} is of dimension $\leq \mathrm{dim}(P)+2e-2$, in the same sense as in \eqref{eq:transverse-pseudo-cycle}. Moreover, its intersection with $D$ is of dimension $\leq \mathrm{dim}(P) +2e-4$.
\end{lemma}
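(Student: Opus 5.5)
We would argue by a stratification and dimension count, in the style of \cite[Section 6.7]{mcduff-salamon}. A stable map in $\bar\frakM_e(P)$ has a tree of components. Its top stratum is a simple (or multiply covered) sphere through $\overline{c_P(P)}$ and the evaluation point. Deeper strata are trees whose components have intersection numbers $e_k \geq 0$ with $D$ summing to $e$; note $c_1 = \mathrm{PD}[D]$, so $c_1(v_k) = e_k$. Monotonicity forces every nonconstant component to have $e_k > 0$.

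First we replace each stable map by its underlying simple curves, discarding multiple covers and ghost components. This can only lower the $D$-intersection and the Chern numbers, and does not change the image of the evaluation map. Only the chain of simple spheres joining $\zeta_1$ to $\zeta_0$ matters; side branches can be dropped. So it suffices to bound, for each chain of simple spheres $v_1,\dots,v_\ell$ with $\sum c_1(v_k) = e' \leq e$, the image of the evaluation at the last point. We use evaluation maps at the endpoints and $v_k(\infty) = v_{k+1}(0)$. For generic $J$ among those making $D$ almost complex, this is a transverse iterated fibre product. Its dimension is
\[
\dim(P) + \sum_k \big(2n + 2c_1(v_k) + 4 - 6\big) + 2 - 2n(\ell-1) - 2n - 2(\ell - 1),
\]
which after simplifying (each sphere contributes $2c_1+2$ after quotienting by automorphisms fixing two points, minus $2n$ per matching condition) gives $\dim(P) + 2e' - 2\ell + \ldots \leq \dim(P) + 2e - 2$. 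Transversality of these evaluation maps to the pseudo-cycle, and to each other, is standard for simple curves. The one point to check is that it remains achievable under the constraint on $J$ along $D$. We would handle this by noting that a simple sphere not contained in $D$ has an injective point outside $D$, where $J$ can be perturbed freely.

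The second claim is the main obstacle. There are two ways the image can meet $D$. The first is a chain with no component contained in $D$ whose endpoint lands in $D$. Here we impose that incidence as an extra codimension-$2$ condition on the evaluation map, which is generically transverse; the bound drops by $2$, to $\dim(P)+2e-4$. The second is a chain with components inside $D$. For these we run the dimension count within $D$, using $J|_D$, which is generic among almost complex structures on $D$. A sphere in $D$ has $c_1(TD) = c_1(TM) - c_1(N_D) = e_k - (D\cdot v_k)$, and we must compute using the index in $D$, which is smaller by $2$ times the normal-bundle contribution plus $2$ in real dimension. We also need the hypothesis on the limit set $\Omega_P \cap D$ from \eqref{eq:transverse-pseudo-cycle}. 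It enters when the first component meets $D$ at $\zeta_1$, or passes through limit points of $P$ lying in $D$: the incidence is then with a set of dimension $\leq \dim(P)-4$ inside $D$, and this gains the needed extra $2$.

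We expect the bookkeeping of components lying in $D$, which may have negative normal degree, to be the delicate step. The plan is to treat the maximal sub-chains lying in $D$ as a single stratum, and use that the total intersection number $e$ still bounds their $c_1(TM)$.
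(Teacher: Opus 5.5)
\textbf{Gap: components contained in $D$.} Your strategy is the standard one the paper has in mind: reduce to chains of simple spheres, count dimensions, and treat spheres contained in $D$ inside $D$, using that a generic $J$ restricts to a generic $J|_D$. (The paper gives no proof, only pointing to \cite[Chapter 6]{mcduff-salamon} and \cite[Lemma 3.3.2]{pomerleano-seidel24}.) However, you stop exactly at the step that distinguishes this lemma from the textbook count, and the obstacle you anticipate there does not exist.

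Since $D$ is Poincar\'e dual to $c_1(M)$, every class satisfies $D\cdot A=\langle c_1(M),A\rangle$. So for a nonconstant sphere $v_k\subset D$ the normal degree is $c_1(N_D)(v_k)=e_k=\omega(v_k)\geq 1$, never negative. By adjunction, $c_1(TD)(v_k)=c_1(TM)(v_k)-c_1(N_D)(v_k)=0$. Your own formula $c_1(TD)=e_k-D\cdot v_k$ already says this, since $e_k$ is by definition $D\cdot v_k$.

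Hence, for generic $J|_D$, simple two-pointed spheres in $D$ form a family of dimension $(2n-2)+0+4-6=2n-4$. This is $2e_k+2\geq 4$ less than the count $2n+2e_k-2$ in $M$. The remaining costs are:
\begin{itemize}
\item the incidence with $c_P(P)\cap D$ (of dimension $\dim P-2$, as $c_P$ is transverse to $D$) has codimension $2n-\dim P$ inside $D$, the same as in $M$;
\item a node joining two components in $D$ costs $2n-2$;
\item a node joining a component in $D$ to one outside $D$ costs $2n$, with transversality supplied by the outside component, where your injective-point argument applies.
\end{itemize}
Adding up, a chain of $\ell$ simple spheres, $|I|\geq 1$ of which lie in $D$, moves in a family of dimension at most $\dim P+2(e-|I|)-2\ell-2\leq\dim P+2e-6$. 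This gives both claims for such chains with room to spare. Without this computation neither claim is established for chains having a component inside $D$. Your first paragraph's transversality argument also only applies to components not contained in $D$.

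\textbf{Smaller points.}
\begin{itemize}
\item Your reduction tacitly assumes $\ell\geq 1$. You also need the stable maps in which $\zeta_0$ and $\zeta_1$ are joined only through ghost components, with all the energy in a side branch. There $v(\zeta_0)=v(\zeta_1)\in\overline{c_P(P)}$, so this part of the image has dimension $\leq\dim P\leq\dim P+2e-2$. Its intersection with $D$ lies in $(c_P(P)\cap D)\cup(\Omega_P\cap D)$, of dimension $\leq\dim P-2\leq\dim P+2e-4$. This is where $e\geq 1$ and the transversality of $c_P$ to $D$ from \eqref{eq:transverse-pseudo-cycle} are used, with no slack when $e=1$.
\item The gain of $2$ you attribute to the hypothesis on $\Omega_P\cap D$ is not what makes the count for chains in $D$ work. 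Given the adjunction slack, that hypothesis only serves to make the incidence inside $D$ a transverse condition for maps into $D$.
\item Your displayed dimension formula simplifies to $\dim P+2e'-4\ell+4$, which is wrong already for $\ell=1$. The correct count is $\dim P+\sum_k(2n+2c_1(v_k)-2)-2n(\ell-1)-(2n-\dim P)=\dim P+2e'-2\ell$.
\item Before invoking transversality of the edge evaluations, you should shortcut the chain whenever two of its components have the same image, so that it consists of distinct simple spheres.
\end{itemize}
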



We will use the same parameter spaces as in Section \ref{sec:kappa}, but with the adjacency condition at $z^*$ governed by \eqref{eq:transverse-pseudo-cycle}, leading to different moduli spaces, transversality conditions, and so on. To keep that distinction clear, we find it convenient to change the notation and to write
\begin{equation}
\frakB_m = \frakA_m, \quad \frakB_m^\dag = \frakA_m^\dag.
\end{equation}
Choose continuation data over $\frakB_m$, satisfying \eqref{eq:fibrewise-asymptotics}. We consider triples $(p,r,u)$, where $p \in P$, $r \in \frakB_m$, and $u: C \rightarrow M$ satisfies \eqref{eq:intersectd} as well as
\begin{equation} \label{eq:adjacency-condition}
u(z^*) = c_P(p).
\end{equation}
This gives rise to moduli spaces $\frakB_m(P,x^{-},x^{+})$ of expected dimension
\begin{align} 
\label{eq:mainstratumdim0} 
\operatorname{deg}(x^{-})-\operatorname{deg}(x^{+})+2m-\operatorname{codim}(P).
\end{align}


We now turn to the required transversality conditions (analogous to those in \cite[Section 3.1]{pomerleano-seidel24}). The first one is the counterpart of Condition \ref{th:stratification}.

\begin{condition}[Stratification] \label{th:a-stratification}
Take some $0 \leq m^* \leq m$, and a partition $\Pi$ of $m-m^*$. Consider the stratum of $\frakB_m$ where $\Sigma_{z^*} = m^*$, and where $\Pi$ describes how the remaining points of $\Sigma$ are grouped. We consider triples $(p,r,u)$ where $r$ lies in that stratum, and $u$ satisfies \eqref{eq:mult-leq} as well as \eqref{eq:adjacency-condition}. As in Condition \ref{th:stratification}, the requirement is that for each choice of intersection multiplicities, the resulting moduli space should be regular, hence of dimension $\mathrm{deg}(x^-) - \mathrm{deg}(x^+) + 2|\Pi| - \operatorname{codim}(P)$. 

There is a necessary addendum: by definition of pseudo-cycle, there are maps from manifolds of dimension $\leq \mathrm{dim}(P) - 2$, which cover the limit points of $c_P$. We impose the corresponding requirement on those maps (with the appropriate change in dimensions).
\end{condition}

To be entirely precise, there are two sub-cases in Condition \ref{th:a-stratification}: if $u(z^*) \notin D$, then \eqref{eq:adjacency-condition} is an intersection condition inside $M$; and if $u(z^*) \in D$ (which can only happen if $m^*>0$), we think of it as a condition inside $D$. Thanks to the transversality assumption on the pseudo-cycle \eqref{eq:transverse-pseudo-cycle}, the dimension is the same in both cases. 

\begin{condition}[Bubbling at $z^*$] \label{th:a-chain}
Take $0 < m^* \leq m$. We consider $(r,u,v)$, where: $r$ describes a divisor with $\Sigma_{z^*} = m^*$; and $v: (S,\zeta_0,\zeta_1) \rightarrow M$ is in $\bar\frakM_e(P)$, see Lemma \ref{th:bubble-dim}, where the relevant almost complex structure is that associated to the point $z^* \in C$. Instead of \eqref{eq:adjacency-condition}, these should satisfy the modified incidence 
\begin{equation} \label{eq:bubble-adjacency}
u(z^*) = v(\zeta_0).
\end{equation}
Moreover, we replace \eqref{eq:mult-leq} with 
\begin{equation} \label{eq:mult-leq-2}
u^{-1}(D) + (v \cdot D) z^* \leq \Sigma.
\end{equation}
(In words, at $z^*$, the multiplicity of intersection of $u$ and $D$ is at most $m^* - v \cdot D$.) We ask for the resulting space to be empty whenever \eqref{eq:mainstratumdim0} is $\leq 1$.
\end{condition}

As usual, the space of $(r,u)$ generically has dimension
\begin{equation}
\mathrm{deg}(x^-) - \mathrm{deg}(x^+) + 2|\Pi| \leq \mathrm{deg}(x^-) - \mathrm{deg}(x^+) + 2(m-m^*).
\end{equation}
By Lemma \ref{th:bubble-dim} and \eqref{eq:mult-leq-2}, the image of the evaluation map from the space of $(p,v)$ has generic dimension 
\begin{equation}
\leq \mathrm{dim}(P) + 2(v \cdot D) - 2 \leq 
\mathrm{dim}(P) + 2m^* - 2. 
\end{equation}
Taking into account \eqref{eq:bubble-adjacency} and transversality of evaluations for $(r,u) \mapsto u(z^*)$, one arrives at a dimension $\leq \mathrm{deg}(x^-) - \mathrm{deg}(x^+) + (2m-2) - \mathrm{codim}(P)$ for the space of such $(r,u,v)$, which explains Condition \ref{th:a-chain}. (As in the parallel situation of Condition \ref{th:a-stratification}, there are really two cases, depending on whether \eqref{eq:bubble-adjacency} happens outside $D$, or on $D$; the second case uses the last sentence in Lemma \ref{th:bubble-dim}.) 

The final (and much simpler) requirement is the analogue of Condition \ref{th:bubble}.

\begin{condition}[Generic bubbling not at $z^*$] \label{th:a-bubble}
Consider the stratum where all points of $\Sigma$ are distinct, and none is equal to $z^*$. Look at solutions $(p,r,u)$ in this stratum, satisfying \eqref{eq:mult-leq} as well as \eqref{eq:adjacency-condition}. We then additionally require the presence of a single bubble, exactly as in Condition \ref{th:bubble}. This space should be empty if \eqref{eq:mainstratumdim0} is $\leq 1$. As in Condition \ref{th:a-stratification}, we need to add the corresponding condition for the auxiliary maps which make $c_P$ a pseudo-cycle.
\end{condition}

Let's assume from now on that these conditions hold. We then have the following transversality and compactness results.

\begin{lemma} \label{th:AMPcompact}
Consider spaces $\frakB_m(P,x^-,x^+)$ of expected dimension $\leq 1$. In any such space, the domains are cylinders where all the marked points (points of $\Sigma$, as well as $z^*$) are pairwise distinct. Moreover, each such space is regular, hence a manifold of the expected dimension.
\end{lemma}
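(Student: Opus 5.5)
The plan is a dimension count by strata, using the three transversality conditions. Take a solution $(p,r,u)$ in $\frakB_m(P,x^-,x^+)$ with expected dimension \eqref{eq:mainstratumdim0} at most $1$. It determines a stratum of $\frakB_m$: a multiplicity $m^* = \Sigma_{z^*}$ at $z^*$, and a partition $\Pi$ of $m-m^*$ describing how the remaining points of $\Sigma$ coincide. Since \eqref{eq:intersectd} implies the weaker \eqref{eq:mult-leq}, Condition \ref{th:a-stratification} applies to that stratum. It says the space of such triples is regular of dimension
\[
\mathrm{deg}(x^-) - \mathrm{deg}(x^+) + 2|\Pi| - \mathrm{codim}(P).
\]
Because $|\Pi| \leq m - m^*$, with equality only if $m^* = 0$ and $\Pi = (1,\dots,1)$, this dimension is at least $2$ less than \eqref{eq:mainstratumdim0} on every non-main stratum. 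In that case it is $\leq -1$, so the stratum is empty. Therefore every solution has all points of $\Sigma$ distinct and different from $z^*$.

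On the main stratum, regularity is again the statement of Condition \ref{th:a-stratification}, applied with $m^*=0$, $\Pi=(1,\dots,1)$, and all multiplicities equal to $1$. The only subtlety concerns the two sub-cases of \eqref{eq:adjacency-condition}. On the main stratum $u(z^*)\notin D$, because $z^*$ is not in $\Sigma$ and \eqref{eq:intersectd} holds exactly. So the adjacency condition is an ordinary transverse incidence in $M$. The transversality of $c_P$ to $D$ from \eqref{eq:transverse-pseudo-cycle} is only needed for the non-main strata, where it ensures the dimension count is the same whether $u(z^*)$ lies in $D$ or not.

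The non-main strata also involve limit points of the pseudo-cycle, meaning solutions with $u(z^*) \in \Omega_P$ rather than $u(z^*) = c_P(p)$. I would exclude these with the addendum to Condition \ref{th:a-stratification}. That addendum applies the same dimension count to the covering maps from manifolds of dimension $\leq \dim(P)-2$, which lose two more dimensions, so the corresponding spaces are empty. Points of the form $c_P(p)\in D$ with $m^*=0$ cannot occur, because then $u^{-1}(D)$ would contain $z^*$, contradicting \eqref{eq:intersectd}.

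So both claims reduce to Condition \ref{th:a-stratification}. I expect the main difficulty to be making sure that no "hidden" stratum is missed. Most delicate is the case $m^*>0$ with $u(z^*)\in D$, where the incidence must be read inside $D$ and one needs the weak transversality $\dim(\Omega_P\cap D)\leq \dim P-4$. I would check the dimension bookkeeping there carefully, using \eqref{eq:transverse-pseudo-cycle}, since it is what keeps the codimension equal to $\mathrm{codim}(P)$ in both sub-cases. Conditions \ref{th:a-chain} and \ref{th:a-bubble} are not needed for this lemma. I expect they enter only in the compactness analysis of the one-dimensional spaces, where they rule out bubbling at $z^*$ and away from it.
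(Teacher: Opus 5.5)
Your proof is correct and is the paper's argument: on any stratum other than the main one, Condition \ref{th:a-stratification} gives a dimension at least $2$ below \eqref{eq:mainstratumdim0}, so that stratum is empty, and regularity on the main stratum is the case $m^*=0$, $\Pi=(1,\dots,1)$ of the same condition. Your discussion of the limit set $\Omega_P$ and of weak transversality is unnecessary, because every element of $\frakB_m(P,x^-,x^+)$ satisfies $u(z^*)=c_P(p)$ for an actual $p\in P$; those issues arise only in the compactness statement, Proposition \ref{th:AMPcompact-2}.
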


\begin{proof}
If $\Sigma$ has multiple points, or if any of them lies on $z^*$, we have $|\Pi| < m$ in Condition \ref{th:a-stratification}, hence codimension $\geq 2$. Regularity follows from the same condition, with $m^* = 0$ and the partition $\Pi = (1,1,\dots,1)$.
\end{proof}

\begin{proposition} \label{th:AMPcompact-2}
(i) Each space $\frakB_m(P,x^-,x^+)$ of expected dimension $0$ is compact (a finite set).

(ii) Suppose that the expected dimension is $1$. Then, $\bar\frakB_m(P,x^-,x^+)$ is a one-manifold with boundary, obtained by adding the following as boundary points:
\begin{equation}
\coprod_{\substack{i+j = m \\ x}} \frakD_{i}(x^-,x) \times \frakB_{j}(P,x,x^+) \quad \text{and} \quad
\coprod_{\substack{i+j = m \\ x}} \frakB_{i}(P,x^-,x) \times \frakD_{j}(x,x^+)
\end{equation}
Here, it is understood that $\frakD_0$ stands for the space of Floer trajectories.
\end{proposition}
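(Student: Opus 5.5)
The plan is the standard Gromov compactness argument for continuation-type moduli spaces. We take a sequence $(p_k,r_k,u_k)$ in $\frakB_m(P,x^-,x^+)$ and pass to a subsequence in the compactified parameter space $\bar\frakB_m$, whose strata are products of spaces $\frakD_{m^i}$ with a single factor $\frakB_{m^c}$ or $\frakB^\dag$-type pieces. We also use the compactification of the pseudo-cycle $P$. A priori, the limit of the maps is a broken configuration: Floer cylinders (whether translation-invariant or carrying divisor points), sphere bubbles attached at interior points, possibly a limit point $c_P(p_k)\to \Omega_P$, and possibly collision of marked points.

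The energy bound and the confinement of solutions to $M\setminus D$ away from $\Sigma$ come from the positivity-of-intersections analysis of \cite[Sections 2.1--2.3]{pomerleano-seidel24}. Using it, we rule out every degeneration other than cylinder breaking, one case at a time.

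\begin{itemize}
\item \emph{Escape of $p_k$ to the limit set of $c_P$.} The limiting configuration lives on a map from a manifold of dimension $\leq \dim(P)-2$. By the addendum in Condition \ref{th:a-stratification}, its moduli space has negative expected dimension, so it is empty.
\item \emph{Collision of points of $\Sigma$ with each other or with $z^*$, without bubbling.} The limit lies in a stratum with $|\Pi|<m$. By Condition \ref{th:a-stratification} this has codimension $\geq 2$, hence is empty in dimension $\leq 1$. This also uses the weakened condition \eqref{eq:mult-leq}, since a limit may have lower intersection multiplicity.
\item \emph{Sphere bubbling at $z^*$.} Here the bubble tree together with the incidence to $P$ is an element of $\bar\frakM_e(P)$, and intersection multiplicities are conserved. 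So the limit satisfies \eqref{eq:bubble-adjacency} and \eqref{eq:mult-leq-2}, and is excluded by Condition \ref{th:a-chain} together with Lemma \ref{th:bubble-dim}.
\item \emph{Sphere bubbling away from $z^*$.} Bubbles at a point not in $\Sigma$ would have $v\cdot D=0$ and hence zero energy by monotonicity, so they cannot occur. Bubbles at a point of $\Sigma$ force the principal component to lose intersection multiplicity there. Reducing to the lowest case (a single bubble with $v\cdot D=1$, in the main stratum, all higher cases having larger codimension by Condition \ref{th:a-stratification}), this is excluded by Condition \ref{th:a-bubble}.
\item \emph{Bubbling on a broken piece carrying no $z^*$.} This is excluded by Conditions \ref{th:stratification} and \ref{th:bubble}.
\end{itemize}

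What remains is breaking of the cylinder at $s\to\pm\infty$. The limit is then a chain of pieces, exactly one of which carries $z^*$ (an element of some $\frakB_j$); the others lie in $\frakD_i$, with $\frakD_0$ meaning Floer trajectories. Each piece is regular by Lemma \ref{th:AMPcompact} and Condition \ref{th:stratification}, and the expected dimensions add up, with one dimension lost per breaking. Hence in dimension $0$ no breaking occurs, which proves (i). In dimension $1$ only single breakings occur, into exactly the two types listed, which gives the boundary of (ii).

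The converse direction, that each such broken configuration is the limit of a unique end of $\bar\frakB_m(P,x^-,x^+)$, is a standard gluing theorem. It is compatible with the consistency of continuation data along the boundary faces of $\bar\frakB_m$, and it parallels \cite[Proposition 2.3.2]{pomerleano-seidel24}.

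The main obstacle is the bubbling analysis at $z^*$ when $u(z^*)\in D$ and a point of $\Sigma$ has collided with $z^*$. There one has to verify carefully that intersection multiplicities distribute between $u$ and the bubble as in \eqref{eq:mult-leq-2}. One also has to check that the ``condition inside $D$'' case gives the same dimension count, which uses the weak transversality hypothesis \eqref{eq:transverse-pseudo-cycle} and the last part of Lemma \ref{th:bubble-dim}.
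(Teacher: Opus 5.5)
Your proposal is correct and is essentially the paper's argument: take a stable-map limit, use conservation of intersection multiplicities with $D$ (the constraint \eqref{eq:topological-constraint}), exclude bubbling at $z^*$ by Condition~\ref{th:a-chain}, non-generic marked-point positions by the stratification conditions after forgetting bubbles, and generic bubbling by Conditions~\ref{th:bubble} and~\ref{th:a-bubble}, so that only cylinder breaking survives. The paper organizes the cases differently (by whether $u^c(z^*) = c_P(p)$ and whether the marked points are in generic position) and assumes $P$ closed, whereas you also treat the limit set of the pseudo-cycle; one minor correction is that the strata of $\bar\frakB_m$ consist of $\frakD$-factors and a single $\frakB$-factor, with no $\frakB^\dag$-type pieces.
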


\begin{proof}
We will make a simplifying assumption, namely that $P$ is a closed manifold. There are additional limits for a general pseudo-cycle; those are straightforward to deal with (using the addenda in Conditions \ref{th:a-stratification} and \ref{th:a-bubble}), but would complicate the formulation of the argument.

Take a sequence in $\frakB_m(P,x^-,x^+)$. Think of the elements of the sequence as maps on the cylinder. Look at the stable map limit of a subsequence, which consists of the following.
\begin{itemize}
\itemsep.5em
\item A point $p \in P$.

\item A chain of maps on cylinders (with matching asymptotics, all of which are periodic orbits in $M \setminus D$ by \cite[Lemma 2.1.5]{pomerleano-seidel24})
\begin{equation}\label{eq:chainofbreakings}
u^1: C^1 \longrightarrow M, \dots, u^R: C^R \longrightarrow M.
\end{equation}
Each $C^k$ carries a divisor $\Sigma^k$, which can be empty. The degrees of these divisors satisfy $m^1 + \cdots + m^R = m$. One of the cylinders carries an extra marked point $z^* \in C^c$ (we call that one the principal cylinder). This is arranged so that forgetting the components which carry no marked points recovers the limit of the sequence of domains in $\bar{\frakB}_m$. As for the maps, if the component $C^k$ carries no marked points, then $u^k$ is a solution of Floer's equation. If it does carry marked points, we think of $C^k$ as a fibre of the universal curve over $\frakD_{m^k}$ or $\frakB_{m^k}$ (the latter applies to $k = c$), and $u^k$ must satisfy the corresponding continuation map equation. 

\item sphere bubble trees attached to points of the cylinders. We will consider this in more detail later, but there is an overall topological constraint \cite[Lemma 2.3.1]{pomerleano-seidel24}:
\begin{equation} \label{eq:topological-constraint}
\parbox{37em}{
at any $z \in C^k$, the local multiplicity of intersection of $u^k$ with $D$, together with the intersection numbers of all sphere bubbles attached at $z$ with $D$, must be equal to $\Sigma^k_z$ (the number of points in the divisor located at $z$).
}
\end{equation}
\end{itemize}
We will now explain how to rule out any limit not lying in the space $\frakB_m(P; x^-,x^+)$ itself. The argument is divided into several cases (other ways of organizing it would be possible).

{\bf (a)} {\em We have $u^c(z^*) \neq c_P(p)$.} In that case, there must be sphere bubbles attached to the cylinder $u^c$ at $z^*$. We keep those bubbles and throw away ones attached elsewhere. From \eqref{eq:topological-constraint} we see that the outcome satisfies \eqref{eq:mult-leq-2}. Hence this cannot happen, by Condition \ref{th:a-chain}.

{\bf (b)} {\em Suppose that the position of the marked points is non-generic (one of the divisors has two points in the same position, or one point located at $z^*$); and that $u^c(z^*) = c_P(p)$.} In that case, we forget all the bubbles. Because of \eqref{eq:topological-constraint}, the outcome still satisfies \eqref{eq:mult-leq}. We then get codimension $\geq 2$ from Condition \ref{th:a-stratification}.

{\bf (c)} {\em Suppose that the position of the marked points is generic (all points of the divisors distinct, none equal to $z^*$); that there are sphere bubbles; and that $u^c(z^*) = c_P(p)$.} We forget all bubbles except one, which is attached directly to one of the cylinder components. For that component, we are in the situation excluded by Conditions \ref{th:bubble} (non-principal cylinder) or \ref{th:a-bubble} (principal cylinder).

{\bf (d)} {\em None of the above.} This means that the position of the points is generic, there are no sphere bubbles, and $u^c(z^*) = c_P(p)$. The only remaining phenomenon is the familiar splitting of one cylinder into several, whose codimension is the number of pieces minus $1$. In situation (i), no such splitting can happen; and in situation (ii), one can have splitting into exactly two pieces.
\end{proof}

Proposition \ref{th:AMPcompact-2} implies that counting rigid points in the zero-dimensional spaces $\frakB_m(P,x^-,x^+)$ defines a map 
\begin{equation} \label{eq:P-m-map}
\begin{aligned}
& b_m(P): \mathit{CF}^*(w) \longrightarrow \mathit{CF}^{*+\mathrm{codim}(P)-2m}(w+m), \\
& \!\!\! \sum_{i+j=m} d_i b_j(P) - (-1)^{\mathrm{dim}(P)} b_i(P) d_j = 0.
\end{aligned}
\end{equation}
As in previous instances, there is a parallel construction over $\frakB_m^{\dag}$, giving rise to operations \begin{equation} \label{eq:P-m-map-2}
\begin{aligned}
&
b_m^{\dag}(P): \mathit{CF}^*(w) \longrightarrow \mathit{CF}^{*+\mathrm{codim}(P)-2m-1}(w+m+1),
\\
& \!\!\!
\sum_{i+j=m} d_i^\dag b_j(P) + (-1)^{\mathrm{dim}(P)} d_i b_j^\dag(P) -
b_i(P) d_j^\dag + b_i^\dag(P) d_j = 0.
\end{aligned}
\end{equation}
Combining these as in \eqref{eq:define-a} gives a chain map of degree $\mathrm{codim}(P)$,
\begin{equation} \label{eq:modulechain7}
\begin{aligned}
& b_{C_q}(P): C_q \longrightarrow C_q, \\
& b_{C_q}(P)(x) = \sum_m q^m b_m(P)(x), \\ & b_{C_q}(P)(\eta x) =
\sum_m q^m\big( (-1)^{\mathrm{dim}(P)} \eta\, b_m(P)(x) + b_m^\dag(P)(x)\big).
\end{aligned}
\end{equation}
These maps are clearly additive with respect to disjoint sum of pseudo-cycles. A familiar variation of the same construction, using pseudo-chains (with boundary), shows that two pseudo-cycles representing the same homology class induce chain homotopic maps \eqref{eq:modulechain7}. Hence, on the cohomology level we get a well-defined map, which is the operation $\bullet_q$ from \eqref{eq:circ-module}.

The remaining task at this point is to discuss Lemma \ref{th:kappa-and-quantum-product}, in the following more precise form:

\begin{lemma}
Take $c_P: P \rightarrow M$ to be a perturbation of the inclusion of $D$ into $M$, to an embedding which is transverse to $D$. Then the (degree $2$) maps
\begin{equation}
q \,a_{C_{q}}, \; b_{C_q}(P): C_q \longrightarrow C_q
\end{equation}
are chain homotopic. 
\end{lemma}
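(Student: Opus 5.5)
We will construct the homotopy geometrically, by deforming the incidence condition at $z^*$. Note first that $q\,a_{C_q}$ is assembled from the maps $a_{m-1}$ and $a_{m-1}^\dag$, with $q$-exponent $m$ and target slope $w+m$ (respectively $w+m+1$). The maps $a_{m-1}$ count cylinders with $m-1$ free divisor points plus the point $z^*$, subject to $u^{-1}(D) = \Sigma + z^*$. In other words, they count cylinders with total intersection number $m$ with $D$, one of the intersection points being the marked point $z^*$ on the line $\bR \times \{0\}$. By contrast, $b_m(P)$ counts cylinders with $m$ free intersection points with $D$, together with the incidence condition $u(z^*) \in c_P(P)$, where $c_P$ is a pushoff of $D$. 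As $c_P$ converges to the inclusion of $D$, the incidence $u(z^*) \in c_P(P)$ degenerates into the condition that $z^*$ is one of the intersection points. This is exactly the moduli problem for $a_{m-1}$, with $z^*$ relabelled, after matching the slope conventions $w+m$ versus $w+(m-1)+1$.

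To turn this into a proof, we will use a one-parameter family of pseudo-cycles $c_{P,\tau}$, $\tau \in (0,1]$, with $c_{P,1} = c_P$ and $c_{P,\tau} \to \iota_D$ as $\tau \to 0$. Concretely, $c_{P,\tau}$ is the image of $D$ under the time-$\tau$ flow of a section of the normal bundle vanishing transversally along a cycle Poincar\'e dual to $c_1(N_D)$. We then consider the parametrized moduli spaces over $(\tau,r,u)$, together with the $\dag$-versions needed for the telescope terms. Counting their rigid points defines a map $h: C_q \to C_q$ of degree $1$, assembled as in \eqref{eq:define-a} and \eqref{eq:modulechain7}. We will also choose the continuation data on $\frakB_m$ to agree, near $\tau = 0$, with data obtained from those on $\frakA_{m-1}$ by forgetting the distinguished status of one point; the slope choices \eqref{eq:avoidrepetition} and \eqref{eq:fibrewise-asymptotics} are designed to make this compatible. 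The boundary of the one-dimensional parametrized spaces then has three kinds of contributions. The ends at $\tau = 1$ give $b_{C_q}(P)$. Cylinder breakings give $d_{C_q} h \pm h\, d_{C_q}$, using the faces listed in Proposition \ref{th:AMPcompact-2} and \eqref{eq:a-dag-compactified}. The ends at $\tau \to 0$ should give $q\,a_{C_q}$.

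The main obstacle is the analysis at $\tau \to 0$, that is, a gluing and compactness statement identifying that end with $\frakA_{m-1}(x^-,x^+)$. As $\tau \to 0$, a solution with $u(z^*) \in c_{P,\tau}(P)$ either has $u(z^*)$ converging to an honest intersection point of $u$ with $D$, or a sphere bubble forms at $z^*$. Condition \ref{th:a-chain} and its parametrized analogue should exclude bubbling in dimension $\le 1$, but it has to be rechecked uniformly in $\tau$, since Lemma \ref{th:bubble-dim} uses $D \pitchfork c_P$, which degenerates in the limit. In the unbubbled case we need to show the following: for small $\tau$, near each transverse intersection point $z_0$ of a rigid $u \in \frakA_{m-1}$, there is exactly one nearby $z^*$ with $u(z^*) \in c_{P,\tau}(P)$, counted with the correct sign. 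The local model is transverse intersection of a holomorphic curve with a small pushoff of $D$: locally, in a coordinate on $\bC$ normal to $D$, we are looking for a point where the curve hits $\epsilon \cdot$(section), and there is exactly one such point near $z_0$. One complication is that in $\frakB_m$ the point $z^*$ is constrained to the line $\bR \times \{0\}$, whereas the intersection points in $\frakA$ may have arbitrary $t$-coordinate. We handle this using rotation of the cylinder: rotating the parametrization identifies the configurations, after which we compare data via an $S^1$-family, which is homotopically trivial on the non-equivariant complex. If that becomes messy, the fallback is to replace the pseudo-cycle argument by a cohomological one. We would use the fact that $[c_P] = [D] = c_1(M)$ and the well-definedness of $\bullet_q$, and compute $c_1(M)\bullet_q$ by choosing $P = D'$, a pushoff of $D$ realized as the zero set of a perturbed section. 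For this specific choice, the degeneration above is a standard "divisor axiom" type argument.
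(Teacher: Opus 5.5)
Your skeleton is the paper's: a cobordism from the pushoff $c_P$ back to $(D,\mathit{inclusion})$, with the $D$-end identified with $q\,a_{C_q}$. Your bookkeeping ($b_m \leftrightarrow a_{m-1}$, $b_m^\dag \leftrightarrow a_{m-1}^\dag$, matching slopes and degrees) is also right. The gap is that the step you call the main obstacle is essentially the whole content of the lemma, and you leave it open.

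The missing idea is that $\tau = 0$ need not be treated as a degenerate end requiring a new gluing and compactness theorem. Instead, one checks directly that $(P,c_P) = (D,\mathit{inclusion})$ is an admissible choice for $b_{C_q}$ whenever \eqref{eq:mainstratumdim0} is $\leq 1$, even though it is not transverse in the sense of \eqref{eq:transverse-pseudo-cycle}. For $P = D$, the conditions $u(z^*) \in D$ and $u^{-1}(D) = \Sigma$ force $z^* \in \mathrm{supp}(\Sigma)$. So on the top stratum ($\Sigma = \Sigma' + z^*$, with $\Sigma'$ of degree $m-1$ avoiding $z^*$) the spaces $\frakB_m(D,x^-,x^+)$ are literally $\frakA_{m-1}(x^-,x^+)$. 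Lower strata have codimension $\geq 2$, and with the same data $b_{C_q}(D) = q\,a_{C_q}$ on the nose. Two things must then be checked.

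First, these solutions are regular inside the stratum $z^* \in \Sigma$ with the incidence read in $D$. They must also be regular for the original problem, with the incidence taken in $M$ and $r$ ranging over all of $\frakB_m$. This follows from transversality of $u$ to $D$ at $z^*$, by comparing linearized operators as in \cite[Lemma 8.4.1]{pomerleano-seidel24}.

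Second, bubbling at $z^*$ must be excluded. You correctly observe that Lemma \ref{th:bubble-dim} is unavailable. The remedy is to drop the incidence of the bubble tree with $D$ entirely and keep one nonconstant component through $u(z^*)$, with underlying simple map $w$. Then $w \cdot D \leq m^*$ and $|\Pi| \leq m-m^*$ give the bound \eqref{eq:d-dimension}, which is negative in the relevant range. Since that bound never mentions $P$, it is automatically uniform along your family $c_{P,\tau}$.

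With these two facts, $\tau=0$ is an ordinary regular endpoint. The nearby solutions for small $\tau$ then come from the implicit function theorem, and the usual cobordism argument finishes.

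Your local model also rests on a misreading. In $\frakA_{m-1}$ the distinguished intersection point is $z^*$ itself, and it lies on $\bR \times \{0\}$ exactly as in $\frakB_m$. So there is no mismatch for a rotation of the cylinder to repair, and no $S^1$-family is needed (rotating would in any case change the $t$-dependent Floer data). When passing from $\tau = 0$ to small $\tau > 0$, it is the point of $\Sigma$ sitting at $z^*$ that moves off, since it is free in $\bR \times S^1$; $z^*$ stays on the line.

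Relatedly, your claim of exactly one nearby solution ``near each transverse intersection point'' of $u$ is wrong as stated. Only the distinguished point contributes, because the other $m-1$ intersection points are generically off the line. Otherwise you would count $m$ solutions per element of $\frakA_{m-1}(x^-,x^+)$ instead of one.

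Finally, your cohomological fallback is not an independent route: computing $c_1(M)\bullet_q$ with a pushoff $D'$ leads to exactly the same degeneration.
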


\begin{proof}
This is a direct analogue of \cite[Lemma 8.2.1]{pomerleano-seidel24} (more precisely, it is the counterpart of the special case $R = D$ of that Lemma). Naively, the idea is to set 
\begin{equation} \label{eq:p-d} 
(P,c_P) = (D, \mathit{inclusion}).
\end{equation}
To make this work, some extra explanation is required, since \eqref{eq:p-d} does not satisfy the transversality assumption in \eqref{eq:transverse-pseudo-cycle}. Let's look at what happens if we go through with our construction of $b_{C_q}$ for \eqref{eq:p-d}.

In order to satisfy \eqref{eq:adjacency-condition}, the point $z^*$ has to be part of the divisor $\Sigma$. Let's restrict to that part of $\frakB_m$, and consider \eqref{eq:adjacency-condition} as an intersection condition taking place in $D$. In the situation of Condition \ref{th:a-stratification}, one gets smooth moduli spaces of dimension
\begin{equation}
\mathrm{deg}(x^-) - \mathrm{deg}(x^+) + 2|\Pi| \leq \mathrm{deg}(x^-) - \mathrm{deg}(x^+) + 2m-2. 
\end{equation}
Look at the top-dimensional stratum, where $\Sigma$ consists of $m$ distinct points, one which is $z^*$. In that case, \eqref{eq:mult-leq} says that $u$ must intersect $D$ transversally at $z^*$. Based on that, a comparison of linearized operators as in \cite[Lemma 8.4.1]{pomerleano-seidel24} shows that these maps are also regular as elements of $\frakB_m(D,x^-,x^+)$ in the original analytic setup (when $r$ ranges over all $\frakB_m$, and the adjacency condition is considered as taking place in $M$). Hence, for \eqref{eq:p-d} we can still satisfy Condition \ref{th:a-stratification} at least in the cases where \eqref{eq:mainstratumdim0} is $\leq 1$, which is what matters for the application.

In the situation of Condition \ref{th:a-chain}, it is sufficient to retain one component of the stable map, which is non-constant and goes through the point $u(z^*)$. Write $w: S^* \rightarrow M$ for the underlying simple map of that component. By construction, $w \cdot D \leq e \leq m^*$. Generically, the space of $(r,u,w,\zeta)$ such that $u(z^*) = w(\zeta)$, is smooth of dimension
\begin{equation} \label{eq:d-dimension}
\mathrm{deg}(x^-) - \mathrm{deg}(x^+) + 2|\Pi| + 2(w \cdot D) - 4 \leq
\mathrm{deg}(x^-) - \mathrm{deg}(x^+) + 2m - 4.
\end{equation}
If \eqref{eq:mainstratumdim0} is $\leq 1$, this is negative, so the space is empty. Once more, it turns out that Condition \ref{th:a-chain} can be satisfied for the choice \eqref{eq:p-d}. The final aspect, Condition \ref{th:a-bubble}, goes through without modifications.

We have seen that $b_{C_q}(D)$ is well-defined, and therefore indeed (choosing the same data, which is compatible with all the genericity requirements) agrees with $q\,a_{C_q}$; the extra power of $q$ comes from how the point-counting invariants are packaged algebraically. The usual cobordism argument shows that if $P$ is a perturbation of \eqref{eq:p-d} then $b_{C_q}(P)$ is chain homotopic to $b_{C_q}(D)$, which completes the proof.
\end{proof}

\begin{remark} \label{th:mod-p-pseudocycles}
We have used pseudo-cycles as representatives of integer cohomology classes. For rational or complex coefficients, one can take appropriate linear combinations of pseudo-cycles. To define the module structure with $\bF_p$-coefficients, mod $p$ pseudo-cycles should be used instead, see e.g.\ \cite[Remark 4.11]{seidel-wilkins21}. For our purpose that isn't even strictly necessary, since we only use the $\bullet_q$-action of $c_1(M)$, which is an integral class. Indeed, in the only place where the $\bF_p$-version of the module structure occurs (the proof of Corollary \ref{th:torsion-1b}), we will indicate a workaround which avoids mod $p$ pseudo-cycles.
\end{remark}

\subsection{Compatibility with the quantum product\label{sec:compatibility}} This section is dedicated to proving the following property, previously stated in \eqref{eq:circ-module}:

\begin{proposition} \label{th:module-over}
The cohomology level maps induced by \eqref{eq:modulechain7} make $\mathit{SH}^*_q(M,D)$ into a module over $\mathit{H}^*(M)[q]$, equipped with its quantum product.
\end{proposition}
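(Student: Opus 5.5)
Two things have to be shown. First, the class $1 \in H^0(M)$ acts as the identity. Second, for pseudo-cycles $P_1,P_2$ transverse to $D$ and to each other, $b_{C_q}(P_1)\circ b_{C_q}(P_2)$ is chain homotopic to $\sum_A q^{A\cdot D}\, b_{C_q}(P_{12,A})$, where $P_{12,A}$ is a pseudo-cycle representing the class-$A$ part of $[P_1]\ast_q[P_2]$. Concretely, $P_{12,A}$ is the evaluation at $\zeta_0$ of three-pointed genus zero curves in class $A$ with $\zeta_1\mapsto P_1$ and $\zeta_2\mapsto P_2$. Because $D$ is Poincar\'e dual to $c_1(M)$, we have $A\cdot D=\int_A c_1(M)$, which is exactly the exponent of $q$ in $\ast_q$. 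The cohomology-level statement then follows by additivity and independence of the choice of representative, both already established for $\bullet_q$.

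\textbf{Unit.} For $P=M$ the adjacency condition \eqref{eq:adjacency-condition} is vacuous, so $\frakB_m(M,x^-,x^+)$ fibres over the moduli spaces used for the telescope. Forgetting $z^*$, the zero-dimensional spaces contribute only for $m=0$, where the count gives the identity (the translation-invariant constant solution). An alternative is to deform $z^*$ away to $s\to\pm\infty$, which gives a chain homotopy to the identity through the $\dag$-spaces.

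\textbf{Main step: a one-parameter family of domains.} I would use cylinders carrying $\Sigma$ together with two points $z_1^*,z_2^*\in \bR\times\{0\}$, with $z_1^*$ to the left of $z_2^*$ (say $s(z_1^*)\le s(z_2^*)$), taken modulo common translation, and impose $u(z_k^*)\in c_{P_k}(P_k)$. The parameter $\ell=s(z_2^*)-s(z_1^*)\in[0,\infty)$ then interpolates between two limits.
\begin{itemize}
\item As $\ell\to\infty$ the cylinder breaks, and we obtain the composition $b(P_1)\circ b(P_2)$, up to sign and up to the continuation data; the latter is handled by the usual $\dag$/telescope bookkeeping.
\item As $\ell\to 0$ the two points collide. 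After rescaling, a sphere bubbles off at the collision point carrying $\zeta_1,\zeta_2$ and the node. The resulting boundary stratum is the fibre product of $\frakB_{m-A\cdot D}$ with three-pointed spheres in class $A$. This is precisely $b(P_{12,A})$, with the divisor points now distributed between the cylinder and the sphere as dictated by \eqref{eq:topological-constraint}.
\end{itemize}
Counting rigid elements of this one-parameter family defines the homotopy. Its $\eta$-components are treated exactly as in \eqref{eq:P-m-map-2}.

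\textbf{Expected obstacle.} The hard part is compactness and transversality at $\ell=0$. This requires the analogues of Conditions \ref{th:a-stratification}, \ref{th:a-chain} and \ref{th:a-bubble} for the two-point spaces. The central difficulty is the case where the collision happens on $D$, or where the bubble has intersection with $D$ larger than the number of divisor points absorbed. Here I would enforce the counting $u^{-1}(D)+(v\cdot D)z^*\le\Sigma$ and make a dimension estimate modeled on Lemma \ref{th:bubble-dim} and the proof of Proposition \ref{th:AMPcompact-2}, case (a), showing that every degenerate configuration has codimension $\ge2$. The transversality of $P_1$ and $P_2$ to $D$ in the sense of \eqref{eq:transverse-pseudo-cycle} is what should make this estimate work, and it has to be arranged so that the $\ell=0$ stratum is counted exactly once. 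Since the bubble is a genuine sphere, it does not satisfy the cylinder constraint \eqref{eq:intersectd}. For this reason I would define $P_{12,A}$ as a pseudo-cycle using a generic domain-independent $J$, and then use a cobordism to compare it with the $J$ induced at the collision point.
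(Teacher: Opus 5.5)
Your proposal follows essentially the same route as the paper: the parameter space $\frakB^{(2)}_m$ of two ordered points on $\bR\times\{0\}$, where splitting of the cylinder gives $b(P_1)\circ b(P_2)$, collision of the two points gives $b(P_1\cap P_2)+\sum_{e>0}q^e\, b(\frakM^{(2)}_e(P_1,P_2))$, and the $\dag$-version handles the $\eta$-terms. The differences are small: the paper avoids your closing cobordism by requiring the almost complex structure at the distinguished point to always be a fixed $J^*$ (the same $J^*$ used to define the quantum-product pseudo-cycles), and by requiring the data on the locus $\Sigma_{z^*}\geq m^*$ to be compatible with the data on $\frakB_{m-m^*}$; on the other hand, your separate check that the unit acts as the identity is something the paper does not spell out.
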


The idea is the same as for the classical quantum cap action on Floer cohomology (without quantum corrections, this is \cite[Proposition 4.1]{le-ono}; the connection with the quantum product was made in \cite{pss}). Consider $b_{C_q}(P_1) \circ b_{C_q}(P_2)$, schematically represented as follows:
\begin{equation}
\includegraphics{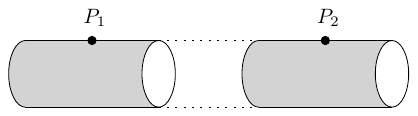}
\end{equation}
One deforms that by gluing the cylinders together, and then moving the two distinguished marked points towards each other, until they bubble off into a sphere:
\begin{equation}
\includegraphics{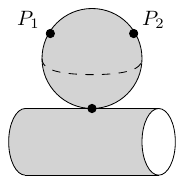}
\end{equation}
Algebraically, the moduli spaces arising from this deformation yield a chain homotopy between $b_{C_q}(P_1) \circ b_{C_q}(P_2)$ and $b_{C_q}(P)$, where $P$ is a suitable representative of $[P_1] \ast_q [P_2]$. There is no major conceptual hurdle in making this rigorous: the main task is simply to write down all the transversality and consistency conditions that have to be imposed in the process.

We first define the quantum product in a form which is convenient for our purpose. 
\begin{equation} \label{eq:tranverse-pseudo-cycle-pairs}
\parbox{37em}{
Let $c_{P_1}: P_1 \rightarrow M$, $c_{P_2}: P_2 \rightarrow M$ be pseudo-cycles which are transverse to each other and to $D$. This condition can be expressed by saying that the product pseudo-cycle $c_{P_1} \times c_{P_2}: P_1 \times P_2 \rightarrow M \times M$ must be transverse, in the same sense as in \eqref{eq:transverse-pseudo-cycle}, to the following submanifolds: $D \times M$, $M \times D$, the diagonal $\Delta_M$, and any intersection of those.
}
\end{equation}
This implies that $P = P_1 \times_M P_2$, with the obvious map $c_P: P \rightarrow M$, is again a pseudo-cycle transverse to $D$. We write that pseudo-cycle as $P_1 \cap P_2$.
The following is an analogue of Lemma \ref{th:bubble-dim}.

\begin{lemma} \label{th:bubble-2points}
Take an integer $e \geq 1$. For $J$ which makes $D$ into an almost complex submanifold, let $\bar\frakM_e^{(2)}(P_1,P_2)$ be the space of genus zero three-pointed $J$-holomorphic stable maps $v: (S,\zeta_0,\zeta_1,\zeta_2) \rightarrow M$, with $v \cdot D = e$, such that $v(\zeta_1) \in \overline{c_{P_1}(P_1)}$, $v(\zeta_2) \in \overline{c_{P_2}(P_2)}$. Let $\frakM_e^{(2)}(P_1,P_2) \subset \bar\frakM_e^{(2)}(P_1,P_2)$ be the subspace where: $S$ is smooth (has only one component); the map $v$ is simple, and transverse to $D$; and $v(\zeta_1) \in c_{P_1}(P_1)$, $v(\zeta_2) \in c_{P_2}(P_2)$. Consider the evaluation map at the remaining marked point $\zeta_0$.
Generically, the following holds.

(i) $\frakM_e^{(2)}(P_1,P_2)$ is smooth, of dimension $\mathrm{dim}(M)+2e-\mathrm{codim}(P_1) - \mathrm{codim}(P_2)$, and the evaluation map $\frakM_e^{(2)}(P_1,P_2) \rightarrow M$ is transverse to $D$.

(ii) The image of the evaluation map
$\bar\frakM_e^{(2)}(P_1,P_2) \setminus \frakM_e^{(2)}(P_1,P_2) \rightarrow M$
has dimension $\leq \mathrm{dim}(M) + 2e - 2 - \mathrm{codim}(P_1) - \mathrm{codim}(P_2)$; and the intersection of that image with $D$ is of dimension $\leq \mathrm{dim}(M) + 2e - 4 - \mathrm{codim}(P_1) - \mathrm{codim}(P_2)$. 
\end{lemma}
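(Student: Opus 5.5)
The plan follows the proof of Lemma \ref{th:bubble-dim}, the one-point version, and the standard pseudo-cycle arguments in \cite[Chapter 6]{mcduff-salamon}. The new ingredients are a second incidence condition and a codimension count relative to $D$.

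\emph{Step 1: the main stratum, part (i).} Consider the universal moduli space of simple $J$-holomorphic spheres $v$ with three marked points, $J$ ranging over almost complex structures for which $D$ is almost complex. Also consider incidence data $(p_1,p_2) \in P_1 \times P_2$. For a simple curve not contained in $D$, the evaluation map
\[
(v,\zeta_0,\zeta_1,\zeta_2,J) \longmapsto \bigl(v(\zeta_0),v(\zeta_1),v(\zeta_2)\bigr)
\]
is a submersion. The constraint that $J$ preserve $TD$ does not interfere, because perturbations of $J$ supported near a point of $v$ outside $D$ suffice; injective points of $v$ off $D$ are dense as long as $v \not\subset D$. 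Curves contained in $D$ have $v\cdot D = c_1(v) \geq$ something. Rather than rely on that, I would treat them separately using transversality inside $D$ and positivity of intersections, and show they contribute only to the lower-dimensional part, as in \cite[Lemma 3.3.2]{pomerleano-seidel24}.

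Sard--Smale then gives the dimension
\[
2n + 2c_1(A) + 6 - 6 - \mathrm{codim}(P_1) - \mathrm{codim}(P_2),
\]
with $c_1(A) = v\cdot D = e$. This yields the stated dimension after accounting for real versus complex conventions, as written, $\mathrm{dim}(M)+2e-\ldots$. Transversality of $\mathrm{ev}_0$ to $D$ follows by the same submersion argument applied with the target condition $D$. Simplicity and transversality to $D$ are open conditions, so they cut out an open subset.

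\emph{Step 2: the boundary, part (ii).} I would stratify $\bar\frakM^{(2)}_e \setminus \frakM^{(2)}_e$ into four kinds of strata:
\begin{enumerate}[label=(\alph*)]
\item nodal trees, reduced to simple components;
\item multiply covered maps, replaced by their underlying simple maps with smaller $e$;
\item maps with $v(\zeta_i)$ in the limit set $\Omega_{P_i}$, which has dimension at most $\mathrm{dim}(P_i)-2$;
\item smooth simple maps that are tangent to $D$ somewhere.
\end{enumerate}
In (a)--(c), each degeneration costs at least $2$ in dimension, exactly as in \cite[Chapter 6]{mcduff-salamon}. This uses that $c_1 = [\omega_M] > 0$ on nonconstant spheres by monotonicity, so passing to underlying simple curves or dropping components only lowers $e$. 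Case (d) does not lower the image dimension, but only the tangency locus lies in the boundary, and that locus has codimension $2$.

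For the second claim, about the intersection with $D$, I would repeat the count with $\zeta_0$ constrained to map to $D$. On strata where the relevant component is not contained in $D$, this costs a further $2$. Where $\zeta_0$ lies on a component contained in $D$, I would use the weak transversality hypotheses \eqref{eq:tranverse-pseudo-cycle-pairs}: the intersections of $P_i$ with $D$ and of $\Omega_{P_i}$ with $D$ are already $2$ smaller. Combined with the fact that spheres in $D$ carry $J|_D$-transversality in $D$, whose dimension is $2$ less, this gives the bound.

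\emph{Main obstacle.} The hardest part is the case of components lying entirely inside $D$, in both (i) and (ii). There genericity of $J$ is restricted, and the dimension count must be done within $D$, using $c_1(TD) = c_1(M)|_D - c_1(N_D)$ with $N_D$ the normal bundle of $D$. I would first try to copy the argument of \cite[Lemma 3.3.2]{pomerleano-seidel24} verbatim, adding the second marked point.
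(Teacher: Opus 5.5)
Your overall plan is the one the paper intends. The paper gives no separate proof: it treats the statement as the two-point analogue of Lemma \ref{th:bubble-dim}, i.e.\ the pseudo-cycle arguments of \cite[Chapter 6]{mcduff-salamon} as adapted in \cite[Lemma 3.3.2]{pomerleano-seidel24}. Your Step 1 and cases (a)--(d) are that standard argument. The problem is the one genuinely new case, components contained in $D$, where your count is wrong.

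Working inside $D$, the incidence $v(\zeta_i)\in P_i\cap D$ has codimension $\mathrm{codim}(P_i)$ in $D$, the same as $v(\zeta_i)\in P_i$ has in $M$. So weak transversality of the $P_i$ gives no extra saving. It cannot help at all when $\zeta_0$ lies on a component in $D$ but $\zeta_1,\zeta_2$ do not. Nor is the space of spheres in $D$ ``$2$ less''. If $c_1(TD)(A)$ were $c_1(M)(A)=e$, a simple three-pointed sphere in $D$ with the incidences would move in dimension $\mathrm{dim}(M)-2+2e-\mathrm{codim}(P_1)-\mathrm{codim}(P_2)$. That violates the second bound in (ii), since the whole image of such curves lies in $D$.

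You do write down $c_1(TD)=c_1(M)|_D-c_1(N_D)$ at the end, but never evaluate it, and evaluating it is exactly the missing step. Since $D$ is Poincar\'e dual to $c_1(M)$, $c_1(N_D)=\mathit{PD}[D]|_D=c_1(M)|_D$, so $c_1(TD)$ vanishes on spheres in $D$. Then a simple three-pointed sphere in $D$ satisfying the incidences moves in dimension $\mathrm{dim}(M)-2-\mathrm{codim}(P_1)-\mathrm{codim}(P_2)$, independently of $e$. This is $\leq \mathrm{dim}(M)+2e-4-\mathrm{codim}(P_1)-\mathrm{codim}(P_2)$ precisely because $e\geq 1$.

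The same mechanism handles mixed configurations. Take a reduced simple tree with $k$ nonconstant components, of which a nonempty set $T$ lies in $D$ (ghost components are handled as usual).
\begin{itemize}
\item Each component in $D$ contributes no $c_1$-term to its index, while still having $c_1(M)(A_j)\geq 1$ by monotonicity.
\item A node with at least one side outside $D$ costs $\mathrm{dim}(M)$ as usual, since that side's evaluation is a submersion onto $M$.
\item Nodes joining two components in $D$ cost only $\mathrm{dim}(M)-2$, but there are at most $|T|-1$ of them.
\end{itemize}
Adding up gives dimension $\leq \mathrm{dim}(M)+2e-2-2k-\mathrm{codim}(P_1)-\mathrm{codim}(P_2)$. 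This yields both halves of (ii) on every stratum with a component inside $D$. With this replacing your accounting, the rest of your plan goes through.
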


As a consequence, the evaluation map on $\frakM_e^{(2)}(P_1,P_2)$ is a pseudo-cycle transverse to $D$. For the rest of this section, we fix a $J = J^*$ which has the properties from Lemma \ref{th:bubble-2points} for all $e$. The quantum product of our pseudo-cycles is defined by
\begin{equation} \label{eq:pseudo-cycle-product}
\quad [P_1] \ast_q [P_2] = [P_1 \cap P_2] + \sum_{e \geq 1}\; q^e\, [\frakM_e^{(2)}(P_1,P_2)]. 
\end{equation}

With that in hand, let's return to our construction. We will assume that the data over $\frakB_m$ used to define \eqref{eq:modulechain7} satisfies the following additional constraints (which do not pose a problem for transversality):
\begin{itemize} 
\itemsep .5 em 
\item The almost complex structure at $z^*$ is always equal to our fixed $J^*$.

\item Consider the subset of $\frakB_m$ where at least $m^*$ points of $\Sigma$ lie at $z^*$. This can be identified with $\frakB_{m-m^*}$. We require that the choice of data (almost complex structures, inhomogeneous terms) should be compatible with this identification.
\end{itemize}

We introduce a parameter space $\frakB^{(2)}_m$ of pairs $(\Sigma,z_1^*,z_2^*)$, up to translation, where: \( \Sigma \) is a degree \( m \) divisor, and $z_1^*= (s_1^*, 0)$, $z_1^*= (s_2^*, 0)$ are two additional points lying on the line $\bR \times \{0\}$, such that $s_1^* < s_2^*$. This space again compactifies to a manifold with corners $\bar{\frakB}^{(2)}_m$. For simplicity, we just list the codimension one boundary strata of the compactification: 
\begin{itemize} \itemsep.5em 
\item 
There is a boundary stratum $\partial_{z_1^*=z_2^*}\bar{\frakB}^{(2)}_m$ where the two marked points $z_1^*$, $z_2^*$ have collided to a single point $z^*$. Clearly,
\begin{align} \label{eq:twopointcollision} 
\partial_{z_1^* = z_2^*}\bar\frakB^{(2)}_m \cong \frakB_m. 
\end{align}

\item The cylinder can split into two pieces, each of which carries one of the distinguished marked points. This gives rise to boundary strata of the form 
\begin{align} \label{eq:A2boundarytype1} 
\frakB_{i} \times \frakB_{j}, \quad i+j=m.  
\end{align} 

\item The cylinder splits into two pieces, one of which carries both of the distinguished marked points $z_1^*,z_2^*$. These strata are of the form
\begin{align} 
\label{eq:A2boundarytype2} \frakB_{i}^{(2)} \times \frakD_{j} \text{\quad  or \quad  } \frakD_{i} \times \frakB_{j}^{(2)} , \quad i+j=m. 
\end{align} 
\end{itemize} 
Choose continuation data over $\frakB^{(2)}_m$, whose extension to the compactification must satisfy the following consistency conditions.
\begin{itemize} 
\itemsep .5 em 
\item Over $\partial_{z_1^*=z_2^*}\bar\frakB^{(2)}_m$, it agrees with the previously chosen data on $\frakB_m$, using \eqref{eq:twopointcollision}.

\item Along boundary strata of type \eqref{eq:A2boundarytype1}, \eqref{eq:A2boundarytype2} we require it to be compatible with the product decompositions (and previously chosen data).  
\end{itemize}

Given periodic orbits $x^{\pm}$, we use this data to define moduli spaces $\frakB_m^{(2)}(P_1,P_2,x^{-},x^{+})$ consisting of quadruples $(p_1,p_2,r,u)$, where $p_i \in P_i$; $r \in \frakB_m^{(2)}$; and $u: C \rightarrow M$ satisfies \eqref{eq:intersectd} as well as
\begin{equation} \label{eq:adjacency-condition2}
u(z_1^*) = c_{P_{1}}(p_1),\quad u(z_2^*) = c_{P_{2}}(p_2).
\end{equation}
The expected dimension of these spaces is
\begin{align} \label{eq:mainstratumdim} \operatorname{deg}(x^{-})-\operatorname{deg}(x^{+})+2m+1-\operatorname{codim}(P_1)-\operatorname{codim}(P_2).
\end{align} 

Over $\frakB_m^{(2)}$, the regularity requirements are direct analogues of Conditions \ref{th:a-stratification}--\ref{th:a-bubble}, and we will not discuss them further here. There are additional conditions along $\partial_{z_1^* = z_2^*}\bar\frakB^{(2)}_m$, as follows.

\begin{condition}[Stratification]\label{th:strata2}
Consider, as in Condition \ref{th:a-stratification}, a stratum of $\partial_{z_1^* = z_2^*}\bar\frakB^{(2)}_m$ depending on $0 \leq m^* \leq m$ and a partition $\Pi$ of $m-m^*$. Take $(p_1,p_2,r,u,v,p_1,p_2)$ as follows. We have points $p_k \in P_k$. The parameter $r$ lies in our stratum, $u$ solves the corresponding continuation map equation, and $v: S \rightarrow M$ is a $J^*$-holomorphic map from $S \iso \bC P^1$, decorated with three distinct marked points $\zeta_0,\zeta_1,\zeta_2 \in S$, which is either constant or simple. The intersection condition (where $z^* = z_1^* = z_2^*)$ is
\begin{equation} \label{eq:incidencewithbubble}
u^{-1}(D) + (v \cdot D)\,z^* = \Sigma.
\end{equation}
The incidence conditions are 
\begin{equation} 
u(z^*) = v(\zeta_0), \;\; c_{P_1}(p_1) = v(\zeta_1),  \;\; c_{P_2}(p_2) = v(\zeta_2).  
\end{equation}  
We require that the space of such configurations is smooth of the expected dimension \begin{equation} 
\label{eq:dimensionofsomestratum} \operatorname{deg}(x^{-})-\operatorname{deg}(x^{+}) + 2|\Pi| + 2(v \cdot D) - \operatorname{codim}(P_1) - \operatorname{codim}(P_2). 
\end{equation}
As in Condition \ref{th:a-stratification}, we have an added requirement, which applies when $v(\zeta_1)$ or $v(\zeta_2)$ lies in the limit set of the pseudo-cycle, and the dimension is correspondingly lower.
\end{condition}

The next condition excludes more complicated bubbling at $z^*$.

\begin{condition}[Extra bubbling at $z^*$] \label{th:a2-extrabubble}
Take $0 < m^* \leq m$. Consider $(r,u,v)$, where: $r \in \partial_{z_1^* = z_2^*} \bar\frakB^{(2)}_m$ describes a divisor with $\Sigma_{z^*} = m^*$; we have $v \in \bar\frakM_e^{(2)}(P_1,P_2) \setminus \frakM_e^{(2)}(P_1,P_2)$ for some $e>0$; and \eqref{eq:bubble-adjacency}, \eqref{eq:mult-leq-2} hold. We require that this space is empty when \eqref{eq:mainstratumdim} is $\leq 1$.
\end{condition}

Finally, we have the analogue of Condition \ref{th:a-bubble}:

\begin{condition}[Generic bubbling not at $z^*$] \label{th:a2-genericbubble}
Take configurations $(u,r,v,p_1,p_2)$ as in Condition \ref{th:strata2}, such that the partition $\Pi$ of $m-m^*$ is generic. We require the presence of a single additional bubble at one of the remaining marked points, exactly as in Condition \ref{th:bubble}. 
These spaces should be empty if the dimension \eqref{eq:mainstratumdim} is $\leq 1$.
\end{condition}

Under these conditions, we have the following results. 

\begin{lemma} \label{th:AM2Pcompact-1} 
Consider spaces $\frakB_m^{(2)}(P_1,P_2,x^-,x^+)$ of expected dimension $\leq 1$. In any such space, the domains are cylinders where all the marked points (points of $\Sigma$, as well as $z_1^*$, $z_2^*$) are pairwise distinct. Moreover, each such space is regular, hence a manifold of the expected dimension.
\end{lemma}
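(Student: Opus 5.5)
The argument will follow the proof of Lemma \ref{th:AMPcompact}, using the regularity conditions on $\frakB_m^{(2)}$ that are the direct analogues of Conditions \ref{th:a-stratification}--\ref{th:a-bubble}. I will work on the open parameter space $\frakB_m^{(2)}$, where $s_1^* < s_2^*$ holds by definition, so $z_1^* \neq z_2^*$ is automatic. The remaining coincidences are of two kinds: points of $\Sigma$ colliding with each other, or points of $\Sigma$ landing on $z_1^*$ or $z_2^*$. I will stratify $\frakB_m^{(2)}$ by the data $(m_1^*, m_2^*, \Pi)$. Here $m_k^* = \Sigma_{z_k^*} \geq 0$, and $\Pi$ is the partition of $m - m_1^* - m_2^*$ recording how the remaining points of $\Sigma$ are grouped.

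First I will formulate the analogue of Condition \ref{th:a-stratification} for two marked points. For each stratum and each choice of intersection multiplicities satisfying \eqref{eq:mult-leq}, the moduli space of $(p_1,p_2,r,u)$ with \eqref{eq:adjacency-condition2} is regular, of dimension
\[
\mathrm{deg}(x^-)-\mathrm{deg}(x^+)+2|\Pi|+1-\mathrm{codim}(P_1)-\mathrm{codim}(P_2).
\]
The $+1$ is the free parameter $s_2^*-s_1^*$. At $z_k^*$ the condition $u(z_k^*) = c_{P_k}(p_k)$ is read either in $M$ or in $D$, and the transversality assumption \eqref{eq:tranverse-pseudo-cycle-pairs} guarantees that both readings give the same dimension, exactly as explained after Condition \ref{th:a-stratification}. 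The same requirement is imposed on the auxiliary maps covering the limit sets of the pseudo-cycles.

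Then I compare with \eqref{eq:mainstratumdim}. The generic stratum, with $m_1^*=m_2^*=0$ and $\Pi=(1,\dots,1)$, has dimension exactly \eqref{eq:mainstratumdim}. Every other stratum has $|\Pi| \leq m-1$, so it loses at least $2$. When \eqref{eq:mainstratumdim} is $\leq 1$, those strata therefore have negative dimension and are empty. This proves that all marked points are pairwise distinct. Regularity of the remaining main stratum is the generic case of the same condition, and that finishes the proof.

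I do not expect a serious obstacle. The one point needing care is the case where $u(z_k^*)\in D$ with $m_k^*>0$. This requires the weak transversality of each $c_{P_k}$ with $D$, which is part of \eqref{eq:tranverse-pseudo-cycle-pairs}. It also requires handling the limit sets of the pseudo-cycles, whose dimension is lower by $2$ (respectively $4$ inside $D$), so they contribute only further negative dimensions. Condition \ref{th:strata2} concerns the boundary face $\partial_{z_1^*=z_2^*}$ and is not needed here, since the statement is about the open space $\frakB_m^{(2)}$.
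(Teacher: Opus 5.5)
Your proposal is correct and is essentially the paper's argument. The paper just says the lemma follows from dimension computations exactly as in Lemma \ref{th:AMPcompact}: any coincidence among points of $\Sigma$, or of a point of $\Sigma$ with $z_1^*$ or $z_2^*$, forces $|\Pi|\leq m-1$ in the two-point analogue of Condition \ref{th:a-stratification}, which costs codimension $\geq 2$, and regularity is the generic case of that same condition (your remarks on the limit sets of the pseudo-cycles are harmless but not needed, since the moduli space itself only involves points $p_k\in P_k$).
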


This follows directly from dimension computations, exactly as in Lemma \ref{th:AMPcompact}.

\begin{proposition} \label{th:AM2Pcompact-2}
(i) If the expected dimension is $0$, then $\frakB_m^{(2)}(P_1,P_2,x^-,x^+)$ is compact (a finite set).

(ii) If the expected dimension \eqref{eq:mainstratumdim} is one, the compactification $\bar{\frakB}_m^{(2)}(P_1,P_2,x^-,x^+)$ is a one-manifold with boundary, obtained by adding the following as boundary points:
\begin{align} \label{eq:boundaryb21}
& \coprod_{\substack{i+j = m \\ x}} \frakB_i(P_1,x^-,x) \times \frakB_j(P_2,x,x^+); \\
&
\label{eq:boundaryb22}
\coprod_{\substack{i+j = m \\ x}} \frakD_{i}(x^-,x) \times \frakB_{j}^{(2)}(P_1,P_2,x,x^+) \quad \text{and} \quad
\coprod_{\substack{i+j = m \\ x}} \frakB_{i}^{(2)}(P_1,P_2,x^-,x) \times \frakD_{j}(x,x^+);
\\
\label{eq:boundaryb23}
&
\frakB_m(P_1 \cap P_2, x^-,x^+) \quad \text{and} \quad
\coprod_{\substack{i+j = m \\ j>0}} \frakB_{i}(\frakM_{j}^{(2)}(P_1,P_2), x^-,x^{+}).
\end{align}
Here, \eqref{eq:boundaryb21} happens when the cylinder splits into two, where the left piece carries $z_1^*$, and the right one $z_2^*$. If in such a splitting, one of the two pieces carries both distinguished points, we get \eqref{eq:boundaryb22} instead. Finally, collision of the two marked points yields \eqref{eq:boundaryb23}.
\end{proposition}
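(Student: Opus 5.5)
The plan is to run the same Gromov compactness analysis as in the proof of Proposition \ref{th:AMPcompact-2}, now for the two-point parameter space $\bar\frakB^{(2)}_m$. As there, I would first treat $P_1,P_2$ as closed manifolds; the extra limits of general pseudo-cycles are handled by the addenda in the conditions. Take a sequence in $\frakB^{(2)}_m(P_1,P_2,x^-,x^+)$ and pass to a stable map limit. The limit consists of points $p_k \in P_k$, a chain of cylinders $u^1,\dots,u^R$ with asymptotics in $M\setminus D$ (by \cite[Lemma 2.1.5]{pomerleano-seidel24}), and sphere bubble trees, all subject to the topological constraint \eqref{eq:topological-constraint}.

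I would split into two cases, according to whether $z_1^*,z_2^*$ stay apart in the limit or collide. If they stay apart, they lie on one principal cylinder or on two different ones. Cases (a)--(d) from the proof of Proposition \ref{th:AMPcompact-2} then go through verbatim, using the $\frakB^{(2)}$-analogues of Conditions \ref{th:a-stratification}--\ref{th:a-bubble} together with Conditions \ref{th:stratification}, \ref{th:bubble} for non-principal cylinders. These rule out everything except generic configurations without bubbles. Counting codimension, such a configuration can be unbroken, which gives (i); or, in the one-dimensional case (ii), broken once. A single breaking yields either \eqref{eq:boundaryb21} (the two marked points on different pieces) or \eqref{eq:boundaryb22} (both on the same piece). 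Here I use that Lemma \ref{th:AM2Pcompact-1} ensures regularity and distinctness of marked points in the top stratum.

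The collision case $r \to \partial_{z_1^*=z_2^*}\bar\frakB^{(2)}_m \cong \frakB_m$ is the main step. Rescaling near $z^*$ produces a bubble tree at $z^*$ carrying the images of $\zeta_1,\zeta_2$, attached to $u$ at $\zeta_0$. By \eqref{eq:topological-constraint}, the total intersection with $D$ at $z^*$ satisfies \eqref{eq:mult-leq-2}. There are three subcases.

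\emph{Non-generic bubble tree.} If the tree is not a single simple or constant component, or it lies in $\bar\frakM^{(2)}_e\setminus\frakM^{(2)}_e$, Condition \ref{th:a2-extrabubble} excludes it.

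\emph{Non-generic residual data.} If the tree is a single simple or constant component but the rest is non-generic (multiple points, or $u$ meeting $D$ at $z^*$ with positive multiplicity, so $|\Pi|<m-e$), Condition \ref{th:strata2} gives codimension $\geq 2$. The dimension count \eqref{eq:dimensionofsomestratum} compared with \eqref{eq:mainstratumdim} shows that only equality \eqref{eq:incidencewithbubble} with generic $\Pi$ survives.

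\emph{Extra bubbles elsewhere.} These are excluded by Condition \ref{th:a2-genericbubble}.

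The surviving limits are then as follows. If the bubble is constant ($e=0$), we get $u(z^*) \in P_1\cap P_2$, i.e.\ $\frakB_m(P_1\cap P_2,x^-,x^+)$. If it is nonconstant with $v\cdot D = e = j>0$, then $u(z^*)$ lies in the image of $\frakM^{(2)}_j(P_1,P_2)$ while $u$ carries the remaining divisor of degree $i=m-j$, giving $\frakB_i(\frakM^{(2)}_j(P_1,P_2),x^-,x^+)$. In both cases I am using the compatibility requirement that data on $\frakB_m$ agree with $\frakB_{m-m^*}$ where points sit at $z^*$, and that $J=J^*$ there.

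The main obstacle is the gluing converse: showing that each element of \eqref{eq:boundaryb23} is the limit of exactly one end of the one-dimensional moduli space, so that these really are boundary points. I would argue this by standard gluing of the sphere $v$ at $z^*$ to $u$ as $z_1^*,z_2^*$ approach each other. Transversality of the evaluation map in Lemma \ref{th:bubble-2points}(i), and transversality of $u$ at $z^*$, make the glued problem regular. The key point is that $v$ is transverse to $D$ and $u(z^*) \notin D$ generically, so the divisor condition \eqref{eq:intersectd} is preserved under gluing, with the $e$ intersection points of $v$ with $D$ becoming $e$ distinct points of $\Sigma$ near $z^*$. Breaking ends are the usual Floer gluing.
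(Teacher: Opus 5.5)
Your proposal is correct and follows essentially the same route as the paper: the non-colliding limits are handled by rerunning the argument of Proposition \ref{th:AMPcompact-2}. The collision limits are sorted into the same subcases, excluded respectively by Conditions \ref{th:a2-extrabubble}, \ref{th:strata2} and \ref{th:a2-genericbubble}, which leaves a constant or simple $D$-transverse bubble with $v \cdot D = m^*$; this yields \eqref{eq:boundaryb23}, and the bubble is glued back in using that transversality.
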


\begin{proof} 
The only case that deserves specific discussion is when the two distinguished marked points $z_1^*$ and $z_2^*$ collide. To keep things reasonably streamlined, we again assume for simplicity that $P_1,P_2$ are closed manifolds. 

Let's take a sequence in $\frakB_m^{(2)}(P_1,P_2, x^-,x^+)$ where that happens, and look at the limit of a subsequence. Note that simultaneously, some amount $m^* \geq 0$ of points of $\Sigma$ can collide with $z_1^*,z_2^*$. The limit has: points $p_1,p_2$; a chain of cylinders $u^k: C^k \longrightarrow M$ as in \eqref{eq:chainofbreakings}, with divisors $\Sigma^k$ whose degrees we denote by $m^k$; and possibly sphere bubbles attached at points of $\Sigma^k$. The total configuration must satisfy the topological constraint \eqref{eq:topological-constraint}.

There is a principal cylinder $u^c: C^c \longrightarrow M$, which carries the point $z^* \in C^c$ at which the two distinguished points have collided. At $z^*$, there is a (possibly constant) $J$-holomorphic stable map $v$ with three marked points $\zeta_0,\zeta_1,\zeta_2$, so that 
\begin{equation}
v(\zeta_1)=c_{P_1}(p_1), \;\; v(\zeta_2)=c_{P_2}(p_2), \;\; v(\zeta_0)=u^c(z^*). 
\end{equation}
We consider these cases:

{\bf (a)} {\em Suppose $v$ has non-smooth domain, or is not simple, or is not transverse to $D$.} In that case, we forget all of the bubbles on $u^c$ besides $v$. The outcome satisfies the intersection condition \eqref{eq:mult-leq-2}. Hence, by Condition \ref{th:a2-extrabubble}, this bubbling does not occur for dimension $\leq 1$ moduli spaces.   

{\bf (b)} {\em Suppose that $v$ is simple, but the partition of the remaining $m^c-m^*$ marked points is non-generic.} We again forget all the bubbles except for $v$. As before, the outcome still satisfies \eqref{eq:mult-leq-2}, and we get that these configurations are of codimension $\geq 2$ by Condition \ref{th:strata2}.

{\bf (c)} {\em Suppose that $v$ is simple, the position of the remaining $m^c-m^*$ marked points on $u^c$ is generic, and that there are sphere bubbles attached to some of these remaining points.} We forget all of these additional bubbles except one. For that component, we are in the situation of Condition \ref{th:a2-genericbubble}, which again rules it out.

{\bf (d)} {\em None of the above.} This means that the $m^c - m^*$ marked points on $u^c$ are in generic position, the sphere bubble $v$ is simple. The topological constraint implies that $v\cdot D=m^*.$ Furthermore, the arguments from Proposition \ref{th:AMPcompact-2} show that additional cylindrical components (beyond the principal one carrying $z^*$) do not arise on the boundary of moduli spaces of dimension $\leq 1.$ Thus, the codimension one boundary strata where $z_1^*,z_2^*$ collide are of the form \eqref{eq:boundaryb23}: the first case happens when $m^* = 0$ ($v$ is constant), and the second one for $m^* > 0$ ($v$ is a simple pseudo-holomorphic sphere). Note that by (a) this sphere is transverse to $D$. That makes it easy to apply a gluing process, which shows that \eqref{eq:boundaryb23} are indeed boundary points.
\end{proof}

As a consequence, counting points in zero-dimensional spaces $\frakB_m^{(2)}(P_1,P_2,x^{-},x^{+})$ defines a map 
\begin{equation} \label{eq:aPQhomotopy1}
\begin{aligned}
& b_{m}^{(2)}(P_1,P_2): \mathit{CF}^*(w) \longrightarrow \mathit{CF}^{*+|P_1|+|P_2|-2m-1}(w+m+1), \\
& b_m(P_1 \cap P_2) + \sum_{\substack{i+j=m \\ j>0}} b_i(\frakM_j^{(2)}(P_1,P_2))- \sum_{i+j=m} b_i(P_1) \circ b_j(P_2)
\\ 
= & \sum_{i+j=m} d_{i} \, b_{j}^{(2)}(P_1,P_2) + (-1)^{\mathrm{dim}(P_1) + \mathrm{dim}(P_2)}  b_{i}^{(2)}(P_1,P_2)\, d_{j}. 
\end{aligned}
\end{equation}
Yet again, there is a variant $\frakB^{\dag,(2)}_m$ of our parameter space, where one does not divide by translations. This leads to additional operations
\begin{equation} 
b_{m}^{(2),\dag}(P_1,P_2): \mathit{CF}^*(w) \longrightarrow \mathit{CF}^{*+|P_1|+|P_2|-2m-2}(w+m+2)
\end{equation}
satisfying
\begin{equation}
\begin{aligned}
\label{eq:aPQhomotopy2} 
& b_m^\dag(P_1 \cap P_2) + \sum_{\substack{i+j = m \\ j>0}} b_{i}^{\dag}(\frakM_{j}^{(2)}(P_1,P_2))- (-1)^{\mathrm{dim}(P_2)} b_{i}^{\dag}(P_1) \circ b_{j}(P_2)- b_{i}(P_1) \circ b_{j}^{\dag}(P_2) 
\\ 
= & \sum_{i+j = m} d_i^{\dag} b^{(2)}_j(P_1,P_2) 
+ (-1)^{\mathrm{dim}(P_1)+\mathrm{dim}(P_2)-1} d_i\, b^{(2),\dag}_j(P_1,P_2) 
\\[-1em] & \qquad \qquad - b^{(2)}_i(P_1,P_2) \, d_j^{\dag} + b^{(2),\dag}_i(P_1,P_2) \, d_j.
\end{aligned}
\end{equation}

\begin{proof}[Proof of Proposition \ref{th:module-over}]
Along familiar lines, we package our operations into a map of degree $\mathrm{codim}(P_1) + \mathrm{codim}(P_2)-1$:
\begin{equation} \label{eq:b-2-homotopy}
\begin{aligned}
& b^{(2)}_{C_q}(P_1,P_2): C_q \longrightarrow C_q, \\
& b^{(2)}_{C_q}(x) = \sum_m q^m b^{(2)}_m(P_1,P_2)(x), \\
& b^{(2)}_{C_q}(\eta x) = \sum_m q^m \big((-1)^{\mathrm{dim}(P_1)+\mathrm{dim}(P_2)-1}\, \eta b^{(2)}_m(P_1,P_2)(x) + b^{(2),\dag}_m(P_1,P_2)(x)\big).
\end{aligned}
\end{equation}
The equations \eqref{eq:aPQhomotopy1} and \eqref{eq:aPQhomotopy2} amount to \eqref{eq:b-2-homotopy} being a chain homotopy between $b_{C_q}(P_1) \circ b_{C_q}(P_2)$ and $b_{C_q}(P_1 \cap P_2) + \sum_{e>0} q^e b_{C_q}(\frakM_e^{(2)}(P_1,P_2))$. In view of \eqref{eq:pseudo-cycle-product}, this yields the desired result on cohomology.
\end{proof}

\subsection{Applications and discussion\label{sec:applications}}
We derive the remaining property of deformed symplectic cohomology stated in Section \ref{subsec:relative}; and conclude the section with informal remarks concerning possible further developments.

\begin{proof}[Proof of Corollary \ref{th:torsion-1}]
Take the homogeneous minimal polynomial \eqref{eq:f-q}. From \eqref{eq:f-q-c1} and Lemma \ref{th:kappa-and-quantum-product}, we know that $q^{\mathrm{deg}(f)} f(a_{q}) = f_q(q a_{q}) = 0$ as an endomorphism of $\mathit{SH}^*_q(M,D;\bC)$. 
The rest of the argument is a degree consideration: the endomorphism $q^n f(a_{q})$ is of degree $2n$, hence by \eqref{eq:deformed-sh} takes values in the subspace $H^*(M;\bC)[q] \subset \mathit{SH}^*_q(M,D;\bC)$. That subspace has no $q$-torsion, so from $q^N f(a_{q}) = 0$ for some $N$, it follows that $q^n f(a_{q}) = 0$.
\end{proof}

For future use, we note a variant:

\begin{corollary} \label{th:torsion-1b}
Suppose that $p$ satisfies \eqref{eq:no-torsion}. Then \eqref{eq:f-kappa} holds in the endomorphisms of $\mathit{SH}^*_q(M,D;\bF_p)$.
\end{corollary}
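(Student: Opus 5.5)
We follow the proof of Corollary \ref{th:torsion-1} step by step, now with $\bF_p$-coefficients. The one new point is the module structure, which by Remark \ref{th:mod-p-pseudocycles} we want to avoid defining with mod $p$ pseudo-cycles. We therefore work at chain level over $\bZ$ and reduce mod $p$ only at the end. The integral complex $C_q$ and the chain maps $a_{C_q}$ and $b_{C_q}(P)$ are all defined over $\bZ$. Here $P$ is the integral pseudo-cycle given by a perturbation of $D$, so that $[P] = c_1(M)$. Their mod $p$ reductions are the corresponding maps on $C_q \otimes \bF_p$, which computes $\mathit{SH}^*_q(M,D;\bF_p)$. The chain homotopy $q\,a_{C_q} \simeq b_{C_q}(P)$ from the proof of Lemma \ref{th:kappa-and-quantum-product} is also integral, so it reduces mod $p$ as well.

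\emph{Step 1.} We first show that $f_q(c_1(M)) = 0$ in $(H^*(M;\bF_p)[q],\ast_q)$. Over $\bC$ this is \eqref{eq:f-q-c1}. Since $f$ has integer coefficients, $f_q(c_1(M))$ is an integral class that vanishes rationally, hence is torsion. By \eqref{eq:no-torsion} its torsion order is prime to $p$, so its reduction to $\bF_p$ vanishes. This is the same reasoning as in the proof of Corollary \ref{th:congruence}.

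\emph{Step 2.} Next we show that $f_q(b_{C_q}(P))$ is nullhomotopic modulo $p$. We choose integral pseudo-cycles $P^{(k)}$ representing the quantum powers $c_1(M)^{\ast_q k}$, built iteratively via \eqref{eq:pseudo-cycle-product}. The integral chain homotopies from Proposition \ref{th:module-over} identify $b_{C_q}(P)^k$ with $b_{C_q}(P^{(k)})$ up to homotopy. Hence $f_q(b_{C_q}(P))$ is homotopic over $\bZ$ to $b_{C_q}(Z)$, where $Z$ is an integral pseudo-cycle combination representing $f_q(c_1(M))$. By Step 1 this class is torsion of order $N$ with $\gcd(N,p)=1$. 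Then $N\,b_{C_q}(Z)$ is nullhomotopic over $\bZ$, by the pseudo-chain bordism argument. Reducing mod $p$ and inverting $N$, we conclude that $q^{\deg f} f(a_q) = f_q(q\,a_q) = 0$ on $\mathit{SH}^*_q(M,D;\bF_p)$.

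\emph{Step 3.} Finally we reduce the power of $q$ by a degree argument. The endomorphism $q^n f(a_q)$ has degree $2n$. The $\bF_p$-version of \eqref{eq:deformed-sh} still holds: the spectral sequence argument of \cite{pomerleano-seidel23} works over any coefficients, or one can use \eqref{eq:no-torsion} and universal coefficients. So the image of $q^n f(a_q)$ lies in $H^*(M;\bF_p)[q]$, which has no $q$-torsion. Combined with $q^{\deg f - n}\cdot q^n f(a_q)=0$ from Step 2, this gives $q^n f(a_q)=0$.

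The main obstacle is Step 2: making precise that the pseudo-cycle representing a torsion class gives a module action that is nullhomotopic after multiplying by $N$. If this turns out to be awkward, the fallback is to avoid $\bullet_q$ powers altogether. One would compute $q^{\deg f}f(a_q)$ on $\mathit{SH}^*_q(M,D;\bZ)$, where the complex result shows it has image in the torsion of $H^*(M;\bZ)[q]$, a group of order prime to $p$. One then passes to $\bF_p$ through the coefficient sequence \eqref{eq:sh-coefficient}, which requires some care with the connecting map.
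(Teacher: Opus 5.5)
Your proposal is correct and is essentially the paper's own proof, specifically its second variant. That variant lets integral pseudo-cycles act on the $\bF_p$-coefficient theory, uses \eqref{eq:no-torsion} to see that $f_q(c_1(M))$ is killed by an integer $N$ prime to $p$ (so $N\,q^{\deg f}f(a_q)=0$, hence $q^{\deg f}f(a_q)=0$), and then applies the same grading argument to get down to $q^n$; the paper's first variant is your Step 1 combined with running the proof of Corollary \ref{th:torsion-1} using mod $p$ pseudo-cycles.
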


\begin{proof} 
We have $f_q(c_1(M)) = 0$ in quantum cohomology with $\bF_p$-coefficients, for the same reason as in the proof of Corollary \ref{th:congruence}. Then, the statement follows as in Corollary \ref{th:torsion-1}, working with $\bF_p$-coefficients throughout. 

Alternatively, if one wants to avoid mod $p$ pseudo-cycles (see Remark \ref{th:mod-p-pseudocycles}), the argument can be tweaked as follows. Using integral pseudo-cycles, and Floer cohomology with $\bF_p$-coefficients, one can define a version of \eqref{eq:circ-module}, 
\begin{equation} \label{eq:modified-circ}
\bullet_q: H^*(M;\bZ)[q] \otimes_{\bZ[q]} \mathit{SH}^*_q(M,D;\bF_p) \longrightarrow \mathit{SH}^*_q(M,D;\bF_p).
\end{equation}
This makes $\mathit{SH}^*_q(M,D;\bF_p)$ into a module over the integral quantum cohomology ring, and satisfies the $\bF_p$-counterpart of Lemma \ref{th:kappa-and-quantum-product}. We know that $f_q(c_1(M)) \in H^*(M;\bZ)[q]$ becomes zero after multiplication with some natural number, which is coprime to $p$. Using \eqref{eq:modified-circ}, it follows that the same holds for $q^{\mathrm{deg}(f)}f(a_{q})$ as an endomorphism of $\mathit{SH}^*_q(M,D;\bF_p)$, which is therefore zero. The grading argument for $q^n f(a_{q})$ works as before.
\end{proof}

\begin{remark} \label{th:bound}
(i) By \cite[Lemma 7.1.10]{pomerleano-seidel23} (stated there with complex coefficients, but the proof works over $\bF_p$ as well), $\mathit{SH}^*_{u,q}(M,D;\bF_p)$ is a finitely generated module over the Weyl algebra $\bF_p[u]\langle q,\nabla_{u\partial_q}\rangle$. This implies that the constant $A$ in Corollary \ref{th:torsion-2} can be chosen uniformly over all $x$. Namely, take $A \equiv 0$ mod $p$, such that $q^A f(\nabla_{u\partial_q}^p)$ acts trivially on some set $x_1,\dots,x_m$ of generators. Then, because $q^A f(\nabla_{u\partial_q}^p)$ commutes with $u$, $q$, and $\nabla_{u\partial_q}$, we have $q^A f(\nabla_{u\partial_q}^p)x = 0$ for all $x \in \mathit{SH}^*_{u,q}(M,D;\bF_p)$.

(ii) One expects that generators in the sense of (i) can be chosen so that
\begin{equation} \label{eq:minus-one-generators}
x_1,\dots,x_m \in H^*(M;\bF_p) \oplus H^*(D;\bF_p)z \subset \mathit{SH}^*_{u,q}(M,D;\bF_p). 
\end{equation}
This could be used to improve the statement of Corollary \ref{th:torsion-2}, as follows. Take the decreasing filtration $F^{\geq K} \subset \mathit{SH}^*_{u,q}(M,D;\bF_p)$, $K \in \bZ$, given by
\begin{equation}
F^{\geq K} = \begin{cases}
q^K H^*(M;\bF_p)[u,q] & K \geq 0, \\
H^*(M;\bF_p)[u,q] \oplus H^*(D;\bF_p)[u]z \oplus \cdots H^*(D;\bF_p)[u]z^{-K} & K < 0.
\end{cases}
\end{equation}
On the cochain level, this is represented by a version of the action filtration; see \cite[Section 4]{pomerleano-seidel24} for the definition of the filtration, and more specifically the discussion after \cite[Equation (4.3.9)]{pomerleano-seidel24} for its cohomology level implications. By definition, one has
\begin{equation} \label{eq:jump-filtration}
\begin{aligned}
& q(F^{\geq K}) \subset F^{\geq K+1}, \\
& \nabla_{u\partial_q}(F^{\geq K}) \subset F^{\geq K-1}.
\end{aligned}
\end{equation}
The $x_k$ from \eqref{eq:minus-one-generators} lie in $F^{\geq -1}$, hence by \eqref{eq:jump-filtration} we have
\begin{equation} \label{eq:f0}
q^{p\,\mathrm{deg}(f)+1} f(\nabla_{u\partial_q}^p) x_k \in F^{\geq 0}.
\end{equation}
We know from Corollary \ref{th:torsion-2} that $f(\nabla_{u\partial_q}^p) x_k$ is $q$-torsion. But $F^{\geq 0}$ does not contain any nontrivial $q$-torsion elements, hence \eqref{eq:f0} must be zero. Combining this with the argument from (i), where $A$ was supposed to be a multiple of $p$, we find that it would suffice to take 
\begin{equation} \label{eq:weak-bound}
A = p(\mathrm{deg}(f) + 1).
\end{equation}
We remind the reader that this argument is incomplete, as we have not justified \eqref{eq:minus-one-generators} (doing that would require a digression into low-energy solutions of the thimble maps underlying the isomorphism \eqref{eq:deformed-sh-2}, and that lies somewhat outside the concerns of the present paper).
\end{remark}

\begin{remark} \label{th:more} 
We now add some speculations concerning other algebraic structures on deformed symplectic cohomology. One expected structure, the pair-of-pants product \eqref{eq:relative-quantum-product}, has already been mentioned. One also expects to have an analogue of the quantum Steenrod operations \eqref{eq:quantum-steenrod-endomorphism}, along the lines of \cite{seidel14c, shelukhin-zhao19, wilkins23}. This should associate to any $b \in \mathit{SH}^*_q(M,D;\bF_p)$ a $(u,\theta,q)$-linear map
\begin{equation} \label{eq:relative-qsigma}
Q\Sigma_b: \mathit{SH}^*_{u,q}(M,D;\bF_p)[\theta] \longrightarrow
\mathit{SH}^*_{u,q}(M,D;\bF_p)[\theta],
\end{equation}
satisfing the analogue of Property \ref{th:quantum-properties}(iv) with respect to the pair-of-pants product. The equivariant version of $a_q^p$ mentioned in the context of the relative Etingof-Lee conjecture (Remark \ref{th:relative-etingof-lee}) is expected to be equal the special case of \eqref{eq:relative-qsigma} where $b = 1z$ is the $\bF_p$-coefficient version of the class from \eqref{eq:1z}; this equality is the counterpart of Lemma \ref{th:kappa-and-quantum-product}.

Assuming this all works out, let's see how it could be useful. We assume torsion-freeness as in \eqref{eq:no-torsion}. From \eqref{eq:q-times-z} one sees that, in the ring structure given by the pair-of-pants product, 
\begin{equation}
q^{\mathrm{deg}(f)} f(1 z) = f_q(c_1(M)) = 0 \in \mathit{SH}^*_q(M,D;\bF_p).
\end{equation}
Because of the grading and $q$-action in \eqref{eq:deformed-sh}, this implies that $q^n f(1 \, z) = 0$. As a consequence,
\begin{equation} \label{eq:f-of-qsigma-rel}
0 = Q\Sigma_{q^n f(1 z)} = q^{np} Q\Sigma_{f(1 z)} = q^{np} f(Q\Sigma_{1 z}),
\end{equation}
where $f(Q\Sigma_{1 z})$ is formed in the endomorphism ring of $\mathit{SH}^*_{u,q}(M,D;\bF_p)[\theta]$. If one assumes the relative Etingof-Lee conjecture, then \eqref{eq:f-of-qsigma-rel} leads to a statement about the relative quantum connection, namely Conjecture \ref{th:stronger}.
\end{remark}

\section{The relative quantum connection\label{sec:proof}}
In this section we prove the main results from Section \ref{subsec:fl}, namely Theorem \ref{th:q-fontaine-laffaille} and Corollary \ref{th:torsion-3}. Before we can get to that, there are some more preliminaries. The first part concerns deformed symplectic cohomology; there, we follow \cite[Section 7.1a]{pomerleano-seidel23} with two modifications. One change is working with mod $p$ coefficients. The other (far more important) change is that we use the minimal polynomial \eqref{eq:minimal-polynomial}, whereas \cite{pomerleano-seidel23} merely proved the existence of some polynomial with the desired properties, without assigning a geometric meaning to it; the key here is to appeal to Lemma \ref{th:kappa-and-quantum-product}. The second part of the preliminaries, which deals with the deformed wrapped Fukaya category, does the same for \cite[Section 7.1b]{pomerleano-seidel23}. With that at hand, we add the categorical results from Section \ref{sec:algebra} to the mix, and it finishes cooking quickly.

\subsection{Variants of deformed symplectic cohomology\label{subsec:variants-sh}}
Throughout, we work with a prime $p$ such that \eqref{eq:no-torsion} holds. $C_q$ is always the chain complex underlying deformed symplectic cohomology with $\bF_p$-coefficients. Write
\begin{equation}
\begin{aligned}
C_{q^{\pm 1}} & \stackrel{\mathrm{def}}{=} \bF_p[q^{\pm 1}] \otimes_{\bF_p[q]} C_q, \\
q^{-1}C_{q^{-1}} & \stackrel{\mathrm{def}}{=}  q^{-1}\bF_p[q^{-1}] \otimes_{\bF_p[q]} C_q = C_{q^{\pm 1}}/C_q.
\end{aligned}
\end{equation}
The endomorphism $a_{q}$ carries over to those complexes. Taking the minimal polynomial \eqref{eq:minimal-polynomial}, we invert $f(a_{q})$:
\begin{equation} \label{eq:1overf}
\begin{aligned}
C_{q,1/f} & \stackrel{\mathrm{def}}{=} \bF_p[a_{q},f(a_{q})^{-1}] \otimes_{\bF_p[a_{q}]} C_q, \\
C_{q^{\pm 1},1/f} & \stackrel{\mathrm{def}}{=} \bF_p[a_{q},f(a_{q})^{-1}] \otimes_{\bF_p[a_{q}]} C_{q^{\pm 1}}, \\
q^{-1}C_{q^{-1},1/f} & \stackrel{\mathrm{def}}{=} \bF_p[a_{q},f(a_{q})^{-1}] \otimes_{\bF_p[a_{q}]} q^{-1}C_{q^{-1}}.
\end{aligned}
\end{equation}
The resulting cohomology groups will be denoted by $\mathit{SH}^*(M,D;\bF_p)$ with the same decorations. We have seen one of them before, in \eqref{eq:invert-f}.

\begin{lemma} \label{th:les}
There is an isomorphism, compatible with the actions of $q$ and $a_{q}$,
\begin{equation}
q^{-1}\mathit{SH}^*_{q^{-1},1/f}(M,D;\bF_p) \iso \mathit{SH}^{*+1}_{q,1/f}(M,D;\bF_p).
\end{equation}
\end{lemma}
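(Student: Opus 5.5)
The isomorphism will come from the short exact sequence of complexes
\begin{equation*}
0 \longrightarrow C_{q,1/f} \longrightarrow C_{q^{\pm 1},1/f} \longrightarrow q^{-1}C_{q^{-1},1/f} \longrightarrow 0.
\end{equation*}
This sequence is obtained by applying the localisation $\bF_p[a_q,f(a_q)^{-1}] \otimes_{\bF_p[a_q]} (\cdot)$ to $0 \rightarrow C_q \rightarrow C_{q^{\pm 1}} \rightarrow q^{-1}C_{q^{-1}} \rightarrow 0$. Localisation is exact, so the sequence stays short exact. All maps commute with $q$ and $a_q$, so the connecting homomorphism $q^{-1}\mathit{SH}^*_{q^{-1},1/f} \rightarrow \mathit{SH}^{*+1}_{q,1/f}$ is compatible with both actions. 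It therefore suffices to show that the middle term is acyclic, i.e.\ that $\mathit{SH}^*_{q^{\pm 1},1/f}(M,D;\bF_p) = 0$.

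To get acyclicity, I would first commute localisation with cohomology. The action of $a_q$ on $C_q$ is only a chain map, not a strict module structure, so I would form the localisation as a homotopy colimit (mapping telescope) of the system $C_q \xrightarrow{f(a_q)} C_q \xrightarrow{f(a_q)} \cdots$. Then $H^*$ of the localised complex is the colimit of $\mathit{SH}^*_{q^{\pm 1}} \xrightarrow{f(a_q)} \cdots$, that is, the localisation of cohomology at $f(a_q)$. It then suffices to show that $f(a_q)$ acts nilpotently on $\mathit{SH}^*_{q^{\pm 1}}(M,D;\bF_p)$, because inverting a nilpotent operator kills the module.

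For that nilpotence, Corollary~\ref{th:torsion-1b} gives $q^n f(a_q) = 0$ on $\mathit{SH}^*_q(M,D;\bF_p)$; this uses the no-torsion assumption \eqref{eq:no-torsion}. Since $\mathit{SH}^*_{q^{\pm 1}} = \bF_p[q^{\pm 1}] \otimes_{\bF_p[q]} \mathit{SH}^*_q$ (inverting $q$ is exact) and $q$ commutes with $a_q$, this relation extends to the localisation, where $q$ is invertible. Hence $f(a_q) = 0$ on $\mathit{SH}^*_{q^{\pm 1}}$, and so $\mathit{SH}^*_{q^{\pm 1},1/f} = 0$.

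The main obstacle is the chain-level bookkeeping in the second step: making sense of $\bF_p[a_q, f(a_q)^{-1}] \otimes C$ when $a_q$ is only an endomorphism up to homotopy, and checking that the resulting cohomology really is the algebraic localisation. My plan there is to read all the definitions in \eqref{eq:1overf} as telescope (colimit) constructions. These are functorial and exact, which gives both the short exact sequence and the commutation of localisation with $H^*$. The $q$-action passes through because $q$ commutes strictly with $a_{C_q}$ on the chain level.
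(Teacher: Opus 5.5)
Your proposal is correct and follows essentially the same route as the paper. You localize the short exact sequence $0 \to C_q \to C_{q^{\pm 1}} \to q^{-1}C_{q^{-1}} \to 0$ at $f(a_q)$, then use Corollary \ref{th:torsion-1b} (after inverting $q$) to get $f(a_q)=0$ on $\mathit{SH}^*_{q^{\pm 1}}(M,D;\bF_p)$, so the middle term vanishes and the connecting map is the desired isomorphism.

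The telescope construction you flag as the main obstacle is unnecessary. Since $a_{C_q}$ is a strict degree-$0$ chain endomorphism commuting with $q$, each complex is already an honest dg module over $\bF_p[a_q]$, and $\bF_p[a_q,f(a_q)^{-1}]\otimes_{\bF_p[a_q]}(\cdot)$ is flat, hence exact and commutes with cohomology.
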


\begin{proof}
By definition, we have a short exact sequence
\begin{equation} \label{eq:plusminus-short}
0 \longrightarrow C_q \longrightarrow C_{q^{\pm 1}} \longrightarrow
q^{-1}C_{q^{-1}} \longrightarrow 0.
\end{equation}
Inverting $f(a_{q})$, as applied to chain complexes of $\bF_p[a_{q}]$-modules, is an exact functor. We apply that to \eqref{eq:plusminus-short} and get a long exact sequence
\begin{equation} \label{eq:plusminus-les}
\cdots \rightarrow \mathit{SH}^*_{q,1/f}(M,D;\bF_p) \longrightarrow \mathit{SH}^*_{q^{\pm 1},1/f}(M,D;\bF_p) \longrightarrow q^{-1}\mathit{SH}^*_{q^{-1},1/f}(M,D;\bF_p) \rightarrow \cdots 
\end{equation}
By Corollary \ref{th:torsion-1b}, $f(a_{q})$ is zero as an endomorphism of $\mathit{SH}^*_{q^{\pm 1}}(M,D;\bF_p)$. Therefore, the middle group in \eqref{eq:plusminus-les} is zero, and the connecting map is an isomorphism.
\end{proof}

There are analogues for the $S^1$-equivariant theory. If $C_{u,q}$ is the chain complex underlying that theory \cite[Section 7]{pomerleano-seidel24}, again with $\bF_p$-coefficients, we set 
\begin{equation} \label{eq:invq-2}
\begin{aligned}
C_{u,q^{\pm 1}} & \stackrel{\mathrm{def}}{=} \bF_p[q^{\pm 1}] \hat\otimes_{\bF_p[q]} C_{u,q}, \\
q^{-1}C_{u,q^{-1}} & \stackrel{\mathrm{def}}{=} q^{-1}\bF_p[q^{-1}] \hat\otimes_{\bF_p[q]} C_{u,q}
\end{aligned}
\end{equation}
and 
\begin{equation} \label{eq:1overf-2}
\begin{aligned}
C_{u,q,1/f} & \stackrel{\mathrm{def}}{=} \bF_p[\nabla_{u\partial_q},1/f(\nabla_{u\partial_q})] \hat\otimes_{\bF_p[\nabla_{u\partial_q}]} C_{u,q}, \\
C_{u,q^{\pm 1},1/f} & \stackrel{\mathrm{def}}{=} \bF_p[\nabla_{u\partial_q},1/f(\nabla_{u\partial_q})] \hat\otimes_{\bF_p[\nabla_{u\partial_q}]} C_{u,q^{\pm 1}}, \\
q^{-1} C_{u,q^{-1},1/f} & \stackrel{\mathrm{def}}{=} \bF_p[\nabla_{u\partial_q},1/f(\nabla_{u\partial_q})] \hat\otimes_{\bF_p[\nabla_{u\partial_q}]} q^{-1} C_{u,q^{-1}}.
\end{aligned}
\end{equation}
The actions of $q$ and $\nabla_{u\partial_q}$ carry over; see \eqref{eq:leibniz-rule} for how to define $q$ on \eqref{eq:1overf-2}. We use the same notational convention for cohomology as before. One more bit of explanation: $\hat\otimes$ denotes the (graded) $u$-adic completion of the tensor product. In general, completion is important in order to get a well-behaved theory. By a suitable choice of Hamiltonians, one can arrange that the Floer complex is bounded below \cite[Lemma 4.1.3(ii)]{pomerleano-seidel24}. Then, graded completion does nothing when applied in the first line of \eqref{eq:1overf-2} (but is still relevant for the other two flavors). As a consequence, 
\begin{equation} \label{eq:no-completion}
\mathit{SH}^*_{u,q,1/f}(M,D;\bF_p) \iso \bF_p[\nabla_{u\partial_q},1/f(\nabla_{u\partial_q})] \otimes_{\bF_p[\nabla_{u\partial_q}]} \mathit{SH}^*_{u,q}(M,D;\bF_p).
\end{equation}
(This is a cohomology level statement, and therefore valid independently of the Hamiltonians; the specific choice is used only to prove it.) We have previously used this as the definition of $\mathit{SH}^*_{u,q,1/f}(M,D;\bF_p)$, in \eqref{eq:invert-f-2}.

\begin{lemma} \label{th:les-2}
There is an isomorphism, compatible with the actions of $q$ and $\nabla_{u\partial_q}$,
\begin{equation}
q^{-1}\mathit{SH}^*_{u,q^{-1},1/f}(M,D;\bF_p) \iso \mathit{SH}^{*+1}_{u,q,1/f}(M,D;\bF_p).
\end{equation}
\end{lemma}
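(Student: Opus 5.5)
The plan is to imitate the proof of Lemma \ref{th:les}, replacing $a_q$ by $\nabla_{u\partial_q}$ throughout.

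\textbf{Step 1: a short exact sequence after localization.} By definition \eqref{eq:invq-2} there is a short exact sequence of complexes
\begin{equation}
0 \longrightarrow C_{u,q} \longrightarrow C_{u,q^{\pm 1}} \longrightarrow q^{-1}C_{u,q^{-1}} \longrightarrow 0.
\end{equation}
The completion is harmless here, because each graded piece is a product over powers of $u$ of exact sequences. This sequence is compatible with $\nabla_{u\partial_q}$ and with $q$. Next apply $\bF_p[\nabla_{u\partial_q},1/f(\nabla_{u\partial_q})]\hat\otimes_{\bF_p[\nabla_{u\partial_q}]}(\cdot)$. Localization of $\bF_p[\nabla_{u\partial_q}]$-modules is exact. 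The $u$-adic completion should preserve exactness as well: degreewise, the completed tensor product is an inverse limit over $u^k$ of localizations of the truncations, and Mittag-Leffler holds because the transition maps are surjective. This yields a long exact sequence in cohomology
\begin{equation}
\cdots \rightarrow \mathit{SH}^*_{u,q,1/f} \rightarrow \mathit{SH}^*_{u,q^{\pm 1},1/f} \rightarrow q^{-1}\mathit{SH}^*_{u,q^{-1},1/f} \rightarrow \mathit{SH}^{*+1}_{u,q,1/f} \rightarrow \cdots
\end{equation}
It is compatible with $\nabla_{u\partial_q}$ and, via \eqref{eq:leibniz-rule}, with $q$. The connecting map commutes with these operations because they are defined at chain level, compatibly with the sequence.

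\textbf{Step 2: the middle term vanishes.} This is the main point, and it is where the two cases differ. In Lemma \ref{th:les}, Corollary \ref{th:torsion-1b} killed $f(a_q)$ after inverting $q$. Here, by \eqref{eq:invert-q-2} with $\bF_p$ coefficients, $\mathit{SH}^*_{u,q^{\pm 1}}$ is the classical quantum $D$-module $H^*(M;\bF_p)[u,q^{\pm 1}]$ (suitably completed), with its connection. By Corollary \ref{th:congruence}, in the form stated in the proof of Corollary \ref{th:torsion-2}, $f(\nabla_{u\partial_q}^p)=0$ there.

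The localization, however, inverts $f(\nabla_{u\partial_q})$ rather than $f(\nabla^p_{u\partial_q})$. To bridge this, observe that over $\bF_p$ we have $f(z)^p = f^{(p)}(z^p)$, and that $f$ has $\bF_p$-coefficients. Hence $f(\nabla_{u\partial_q})^p = f(\nabla_{u\partial_q}^p)$, and this is zero on $\mathit{SH}_{u,q^{\pm1}}$. Consequently $f(\nabla_{u\partial_q})$ is nilpotent there, and inverting it kills the module.

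This needs care at chain level, together with the completion. I would argue that $C_{u,q^{\pm1},1/f}$ is acyclic by filtering by powers of $u$. The $u$-adic spectral sequence, or equivalently the associated graded $u^k C_{q^{\pm1}}$, on which $\nabla_{u\partial_q}$ acts through $a_q$, reduces the question to the non-equivariant statement already used in Lemma \ref{th:les}: localizing at $f(a_q)$ kills $\mathit{SH}_{q^{\pm1}}$. Exactness of localization on each graded piece, and completeness of the filtration, then give acyclicity. Running the argument this way avoids needing the $p$-th power identity for the equivariant statement, although that identity gives an alternative route.

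\textbf{Step 3: conclusion.} With the middle term zero, the connecting map $q^{-1}\mathit{SH}^*_{u,q^{-1},1/f}\to \mathit{SH}^{*+1}_{u,q,1/f}$ is an isomorphism, compatible with $q$ and $\nabla_{u\partial_q}$ by Step 1. The step I expect to be the main obstacle is the convergence issue in Step 2, namely that localization commutes with the completed filtration argument. I would handle it using the bounded-below choice of Hamiltonians from \cite[Lemma 4.1.3(ii)]{pomerleano-seidel24}, so that in each degree only finitely many powers of $u$ interact.
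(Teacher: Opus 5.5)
Your proposal is correct and follows essentially the paper's route. The paper likewise uses a $u$-filtration argument, together with the chain-level fact that $\nabla_{u\partial_q}$ reduces to $a_q$ mod $u$, to reduce to the exactness of \eqref{eq:plusminus-short} and the acyclicity of $C_{q^{\pm 1},1/f}$ from Lemma \ref{th:les}. It phrases this as two quasi-isomorphisms out of $\mathit{Cone}(C_{u,q,1/f} \to C_{u,q^{\pm1},1/f})$ rather than via your Mittag-Leffler exactness argument, and cites \cite[Lemma 2.3.6]{pomerleano-seidel23} for $u$-torsion-freeness.

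One small correction: your closing appeal to bounded-below Hamiltonians does not help for $C_{u,q^{\pm 1},1/f}$. There, negative powers of $q$ allow infinitely many powers of $u$ in each degree. Your complete-filtration argument does not need that appeal, so nothing breaks.
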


\begin{proof}
As an application of (the version over $\bF_p$ of) \cite[Lemma 2.3.6]{pomerleano-seidel23}, we see that each of the complexes \eqref{eq:1overf-2} is $u$-torsionfree. That Lemma also determines the $u = 0$ reduction of the complexes. By \eqref{eq:kappa-and-connection} (or more precisely, because of the underlying chain level statement), those reductions turn out to be the corresponding complexes \eqref{eq:1overf}.
 
From the obvious inclusion and projection, we get a map
\begin{equation} \label{eq:cone-1}
\mathit{Cone}\big(C_{u,q,1/f} \rightarrow C_{u,q^{\pm 1},1/f}\big) \longrightarrow
q^{-1}C_{u,q^{-1},1/f}.
\end{equation}
The $u = 0$ reduction of this map consists of the groups correspondingly obtained from \eqref{eq:plusminus-short}. Because that is a short exact sequence, the $u=0$ map is a quasi-isomorphism; from there, a $u$-filtration argument shows that \eqref{eq:cone-1} is a quasi-isomorphism as well. A similar argument, together with the acyclicity of $C_{q^{\pm 1},1/f}$ which we established as part of the proof of Lemma \ref{th:les}, shows that the projection
\begin{equation} \label{eq:cone-2}
\mathit{Cone}\big(C_{u,q,1/f} \rightarrow C_{u,q^{\pm 1},1/f}\big) \longrightarrow
C_{u,q,1/f}[1]
\end{equation}
is a quasi-isomorphism. Combining \eqref{eq:cone-1} and \eqref{eq:cone-2} yields the desired isomorphism on cohomology. (This quasi-isomorphism argument followed \cite[Lemma 2.3.8]{pomerleano-seidel23}; we have narrowed that down to our specific situation, avoiding the language of triangulated categories.)
\end{proof}

Finally, one can invert $u$ on both sides of Lemma \ref{th:les-2}. We denote the resulting isomorphism by
\begin{equation} \label{eq:les-3}
q^{-1}\mathit{SH}^*_{u^{\pm 1},q^{-1},1/f}(M,D;\bF_p) \iso \mathit{SH}^{*+1}_{u^{\pm 1},q,1/f}(M,D;\bF_p).
\end{equation}

\subsection{The deformed wrapped Fukaya category\label{subsec:deformed-fukaya}}
Take the wrapped Fukaya category of $M \setminus D$. Since that is a Weinstein manifold, the category (with coefficients in $\bZ$, or in any ring) is generated by finitely many co-cores. We consider the endomorphism $A_\infty$-algebra of the direct sum of those co-cores, in two different versions: $\scrA$ is with $\bF_p$-coefficients, and $\tilde{\scrA}$ is with $(\bZ/p^2)$-coefficients. We also consider the $q$-deformation given by the Borman-Sheridan element. In \cite[Section 6.2]{pomerleano-seidel23}, that deformation was interpreted as an instance of the Maurer-Cartan formalism, which in general involves denominators. Let's quickly recall what that meant. Let $\mathit{CF}^*(H)$ be the Floer complex of a suitable Hamiltonian with infinite slope \cite[Section 4.1]{pomerleano-seidel23}; by a suitable choice, one can arrange that this is concentrated in degrees $\geq 0$ \cite[Proposition 7.2.2]{pomerleano-seidel23}. Let $b \in \mathit{CF}^0(H)$ be the Borman-Sheridan cocycle \cite[Section 7.2c]{pomerleano-seidel23}. The definition of the deformed Fukaya category involves parameter spaces $\frakR_{d,m}$ of discs with $(d+1)$ boundary punctures and $m$ interior punctures, where one inserts $qb$ at each interior puncture. In a little more detail: the moduli spaces first give rise to operations \cite[Equation (6.2.6)]{pomerleano-seidel23}
\begin{equation}
\mu^{d,m}: \mathit{CF}^*(L_0,L_1) \otimes \cdots \mathit{CF}^*(L_{d-1},L_d) \otimes \mathit{CF}^*(H)^{\otimes m} \longrightarrow \mathit{CF}^{*+2-d}(L_0,L_d)
\end{equation}
One then defines the deformed $A_\infty$-structure by the formula \cite[Equation (6.2.7)]{pomerleano-seidel23}
\begin{equation}
\mu^d_q(a_1,\dots,a_d) = \sum_{m \geq 0} \frac{q^m}{m!} \,\mu^{d,m}(a_1,\dots,a_d,\overbrace{b,\dots,b}^m),
\end{equation}
which does have denominators. However, the spaces $\frakR_{d,m}$ carry a free action of the symmetric group permuting the $m$ points, and one can achieve transversality equivariantly. Since $b$ is defined over the integers, $\mu^{d,m}(a_1,\dots,a_d,b,\dots,b)$ is a multiple of $m!$, cancelling out the apparent denominator. The outcome is that the deformation is defined over $\bZ$, and of course one can then reduce that structure modulo any number. We use two versions $\scrA_q$ and $\tilde{\scrA}_q$, corresponding to the ones above.

\begin{remark}
Alternatively, one could define the deformed wrapped Fukaya category by using pseudo-holomorphic discs which intersect $D$. This would combine the techniques in the definition of the relative Fukaya category \cite{perutz-sheridan20}, which does work over $\bZ$ for basically the reason mentioned above, with the popsicle formalism (which allows for a direct limit over Hamiltonians with increasing slopes) from \cite{abouzaid-seidel07}. The outcome would be the open string counterpart of the definition of deformed symplectic cohomology in \cite{pomerleano-seidel24}. We did not choose that approach, since applying it for our purpose would involve re-doing much of the work in \cite[Section 6]{pomerleano-seidel23} in that setup.
\end{remark}

The next statements involve the relation between the (deformed) wrapped Fukaya category and symplectic cohomology. A word of caution: because we are relying on \cite{pomerleano-seidel23}, this uses a different approach to defining deformed symplectic cohomology, with a single Hamiltonian of ``infinite slope'' \cite[Section 4.1]{pomerleano-seidel23}, and punctured cylinders where $qb$ is inserted into the punctures \cite[Section 5.3e]{pomerleano-seidel23}. This can be defined with integer (or any) coefficients, for the same reason as the deformed Fukaya category. In this context, the connection on the $S^1$-equivariant version is defined in \cite[Section 5.3f]{pomerleano-seidel23}, and one can set $u = 0$ in that construction to obtain the corresponding version of $a_{q}$, even though this was not specifically introduced in that paper; in the notation of \cite[Equation (5.3.33)]{pomerleano-seidel23}, the underlying chain map would be $x \mapsto \sum_{m \geq 1} \frac{1}{(m-1)!} \mathit{KH}^{(A)}_{m,0}(\partial_q\alpha,\alpha^{\otimes m-1},x)$. Ultimately, as explained in \cite[Section 10]{pomerleano-seidel24}, the resulting deformed symplectic cohomology and its structures  are isomorphic to the ones we've used in the rest of the paper.

\begin{lemma} \label{th:oc}
The open-closed map and its (negative) cyclic version,
\begin{align}
\label{eq:oc} & \mathit{OC}_q: \mathit{HH}_*(\scrA_q) \longrightarrow \mathit{SH}^{*+n}_q(M,D;\bF_p), \\
\label{eq:cyclic-oc} & \mathit{OC}_{u,q}: \mathit{HC}_*(\scrA_q) \longrightarrow \mathit{SH}^{*+n}_{u,q}(M,D;\bF_p)
\end{align}
are isomorphisms.
\end{lemma}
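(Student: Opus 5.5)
The strategy is deformation-theoretic: reduce both statements to the undeformed case $q=0$, where the open-closed maps are known isomorphisms, and then lift to the $q$-deformed and $S^1$-equivariant theories by filtration arguments. The key undeformed input is the generation result for the wrapped Fukaya category of the Weinstein manifold $M \setminus D$. Because the co-cores generate, the open-closed map $\mathit{HH}_*(\scrA) \rightarrow \mathit{SH}^{*+n}(M\setminus D;\bF_p)$ is an isomorphism. This holds with $\bF_p$-coefficients, since the generation and Abouzaid-criterion arguments work over any coefficient ring; alternatively one can start with integer coefficients and use the universal coefficient sequences on both sides. The cyclic version $\mathit{HC}^-_*(\scrA) \rightarrow \mathit{SH}^{*+n}_{S^1}(M\setminus D;\bF_p)$ then follows, since the open-closed map admits an $S^1$-equivariant chain-level lift, and a $u$-adic filtration argument transfers the non-equivariant isomorphism to the equivariant one.

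Next comes the passage to the deformation. I would use the chain-level deformed open-closed map from \cite{pomerleano-seidel23}, constructed with the infinite-slope Hamiltonian and with $qb$ inserted at interior punctures. Its key property is that it is $q$-linear and reduces modulo $q$ to the undeformed open-closed map. Both sides are $q$-adically complete, and the target is bounded below in each degree for a suitable choice of Hamiltonian (compare \cite[Proposition 7.2.2]{pomerleano-seidel23}). This gives the comparison filtration the needed convergence, so a filtration (Eilenberg-Moore comparison) argument shows that \eqref{eq:oc} is an isomorphism. For \eqref{eq:cyclic-oc}, I would filter by both $u$ and $q$, again using completeness in both variables. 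Finally, I would invoke \cite[Section 10]{pomerleano-seidel24} to identify this version of deformed symplectic cohomology, together with its $u$-structure, with the telescope version $\mathit{SH}^*_q(M,D;\bF_p)$ and $\mathit{SH}^*_{u,q}(M,D;\bF_p)$ used earlier.

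The main obstacle is convergence in the spectral sequence comparison. Completed complexes of the form $C[[q]]$ in mixed degrees can have the $q$-adic spectral sequence fail to converge, because the degree of $q$ interacts with unbounded gradings. I would handle this with the degree bounds: $\mathit{CF}^*(H)$ can be taken concentrated in degrees $\geq 0$, and the Hochschild complex of the finitely many co-cores is likewise degreewise controlled. With these bounds, each fixed total degree involves only finitely many powers of $q$ modulo any given order, so the filtrations are complete and exhaustive and the comparison theorem applies. The remaining points should be formal. One is reduction mod $p$ of the integrally defined structures, which by the $m!$-divisibility discussion above are honestly defined over $\bZ$. The other is checking that the chain-level open-closed map is compatible with the equivariant differential, exactly as in \cite{pomerleano-seidel23}, with the field $\bC$ replaced by $\bF_p$.
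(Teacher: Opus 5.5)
Your proposal is correct and is essentially the paper's argument: the undeformed isomorphism comes from Ganatra's theorem together with nondegeneracy (equivalently, co-core generation) for Weinstein manifolds, and \eqref{eq:oc} and \eqref{eq:cyclic-oc} then follow by $q$- and $u$-filtration arguments; the order in which you run the two filtrations is immaterial. One small correction: the Hochschild chain complex need not be bounded in degree (longer tensors have lower degree), but no such bound is needed, because the deformed Hochschild and negative cyclic complexes are $q$-adically (and $u$-adically) complete by definition, and completeness together with torsion-freeness is all the comparison argument uses.
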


Concerning \eqref{eq:oc}, the corresponding property for undeformed ($q = 0$) category was proved in \cite[Theorem 1.1]{ganatra13} under a nondegeneracy assumption, which is now known to hold for all Weinstein manifolds. Our statement follows by a $q$-filtration argument; and \eqref{eq:cyclic-oc} from that by a $u$-filtration argument (see e.g.\ \cite[Section 7.1b]{pomerleano-seidel23} for further discussion and references).

\begin{lemma} \label{th:module}
Take the preimage of the unit class $1 \in \mathit{SH}^0_q(M,D;\bF_p)$ under \eqref{eq:oc}. Then, the action of Hochschild cohomology on that class yields an isomorphism
\begin{equation}
x \longmapsto x \cdot \mathit{OC}_q^{-1}(1): \mathit{HH}^*(\scrA_q) \longrightarrow \mathit{HH}_{*-n}(\scrA_q).
\end{equation}
\end{lemma}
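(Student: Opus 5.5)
The plan is to reduce to the undeformed case $q = 0$ by a $q$-filtration argument, in the same way that Lemma \ref{th:oc} is deduced from the undeformed statement.

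\textbf{Chain-level setup.} Choose a Hochschild cycle $\gamma_q \in C_*(\scrA_q)$ representing $\mathit{OC}_q^{-1}(1)$. Capping with $\gamma_q$ defines a chain map
\begin{equation}
\cap\, \gamma_q : C^*(\scrA_q) \longrightarrow C_{*-n}(\scrA_q).
\end{equation}
This map is $q$-linear. Both complexes are $q$-adically complete (in the graded sense), since they underlie cohomology theories of a formal deformation.

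\textbf{Reduction to $q = 0$.} Setting $q = 0$ turns $\gamma_q$ into a cycle $\gamma_0$ for the undeformed algebra $\scrA$. The $q = 0$ reduction of $\mathit{OC}_q$ is the undeformed open-closed map, and the unit is preserved, so $\gamma_0$ represents $\mathit{OC}^{-1}(1) \in \mathit{HH}_*(\scrA)$. By the comparison theorem for complete decreasing filtrations (filter both sides by powers of $q$), the lemma reduces to the undeformed claim: capping with $\mathit{OC}^{-1}(1)$ gives an isomorphism $\mathit{HH}^*(\scrA) \rightarrow \mathit{HH}_{*-n}(\scrA)$.

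\textbf{The undeformed case.} Here I would use the closed-open map $\mathit{CO}: \mathit{SH}^*(M\setminus D;\bF_p) \rightarrow \mathit{HH}^*(\scrA)$ together with the module compatibility
\begin{equation}
\mathit{OC}(\mathit{CO}(y)\cdot c) = y \cdot \mathit{OC}(c),
\end{equation}
where the right-hand side uses the pair-of-pants product on $\mathit{SH}^*$. This compatibility is standard and is part of Ganatra's framework.

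Since $\scrA$ is generated by cocores, both $\mathit{OC}$ and $\mathit{CO}$ are isomorphisms. This is \cite{ganatra13}, with the nondegeneracy input holding for Weinstein manifolds; the argument works over any field, in particular over $\bF_p$. Substituting $x = \mathit{CO}(y)$ in the compatibility gives
\begin{equation}
\mathit{OC}\big(x \cdot \mathit{OC}^{-1}(1)\big) = \mathit{CO}^{-1}(x) \cdot 1 = \mathit{CO}^{-1}(x).
\end{equation}
So the map in question equals $\mathit{OC}^{-1} \circ \mathit{CO}^{-1}$, which is a composition of isomorphisms.

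An alternative for this step is that $\scrA$ is a smooth Calabi-Yau algebra of dimension $n$ with fundamental class $\mathit{OC}^{-1}(1)$. With that input, the claim is the standard Van den Bergh duality isomorphism.

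\textbf{Expected obstacle.} The main difficulty is the convergence of the filtration argument. One needs the $q$-filtrations on the Hochschild cochain and chain complexes of $\scrA_q$ to be complete and exhaustive in each degree, so that an isomorphism on the associated graded lifts to an isomorphism for the whole deformation. I would handle this as in Lemma \ref{th:oc}:
\begin{itemize}
\item Use the degree bounds available for cocore endomorphism algebras. Since $\mathit{CF}^*(H)$ is concentrated in degrees $\geq 0$ and $q$ has positive degree, each graded piece receives contributions from only finitely many powers of $q$, or at least the completion is well behaved.
\item Check that the cap product with $\gamma_q$ respects the filtration.
\end{itemize}
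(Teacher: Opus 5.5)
Your proposal is correct and follows essentially the same route as the paper: at $q=0$ you use that $\mathit{OC}$ and $\mathit{CO}$ are isomorphisms compatible with the module structures (Ganatra), so capping with $\mathit{OC}^{-1}(1)$ equals $\mathit{OC}^{-1}\circ\mathit{CO}^{-1}$, and you then pass to the deformation by a $q$-filtration argument. The convergence obstacle you anticipate does not arise: the Hochschild complexes of the deformation are $q$-adically complete by construction, so the standard comparison argument applies without any degree bounds on $\scrA$.
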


\begin{proof}
Let's first look at the situation for $q = 0$. The open-closed and closed-open map are isomorphisms, and compatible with the module structure of Hochschild homology over Hochschild cohomology (this is again \cite[Theorem 1.1]{ganatra13}). Compatibility means that the following diagram commutes:
\begin{equation}
\xymatrix{
\mathit{HH}^*(\scrA) \otimes \mathit{HH}_{*-n}(\scrA) 
\ar[rrr]^-{\text{module structure}}
&&& 
\mathit{HH}_{*-n}(\scrA) 
\ar[dd]^-{\mathit{OC}}_-{\iso}
\\
\mathit{SH}^*(M \setminus D;\bF_p) \otimes \mathit{HH}_{*-n}(\scrA) 
\ar[u]_-{\mathit{CO} \otimes \mathit{id}}^-{\iso}
\ar[d]^-{\mathit{id} \otimes \mathit{OC}}_-{\iso}
\\
\mathit{SH}^*(M \setminus D;\bF_p) \otimes \mathit{SH}^*(M \setminus D;\bF_p) \ar[rrr]_-{\text{product}}
&&& \mathit{SH}^*(M \setminus D;\bF_p)
}
\end{equation}
Restricting the bottom $\rightarrow$ to $\mathit{SH}^*(M \setminus D;\bF_p) \otimes 1$ yields the identity, by definition. Hence, restricting the top $\rightarrow$ to $\mathit{HH}^*(\scrA) \otimes \mathit{OC}^{-1}(1)$ is an isomorphism $\mathit{HH}^*(\scrA) \rightarrow \mathit{HH}_*(\scrA)$. One now considers the corresponding $q$-deformed map $\mathit{HH}^*(\scrA_q) \rightarrow \mathit{HH}_*(\scrA_q)$; a $q$-filtration argument shows that this is an isomorphism as well.
\end{proof}

\begin{lemma} \label{th:cy}
If a class in $\mathit{HH}^*(\scrA_q)$ has the property that its action on $\mathit{HH}_*(\scrA_q)$ is zero, then the class itself is zero.
\end{lemma}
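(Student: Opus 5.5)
Lemma \ref{th:module} supplies an element of $\mathit{HH}_*(\scrA_q)$ on which the action of $\mathit{HH}^*(\scrA_q)$ is faithful, and the plan is to use it directly. Let $\omega = \mathit{OC}_q^{-1}(1) \in \mathit{HH}_{-n}(\scrA_q)$. Take a class $x \in \mathit{HH}^*(\scrA_q)$ whose action on $\mathit{HH}_*(\scrA_q)$ is zero. Then in particular $x \cdot \omega = 0$.

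Lemma \ref{th:module} says that $y \mapsto y \cdot \omega$ is an isomorphism from $\mathit{HH}^*(\scrA_q)$ to $\mathit{HH}_{*-n}(\scrA_q)$, so it is injective. From $x \cdot \omega = 0$ we therefore get $x = 0$.

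The only point to check is that "action" here means the same module structure of Hochschild homology over Hochschild cohomology that appears in Lemma \ref{th:module}. That structure is the $q$-deformed cap product, and it is the one used both in the $q=0$ compatibility diagram and in the subsequent $q$-filtration argument. Once this identification is made, no further input is needed. In particular the statement holds with $\bF_p$ coefficients, and equally with $\bZ/p^2$ coefficients, since the proof of Lemma \ref{th:module} only used that $\mathit{OC}$ and $\mathit{CO}$ are isomorphisms compatible with the module structures.

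I do not expect a real obstacle. The substantive work, namely the Calabi--Yau-type duality provided by $\mathit{OC}$ and $\mathit{CO}$ together with the $q$-filtration argument, has already been done in Lemma \ref{th:module}. This lemma is a formal consequence of it, stating that the module $\mathit{HH}_*(\scrA_q)$ over $\mathit{HH}^*(\scrA_q)$ is faithful.
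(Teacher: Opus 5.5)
Your argument is correct and is exactly the paper's: there the lemma is called an immediate consequence of Lemma \ref{th:module}, via injectivity of $y \mapsto y \cdot \mathit{OC}_q^{-1}(1)$. Your aside about $\bZ/p^2$ coefficients is unnecessary, since only the $\bF_p$ version is used in Corollary \ref{th:satisfied}, and it is not established in the paper, so you can drop it.
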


\begin{proof}
This is an immediate consequence of Lemma \ref{th:module}.
\end{proof}

\begin{lemma} \label{th:compatibility-oc}
(i) Under \eqref{eq:oc}, the action of $[\partial_q\mu_{\scrA_q}]$ on Hochschild homology corresponds to the endomorphism $a_{q}$.

(ii) Under \eqref{eq:cyclic-oc}, the Getzler-Gauss-Manin connection on cyclic homology corresponds to the Floer-theoretic connection on equivariant symplectic cohomology.
\end{lemma}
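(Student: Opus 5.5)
The plan is to reduce both statements to the $q=0$ (undeformed) case and then propagate to all orders in $q$ by a filtration argument. We work throughout with the model of deformed symplectic cohomology from \cite{pomerleano-seidel23}: a single Hamiltonian of infinite slope, and punctured cylinders with $qb$ inserted at the punctures. In that model, the deformed open-closed map $\mathit{OC}_q$ is built from discs with $m$ interior punctures carrying $qb$ and one interior output. The Kaledin class $[\partial_q\mu_{\scrA_q}]$ is represented by the Hochschild cochain
\[
\sum_{m\geq 1}\frac{q^{m-1}}{(m-1)!}\,\mu^{d,m}(\cdots,b,\dots,b).
\]
This is $\partial_q$ applied to the deformed structure, so one puncture carries $b$ itself rather than $qb$.

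\textbf{Part (i).} The first step is to recall that the action of a Hochschild cocycle $\phi$ on Hochschild chains is given by contracting $\phi$ into the chain and then applying $\mathit{OC}_q$. That composite counts discs in which one boundary operation, coming from $\phi$, carries an extra $\partial_q$-distinguished $b$-puncture. On the closed side, $a_q$ is (in the Floer model of \cite{pomerleano-seidel23}) the chain map
\[
x\mapsto \sum_{m\geq 1}\frac{1}{(m-1)!}\,\mathit{KH}^{(A)}_{m,0}(\partial_q\alpha,\alpha^{\otimes m-1},x),
\]
that is, a cylinder with one distinguished $b$-puncture.

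The key geometric step is a one-parameter family of moduli spaces of discs. Each disc has the boundary inputs, $m$ interior punctures labelled $b$ with one of them distinguished, and an interior output. The parameter moves the distinguished puncture from near the boundary to near the output. The two ends of this family give:
\begin{itemize}
\item near the boundary, the disc breaks off a disc with the distinguished puncture, which is the contraction with the Kaledin class followed by $\mathit{OC}_q$;
\item near the output, a cylinder with the distinguished puncture splits off, which is $a_q\circ \mathit{OC}_q$.
\end{itemize}
The remaining boundary faces assemble into a chain homotopy with respect to the deformed differentials. Factorials cancel as before, because the symmetric group acts freely on the non-distinguished punctures.

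This is exactly the deformed analogue of the standard argument that $\mathit{OC}$ intertwines the Hochschild cohomology action with the closed-string product by $\mathit{CO}$ of the class, as in \cite{ganatra13}. An alternative formulation is to show that $\mathit{CO}_q([\partial_q\mu_{\scrA_q}])$ equals the class $\partial_q$ of the Maurer--Cartan element, and then invoke module compatibility of $\mathit{OC}_q$ over $\mathit{HH}^*$ as in Lemma \ref{th:module}. I would use whichever is shorter given \cite[Section 5.3]{pomerleano-seidel23}.

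\textbf{Part (ii).} Statement (ii) is the compatibility of the cyclic open-closed map with connections. I expect it to be essentially \cite[Section 7.1b]{pomerleano-seidel23} and the theorem there identifying Getzler--Gauss--Manin with the Floer connection. I would check that the argument is coefficient-independent: all moduli spaces are counted over $\bZ$, with denominators cleared by the free symmetric-group action. It then holds verbatim over $\bF_p$.

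As a consistency check, (i) should be the $u=0$ reduction of (ii). Under $\mathit{OC}_{u,q}$, the $u=0$ part of Getzler--Gauss--Manin is contraction with the Kaledin class. On the Floer side, the $u=0$ part of the connection is $a_q$ by \eqref{eq:kappa-and-connection}. So one could alternatively deduce (i) from (ii).

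\textbf{Main obstacle.} The hard step is the chain-level identification in (i), specifically matching the $u=0$ term of the Floer connection as defined in \cite{pomerleano-seidel23} with the contraction term. This requires the Getzler--Gauss--Manin formula to split cleanly into a $u^0$ piece and higher $u$ pieces. My first attempt is to read it off from the explicit chain-level formulas in \cite[Section 5.3f]{pomerleano-seidel23} and set $u=0$ there. The remaining concerns are comparing the models of \cite{pomerleano-seidel23} and \cite{pomerleano-seidel24} (settled by \cite[Section 10]{pomerleano-seidel24}) and signs.
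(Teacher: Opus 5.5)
Your proposal is correct, but the route you make primary for (i) is not the paper's. The paper's proof is exactly what you list as a consistency check.

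\textbf{The paper's route.} The paper proves (ii) by citing the chain homotopy constructed in \cite[Section 6.3]{pomerleano-seidel23}; the reference is Section 6.3, not Section 7.1b. It then gets (i) at no cost by reducing that chain homotopy mod $u$. In the model of \cite{pomerleano-seidel23}, $a_q$ is \emph{defined} as the $u=0$ part of the chain map underlying the Floer connection; this is the $\mathit{KH}^{(A)}$ formula you quote. On the algebraic side, the $u=0$ part of Getzler's chain-level connection is the cap product with the cocycle $\partial_q\mu_{\scrA_q}$. The commutator of $u\partial_q$ with $\mathit{OC}_{u,q}$ is divisible by $u$, so it drops out. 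Hence what you call the main obstacle is not one: the split into a $u^0$ piece and higher powers of $u$ is built into both chain-level formulas.

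\textbf{Your direct route.} Your argument is a deformed version of the proof that $\mathit{OC}$ is a module map. It would prove (i) without passing through the equivariant theory, but it costs a new family of moduli spaces with its own transversality and compactness analysis. It also needs two points of care:
\begin{enumerate}
\item For degree reasons, the distinguished puncture (carrying $b$, with coefficient $q^{m-1}$) must be confined to a fixed radial arc. This is the analogue of $z^*\in\bR\times\{0\}$ in $\frakA_m$. If it ranges freely over the disc, the rigid counts have degree $-2$ relative to $\mathit{OC}_q$, not the degree $-1$ a homotopy needs.
\item The stratum where this puncture reaches the boundary has to be identified with the $A_\infty$ cap product, as in \cite{ganatra13}. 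It is not literally ``a disc bubble followed by $\mathit{OC}_q$.''
\end{enumerate}

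\textbf{Your other alternative.} The statement should read $\mathit{CO}_q([\partial_q\alpha]) = [\partial_q\mu_{\scrA_q}]$, since $\mathit{CO}_q$ goes from symplectic cohomology to Hochschild cohomology. This route would additionally need a pair-of-pants product on deformed symplectic cohomology, and an identification of $a_q$ with multiplication by a class. The paper deliberately avoids both (Remark \ref{th:pair-of-pants}).
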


\begin{proof}
(ii) is proved in \cite[Section 6.3]{pomerleano-seidel23}, through the construction of a suitable chain homotopy (a parallel result, in a slightly different geometric context, is proved in \cite{ganatra-sheridan25}). Now, the $u = 0$ part of the chain map underlying the connection on $\mathit{SH}^*_{u,q}(M,D)$ defines $a_{q}$; and similarly, the $u = 0$ part of the chain level Getzler-Gauss-Manin connection yields the action of $[\partial_q\mu_{\scrA_q}]$. Hence, the chain homotopy argument for (ii), when truncated to $u = 0$, yields (i).
\end{proof}

\begin{corollary} \label{th:satisfied}
Suppose that \eqref{eq:no-torsion} holds. Then $\scrA$ and its deformation $\scrA_q$ satisfy the assumptions (i)--(vi) of Corollary \ref{th:fontaine-laffaille-2}, where $f$ is taken to be \eqref{eq:minimal-polynomial}. 
\end{corollary}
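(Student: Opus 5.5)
We check the six assumptions of Corollary \ref{th:fontaine-laffaille-2} one at a time, translating each into Floer theory via Lemmas \ref{th:oc}--\ref{th:compatibility-oc}.

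\emph{Assumption (i), lifts to $\bZ/p^2$.} This is immediate from the construction in Section \ref{subsec:deformed-fukaya}. The wrapped Fukaya category and its Borman-Sheridan deformation are defined over $\bZ$, because the factorials cancel by equivariant transversality. Hence $\tilde\scrA$ and $\tilde\scrA_q$ exist and reduce mod $p$ to $\scrA$ and $\scrA_q$.

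\emph{Assumption (iii), smoothness.} $M\setminus D$ is Weinstein, and its wrapped category is generated by the co-cores. Homological smoothness then follows from Ganatra's results \cite{ganatra13}, which hold with any coefficients, in particular over $\bF_p$.

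\emph{Assumption (vi), vanishing of $\mathit{HH}^{<0}(\scrA)$.} Through the closed-open isomorphism, $\mathit{HH}^*(\scrA)\iso \mathit{SH}^*(M\setminus D;\bF_p)$. By \eqref{eq:deformed-sh} at $q=0$, or by the Hamiltonian chosen with Floer complex concentrated in degrees $\geq 0$, this group vanishes in negative degrees.

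\emph{Assumption (v), the degree range of $\mathit{HH}_*(\scrA)$.} Via $\mathit{OC}$, $\mathit{HH}_*(\scrA)\iso \mathit{SH}^{*+n}(M\setminus D;\bF_p)$. We then use the explicit bounds from \cite[Corollary 3.1.13]{pomerleano-seidel23}, with the reinterpretation in Theorem \ref{th:fontaine-laffaille}: Hochschild homology of the localised $\bF_p[t,1/f]$ algebra is in the range $[-n,n]$ up to the shift. This is where $p\geq n+3$ is used.

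\emph{Assumption (ii), finite rank.} The class $[\mu^{0,1}_{\scrA_q}]$ is the Borman-Sheridan class. Under $\mathit{CO}$ it corresponds to the $q=0$ part of $a_q$, or equivalently to multiplication by $1z$ in the undeformed theory. $\mathit{SH}^*(M\setminus D)$ is finitely generated over the polynomial ring generated by this element: \eqref{eq:deformed-sh} shows that multiplication by the Borman-Sheridan class shifts the $z$-grading, since it is the PSS-type degree-one class, so the pieces $H^*(D)z^k$ are generated by finitely many of them. We would import this from \cite[Section 7.1a]{pomerleano-seidel23}, whose argument is coefficient independent. $H^*(\scrA)$ is then a module over $\mathit{HH}^0$ acting through the closed-open map, so it inherits finite rank, as in \cite[Section 7.1b]{pomerleano-seidel23}.

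\emph{Assumption (iv), the main step.} This is where the minimal polynomial $f$ genuinely enters. By Lemma \ref{th:compatibility-oc}(i), the action of the Kaledin class $[\partial_q\mu_{\scrA_q}]$ on $\mathit{HH}_*(\scrA_q)$ corresponds under $\mathit{OC}_q$ to $a_q$. Corollary \ref{th:torsion-1b} requires \eqref{eq:no-torsion}, which we have assumed, and gives $q^n f(a_q)=0$ on $\mathit{SH}^*_q(M,D;\bF_p)$. Hence $q^nf([\partial_q\mu_{\scrA_q}])$ acts by zero on Hochschild homology. We must also know that Hochschild cohomology acts on Hochschild homology as a module over $\mathit{HH}^*(\scrA_q)$, so that polynomials in the class act by the corresponding polynomials of the action; this is standard. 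Lemma \ref{th:cy} then forces $q^nf([\partial_q\mu_{\scrA_q}])=0$ in $\mathit{HH}^*(\scrA_q)$, so (iv) holds with $B=n$.

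\emph{Anticipated obstacles.} A subtle point in (iv) is that the identification of $a_q$ in the single-Hamiltonian model of \cite{pomerleano-seidel23} agrees with the telescope model used for Lemma \ref{th:kappa-and-quantum-product}, compatibly with the $\bF_p$-reduction. We would cite \cite[Section 10]{pomerleano-seidel24} for this. The main remaining difficulty is (ii), together with pinning down the exact degree range in (v).
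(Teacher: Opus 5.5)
Your proof follows the paper's: (i) holds by construction, the decisive condition (iv) is obtained exactly as there from Lemma~\ref{th:cy}, Lemma~\ref{th:compatibility-oc}(i) and Corollary~\ref{th:torsion-1b} (so $B=n$ works), and the remaining conditions are the ones the paper simply imports from \cite[Lemma 7.1.24]{pomerleano-seidel23}. Your own sketches of those remaining conditions should be replaced by that citation, since two of them do not hold up as written: in (ii), being a module over a ring that is finite over $\bF_p[t]$ does not make $H^*(\scrA)$ finite over $\bF_p[t]$ without a separate finite generation statement, and (v) is a hypothesis on the undeformed, unlocalised $\mathit{HH}_*(\scrA)\iso\mathit{SH}^{*+n}(M\setminus D;\bF_p)$, not on the localised algebra (the latter is the derived property (iv') in the proof of Theorem~\ref{th:fontaine-laffaille}).
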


\begin{proof}
(i) is true by construction. The rest is as in \cite[Lemma 7.1.24]{pomerleano-seidel23}, except for (iv) which we will now discuss. By Lemma \ref{th:cy}, it is sufficient to show that $q^B f([\partial_q\mu_{\scrA_q}])$ acts trivially on $\mathit{HH}_*(\scrA_q)$. In view of Lemma \ref{th:compatibility-oc}, this is equivalent to saying that $q^B f(a_{q})$ acts trivially on $\mathit{SH}^*_q(M,D;\bF_p)$; which is (a weaker version of) Corollary \ref{th:torsion-1b}.
\end{proof}

We carry out corresponding algebraic manipulations (passing to negative powers of $q$; inverting $f$; and in the equivariant case, inverting $u$) on the chain complexes on both sides of \eqref{eq:oc} and \eqref{eq:cyclic-oc}. The outcome, using the notation from Section \ref{sec:algebra} on the categorical side, are isomorphisms
\begin{equation}
\begin{aligned}
\label{eq:negative-oc}
& H^*(q^{-1}A_{q^{-1},1/f}) \iso q^{-1}\mathit{SH}^{*+n}_{q^{-1},1/f}(M,D;\bF_p), \\
& H^*(q^{-1}A_{u^{\pm 1},q^{-1},1/f}) \iso q^{-1}\mathit{SH}^{*+n}_{u^{\pm 1},q^{-1},1/f}(M,D;\bF_p).
\end{aligned}
\end{equation}

%
%
%

\subsection{Conclusion\label{sec:the-end}}
We now add the final ingredient, namely Theorem \ref{th:fontaine-laffaille-2}.

\begin{proof}[Proof of Theorem \ref{th:q-fontaine-laffaille}]
We combine \eqref{eq:negative-oc} with Lemmas \ref{th:les} and \ref{th:les-2} to get
\begin{equation}
\begin{aligned}
\label{eq:weird-oc}
& H^*(q^{-1}A_{q^{-1},1/f}) \iso \mathit{SH}^{*+n+1}_{q,1/f}(M,D;\bF_p), \\
& H^*(q^{-1}A_{u^{\pm 1},q^{-1},1/f}) \iso \mathit{SH}^{*+n+1}_{u^{\pm 1},q,1/f}(M,D;\bF_p).
\end{aligned}
\end{equation}
Applying Corollary \ref{th:fontaine-laffaille-2} to the deformed Fukaya category (which is possible by Lemma \ref{th:satisfied}) yields a map between the groups on the left hand side of \eqref{eq:weird-oc}. We use these isomorphism to transfer that to a map between the groups on the right hand side. The desired properties of this map follow from the compatibility properties of \eqref{eq:weird-oc} inherited from Lemma \ref{th:compatibility-oc}.
\end{proof}

\begin{corollary} \label{th:q-p-curvature}
In the situation of Theorem \ref{th:q-fontaine-laffaille}, the $u$-linear extension of $\Phi$ satisfies
\begin{equation} \label{eq:q-p-curvature}
q^p \Phi(x) = \Phi(q^p a_q^{p-1}(x) - u^{p-1} qx) = \Phi\big(q(c_1(M)^{\ast_q (p-1)} \bullet_q \cdot - u^{p-1})(x)\big).
\end{equation}
\end{corollary}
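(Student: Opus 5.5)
We derive \eqref{eq:q-p-curvature} purely algebraically from the two relations \eqref{eq:q-fontaine-laffaille-1} and \eqref{eq:q-fontaine-laffaille-2}, together with the Weyl algebra identity $[\nabla_{u\partial_q},q] = u$ and the $u$-linearity of $\Phi$. The first step is to iterate \eqref{eq:q-fontaine-laffaille-2}. Applying it with $qx$ in place of $x$ gives
\begin{equation*}
q^2\Phi(x) = q\,\nabla_{u\partial_q}^{p-1}\Phi(qx).
\end{equation*}
The factor $q$ is now on the wrong side of $\nabla_{u\partial_q}^{p-1}$, so a single iteration does not suffice. Instead we use the standard characteristic $p$ identity in the Weyl algebra,
\begin{equation*}
(q\nabla_{u\partial_q})^p - u^{p-1}\,q\nabla_{u\partial_q} = q^p\nabla_{u\partial_q}^p.
\end{equation*}
This is \eqref{eq:p-curvature-alt} read abstractly; it holds in $\bF_p[u]\langle q,\nabla_{u\partial_q}\rangle$ and hence on $\mathit{SH}^*_{u^{\pm 1},q,1/f}(M,D;\bF_p)$, where $q$ acts as in \eqref{eq:leibniz-rule}.

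The key observation is that \eqref{eq:q-fontaine-laffaille-2} can be rewritten as $q\nabla_{u\partial_q}\cdot \nabla_{u\partial_q}^{-1}$-type relations. It is cleaner to note that \eqref{eq:q-fontaine-laffaille-1} and \eqref{eq:q-fontaine-laffaille-2} combine to
\begin{equation*}
q\nabla_{u\partial_q}\,\Phi(y) = q\,\Phi(a_q\, \cdot)\text{-type expressions}.
\end{equation*}
Precisely, applying \eqref{eq:q-fontaine-laffaille-2} to $a_q(x)$ and using \eqref{eq:q-fontaine-laffaille-1} with commutation of $a_q$ and $q$, we get
\begin{equation*}
q\,\nabla_{u\partial_q}^p\Phi(x) = q\Phi(a_q x) = \nabla_{u\partial_q}^{p-1}\Phi(q a_q x) = \nabla_{u\partial_q}^{p-1} \nabla_{u\partial_q}^{p}\cdots.
\end{equation*}
The plan is to set $\Theta = q\nabla_{u\partial_q}$ and prove by these two rules that
\begin{equation*}
\Theta\,\Phi(x) = \Phi(q\,a_q\, x)
\end{equation*}
up to the correct normalization. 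From \eqref{eq:q-fontaine-laffaille-2} applied to $a_qx$, we get $q\Phi(a_qx)=\nabla^{p-1}\Phi(qa_qx)$, and the left side is $q\nabla^p\Phi(x)$. Multiplying by $\nabla$ on the left does not directly isolate $\Theta$, so the honest route is to compute $\Theta^p\Phi(x)$ inductively. One shows $\Theta^k\Phi(x)$ is expressible as $\Phi$ of something, using $\nabla^p\Phi=\Phi a_q$ and $q\Phi=\nabla^{p-1}\Phi q$ repeatedly. Then one substitutes into the identity above to express $q^p\nabla^p\Phi(x) = q^p\Phi(a_qx)$.

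More directly, the required formula says $q^p\Phi(x)=\Phi(q^pa_q^{p-1}x - u^{p-1}qx)$. We apply $\nabla_{u\partial_q}^p$ to both sides, which is legitimate because $\nabla^p$ is injective on the localized module: $f(\nabla^p)$ is invertible there, and $f$ has nonzero constant term after localization, or more robustly $\Phi$ is injective. On the right, \eqref{eq:q-fontaine-laffaille-1} gives $\Phi(a_q(q^pa_q^{p-1}x - u^{p-1}qx))$. On the left, $\nabla^p$ commutes with $q^p$, because the $p$-curvature is central in characteristic $p$, giving $q^p\Phi(a_qx)$. So the claim for $x$ reduces to the claim for $a_qx$, which suggests that the cleanest approach is to establish the claim on $\mathit{im}$ of a generator and bootstrap.

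I would therefore prove it head-on via the Weyl identity. We write $q^p\nabla^p = \Theta^p - u^{p-1}\Theta$, and compute $\Theta\Phi(y) = q\nabla\Phi(y)$. Using \eqref{eq:q-fontaine-laffaille-2} in the form $q\Phi(z) = \nabla^{p-1}\Phi(qz)$ with $z$ chosen so that $\nabla\Phi(y)=\ldots$ is the main obstacle. Since $\nabla\Phi(y)$ is not itself in $\mathit{im}\Phi$, \eqref{eq:q-fontaine-laffaille-3} must be invoked to decompose.

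The expected workable route is the following. Set $\Psi(x) = q\Phi(x)$, so that \eqref{eq:q-fontaine-laffaille-2} gives $\Psi(x)=\nabla^{p-1}\Phi(qx)$. Then
\begin{equation*}
\nabla\Psi(x) = \nabla^p\Phi(qx) = \Phi(a_q q x),
\end{equation*}
that is, $\nabla q\,\Phi = \Phi\circ(q a_q)$. This is the clean first-order relation $(\nabla q)\Phi = \Phi(qa_q)$, which is also the one noted after Theorem~\ref{th:q-fontaine-laffaille}. Now let $\Theta' = \nabla q = q\nabla + u$. The identity $(\nabla q)^p - u^{p-1}\nabla q = \nabla^p q^p$ holds by the same Jacobson-type computation; one verifies it on $\bF_p[u]\langle q,\nabla\rangle$. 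Applying it to $\Phi(x)$, the left side gives $\Phi((qa_q)^p x - u^{p-1}qa_qx)$, and the right side gives $\nabla^p q^p\Phi(x) = q^p\nabla^p\Phi(x)$. Finally we cancel one $\nabla^p$: the right side is $\nabla^p(q^p\Phi(x))$, and the left side is $\nabla^p\Phi(q^pa_q^{p-1}x - u^{p-1}qx)$ by \eqref{eq:q-fontaine-laffaille-1}, using that $a_q$ commutes with $q$. Injectivity of $\nabla^p$ on $\mathit{SH}^*_{u^{\pm1},q,1/f}$ then yields the first equality in \eqref{eq:q-p-curvature}. This injectivity is the main point to check; I would deduce it from $f(\nabla^p)$ being invertible there, via Corollary~\ref{th:congruence}-type reasoning, or more simply from the decomposition \eqref{eq:q-fontaine-laffaille-3} together with the injectivity of $\Phi$ and the invertibility of $a_q$ on $\mathit{SH}_{q,1/f}$ when $f(0)\neq0$. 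If $f(0)=0$, I would instead argue via the decomposition directly. The second equality in \eqref{eq:q-p-curvature} then follows from Lemma~\ref{th:kappa-and-quantum-product}, since $q^pa_q^{p-1}=q\,(qa_q)^{p-1}=q\,c_1(M)^{\ast_q(p-1)}\bullet_q$ by the module property \eqref{eq:circ-module}.
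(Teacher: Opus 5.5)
Your final route is correct up to its last step. The relation $(\nabla_{u\partial_q} q)\,\Phi(y) = \Phi(q a_q y)$ does follow from \eqref{eq:q-fontaine-laffaille-1} and \eqref{eq:q-fontaine-laffaille-2}, and the identity $(\nabla_{u\partial_q}q)^p - u^{p-1}\nabla_{u\partial_q}q = \nabla_{u\partial_q}^p q^p$ does hold mod $p$. But together these only give $\nabla_{u\partial_q}^p\big(q^p\Phi(x) - \Phi(q^pa_q^{p-1}x - u^{p-1}qx)\big) = 0$, and cancelling $\nabla_{u\partial_q}^p$ is a genuine gap.

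By \eqref{eq:q-fontaine-laffaille-3} and \eqref{eq:q-fontaine-laffaille-1}, $\nabla_{u\partial_q}^p$ acts on each summand $\nabla_{u\partial_q}^k\,\mathit{im}(\Phi)$ through $a_q$. So its injectivity is equivalent to $a_q$ having no kernel on $\mathit{SH}^*_{q,1/f}(M,D;\bF_p)$. That is automatic only when $f(0)\equiv 0$ mod $p$: then $f(a_q)=a_q\,g(a_q)$ being invertible forces $a_q$ to be invertible. Your justification has this backwards. When $f(0)\not\equiv 0$ (which happens, e.g.\ $f(z)=z^2-4$ for $\bC P^1$), invertibility of $f(\nabla_{u\partial_q}^p)=f(\nabla_{u\partial_q})^p$ says nothing about $\ker \nabla^p_{u\partial_q}$. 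Injectivity of $a_q$ on $\mathit{SH}^*_{q,1/f}(M,D;\bF_p)$ is then a torsion-freeness statement that neither your argument nor the paper provides.

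The fix is to stop one step short of the $p$-th power. Write
$q^p\Phi(x) = q^{p-1}\big(q\Phi(x)\big) = q^{p-1}\nabla_{u\partial_q}^{p-1}\Phi(qx)$.
Then use the Weyl algebra identity $q^k\nabla_{u\partial_q}^k = \prod_{j=1}^{k}(\nabla_{u\partial_q}q - ju)$. This follows by induction from $q\,P(\nabla_{u\partial_q}q) = P(\nabla_{u\partial_q}q - u)\,q$. In characteristic $p$, for $k=p-1$ it reads $q^{p-1}\nabla_{u\partial_q}^{p-1} = (\nabla_{u\partial_q}q)^{p-1} - u^{p-1}$. Applying your relation $(\nabla_{u\partial_q}q)\,\Phi = \Phi\circ(qa_q)$ then yields $q^p\Phi(x) = \Phi\big((qa_q)^{p-1}qx - u^{p-1}qx\big)$ directly, with nothing to cancel.

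This is the same polynomial identity the paper uses. The paper iterates \eqref{eq:q-fontaine-laffaille-2}, commutes $q$ past powers of $\nabla_{u\partial_q}$, and solves the resulting recursion, obtaining the generating function $\prod_{j=1}^{p-1}(1+jz) = 1-z^{p-1}$. Your packaging through $\nabla_{u\partial_q}q$ makes this a bit slicker. The second equality, via Lemma \ref{th:kappa-and-quantum-product} and the module property, is fine as you wrote it.
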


\begin{proof}
Iterating \eqref{eq:q-fontaine-laffaille-2}, one finds that for any $k \geq 1$, 
\begin{equation} \label{eq:qk-phi}
q^k \Phi(x) = \sum_{i=0}^{k-1} c_{k,i}\, u^i \nabla_{u\partial_q}^{k(p-1)-ip}
\Phi(q^{k-i} x),
\end{equation}
where the coefficients $c_{k,i} \in \bF_p$ satisfy $c_{1,0} = 1$ and the recursive relation
\begin{equation}
c_{k+1,i} = c_{k,i} + kc_{k,i-1} \quad \text{(with $c_{k,-1}$ and $c_{k,k}$ set to $0$).}
\end{equation}
We can package these as coefficients of polynomials, and solve the recursion:
\begin{equation}
g_k(z) = \sum_{i=0}^{k-1} c_{k,i} z^i = (1+z)(1+2z)\cdots(1+(k-1)z)
\end{equation}
For $k = p$, we have $g_p(z) \in \bF_p[z]$ of degree $(p-1)$ which vanishes at $z=1,1/2\dots,1/(p-1)$, or equivalently at $z = 1,\dots,p-1$, and with constant term $1$; which must be $g_p(z) = 1-z^{p-1}$. Plugging that back into \eqref{eq:qk-phi} and using \eqref{eq:q-fontaine-laffaille-1} leads to the first expression in \eqref{eq:q-p-curvature}:
\begin{equation}
q^p \Phi(x) = \nabla^{p(p-1)}_{u\partial_q}\Phi(q^p x) - u^{p-1} \Phi(qx) =
\Phi(q^p a_q^{p-1}(x)) - u^{p-1}\Phi(qx).
\end{equation}
The second one follows by applying Lemma \ref{th:kappa-and-quantum-product}
\end{proof}

\begin{proof}[Proof of Corollary \ref{th:torsion-3}]
We start by working with $\bF_p$-coefficients. Corollary \ref{th:q-p-curvature} has the general form $q^p \Phi(x) = \Phi(q N_q x)$, where the endomorphism $N_q$ is $q$-linear. We can iterate this and get $q^{np} \Phi(x) = \Phi(q^n N_q^nx) = \Phi(N_q^n q^nx)$. Corollary \ref{th:torsion-1b} shows that the right hand side is zero. In words, $q^{np}$ acts trivially on the image of $\Phi$. Because $q^{np}$ commutes with $\nabla_{u\partial_q}$, we then conclude from \eqref{eq:q-fontaine-laffaille-3} that 
\begin{equation} \label{eq:q-np}
q^{np}x = 0\;\; \text{for all } x \in \mathit{SH}^*_{u^{\pm 1},q,1/f}(M,D:\bF_p). 
\end{equation}
Since inverting $u$ and $f(\nabla_{u\partial_q})$ are mutually commuting operations, we can write
\begin{equation}
\mathit{SH}^*_{u^{\pm 1},q,1/f}(M,D;\bF_p) = \bF_p[\nabla_{u\partial_q},1/f(\nabla_{u\partial_q})] \otimes_{\bF_p[\nabla_{u\partial_q}]} \mathit{SH}^*_{u^{\pm 1},q}(M,D;\bF_p),
\end{equation}
where $\mathit{SH}^*_{u^{\pm 1},q}(M,D;\bF_p) = \bF_p[u^{\pm 1}] \otimes_{\bF_p[u]} \mathit{SH}^*_{u,q}(M,D;\bF_p)$. Therefore, \eqref{eq:q-np} says that 
\begin{equation} \label{eq:pointwise2}
\text{for each } x \in \mathit{SH}^*_{u^{\pm 1},q}(M,D:\bF_p), \text{ there is a $C>0$ such that } q^{np} f(\nabla_{u\partial_q})^C x = 0.  
\end{equation}
By \eqref{eq:deformed-sh-2} there is no $u$-torsion, so the same holds in $\mathit{SH}^*_{u,q}(M,D;\bF_p)$. 
One can take $C = pB$ and write $f(\nabla_{u\partial_q})^C = f(\nabla_{u\partial_q}^p)^B$.
The result as stated then follows by applying the long exact coefficient sequence \eqref{eq:sh-coefficient}.
\end{proof}

\section{The projective line\label{sec:p1}}

Here, we illustrate how some of the results in the body of the paper work out in the simplest example of $M = \bC P^1$. For the quantum connection this is done by elementary computations, while for the relative version we appeal to homological mirror symmetry.

\subsection{The absolute theory}
In the standard basis $(1,\mathit{PD}([\mathit{point}]))$ of $H^*(M;\bZ) = \bZ^2$, the map \eqref{eq:quantum-product-c1} is
\begin{equation}
q^{-1}c_1(M) \ast_q \cdot = \begin{pmatrix} 0 & 2q \\ 2q^{-1} & 0 \end{pmatrix}.
\end{equation}
When reduced mod $2$ this vanishes, so the quantum connection becomes trivial. We will therefore consider the connection with $\bF_p$-coefficients for odd $p$. The equation
\begin{equation} \label{eq:eigenvectors}
\nabla_{\partial_q} x^{\pm} = \pm 2x^{\pm} 
\end{equation}
has the (mod $p$) solution
\begin{equation} \label{eq:xpm}
\begin{aligned}
& x^{\pm} = (x^{\pm}_0,x^{\pm}_1), \\
& x_0^{\pm} = \sum_{k=2}^{(p+1)/2} \frac{1}{(k-1)!} \begin{pmatrix} 2k-2 \\ k-2 \end{pmatrix} (\pm q)^k = q^2 + \cdots + ((p-1)/2)!\, (\pm q)^{(p+1)/2}, \\
& x_1^{\pm} = -\!\!\!\!\sum^{(p-1)/2}_{k=0} \frac{1}{k!} \begin{pmatrix} 2k \\ k \end{pmatrix} (\pm q)^k
= -1 + \cdots + ((p-1)/2)!\, (\pm q)^{(p-1)/2}.
\end{aligned}
\end{equation}
Directly from \eqref{eq:eigenvectors} one sees that
\begin{equation}
\partial_q \mathrm{det}\begin{pmatrix} x_0^+ & x_0^- \\ x_1^+ & x_1^- \end{pmatrix} = 0.
\end{equation}
From this and looking at the powers of $q$ that can appear, it follows that the determinant must be a constant times $q^p$. That constant can be computed from the highest order terms in \eqref{eq:xpm} (and a bit of elementary number theory):
\begin{equation}
\mathrm{det}\begin{pmatrix} x_0^+ & x_0^- \\ x_1^+ & x_1^- \end{pmatrix} =
-2q^p.
\end{equation}
Since the determinant is invertible in $\bF_p[q^{\pm 1}]$, it follows that the vectors $x^{\pm}$ form a basis for $H^*(M;\bF_p)[q^{\pm 1}]$. In that basis, the quantum connection will split in the way predicted by Corollary \ref{th:trivial}. (Writing the connection in this basis also shows that for the minimal polynomial $f(z) = z^2-4$ we have $f(\nabla_{\partial_q}^p) = 0$, see Corollary \ref{th:congruence}.)

Concerning quantum Steenrod operations, the computation in \cite[Example 1.6]{seidel-wilkins21} adjusted to our notation, is that 
\begin{equation} \label{eq:qsigma-p1}
\begin{aligned}
&
(Q\Sigma_{q^{-1}c_1(M)})_{u=1} = \begin{pmatrix} \sigma_{11} & \sigma_{12} \\ 
\sigma_{21} & \sigma_{22} \end{pmatrix}, \\
& \sigma_{11} = -\sigma_{22} = 2\sum_{k=1}^{(p-1)/2} \frac{1}{k!\,(k-1)!}
\begin{pmatrix} 2k-1 \\ k-1 \end{pmatrix} q^{2k-p} = 2q^{2-p} + \cdots + q^{-1}/2, \\
& \sigma_{12} = 2\sum_{k=2}^{(p+1)/2} \frac{1}{k!\, (k-2)!} \begin{pmatrix} 2k-2 \\ k-1 \end{pmatrix} q^{2k-p} = 2q^{4-p} + \cdots + 2q, \\
& \sigma_{21} = -2\sum_{k=0}^{(p-1)/2} \frac{1}{(k!)^2} \begin{pmatrix} 2k \\ k \end{pmatrix} q^{2k-p} = -2q^{-p} + \cdots + 2 q^{-1}.
\end{aligned}
\end{equation}
In principle, one could try to use these formulae directly to check that, following Proposition \ref{th:jae-conjecture},
\begin{equation} \label{eq:jae-example}
(Q\Sigma_{q^{-1}c_1(M)})_{u=1} x^{\pm} = \nabla_{\partial_q}^p x^{\pm} = (\pm 2)^p x^{\pm} =
\pm 2 x^{\pm};
\end{equation}
but one can also bypass most of that computation, as follows. First, we use the highest order terms in \eqref{eq:xpm} and \eqref{eq:qsigma-p1} to see that
\begin{equation} \label{eq:highest}
\begin{aligned}
(Q\Sigma_{q^{-1}c_1(M)})_{u=1} x^{\pm} & = \begin{pmatrix} \half q^{-1} + \cdots 
& 2q + \cdots \\
2q^{-1} + \cdots & -\half q^{-1} + \cdots \end{pmatrix}
\begin{pmatrix}
((p-1)/2)!) (\pm q)^{(p+1)/2} + \cdots \\
((p-1)/2)!) (\pm q)^{(p-1)/2} + \cdots 
\end{pmatrix}
\\ & = \pm 2 \begin{pmatrix}
((p-1)/2)!) (\pm q)^{(p+1)/2} + \cdots \\
((p-1)/2)!) (\pm q)^{(p-1)/2} + \cdots 
\end{pmatrix}
\end{aligned}
\end{equation}
where $\cdots$ denotes lower powers of $q$. Secondly, $Q\Sigma_{q^{-1}c_1(M)}$ commutes with the connection by Property \ref{th:quantum-properties}(iv), and therefore 
\begin{equation} \label{eq:eigenvector-2}
\nabla_{\partial_q} (Q\Sigma_{q^{-1}c_1(M)})_{u=1} x^{\pm} = \pm 2 (Q\Sigma_{q^{-1}c_1(M)})_{u=1} x^{\pm}.
\end{equation}
The space of solutions of each equation \eqref{eq:eigenvectors} is a free rank $1$ module over $\bF_p[q^{\pm p}]$. From \eqref{eq:eigenvector-2} we know that $(Q\Sigma_{q^{-1}c_1(M)})_{u=1} x^{\pm}$ lies in that module. From this, \eqref{eq:highest}, and a consideration about the lowest power of $q$ that can appear in $(Q\Sigma_{q^{-1}c_1(M)})_{u=1} x^{\pm}$, one can derive that \eqref{eq:jae-example} holds.

\subsection{The relative theory}
Take $(M,D) = (\bC P^1, 2 \text{ points})$. We will use Lemma \ref{th:oc} to interpret $\mathit{SH}^*_q(M,D;\bF_p)$ as the Hochschild homology of the wrapped Fukaya category of $M \setminus D = \bC^*$, deformed by the Maurer-Cartan element which is determined by the compactification. Up to quasi-equivalence, the wrapped category is described by the algebra
\begin{equation}
\scrA \htp \bF_p[y,y^{-1}],
\end{equation}
the ring of functions on the punctured line $Y = \{y \neq 0\}$ over $\bF_p$; and the deformation to $\scrA_q$ comes from the superpotential (central element) $W = y+y^{-1}$. This is a Landau-Ginzburg model, and its Hochschild homology is
\begin{equation} \label{eq:hkr-1}
\mathit{HH}_{*-1}(\scrA_q) \iso H^{*-1}(\Omega^{-*}_Y[q], q\,dW \wedge \cdot);
\end{equation}
and on the right hand side, 
\begin{equation} \label{eq:example-kappaq}
a_{q}(\theta) = W\theta.
\end{equation}
Determining the cohomology of \eqref{eq:hkr-1} is elementary. Having done that, one then inverts 
\begin{equation}
f(a_{q}) = f(W) = (y^2+y^{-2}-2) = y^{-2}(y-1)^2(y+1)^2.
\end{equation}
We skip the computation, and only record the outcome. Let $Y^{\mathrm{reg}} = Y \setminus \{y = \pm 1\}$ be the space obtained by removing the critical points of $W$ from $Y$.

\begin{lemma} \label{th:p1}
We have
\begin{equation}
\mathit{SH}^*_{q,1/f}(M,D:\bF_p) \iso \Omega^1_{Y^{\mathrm{reg}}} = \bF_p[y^{\pm 1},(y^2-1)^{-1}] \mathit{dy},
\end{equation}
placed in degree $0$. This carries the trivial action of $q$ (obvious for degree reasons); and $a_{q}$ is still given by \eqref{eq:example-kappaq}.
\end{lemma}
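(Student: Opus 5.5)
We will compute the right-hand side of \eqref{eq:hkr-1} directly, then localise. Over $Y = \mathrm{Spec}\,\bF_p[y^{\pm 1}]$, the complex $\Omega^{-*}_Y[q]$ with differential $q\,dW\wedge\cdot$ is the two-term complex
\begin{equation}
\bF_p[y^{\pm 1}][q] \xrightarrow{\;q\,W'(y)\,dy\;} \bF_p[y^{\pm 1}][q]\,dy, \qquad W'(y) = 1 - y^{-2} = y^{-2}(y-1)(y+1).
\end{equation}
Here functions sit in one degree and one-forms in the adjacent one, with the $q$-grading placing one-forms so that they end up in degree $0$ after the shift in \eqref{eq:hkr-1}. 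The map is injective, since $\bF_p[y^{\pm 1}][q]$ is a domain. So the cohomology is concentrated in the one-form slot, and equals
\begin{equation}
\bF_p[y^{\pm 1}][q]\,dy \big/ q(y^2-1)\,\bF_p[y^{\pm 1}][q]\,dy .
\end{equation}
The remaining work is bookkeeping of gradings and the $q$-adic completion. Since the complex is already $q$-adically well-behaved, we expect the completion to change nothing after localisation.

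Next we invert $f(a_q) = f(W)$, where by \eqref{eq:example-kappaq} $a_q$ acts by multiplication by $W$. We use the stated factorisation $f(W) = y^{-2}(y-1)^2(y+1)^2$; $p$ is odd, so $\pm 1$ are distinct. Localisation is exact, so we localise the quotient module. After inverting $(y^2-1)$, the relation $q(y^2-1)\,dy = 0$ becomes $q\,dy = 0$, so $q$ acts by zero. We then obtain
\begin{equation}
\bF_p[y^{\pm 1},(y^2-1)^{-1}][q]\,dy \big/ q \cdot (\ldots) \;\iso\; \bF_p[y^{\pm 1},(y^2-1)^{-1}]\,dy = \Omega^1_{Y^{\mathrm{reg}}}.
\end{equation}
The $a_q$-action on this quotient is still multiplication by $W$, because localisation is functorial and $W$ commutes with everything.

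The main obstacle is justifying the identifications feeding into this computation: that $\mathit{SH}^*_q(M,D;\bF_p)$ is $\mathit{HH}_*(\scrA_q)$ via Lemma \ref{th:oc}, with $a_q$ corresponding to the action of $[\partial_q\mu_{\scrA_q}] = [W]$ by Lemma \ref{th:compatibility-oc}(i). One also needs the HKR-type isomorphism \eqref{eq:hkr-1} for the curved algebra $(\bF_p[y^{\pm 1}], qW)$ in characteristic $p$. On a one-dimensional smooth affine $Y$, HKR needs no denominators beyond $1!$, so this should hold over $\bF_p$; we would cite \eqref{eq:hkr-1} as given. With those in place, the degree statement and the trivial $q$-action follow as above; the latter is also forced by degree, since $q$ has degree $2$ and everything sits in degree $0$.
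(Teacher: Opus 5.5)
Your proposal is correct and follows exactly the computation the paper has in mind but skips: the right-hand side of \eqref{eq:hkr-1} is $\bF_p[y^{\pm 1}][q]\,dy/(q(y^2-1))$, and inverting $f(W) = y^{-2}(y^2-1)^2$ leaves $\bF_p[y^{\pm 1},(y^2-1)^{-1}]\,dy$ with zero $q$-action and $a_q = W$. One small precision on your grading remark: before localisation, only the $q^0$ one-forms lie in degree $0$, while the summands $q^k\,\bF_p[y^{\pm 1}]/(y^2-1)\,dy \iso q^k\,\mathit{Jac}(W)\,dy$ sit in degree $2k$, and it is precisely these summands that inverting $f(W)$ kills, which is why the result is concentrated in degree $0$.
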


\begin{remark} \label{th:landau-ginzburg}
The isomorphism \eqref{eq:hkr-1} is related to, but somewhat simpler than, the computation of the Hochschild homology of matrix factorizations. There, much of the difficulty is showing that the category of matrix factorizations has enough objects. In our case, $\mathit{HH}_*(\scrA_q)$ is defined via a (complete) $q$-deformation of the Hochschild complex of $\scrA$. Therefore, it is sufficient to look at the map between the underlying complexes, as in \cite{caldararu-tu, segal}. The fact that it's a quasi-isomorphism property can then be proved via $q$-filtrations, reducing it to the classical Hochschild-Kostant-Rosenberg theorem; in the literature, this is usually stated in characteristic $0$, but it also works in characteristic $p > \mathrm{dim}(Y)$, which is certainly the case here. Of course, the example under consideration here is so simple (only one-forms are relevant) that all computations could also be carried out in an ad hoc fashion.
\end{remark}

A similar argument applies to the equivariant theory, where one has 
\begin{equation} \label{eq:hkr-2}
\mathit{HC}_{*-1}(\scrA_q) \iso H^{*-1}(\Omega^{-*}_Y[u,q], ud + q\,dW \wedge \cdot)
\end{equation}
with the connection
\begin{equation} \label{eq:example-gauss-manin}
\nabla_{u\partial_q}\theta = (u\partial_q + W)\theta.
\end{equation}
Again, we only record the outcome of applying this isomorphism in our case:

\begin{lemma} \label{th:p1-2}
\begin{equation}
\mathit{SH}^*_{u,q,1/f}(M,D;\bF_p) \iso \Omega^1_{Y^{\mathrm{reg}}}[u],
\end{equation}
where the $q$-action is
\begin{equation} \label{eq:q-action}
q \, \theta = -u\, d(\theta/dW),
\end{equation}
and the connection is 
\begin{equation}
\nabla_{u\partial_q}\theta = W\theta.
\end{equation}
\end{lemma}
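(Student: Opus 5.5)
The plan is to compute directly from the equivariant Hochschild–Kostant–Rosenberg description \eqref{eq:hkr-2}, together with the connection \eqref{eq:example-gauss-manin}; I take both as given. Since $Y$ is a curve, the complex $(\Omega^{-*}_Y[u,q],\, ud + q\,dW\wedge\cdot)$ has only two terms, $g \mapsto u\,dg + q\,g\,dW$ from $\Omega^0_Y[u,q]$ to $\Omega^1_Y[u,q]$. I first note that this map is injective: look at the lowest power of $q$ in $g$, and then use finiteness of the $q$-degree. Hence only the cokernel
$$H = \Omega^1_Y[u,q]/(ud+q\,dW)\,\Omega^0_Y[u,q]$$
contributes. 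The key relation in $H$ is that, for $\theta = g\,dW$, we have $q\theta \equiv -u\,dg = -u\,d(\theta/dW)$. This is exactly \eqref{eq:q-action}; it requires dividing by $dW = y^{-2}(y-1)(y+1)\,dy$, which becomes possible on $Y^{\mathrm{reg}}$.

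Next I put a module structure on the target. On $T = \Omega^1_{Y^{\mathrm{reg}}}[u]$, define $\nabla_{u\partial_q}\theta = W\theta$ and $q\theta = -u\,d(\theta/dW)$. I will check the Weyl relation $[\nabla_{u\partial_q},q] = u$. Writing $\theta = g\,dW$, one has $\nabla q\theta - q\nabla\theta = -uW\,dg + u\,d(Wg) = u\,g\,dW = u\theta$, so the relation holds. Then I define a map $H \to T$ by sending $q^k\theta \mapsto q^k\cdot\theta$, computed with this action. The map is well defined because the image of $u\,dg + q\,g\,dW$ is $u\,dg - u\,dg = 0$. It intertwines the actions of $q$ and of $\nabla_{u\partial_q} = u\partial_q + W$; this follows from the Weyl relation, since $u\partial_q$ acting on $q^k$ is the commutator. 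On $T$, $f(\nabla_{u\partial_q}) = f(W) = y^{-2}(y-1)^2(y+1)^2$ is invertible. Hence the map extends to $\mathit{SH}^*_{u,q,1/f}(M,D;\bF_p) \to T$, using the description \eqref{eq:no-completion} of the localization. The extension remains compatible with $q$, because the Leibniz rule \eqref{eq:leibniz-rule} is forced by $[f(\nabla),q] = uf'(\nabla)$, and that identity holds in $T$ as well.

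Surjectivity of the extended map is straightforward. Any $\theta \in \Omega^1_{Y^{\mathrm{reg}}}$ can be written as $f(W)^{-N}\theta'$ with $\theta' \in \Omega^1_Y$. Such an element is then the image of $f(\nabla_{u\partial_q})^{-N}\theta'$, and $u$-linearity handles the powers of $u$.

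The main obstacle is injectivity, and I plan to handle it by reducing to $u = 0$. Both sides are $u$-torsion-free. For the source this follows from \eqref{eq:deformed-sh-2} and exactness of localization; for $T$ it is clear. Setting $u = 0$ turns the map into the identification of Lemma \ref{th:p1}, where $\nabla$ reduces to $a_q = W$ via \eqref{eq:kappa-and-connection}, and this identification is an isomorphism. In each fixed degree, only boundedly many powers of $u$ occur, because the complexes can be taken bounded below as in Section \ref{subsec:variants-sh}. A graded Nakayama and $u$-filtration argument then upgrades the isomorphism modulo $u$ to an isomorphism. The delicate point is checking that the $u = 0$ reduction of the localized equivariant group is genuinely the localized nonequivariant group. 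I would take this from the $\bF_p$-version of \cite[Lemma 2.3.6]{pomerleano-seidel23}, exactly as in the proof of Lemma \ref{th:les-2}.
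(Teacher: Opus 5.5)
Your proposal is correct and follows the route the paper indicates: the paper just records the outcome of computing with \eqref{eq:hkr-2} and \eqref{eq:example-gauss-manin}, and your explicit map to $\Omega^1_{Y^{\mathrm{reg}}}[u]$, combined with the graded Nakayama reduction to Lemma \ref{th:p1}, is a clean way to carry out that omitted computation. One small fix: to prove injectivity of $g \mapsto u\,dg + q\,g\,dW$, argue from the highest power of $q$ (equivalently, the lowest power of $u$), where the top coefficient gives $g\,dW = 0$ and hence $g = 0$; the lowest $q$-power only yields $dg = 0$, which in characteristic $p$ does not force $g = 0$.
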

%

\begin{remark}
Continuing the discussion from Remark \ref{th:landau-ginzburg}, \eqref{eq:hkr-2} is related to, but simpler than, known results about the cyclic homology of matrix factorizations \cite{efimov,shklyarov}. 
Again, if one wanted to avoid any theory, our example is simple enough to be amenable to direct computation in the cyclic complex.
\end{remark}

One can write \eqref{eq:q-action} as $q y^k (\mathit{dy/y}) = (u(k+1) y^{k+1} + qy^{k+2}) (dy/y)$, and iterate that to get
\begin{equation} \label{eq:iterated-relation}
q^m y^k (dy/y) = \sum_{i=0}^m \begin{pmatrix} m \\ i \end{pmatrix} 
(k+i+1)(k+i+2) \cdots (k+m)
q^i u^{m-i} y^{k+m+i} (dy/y).
\end{equation}
Setting $m = p$, one finds that $q^p y^k(dy/y) = q^p y^{k+2p} (dy/y)$. Since both $y$ and $y^{-p} - y^p = (y-y^{-1})^p$ are invertible functions on $Y^{\mathrm{reg}}$, it follows that
\begin{equation} \label{eq:q-p-is-zero}
q^p = 0 \text{ as an endomorphism of $\mathit{SH}^*_{u,q,1/f}(M,D;\bF_p)$,}
\end{equation}
as predicted by Corollary \ref{th:torsion-3}. 

We analyze the situation a little further, in the simpler context where $u$ has been inverted. $K = \mathit{ker}(q) \subset \mathit{SH}^*_{u^{\pm 1},q,1/f}(M,D;\bF_p)$ is the $\bF_p[u,u^{-1}]$-module generated by elements $g(y^p)\, dW$. Multiplication by $q$ induces isomorphisms
\begin{equation}
q: \nabla_{u\partial_q}^k K \stackrel{\iso}{\longrightarrow} \nabla_{u\partial_q}^{k-1} K
\quad \text{for $k = 1,\dots,p-1$.}
\end{equation}
From that and \eqref{eq:q-p-is-zero}, it follows that $\bF_p[u^{\pm 1}] \otimes_{\bF[u]} \mathit{SH}^*_{S^1,q,1/f}(M,D;\bF_p)$ splits into the direct sum of the $\nabla_{u\partial_q}^k K$, and that $K$ itself must be the image of the Fontaine-Laffaille map \eqref{eq:q-fontaine-laffaille}. We will not try to determine the map (since that would require one to dig through all of \cite{pomerleano-seidel23, petrov-vaintrob-vologodsky17}), but there is a reasonable guess compatible with the properties \eqref{eq:q-fontaine-laffaille-1}, \eqref{eq:q-fontaine-laffaille-2}, namely
\begin{equation}
\Phi(g(y) dW) \stackrel{?}{=} g(y^p) dW.
\end{equation}

\begin{remark} \label{th:speculation}
With Remark \ref{th:p-m} in mind, it is worth taking a speculative look at the mod $p^m$ situation. The computations from Lemmas \ref{th:p1} and \ref{th:p1-2} carry over. Spelling out \eqref{eq:q-fontaine-laffaille-m}, we would like to see a map
\begin{equation}
\Phi: (\bZ/p^m)[y^{\pm 1}, (y^2-1)^{-1}] \mathit{dy} \longrightarrow
(\bZ/p^m)[y^{\pm 1}, (y^2-1)^{-1}] \mathit{dy}
\end{equation}
which satisfies
\begin{align}
\label{eq:fl-scary}
& W^p \Phi(\theta) = \Phi( W\theta), \\
& d(\Phi(\theta)/dW) = pW^{p-1}\, \Phi(d(\theta/dW)) \label{eq:fl-scary-2};
\end{align}
the $p$ in \eqref{eq:fl-scary-2} arises because in all constructions involving $\overline{SH}^*$ we multiply the $u$ variable by $p$, resulting in a modified version of \eqref{eq:q-action}. One can find a change of coordinates $y \mapsto \phi(y) = y^p + p\psi(y)$ over $\bZ/p^m$, such that $\phi^*W = W^p$: namely,
\begin{equation}
\begin{aligned}
& \phi(y) + \frac{1}{\phi(y)} = (y+y^{-1})^p \\ \Leftrightarrow \;\; &
(y^p-y^{-p})\psi(y) = 
\sum_{k=1}^{p-1} \frac1p \begin{pmatrix} p \\ k \end{pmatrix} y^{2k}
- y^{-p} \big(py^{-p} \psi(y)^2 - p^2 y^{-2p} \psi(y)^3 + \cdots\big);
\end{aligned}
\end{equation}
bearing in mind that $y^p-y^{-p}$ is invertible, this can be solved iteratively with respect to $m$, \`a la Hensel's Lemma. Then, the following possible formula satisfies \eqref{eq:fl-scary}, \eqref{eq:fl-scary-2}:
\begin{equation}
\Phi(g(y)\,dW) \stackrel{?}{=} g(\phi(y)) \, dW.
\end{equation}
\end{remark}

\section{Isolated hypersurface singularities\label{sec:appendix}}

Using the results from Section \ref{sec:p-curvature}, we will give a characteristic $p$ proof of Varchenko's theorem \cite{varchenko81} about the monodromy of isolated hypersurface singularities, in the context of twisted de Rham complexes (Theorem \ref{th:varchenko}). The argument builds on \cite[Appendix C]{chen24}; but the main step (Proposition \ref{th:explicit-cartier}) is new and of independent interest, since it sheds light on the role played by the Cartier isomorphism.
%
%

\subsection{Preliminaries}
Let $F$ be any field. Take some $W \in F[[x_1,\dots,x_n]]$, and its Jacobian ring
\begin{equation}
\mathit{Jac}(W) = F[[x_1,\dots,x_n]]/(\partial_{x_1}W,\dots,\partial_{x_n}W).
\end{equation}
From now on, we assume that $W(0) = 0$, and that $W$ has an isolated critical point at the origin, which means that $\mathit{Jac}(W)$ is finite-dimensional over $F$. As a consequence,
\begin{equation} \label{eq:high-power}
\text{there is some $N \geq 1$ such that } (x_1,\dots,x_n)^N \subset (\partial_{x_1}W,\dots,\partial_{x_n}W).
\end{equation}
In particular, multiplication with $W$ acts nilpotently on $\mathit{Jac}(W)$.

Take the spaces of $k$-forms,
\begin{equation}
\Omega^k \stackrel{\mathrm{def}}{=} \bigoplus_{i_1<\cdots<i_k} F[[x_1,\dots,x_n]] \mathit{dx}_{i_1} \wedge \cdots \wedge \mathit{dx}_{i_k}.
\end{equation}
Introduce a formal variable $u$, and consider the twisted de Rham complex 
\begin{equation} \label{eq:twisted-de-rham}
(\Omega^*((u)), ud-dW),
\end{equation}
where $(ud-dW)\theta = u\,d\theta - dW \wedge \theta$.

\begin{lemma} \label{th:u-filtration}
$H^k(\Omega^*((u)), ud-dW)$ is zero in degrees $k<n$. Moreover, the $F((u))$-dimension of $H^n(\Omega^*((u)), ud-dW)$ equals the $F$-dimension of $\mathit{Jac}(W)$.
\end{lemma}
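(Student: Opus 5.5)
The plan is to compare the twisted differential with the Koszul differential $-dW\wedge\cdot$ by filtering with respect to powers of $u$. The key input is that the partial derivatives $\partial_{x_1}W,\dots,\partial_{x_n}W$ form a regular sequence in $F[[x_1,\dots,x_n]]$. This holds because $\mathit{Jac}(W)$ is finite-dimensional: a system of $n$ parameters generating an $\mathfrak{m}$-primary ideal in an $n$-dimensional regular, hence Cohen-Macaulay, local ring is regular. Consequently the Koszul complex $(\Omega^*, -dW\wedge\cdot)$ has cohomology only in degree $n$, where it is $\Omega^n/dW\wedge\Omega^{n-1}\iso \mathit{Jac}(W)$.

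To pass from this to the twisted complex, I first work over $F[[u]]$ with the lattice $(\Omega^*[[u]], ud-dW)$. Its reduction at $u=0$ is the Koszul complex.

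\textbf{Vanishing in degrees $k<n$.} Take a cocycle $\theta=\sum_{i\geq i_0}u^i\theta_i$ in $\Omega^k((u))$, and rescale so that $i_0=0$. The leading term gives $dW\wedge\theta_0=0$, so $\theta_0=-dW\wedge\eta_0$ for some $\eta_0$. Subtracting $(ud-dW)\eta_0$ from $\theta$ produces a cocycle that starts at order $u^1$, and the same step can then be repeated at each order. The corrections converge $u$-adically, so every cocycle is exact.

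\textbf{Top degree.} The same order-by-order argument shows that $H^n(\Omega^*[[u]],ud-dW)$ is $u$-torsion-free. For the torsion-freeness, suppose $u\theta=(ud-dW)\eta$. Then $dW\wedge\eta_0=0$, so $\eta_0$ is exact in the Koszul complex when $n-1<n$, and inductively one shows that $\theta$ is itself a coboundary. Using the long exact sequence for multiplication by $u$, the reduction of this cohomology mod $u$ injects into $\mathit{Jac}(W)$ and is in fact isomorphic to it, since the next group $H^{n+1}$ vanishes.

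\textbf{Finite generation.} To conclude that $H^n$ is free of rank $\dim\mathit{Jac}(W)$ over $F[[u]]$, I also need it to be finitely generated. I will get this by lifting a basis of $\mathit{Jac}(W)$, say monomials $m_j\,dx_1\wedge\cdots\wedge dx_n$. For any class $\omega$, write $\omega\equiv\sum c_jm_j \bmod dW\wedge\Omega^{n-1}$. The difference is then equal to $(ud-dW)\eta$ plus $u\,d\eta$. Iterating this expresses $\omega$ as a $u$-adically convergent combination of the lifts $m_j$.

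\textbf{Completeness.} The main obstacle is this convergence, that is, the completeness and separatedness of $H^n$ over $F[[u]]$. I would handle it using \eqref{eq:high-power} together with the finite-dimensionality of $\mathit{Jac}(W)$, so that the $x$-adic topology causes no trouble, since everything is $u$-adically complete on the chain level. Once $H^n(\Omega^*[[u]])$ is known to be free of rank $\dim_F\mathit{Jac}(W)$, inverting $u$, which is exact, gives the statement.
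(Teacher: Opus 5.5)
Your proposal is correct and is essentially the paper's argument: since the $\partial_{x_i}W$ form a regular sequence, the Koszul complex $(\Omega^*,-dW)$ has cohomology only in degree $n$, with $H^n \iso \mathit{Jac}(W)$; the $u$-filtration on the lattice $(\Omega^*[[u]],ud-dW)$ then gives vanishing below degree $n$ and a free $H^n$ whose reduction mod $u$ is $\mathit{Jac}(W)$; finally one inverts $u$. You simply supply the details that the paper leaves implicit. The one overstatement is your ``completeness'' step: $ud-dW$ acts coefficientwise in $u$ on genuine power series, so the lifting iteration already converges on the chain level. Hence finite generation is immediate, \eqref{eq:high-power} plays no role, and freeness follows from torsion-freeness over the discrete valuation ring $F[[u]]$.
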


\begin{proof}
(This is familiar.) Start with the simpler complex $(\Omega^*,-dW)$. By assumption, the $\partial_{x_i}W$ form a regular sequence in $F[[x_1,\dots,x_n]]$. Since $(\Omega^*,-dW)$ can be thought of as the associated Koszul complex, its cohomology is zero in degrees $<n$. We have
\begin{equation}
-dW \wedge (dx_1 \wedge \cdots \wedge dx_{i-1} \wedge dx_{i+1} \wedge \cdots \wedge dx_n) = 
(-1)^i (\partial_{x_i}W) \mathit{vol},
\end{equation}
where $\mathit{vol} = \mathit{dx}_1 \wedge \cdots \wedge \mathit{dx}_n$, and therefore an isomorphism
\begin{equation}
\mathit{Jac}(W) \stackrel{\iso}{\longrightarrow} H^n(\Omega^*,-dW), \quad
[f] \longmapsto [f\,\mathit{vol}].
\end{equation}
Next consider $(\Omega^*[[u]], ud-dW)$. An argument involving the previous computation and the $u$-filtration shows the following. First, $H^k(\Omega^*[[u]],ud-dW) = 0$ for $k<n$. Second, $H^n(\Omega^*[[u]],ud-dw)$ is a free $F[[u]]$-module, and the natural map $H^n(\Omega^*[[u]],ud-dW)/u \rightarrow H^n(\Omega^*,-dW)$ is an isomorphism. The desired result is obtained by inverting $u$.
\end{proof}

Assume that $F$ is of characteristic $\neq 2$. Take the connection
\begin{equation} \label{eq:u-connection}
\nabla_{\partial_u}: \Omega^*((u)) \longrightarrow \Omega^*((u)), \quad
\nabla_{\partial_u} = \partial_u + u^{-2}W - \textstyle\frac{u^{-1}}{2} \mathit{Gr}
\end{equation}
where $\mathit{Gr}$ multiplies $k$-forms by $k$. This satisfies
\begin{equation}
\nabla_{\partial_u} (ud-dW) - (ud-dW) \nabla_{\partial_u} = \textstyle \frac{u^{-1}}{2}(ud-dW),
\end{equation}
and therefore induces a connection on cohomology, for which we use the same notation,
\begin{equation} \label{eq:u-connection-2}
\nabla_{\partial_u}: H^n(\Omega^*((u)), ud-dW) \longrightarrow
H^n(\Omega^*((u)), ud-dW).
\end{equation}

\subsection{The characteristic $p$ argument}
We need the following elementary fact:

\begin{lemma} \label{th:scalar-p-curvature}
In an associative ring, take elements $\partial$ and $a$, and define $a^{(k)}$ inductively by $a^{(0)} = a$, $a^{(k)} = [\partial,a^{(k-1)}]$. 
Suppose that, for some prime $p$, we have that $a^{(0)},\dots,a^{(p-1)}$ mutually commute. Then
\begin{equation} \label{eq:scalar-p-curvature}
(\partial + a)^p = \partial^p + a^{(p-1)} + a^p \text{ mod p},
\end{equation}
where ``mod $p$'' means that the difference between the two sides is $p$ times some element.
\end{lemma}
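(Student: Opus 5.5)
Since the ring need not be commutative and no division by $p$ is allowed, the natural route is the standard proof of Jacobson's formula, working in the ring itself. Expand $(\partial+a)^p$ as a sum over all $2^p$ words in the letters $\partial$ and $a$. The cyclic group $C_p$ acts on these words by rotation, and the action is free except on the two constant words $\partial^p$ and $a^p$. Rotation does not preserve the value of a word, so we cannot just group orbits. Instead we use the derivative trick: introduce a commuting formal variable $s$ and consider $(\partial+sa)^p$ in $R[s]$, where $R$ is our ring. Write
\[
(\partial + sa)^p = \partial^p + s^p a^p + \sum_{i=1}^{p-1} s^i c_i .
\]
Differentiating in $s$ gives
\[
\sum_{j=0}^{p-1} (\partial+sa)^j\, a\, (\partial+sa)^{p-1-j} = \sum_{i=1}^{p-1} i s^{i-1} c_i + p s^{p-1}a^p .
\]
The left hand side equals $\mathrm{ad}(\partial+sa)^{p-1}(a)$ modulo $p$. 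This holds because, in any ring, $\sum_j X^j a X^{p-1-j}=\sum_k \binom{p}{k+1}X^{p-1-k}\,\mathrm{ad}(X)^k(a)$ (with the sign convention $\mathrm{ad}(X)(a)=[X,a]$), and $\binom{p}{k+1}\equiv 0$ mod $p$ for $k<p-1$. This is an identity with integer coefficients, so "mod $p$" has the meaning required in the statement.

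Next, expand $\mathrm{ad}(\partial+sa)^{p-1}(a)$ in powers of $s$. The $s^0$ term is $\mathrm{ad}(\partial)^{p-1}(a)=a^{(p-1)}$. For the higher terms, note that each term is an iterated commutator in which at least one $\mathrm{ad}(a)$ is applied to an element $a^{(k)}$, $k\le p-2$, or to an iterated combination of these. If we only used $\mathrm{ad}(\partial)$ on elements built from the $a^{(j)}$ with $j\le p-1$, we would stay in the commutative family. So I would argue by induction that every intermediate element, before the first $\mathrm{ad}(a)$ is applied, is some $a^{(k)}$ with $k\le p-1$. The first $\mathrm{ad}(a)$ then kills it, since $[a,a^{(k)}]=0$ by hypothesis, and every term with an $s$-power vanishes. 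Hence $\mathrm{ad}(\partial+sa)^{p-1}(a)=a^{(p-1)}$ exactly.

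Comparing coefficients of powers of $s$ then gives $i c_i \equiv 0$ mod $p$ for $1\le i\le p-2$, and $(p-1)c_{p-1}\equiv a^{(p-1)}$. Here one must be careful, because "$\equiv$ mod $p$" means the difference lies in $pR$, and we cannot divide by $i$ in a general ring. However, $i$ is invertible modulo $p$: choose an integer $i'$ with $ii'=1+pm$. Then $c_i = i'(ic_i) - pm\,c_i \in pR$. For $i=p-1$ this yields $c_{p-1}\equiv -a^{(p-1)}$.

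The step I expect to be the main obstacle is the sign. The expected formula has $+a^{(p-1)}$, but we get $-a^{(p-1)}$ up to the sign convention for $\mathrm{ad}$. This comes down to checking carefully the identity $\sum_j X^jaX^{p-1-j}\equiv \pm\,\mathrm{ad}(X)^{p-1}(a)$, including the factor $(-1)^{p-1}$ relating left and right adjoint actions. For odd $p$, $(-1)^{p-1}=1$ and $p-1\equiv -1$, so I expect these sign and factor contributions to combine to give $+a^{(p-1)}$. I would verify this against the rank one case $\partial=\frac{d}{dx}$, $a=f(x)$ before finalizing. For $p=2$ signs are irrelevant mod $2$. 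Finally, setting $s=1$ gives $(\partial+a)^p\equiv\partial^p+a^{(p-1)}+a^p$.
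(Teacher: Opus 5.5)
The overall strategy is sound: put $s$ in front of $a$, differentiate in $R[s]$, and reduce $\sum_j X^jaX^{p-1-j}$ to $\mathrm{ad}(X)^{p-1}(a)$ mod $p$. Your argument that $\mathrm{ad}(\partial+sa)^{p-1}(a)=a^{(p-1)}$ exactly is also correct. The error is in the coefficient comparison.

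The $s^{i-1}$-coefficient of $\frac{d}{ds}(\partial+sa)^p$ is $ic_i$. The only surviving term on the other side, $a^{(p-1)}$, sits in degree $s^0$, so it must be matched with $1\cdot c_1$, not with $(p-1)c_{p-1}$. The correct outcome is $c_1\equiv a^{(p-1)}$, together with $ic_i\equiv 0$ (hence $c_i\equiv 0$ by your $i'$ argument) for $2\le i\le p-1$. Setting $s=1$ then gives $(\partial+a)^p\equiv\partial^p+a^{(p-1)}+a^p$ directly, with the right sign.

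As written, you instead derive $c_1\equiv 0$ and $c_{p-1}\equiv -a^{(p-1)}$, which are false for odd $p$. For example, take $\partial=d/dx$ and $a=x^{p-1}$ in $\mathrm{End}_{\mathbb{Z}}(\mathbb{Z}[x])$. Then $c_{p-1}=p\,a^{p-1}\partial+\binom{p}{2}a^{p-2}a^{(1)}\in pR$, while $a^{(p-1)}=(p-1)!\notin pR$.

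The sign ``obstacle'' is an artifact of this slip, and the remedy you sketch would not remove it. Since $(-1)^{p-1}\equiv 1$ mod $p$ for every prime, no left/right convention for $\mathrm{ad}$ changes anything mod $p$. The factor $p-1\equiv -1$, by contrast, really does flip the sign, so the two effects do not combine. That cancellation does occur in the other normalization, with $s$ in front of $\partial$: there the $s^{p-2}$-coefficient gives $(p-1)c_1\equiv \mathrm{ad}(\partial)^{p-2}([a,\partial])=-a^{(p-1)}$.

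A minor point: the exact identity you quote is wrong as stated, already for $p=2$. From $L_X=R_X+\mathrm{ad}(X)$ one gets $\sum_jX^jaX^{p-1-j}=\sum_k\binom{p}{k+1}\,\mathrm{ad}(X)^k(a)\,X^{p-1-k}$, with the powers of $X$ on the right. Only its reduction mod $p$ is used, and that reduction is correct.

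Once repaired, your proof takes a genuinely different route from the paper's. The paper shows by induction on $k\le p$ an exact expansion of $(\partial+a)^k$ as a sum over unordered collections of pairwise disjoint nonempty subsets $S_1,\dots,S_l\subset\{1,\dots,k\}$ of the terms $a^{(|S_1|-1)}\cdots a^{(|S_l|-1)}\partial^{k-|S_1|-\cdots-|S_l|}$. For $k=p$ it lets the cyclic group act by rotation. The only invariant collections are the empty one, $\{\{1,\dots,p\}\}$, and the partition into singletons; these give $\partial^p$, $a^{(p-1)}$ and $a^p$, while free orbits contribute multiples of $p$.

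That argument needs full mutual commutativity, so that the products are independent of the order of the subsets. In return it is completely explicit and exact before reduction mod $p$. Your argument uses the hypothesis only to kill the words containing $\mathrm{ad}(a)$, i.e.\ only $[a,a^{(k)}]=0$ for $1\le k\le p-2$. So it proves a slightly stronger statement, but gives no exact formula for the intermediate terms.
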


\begin{proof} (See \cite[Theorem 1.3]{bavula09} or \cite[Theorem 3.12]{bostan-caruso-roques}.) By induction, one shows that
\begin{equation}
(\partial + a)^k =
\!\!\!\!
\sum_{\substack{0 \leq l \leq k \\ S_1,\dots,S_l}}
\frac{1}{l!}\, 
 a^{(|S_1|-1)} \cdots a^{(|S_l|-1)} \partial^{k-|S_1|-\cdots-|S_l|} \quad
 \text{for $k = 1,\dots,p$,}
\end{equation}
where the sum is over all choices of nonempty pairwise disjoint subsets $S_1,\dots,S_l \subset \{1,\dots,k\}$. Dividing by $l!$ corresponds to forgetting the ordering of those subsets. Let's therefore consider unordered collections of subsets, and the action of the cyclic group of order $k$ on those. If $k = p$ is prime, this action has exactly three fixed points, namely: not choosing any subset ($l = 0$); a single subset which is all of $\{1,\dots,p\}$ ($l = 1$); and the decomposition into singletons ($l = p$). These fixed points give rise to the three terms on the right in \eqref{eq:scalar-p-curvature}, and the rest yields a $p$-fold multiple.
\end{proof}

Let's return to our original situation, assuming that $F$ is a field of characteristic $p>2$. 

\begin{lemma} \label{th:spectral-sequence}
The twisted de Rham complex \eqref{eq:twisted-de-rham} is a complex of finitely generated free modules over $F[[x_1^p,\dots,x_n^p]]((u))$.
\end{lemma}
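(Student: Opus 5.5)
To prove Lemma \ref{th:spectral-sequence}, I would check two things: that the differential $ud - dW$ is linear over the subring $S = F[[x_1^p,\dots,x_n^p]]((u))$, and that each $\Omega^k((u))$ is a finitely generated free $S$-module.

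Linearity comes from characteristic $p$. For $g \in F[[x_1^p,\dots,x_n^p]]$ we have $dg = 0$, since $\partial_{x_i}(x_i^{pm}) = pm\,x_i^{pm-1} = 0$ in $F$; for a general power series this holds termwise, because $d$ is continuous in the $(x)$-adic topology. Hence $d(g\theta) = g\,d\theta$. The term $dW \wedge \cdot$ is linear over the whole ring $F[[x_1,\dots,x_n]]$, and $u$ is a central scalar. So $ud - dW$ is $F[[x_1^p,\dots,x_n^p]]((u))$-linear. One small point: $\Omega^*((u))$ should be read as Laurent series in $u$ with coefficients in $\Omega^*$. Linearity then holds coefficientwise, and multiplication by Laurent series in $u$ with coefficients in $F[[x^p]]$ is compatible with it.

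Freeness is the main step, but it is routine. The ring $F[[x_1,\dots,x_n]]$ is free over $R = F[[x_1^p,\dots,x_n^p]]$, with basis the monomials $x^a$, $0 \le a_i \le p-1$. To see this, write any power series uniquely as
\begin{equation}
\sum_{b} c_b x^b = \sum_{0 \le a_i < p} x^a \Big( \sum_{c} c_{a+pc}\, x^{pc} \Big),
\end{equation}
by splitting each exponent $b$ into residue $a$ and quotient $c$ mod $p$. This regrouping is a bijection on coefficient families, so both existence and uniqueness are clear. It follows that $\Omega^k$ is free over $R$ of rank $p^n\binom{n}{k}$, with basis $x^a\, dx_{i_1} \wedge \cdots \wedge dx_{i_k}$.

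The last step is to pass to Laurent series in $u$. Elements of $\Omega^k((u))$ are series $\sum_{j \ge j_0} u^j \theta_j$, and each $\theta_j$ expands uniquely in the finite basis above. Collecting coefficients then identifies $\Omega^k((u))$ with $S^{\,p^n\binom{n}{k}}$. This works because the basis is finite, so $R((u)) \otimes_R R^N = R((u))^N$ with no completion subtleties. I expect no real obstacle; the only thing needing care is the termwise argument that $d$ kills $p$-th power series.
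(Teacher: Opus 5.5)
Your proposal is correct and matches what the paper intends. The paper calls the lemma obvious and gives no proof, and your two checks supply exactly the omitted details. First, $d$ kills $F[[x_1^p,\dots,x_n^p]]$ in characteristic $p$, so $ud-dW$ is linear over $F[[x_1^p,\dots,x_n^p]]((u))$. Second, the forms $x^a\,dx_{i_1}\wedge\cdots\wedge dx_{i_k}$ with $0 \le a_i \le p-1$ give a finite basis, and this basis persists after passing to Laurent series in $u$. This finite freeness is what the paper later relies on to make completion exact in Lemma \ref{th:completion}(ii).
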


This is obvious (but important).

\begin{lemma} \label{th:u-p-curvature} 
The $p$-curvature of \eqref{eq:u-connection} is multiplication with $u^{-2p}W^p$.
\end{lemma}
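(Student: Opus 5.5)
The plan is to apply Lemma \ref{th:scalar-p-curvature} in the ring of $F$-linear endomorphisms of $\Omega^*((u))$, with $\partial = \partial_u$ and $a = u^{-2}W - \frac{u^{-1}}{2}\mathit{Gr}$. This ring has characteristic $p$, so the ``mod $p$'' ambiguity in \eqref{eq:scalar-p-curvature} disappears and we get an exact identity.

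The first step is to check that $a^{(0)},\dots,a^{(p-1)}$ mutually commute. The operator $a$ is multiplication by a function of $(x,u)$ plus a function of $u$ times $\mathit{Gr}$. Since $W$ is a function, multiplication by it preserves form degree, so it commutes with $\mathit{Gr}$. Taking commutators with $\partial_u$ only differentiates the $u$-dependent coefficients. Hence each $a^{(k)}$ has the form $g_k(u)W + h_k(u)\mathit{Gr}$, and all of these commute with each other.

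Next we compute the two terms appearing in \eqref{eq:scalar-p-curvature}.
\begin{itemize}
\item For $a^p$: since $W$ commutes with $\mathit{Gr}$, the Frobenius identity gives $a^p = u^{-2p}W^p - \frac{u^{-p}}{2^p}\mathit{Gr}^p$. Because $\mathit{Gr}$ acts by integers $k$ and $k^p = k$ in $F$, and $2^p = 2$ in $F$, this becomes $u^{-2p}W^p - \frac{u^{-p}}{2}\mathit{Gr}$.
\item For $a^{(p-1)}$: this is $\partial_u^{p-1}$ applied to the coefficients. We have $\partial_u^{p-1}u^{-2} = (-2)(-3)\cdots(-p)\,u^{-p-1} = (-1)^{p-1}p!\,u^{-p-1}$, which vanishes in characteristic $p$. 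We also have $\partial_u^{p-1}u^{-1} = (-1)^{p-1}(p-1)!\,u^{-p} = -u^{-p}$, using Wilson's theorem and the fact that $p$ is odd. So $a^{(p-1)} = \frac{u^{-p}}{2}\mathit{Gr}$.
\end{itemize}

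Finally, $\partial_u^p$ kills $F((u))$ coefficients in characteristic $p$, because $\partial_u^p u^m = m(m-1)\cdots(m-p+1)u^{m-p}$ and this product of $p$ consecutive integers is divisible by $p$. Adding the three contributions, the two $\mathit{Gr}$ terms cancel and we obtain $\nabla_{\partial_u}^p = u^{-2p}W^p$ on $\Omega^*((u))$. This identity then descends to the cohomology connection \eqref{eq:u-connection-2}.

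The main point needing care is the sign bookkeeping in $a^{(p-1)}$ and $a^p$, since the cancellation of the $\mathit{Gr}$ terms depends on Wilson's theorem giving exactly $-1$. Apart from that, the argument is routine.
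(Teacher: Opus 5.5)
Your proposal is correct and matches the paper's own proof: both apply Lemma~\ref{th:scalar-p-curvature} with $\partial = \partial_u$ and $a = u^{-2}W - \frac{u^{-1}}{2}\mathit{Gr}$. In both, $\partial_u^p$ and the $p!\,u^{-p-1}W$ part of $a^{(p-1)}$ vanish, and the remaining $\mathit{Gr}$-term cancels the one in $a^p$ (via Wilson's theorem, $2^p = 2$ and $k^p = k$), leaving $u^{-2p}W^p$.
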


\begin{proof} (See \cite[Equations (C.5) and (C.6)]{chen24c}.) Write the connection as $\partial + a$, where $\partial = \partial_u$ and $a = u^{-2}W - u^{-1}\mathit{gr/2}$, seen as elements of the algebra of $F$-linear endomorphisms of $\Omega^*((u))$. In the notation of Lemma \ref{th:scalar-p-curvature}, $a^{(k)} = \partial_u^k(a)$ are the derivatives, and these pairwise commute. One therefore gets 
\begin{equation}
\nabla_{\partial_u}^p = \textstyle \partial_u^p + (u^{-2}W - \frac{u^{-1}}{2} \mathit{Gr})^p
+ (p! u^{-p-1}W - \frac{(p-1)! u^{-p}}{2} \mathit{Gr}) = u^{-2p}W^p.
\end{equation}
\end{proof}

\begin{lemma}\label{lem:vanishing} 
The elements $(\partial_{x_i} W)^p \in F[[x_1^p,\dots,x_n^p]]$ act trivially on $H^*(\Omega^*((u)), ud-dW)$.  
\end{lemma}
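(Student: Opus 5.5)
The plan is to exhibit $(\partial_{x_i}W)^p$ as a null-homotopic endomorphism of the complex \eqref{eq:twisted-de-rham}, by combining a Cartan-type homotopy formula with Lemma \ref{th:scalar-p-curvature}. Write $\delta = ud - dW$ for the differential. Let $\iota_i$ be contraction with the vector field $\partial_{x_i}$, and let $L_i$ be the Lie derivative along $\partial_{x_i}$. Concretely, $L_i$ acts on a form $g\, dx_{j_1}\wedge\cdots\wedge dx_{j_k}$ by differentiating the coefficient, $g \mapsto \partial_{x_i} g$.

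\emph{Step 1: a homotopy.} I will check the graded commutator identity
\begin{equation}
\delta\,\iota_i + \iota_i\,\delta = u L_i - (\partial_{x_i}W).
\end{equation}
This is Cartan's formula $d\iota_i + \iota_i d = L_i$, together with the Koszul identity $(dW\wedge)\iota_i + \iota_i (dW\wedge) = \partial_{x_i}W$. Set $D_i = uL_i - \partial_{x_i}W$. Since $D_i = [\delta,\iota_i]$ and $\delta^2=0$, $D_i$ commutes with $\delta$. It follows that every power of $D_i$ is null-homotopic: $D_i^k = [\delta, \iota_i D_i^{k-1}]$, because $D_i^{k-1}$ commutes with $\delta$. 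In particular $D_i^p$ acts by zero on cohomology.

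\emph{Step 2: computing $D_i^p$ in characteristic $p$.} Apply Lemma \ref{th:scalar-p-curvature} in the ring of $F((u))$-linear endomorphisms of $\Omega^*((u))$, with $\partial = uL_i$ and $a = -\partial_{x_i}W$ (multiplication operator). The iterated commutators are
\begin{equation}
a^{(k)} = -u^k\, \partial_{x_i}^{k+1}W,
\end{equation}
again multiplication operators by functions, so they mutually commute and the lemma applies; since $F$ has characteristic $p$, the ``mod $p$'' error vanishes. This gives
\begin{equation}
D_i^p = u^p L_i^p - u^{p-1}(\partial_{x_i}^p W) + (-1)^p (\partial_{x_i}W)^p.
\end{equation}
In characteristic $p$ one has $\partial_{x_i}^p = 0$ on $F[[x_1,\dots,x_n]]$, since it kills every monomial $x^m$ (the coefficient contains $m(m-1)\cdots(m-p+1)\equiv 0$). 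Hence $L_i^p = 0$ and $\partial_{x_i}^pW = 0$. As $p$ is odd, this leaves $D_i^p = -(\partial_{x_i}W)^p$.

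Combining the two steps, $(\partial_{x_i}W)^p = -D_i^p$ is null-homotopic, hence acts trivially on $H^*(\Omega^*((u)),ud-dW)$. Note that $(\partial_{x_i}W)^p$ lies in $F[[x_1^p,\dots,x_n^p]]$, consistent with Lemma \ref{th:spectral-sequence}.

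The only delicate point is Step 1. One must get the signs right in the super-commutator, with $\iota_i$ odd and $dW\wedge$ odd. The identity $[\delta,D_i]=0$ follows formally from $\delta^2=0$ regardless of the sign convention, so a sign error there would at worst change $D_i$ to $uL_i + \partial_{x_i}W$. That variant leads to the same conclusion, since Lemma \ref{th:scalar-p-curvature} then yields $D_i^p = +(\partial_{x_i}W)^p$.
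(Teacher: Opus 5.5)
Your proof is correct and is essentially the paper's own argument. The Cartan homotopy formula shows that $u\partial_{x_i}-\partial_{x_i}W$ is nullhomotopic, and Lemma \ref{th:scalar-p-curvature} identifies its $p$-th power with $-(\partial_{x_i}W)^p$; your explicit homotopy $\iota_i D_i^{p-1}$ simply spells out the step ``therefore, so is its $p$-th power'' that the paper leaves implicit.
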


\begin{proof} 
(See \cite[Equations (C.7) and (C.8)]{chen24c}.) Let $\iota_{\partial_{x_i}}$ denote contraction with the coordinate vector field $\partial_{x_i}$. The Cartan homotopy formula gives
\begin{equation}\label{eq:cartan-homotopy}
(ud-dW)\iota_{\partial_{x_i}}+\iota_{\partial_{x_i}}(ud-dW)
= u\partial_{x_i}-\partial_{x_{i}}W. 
\end{equation}
Write this as $\partial+a$, where $\partial = u\partial_{x_i}$ and $a = -\partial_{x_i}W$. Again in terms of Lemma \ref{th:scalar-p-curvature}, $a^{(k)} = u^k\partial_{x_i}^k(a) = -u^k\partial_{x_i}^{k+1}W$, and these mutually commute. Therefore
\begin{equation} \label{eq:p-th-power-of-l}
(u\partial_{x_i} - \partial_{x_i}W)^p = (u\partial_{x_i})^p 
- (\partial_{x_i}W)^p
- u^{p-1}\partial_{x_i}^pW = - (\partial_{x_i}W)^p.
\end{equation}
From \eqref{eq:cartan-homotopy} we know that $u\partial_{x_i} - \partial_{x_i}W$ is nullhomotopic; therefore, so is \eqref{eq:p-th-power-of-l}.
\end{proof}

Define
\begin{equation}
C^{-1}(\mathit{vol}) = d(x_1^p/p) \wedge \cdots \wedge d(x_n^p/p) = (x_1^{p-1} \cdots x_n^{p-1})
\, \mathit{dx}_1 \wedge \cdots \wedge \mathit{dx}_n,
\end{equation}
where the notation follows \cite{cartier57}. By Lemma \ref{lem:vanishing}, there is a well-defined map
\begin{equation} \label{eq:explicit-cartier}
\frac{F[[x_1^p,\dots,x_n^p]]((u))}{((\partial_{x_{1}}W)^p,\dots,(\partial_{x_{n}}W)^p)}
\longrightarrow H^n\bigl(\Omega^*((u)), ud-dW \bigr), \quad
[f] \longmapsto [f\, C^{-1}(\mathit{vol})].
\end{equation}

\begin{proposition} \label{th:explicit-cartier}
The map \eqref{eq:explicit-cartier} is an isomorphism.
\end{proposition}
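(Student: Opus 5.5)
The plan is to prove that \eqref{eq:explicit-cartier} is surjective, and then get injectivity from a dimension count. Set $R' = F[[x_1^p,\dots,x_n^p]]$. The source of \eqref{eq:explicit-cartier} is the base change to $F((u))$ of $R'/((\partial_{x_1}W)^p,\dots,(\partial_{x_n}W)^p)$. Under the isomorphism $F[[x_1,\dots,x_n]] \iso R'$, $x_i \mapsto x_i^p$, this ring is the Frobenius twist of $\mathit{Jac}(W)$ (apply $\mathit{Frob}$ to the coefficients of $\partial_{x_i}W$). Hence it has $F$-dimension $\dim_F \mathit{Jac}(W)$, and it is still finite-dimensional by \eqref{eq:high-power}. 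By Lemma \ref{th:u-filtration}, the target has the same $F((u))$-dimension. So surjectivity of \eqref{eq:explicit-cartier} is enough, since a surjection between $F((u))$-vector spaces of the same finite dimension is an isomorphism.

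For surjectivity, I would view the twisted de Rham complex as a complex of finite free $R'((u))$-modules (Lemma \ref{th:spectral-sequence}). It is the Koszul complex of the mutually commuting, $R'$-linear operators
\begin{equation*}
L_i = u\partial_{x_i} - \partial_{x_i}W
\end{equation*}
acting on $F[[x_1,\dots,x_n]]((u))$. Since $H^n$ is the top cohomology, it equals the cokernel of $\sum_i L_i$, and forming it commutes with base change along $R' \to R'/\frakm'$, where $\frakm' = (x_1^p,\dots,x_n^p)$. The reduced problem lives on $A = F[x_1,\dots,x_n]/(x_1^p,\dots,x_n^p)$, with coefficients in $F((u))$. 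There I claim that $A((u))/\sum_i L_i A((u))$ is at most one-dimensional, spanned by the class of $x_1^{p-1}\cdots x_n^{p-1}$, which is the reduction of $C^{-1}(\mathit{vol})$. To prove the claim, filter $A$ by polynomial degree. Since $W(0) = 0$ and $dW(0) = 0$, the term $\partial_{x_i}W$ raises degree by at least $1$, while $u\partial_{x_i}$ lowers it by exactly $1$. On the associated graded, the $L_i$ therefore become $u\partial_{x_i}$, whose joint cokernel on the truncated polynomial ring is spanned by the top monomial (here $\mathrm{char}\,F = p$ is essential: $\partial_{x_i}$ kills exactly $x_i^{p-1}$ in the top slot). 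A standard descending induction on degree lifts this from the associated graded to $A((u))$.

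The remaining step is the one I expect to be the main obstacle: passing from ``$C^{-1}(\mathit{vol})$ generates $H^n$ modulo $\frakm'$'' to ``$C^{-1}(\mathit{vol})$ generates $H^n$ over $R'((u))$''. A naive Nakayama argument fails because $\frakm'$ is not in the Jacobson radical of $R'((u))$; for instance $1 - x_1^p u^{-1}$ is not invertible there. I would first try to run the whole argument on the lattice $(\Omega^*[[u]], ud-dW)$ over the complete local ring $R'[[u]]$, where Nakayama does apply. This needs $H^n$ of the lattice to be finitely generated over $R'[[u]]$, which holds because the complex is finite free over that Noetherian ring. One then reduces modulo $(\frakm',u)$ and repeats the filtration argument above (now working modulo $u$, with the roles adjusted), and finally inverts $u$. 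Should the $u=0$ reduction degenerate, the fallback is to use instead the $x$-adic grading with $\deg u$ chosen so that $ud-dW$ becomes filtered, and to apply a filtered Nakayama argument. Either route yields surjectivity, and together with the dimension count above this proves the proposition.
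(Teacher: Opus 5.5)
Your dimension count is correct, and so is your computation modulo $\frakm' = (x_1^p,\dots,x_n^p)$, which closely parallels the triangularity computation \eqref{eq:triangular-term} in the paper. The gap is the lifting step that you flag yourself, and the route you propose for it cannot work.

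For the lattice $H^n(\Omega^*[[u]],ud-dW)$, reduction modulo $(\frakm',u)$ is the cokernel of multiplication by the $\partial_{x_i}W$ on $F[x_1,\dots,x_n]/(x_1^p,\dots,x_n^p)$. That cokernel is the nonzero local ring $F[x_1,\dots,x_n]/(x_1^p,\dots,x_n^p,\partial_{x_1}W,\dots,\partial_{x_n}W)$. The class of $x_1^{p-1}\cdots x_n^{p-1}$ lies in its maximal ideal, so it cannot span this quotient. Hence $[C^{-1}(\mathit{vol})]$ never generates the lattice over $R'[[u]]$. The Remark following the statement of Proposition \ref{th:explicit-cartier} shows this explicitly for $W = x^2/2$, where $[x^{p-1}dx]$ is a unit multiple of $u^{(p-1)/2}[dx]$.

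So the $u=0$ reduction always degenerates, and no adjustment of roles modulo $u$ repairs it. Your fallback, a filtered Nakayama argument for a weighted $x$-adic filtration, is not specified. Any such argument needs the filtration to be complete, and on $F[[x_1,\dots,x_n]]((u))$ it is not; this is the same phenomenon as your example $1-x_1^pu^{-1}$.

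The missing idea is to apply Lemma \ref{lem:vanishing} to the target. Taking $p$-th powers in \eqref{eq:high-power} gives $(\frakm')^N \subset ((\partial_{x_1}W)^p,\dots,(\partial_{x_n}W)^p)$ in $R'$. Therefore $(\frakm')^N$ acts by zero on $H^n(\Omega^*((u)),ud-dW)$. Let $Q$ be the cokernel of \eqref{eq:explicit-cartier}. Your computation modulo $\frakm'$, together with right exactness of base change, gives $Q = \frakm' Q$. Hence $Q = (\frakm')^N Q = 0$; Nakayama for a nilpotent ideal needs no Jacobson radical hypothesis. With this line added, your argument is complete: surjectivity plus your dimension count gives the isomorphism.

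This repaired route differs from the paper's. The paper uses the same nilpotence in Lemma \ref{th:completion}, to show that passing to the completion $F((u))[[x_1^p,\dots,x_n^p]]$ changes neither side of the map. Over that local ring it shows by Nakayama that \eqref{eq:cyclic-vector-map} identifies $F[[x_1,\dots,x_n]]((u))^\wedge$ with the algebra \eqref{eq:auxiliary-algebra}, so that $x_1^{p-1}\cdots x_n^{p-1}$ is a cyclic vector for the operators $D_i$. It then reads off the isomorphism directly, injectivity included. Your version avoids completions and the cyclic-vector structure, at the cost of relying on the dimension count from Lemma \ref{th:u-filtration}.
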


We postpone the proof for now.

\begin{remark}
Unlike the computation from Lemma \ref{th:spectral-sequence}, there is no version of Proposition \ref{th:explicit-cartier} without inverting $u$, as the following (simplest) example shows. Take $n = 1$ and $W = x^2/2$. In $H^1(\Omega^*[[u]],ud-dW)$, one has
\begin{equation}
[x^{m+1} \mathit{dx}]=m u[x^{m-1}\mathit{dx}]
\end{equation}
and consequently,
\begin{equation}
[x^{p-1}dx]
=(p-2)(p-4)\cdots 1\,
 u^{(p-1)/2}[dx].
\end{equation}
The class $[dx]$ is not divisible by $u$, since its reduction modulo
$u$ is the nonzero generator of $\mathit{Jac}(W)\,dx$ (see the proof of Lemma \ref{th:u-filtration}).  Therefore, $[x^{p-1}dx]$ does not generate $H^1(\Omega^*[[u]],ud-dW)$ over $F[[u]]$; it does once we pass to $F((u))$.
\end{remark}

\begin{corollary} \label{th:w-conjugacy}
The $p$-curvature of \eqref{eq:u-connection-2} belongs to the same nilpotent conjugacy class as the action of $W$ on $\mathit{Jac}(W)$ (extended $u$-linearly).
\end{corollary}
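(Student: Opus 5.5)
Assuming Proposition \ref{th:explicit-cartier}, the plan is to compute the $p$-curvature on the explicit model \eqref{eq:explicit-cartier} and then transport it along a Frobenius-type isomorphism to the action of $W$ on $\mathit{Jac}(W)$.

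First, the $p$-curvature. By Lemma \ref{th:u-p-curvature}, $\nabla_{\partial_u}^p$ on $H^n(\Omega^*((u)),ud-dW)$ is multiplication by $u^{-2p}W^p$. Since $u^{-2p}$ is an invertible scalar in $F((u))$, it does not change the nilpotent conjugacy class. So it suffices to identify the conjugacy class of multiplication by $W^p$.

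Second, the transport. By Proposition \ref{th:explicit-cartier}, $H^n$ is isomorphic to
\[
J_p \stackrel{\mathrm{def}}{=} F[[x_1^p,\dots,x_n^p]]((u))\big/\big((\partial_{x_1}W)^p,\dots,(\partial_{x_n}W)^p\big).
\]
Because $W^p$ lies in $F[[x_1^p,\dots,x_n^p]]$, multiplication by $W^p$ on $H^n$ corresponds under \eqref{eq:explicit-cartier} to multiplication by $W^p$ on $J_p$: we have $W^p \cdot f\,C^{-1}(\mathit{vol}) = (W^p f)\,C^{-1}(\mathit{vol})$, and this is compatible with the action of $F[[x^p]]((u))$ on the complex (Lemma \ref{th:spectral-sequence}). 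Now consider the ring map $F[[x_1,\dots,x_n]] \to F[[x_1^p,\dots,x_n^p]]$ defined by $g \mapsto g^p$. Concretely, it sends $x_i \mapsto x_i^p$ and applies $\mathit{Frob}$ to the coefficients. It is an isomorphism of rings; it is only Frobenius-semilinear over $F$, and $F$-linear when $F=\bF_p$. It carries $\partial_{x_i}W$ to $(\partial_{x_i}W)^p$ and $W$ to $W^p$. Hence it induces a Frobenius-semilinear ring isomorphism $\mathit{Jac}(W) \to J_p/u$ under which multiplication by $W$ becomes multiplication by $W^p$.

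Third, comparison of conjugacy classes. The conjugacy class of a nilpotent operator is determined by the dimensions of the kernels of its powers, and a semilinear bijection preserves these dimensions. Therefore multiplication by $W^p$ on $J_p \otimes F((u))$ (which is free over $F((u))$ by construction) has the same Jordan type as $W$ acting on $\mathit{Jac}(W)\otimes F((u))$. This is the same Frobenius comparison used in Corollary \ref{th:conjugacy}(ii).

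The main care needed is in the second step. One must check that \eqref{eq:explicit-cartier} is an isomorphism of $F[[x^p]]((u))$-modules, so that it intertwines $W^p$ on both sides, and keep track of the semilinearity. The remaining steps are formal once Proposition \ref{th:explicit-cartier} is in hand.
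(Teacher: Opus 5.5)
Your overall route is the paper's. You reduce via Lemma \ref{th:u-p-curvature} to multiplication by $W^p$, using that $u^{-2p}$ is an invertible scalar. You transport this through Proposition \ref{th:explicit-cartier}, which is visibly $F[[x_1^p,\dots,x_n^p]]((u))$-linear. You then compare $W^p$ on $F[[x_1^p,\dots,x_n^p]]/((\partial_{x_1}W)^p,\dots,(\partial_{x_n}W)^p)$ with $W$ on $\mathit{Jac}(W)$ via Frobenius.

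The one step that fails as written is the claim that $g \mapsto g^p$ is a ring isomorphism $F[[x_1,\dots,x_n]] \to F[[x_1^p,\dots,x_n^p]]$. In this section $F$ is an arbitrary field of characteristic $p>2$. Unless $F$ is perfect, the image consists only of series with coefficients in $F^p$ (take $F = \bF_p(t)$). So the induced semilinear map of Jacobian rings is no longer a bijection in general, and your ``semilinear bijections preserve kernel dimensions'' step does not apply.

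The repair is to linearize, which is how the paper phrases it. The induced $F$-linear map $\mathit{Jac}(W)^{(1)} = \mathit{Jac}(W) \otimes_{\mathit{Frob}} F \to F[[x_1^p,\dots,x_n^p]]/((\partial_{x_i}W)^p)$ is an isomorphism. This can be checked in finite-dimensional truncations, since $(\partial_{x_i}W)$ contains a power of the maximal ideal. This map intertwines $W \otimes \mathit{id}_F$ with $W^p$. Base change along $\mathit{Frob}$ preserves the dimensions of the kernels of powers, so $W \otimes \mathit{id}_F$ has the same Jordan type as $W$. For perfect $F$, in particular the finite fields used in the proof of Theorem \ref{th:varchenko}, your version is fine as it stands.

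Separately, your ``$J_p/u$'' should be the $u$-free quotient $F[[x_1^p,\dots,x_n^p]]/((\partial_{x_i}W)^p)$, since $u$ is invertible in $J_p$. $J_p$ is then its extension of scalars to $F((u))$.
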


\begin{proof} 
As a first step, take the Frobenius $\mathit{Frob}: F[[x_1,\dots,x_n]] \rightarrow F[[x_1^p,\dots,x_n^p]]$. The induced map of Jacobian rings fits into a diagram of $F$-vector spaces
\begin{equation} \label{eq:nilpotent-frobenius}
\xymatrix{
\mathit{Jac}(W)^{(1)} 
\ar[r]^-{\mathit{Frob}}_-{\iso} \ar[d]_-{W^{(1)}} & 
\frac{F[[x_1^p,\dots,x_n^p]]}{((\partial_{x_1}W)^p,\dots,(\partial_{x_n}W)^p)} \ar[d]^-{W^p}
\\
\mathit{Jac}(W)^{(1)} \ar[r]^-{\mathit{Frob}}_-{\iso} & 
\frac{F[[x_1^p,\dots,x_n^p]]}{((\partial_{x_1}W)^p,\dots,(\partial_{x_n}W)^p)}
}
\end{equation}
Here, $\mathit{Jac}(W)^{(1)} = \mathit{Jac}(W) \otimes_{\mathit{Frob}} F$ and $W^{(1)} = W \otimes_{\mathit{Frob}} \mathit{id}_F$. In the notation from Section \ref{subsec:nilpotent-2}, the nilpotent endomorphism $W$ of $\mathit{Jac}(W)$ belongs to the conjugacy class $\scrO_\pi(F)$ for some partition $\pi$. The left $\downarrow$ in \eqref{eq:nilpotent-frobenius} belongs to the same class (think in terms of Jordan normal forms, or in terms of the nullspaces of its iterates), and therefore, so does the right $\downarrow$.

The second step is to extend the last-mentioned map $u$-linearly, and appeal to Proposition \ref{th:explicit-cartier}, which gives a diagram
\begin{equation} \label{eq:explicit-cartier-diagram}
\xymatrix{
\frac{F[[x_1^p,\dots,x_n^p]]((u))}{((\partial_{x_1}W)^p,\dots,(\partial_{x_n}W)^p)}
\ar[r]^-{\eqref{eq:explicit-cartier}} \ar[d]_{W^p} &
H^n(\Omega^*((u)),ud-dW) \ar[d]^{W^p}
\\
\frac{F[[x_1^p,\dots,x_n^p]]((u))}{((\partial_{x_1}W)^p,\dots,(\partial_{x_n}W)^p)}
\ar[r]^-{\eqref{eq:explicit-cartier}} &
H^n(\Omega^*((u)),ud-dW).
}
\end{equation}
The $p$-curvature is $u^{-2p}W^p$, by Lemma \ref{th:u-p-curvature}. From \eqref{eq:explicit-cartier-diagram} we know that this is nilpotent, and multiplying it by the invertible element $u^{2p}$ does not change its conjugacy class. It follows that the $p$-curvature lies in $\scrO_\pi(F((u)))$.
\end{proof}

\subsection{Proof of Proposition \ref{th:explicit-cartier}}
Write $F[[x_1,\dots,x_n]]((u))^\wedge$ and $F[[x_1^p,\dots,x_n^p]]((u))^\wedge$ for the completions with respect to the ideal $(x_1,\dots,x_n)$ respectively $(x_1^p,\dots,x_n^p)$. In both cases, elements of the completion have the form
\begin{equation} \label{eq:completed-f}
\sum_{j=-\infty}^{\infty} f_j(x_1,\dots,x_n) u^j
\end{equation}
such that as $j \rightarrow -\infty$, the $f_j$ vanish to higher and higher order. (These spaces could equivalently be written as $F((u))[[x_1,\dots,x_n]]$, meaning power series in $x_1,\dots,x_n$ with coefficients in $F((u))$, respectively $F((u))[[x_1^p,\dots,x_n^p]]$; but that is maybe less intuitive from our viewpoint.) The same completion can be applied to differential forms, yielding $\Omega^*((u))^\wedge$, which inherits the differential $ud-dW$.

\begin{lemma} \label{th:completion}
(i) The inclusion into the completion induces isomorphisms
\begin{align} \label{eq:include-into-completion-1}
& \frac{F[[x_1,\dots,x_n]]((u))}{(\partial_{x_1}W,\dots,\partial_{x_n}W)}
\longrightarrow
\frac{F[[x_1,\dots,x_n]]((u))^\wedge}{(\partial_{x_1}W,\dots,\partial_{x_n}W)},
\\ \label{eq:include-into-completion-2}
& \frac{F[[x_1^p,\dots,x_n^p]]((u))}{((\partial_{x_1}W)^p,\dots,(\partial_{x_n}W)^p)}
\longrightarrow
\frac{F[[x_1^p,\dots,x_n^p]]((u))^\wedge}{((\partial_{x_1}W)^p,\dots,(\partial_{x_n}W)^p)}.
\end{align}

(ii) Similarly, the following map is an isomorphism:
\begin{equation} \label{eq:include-into-completion-3}
H^*(\Omega^*((u)), ud-dW) \longrightarrow H^*(\Omega^*((u))^\wedge, ud-dW).
\end{equation}
\end{lemma}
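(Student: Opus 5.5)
The plan is to reduce everything to the statement that, modulo a large power of the maximal ideal, the completion does nothing, and then to use \eqref{eq:high-power} to see that such a power dies in the relevant quotients and cohomology.

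\emph{Part (i).} By \eqref{eq:high-power}, the ideal $(\partial_{x_1}W,\dots,\partial_{x_n}W)$ contains $(x_1,\dots,x_n)^N$, both in $F[[x_1,\dots,x_n]]((u))$ and in its completion.
\begin{itemize}
\item The first step is to show that $(x_1,\dots,x_n)^N$ times the completed ring is exactly the set of series \eqref{eq:completed-f} with all $f_j \in (x_1,\dots,x_n)^N$. Given such a series, decompose each $f_j = \sum_{|\alpha| = N} x^\alpha g_{\alpha,j}$ with $\mathrm{ord}(g_{\alpha,j}) \geq \mathrm{ord}(f_j) - N$. Then $\sum_j g_{\alpha,j}u^j$ again lies in the completion.
\item Consequently both sides of \eqref{eq:include-into-completion-1} are quotients of the same space $(F[[x_1,\dots,x_n]]/(x_1,\dots,x_n)^N)((u))$. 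Indeed, only finitely many negative powers of $u$ survive modulo $(x_1,\dots,x_n)^N$, because the $f_j$ vanish to increasing order as $j \to -\infty$.
\item Both sides are then quotients by the image of the same ideal, so the map is an isomorphism.
\item For \eqref{eq:include-into-completion-2}, the ideal $((\partial_{x_i}W)^p)$ is the image of the Frobenius-twisted Jacobian ideal under $x_i \mapsto x_i^p$. Hence it contains $(x_1^p,\dots,x_n^p)^N$, and the same argument applies with $x_i^p$ in place of $x_i$.
\end{itemize}

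\emph{Part (ii).} I would run a filtration argument in parallel with the proof of Lemma \ref{th:u-filtration}, now for the cokernel complex $Q = \Omega^*((u))^\wedge/\Omega^*((u))$, aiming to show that $Q$ is acyclic. Elements of $Q$ are represented by tails $\sum_{j \ll 0} \theta_j u^j$ in which the forms $\theta_j$ vanish to order tending to infinity.

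The key local step is the following approximation lemma. Let $\theta$ be a cocycle whose coefficients lie in $(x_1,\dots,x_n)^{N+k}$. Then, up to the differential $ud - dW$, $\theta$ can be replaced by $u$ times a form whose coefficients lie in $(x_1,\dots,x_n)^{k-1}$.
\begin{itemize}
\item To prove it, write $\theta = -dW \wedge \eta$ plus Koszul boundaries, using exactness of the Koszul complex $(\Omega^*,-dW)$ in degrees $<n$ and the containment $(x)^N \subset (\partial W)$ in degree $n$. This can be done with $\eta$ having coefficients in $(x)^k$.
\item Then $\theta - (ud - dW)\eta = -u\,d\eta$, and $d\eta$ has coefficients in $(x)^{k-1}$.
\end{itemize}
Iterating this trades $x$-adic order for powers of $u$. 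Applied to a tail term $\theta_j u^j$ with $\mathrm{ord}(\theta_j) = m(j) \to \infty$, about $m(j)/(N+1)$ iterations push it to $u$-degree $j + m(j)/(N+1)$. Once that degree is bounded below, the result lies in $\Omega^*((u))$. Summing the correction terms $\eta$ over $j$ converges in the completion, because their $x$-order still tends to infinity as $j \to -\infty$. This gives surjectivity of \eqref{eq:include-into-completion-3}, and acyclicity of $Q$ in degrees $<n$.

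Injectivity is the remaining point. I would get it by comparing dimensions. Repeat the Koszul/$u$-filtration computation of Lemma \ref{th:u-filtration} over the field $F((u))$, using the identification $\Omega^*((u))^\wedge \cong \Omega^*_{F((u))[[x]]}$ and the rescaled differential $u(d - d(W/u))$. This should show that the completed $H^n$ has $F((u))$-dimension at most $\dim_F \mathit{Jac}(W)$, so a surjection between spaces of equal finite dimension is an isomorphism.

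The main obstacle I expect is this last convergence and bookkeeping. The tail iteration must be made uniform in $j$ so that the infinite sum of homotopies genuinely lies in $\Omega^*((u))^\wedge$. In addition, the dimension bound for the completed complex cannot simply quote Lemma \ref{th:u-filtration}, since the $u$-adic filtration is not obviously complete there. If that bound proves awkward, the fallback is to prove injectivity directly by the same approximation lemma, applied to a primitive in the completion of a cocycle from $\Omega^*((u))$.
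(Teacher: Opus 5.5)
\textbf{Part (i).} This part is fine and is essentially the paper's argument. Reducing both sides to $(F[[x_1,\dots,x_n]]/(x_1,\dots,x_n)^N)((u))$ modulo the image of the same ideal is a tidy repackaging of the paper's truncation argument for surjectivity and injectivity. The Frobenius argument for the $p$-th power version is also right.

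\textbf{Part (ii).} Here there is a genuine gap: nothing in your argument uses $\mathrm{char}(F)=p$, but the statement is false in characteristic $0$. Two examples show this.
\begin{itemize}
\item In characteristic $0$, $e^{W/u}$ lies in $\Omega^0((u))^\wedge$ and is a nonzero cocycle. So $H^0$ of the completed complex is nonzero, while $H^0(\Omega^*((u)),ud-dW)=0$ by Lemma \ref{th:u-filtration}.
\item For $n=1$, $W=x^2/2$, the equation $uf'-xf=g$ is solved recursively by $f_{k+1}=(g_k+f_{k-1})/((k+1)u)$ in $F((u))[[x]]$. So $H^1$ of the completion vanishes, while $H^1(\Omega^*((u)),ud-dW)$ is one-dimensional.
\end{itemize}

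These examples show exactly where your steps fail.
\begin{itemize}
\item \emph{Degrees below $n$.} Your approximation lemma needs $\theta$ to be a Koszul cocycle. In degrees $<n$, the $u$-components of a $(ud-dW)$-cocycle only satisfy $dW\wedge\theta_j=d\theta_{j-1}$. This coupling is precisely what $e^{W/u}$ exploits, so acyclicity of $Q$ below degree $n$ does not follow.
\item \emph{Degree $n$.} Each step gains one power of $u$ at the cost of $N+1$ orders of vanishing. But $\mathrm{ord}(\theta_j)$ may grow arbitrarily slowly as $j\to-\infty$. So the term-by-term pushing need not land in $\Omega^*((u))$, and the accumulated corrections at a fixed power of $u$ need not converge. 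This is not mere bookkeeping.
\item \emph{Injectivity.} The dimension count points the wrong way. A surjection from a space of dimension $\dim_F\mathit{Jac}(W)$ onto one of dimension \emph{at most} that need not be injective; the $W=x^2/2$ example satisfies your bound and fails injectivity. You would need a lower bound, and that again requires characteristic $p$ input. Your fallback, the approximation lemma applied to primitives, is characteristic-free for the same reason, so it cannot work either.
\end{itemize}

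\textbf{The missing idea.} The paper proceeds as follows.
\begin{itemize}
\item Since $d(x_i^p)=0$, the differential $ud-dW$ is linear over the Noetherian ring $S=F[[x_1^p,\dots,x_n^p]]((u))$.
\item $\Omega^*((u))$ is a complex of finite free $S$-modules (Lemma \ref{th:spectral-sequence}).
\item The completion in question is the $I$-adic completion for $I=(x_1^p,\dots,x_n^p)$. This is exact on finitely generated $S$-modules, so $H^*(\Omega^*((u))^\wedge,ud-dW)\cong H^*(\Omega^*((u)),ud-dW)^\wedge$.
\item By Lemma \ref{lem:vanishing}, the $(\partial_{x_i}W)^p$ act by zero on cohomology. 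By the $p$-th power containment from (i), so does a power of $I$.
\item Completion does nothing to an $I$-power-torsion module, which gives the isomorphism.
\end{itemize}
The characteristic-$p$ input is exactly Lemma \ref{lem:vanishing} together with the $S$-linearity; without them the statement cannot be proved.
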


\begin{proof}
(i) Recall from \eqref{eq:high-power} that any power series which vanishes to sufficiently high order lies in the ideal $(\partial_{x_1}W,\dots,\partial_{x_n}W)$. More precisely: there is some $N$ such that if $f \in F[[x_1,\dots,x_n]]$ vanishes to order $k_1+\cdots+k_n \geq N$, then it can be written as $g_1 \partial_{x_1}W + \cdots + g_n \partial_{x_n}W$, where $g_1,\dots,g_n$ vanish to order $k_1+\cdots+k_n-N$. This means that, if in a series \eqref{eq:completed-f} each term vanishes to order $N$, then that series lies in $(\partial_{x_1}W,\dots,\partial_{x_n}W) \subset F[[x_1,\dots,x_n]]((u))^\wedge$. Modulo that ideal, we can therefore truncate any series to one with only finitely powers of $u^{-1}$, showing that \eqref{eq:include-into-completion-1} is onto.

Next, suppose that $g \in F[[x_1,\dots,x_n]]((u))$ maps to zero under \eqref{eq:include-into-completion-1}, hence can be written as $g_1 \partial_{x_1}W + \cdots + g_n\partial_{x_n}W$ for $g_1,\dots,g_n \in F[[x_1,\dots,x_n]]((u))^\wedge$. One can truncate the $g_i$ to some sufficiently high order in $u^{-1}$, such that the truncations $g_i^{\mathrm{trunc}} \in F[[x_1,\dots,x_n]]((u))$ have the property that $g - g_1^{\mathrm{trunc}}\partial_{x_1}W - \cdots - g_n^{\mathrm{trunc}}\partial_{x_n}W$ vanishes to order $N$. This shows that $g$ is zero in $F[[x_1,\dots,x_n]]((u))/(\partial_{x_1}W,\dots,\partial_{x_n}W)$, so the map \eqref{eq:include-into-completion-1} is injective. 

By taking $p$-th powers, one sees that any sufficiently high degree monomial in the $x_i^p$ belongs to $((\partial_{x_1}W)^p,\dots,(\partial_{x_n}W)^p) \subset F[[x_1^p,\dots,x_n^p]]$. Then, the same argument as before shows that \eqref{eq:include-into-completion-2} is an isomorphism.

(ii) Because $F[[x_1,\dots,x_n]]((u))$ is finite free over the Noetherian ring $F[[x_1^p,\dots,x_n^p]]((u))$, the completion is exact:
\begin{equation}\label{eq:completed-cohomology}
H^*\bigl(\Omega^*((u))^\wedge,ud-dW\bigr) \iso H^*\bigl(\Omega^*((u)),ud-dW)^\wedge.
\end{equation}
We have shown (Lemma \ref{lem:vanishing}) that $(\partial_{x_i}W)^p$ acts trivially on $H^*(\Omega^*((u)),ud-dW)$ cohomology. Hence, see (i), the same holds for any high degree monomial in the $x_i$, which means that completion does nothing on the right hand side of \eqref{eq:completed-cohomology}.
\end{proof}

\begin{remark}
The fact that \eqref{eq:include-into-completion-1} is an isomorphism holds in any characteristic. 
In contrast, \eqref{eq:include-into-completion-3} is not an isomorphism if the field $F$ has characteristic $0$: we would have
\begin{equation}
e^{W/u} \in F[[x_1,\dots,x_n]]((u))^\wedge, \;\; (ud-dW)e^{W/u} = 0 \;\; \Longrightarrow 
H^0(\Omega^*((u))^\wedge,ud-dW) \neq 0,
\end{equation}
in contrast with Lemma \ref{th:u-filtration}. This may remind one of the isomorphism between periodic and co-periodic cyclic homology \cite{kaledin}, which also only holds in positive characteristic.
\end{remark}

It will be useful for our computation to take a different point of view on $F[[x_1,\dots,x_n]]((u))^\wedge$. For that, consider
\begin{equation}
D_i = \partial_{x_i}-u^{-1}\partial_{x_{i}}W: F[[x_1,\dots,x_n]]((u))^\wedge
\longrightarrow F[[x_1,\dots,x_n]]((u))^\wedge.
\end{equation}
The $D_i$ are pairwise commuting $F[[x_1^p,\dots,x_n^p]]((u))^\wedge$-linear operators, and they satisfy (see the proof of Lemma \ref{lem:vanishing})
\begin{equation} \label{eq:d-power}
D_i^p = -u^{-p}(\partial_{x_{i}}W)^p.
\end{equation}
Introduce the auxiliary algebra 
\begin{equation}\label{eq:auxiliary-algebra}
A = \frac{(F[[x_1^p,\dots,x_n^p]]((u))^\wedge)[T_1,\dots,T_n]}
{(T_1^p+u^{-p}(\partial_{x_{1}}W)^p,\dots,T_n^p+u^{-p}(\partial_{x_{n}}W)^p)}.
\end{equation}
Because of \eqref{eq:d-power}, we have a well-defined $F[[x_1^p,\dots,x_n^p]]((u))^\wedge$-linear map
\begin{equation}\label{eq:cyclic-vector-map}
\begin{aligned}
& \Phi: A \longrightarrow F[[x_1,\dots,x_n]]((u))^\wedge, \\
& \Phi(T_1^{k_1}\cdots T_n^{k_n}) = D_1^{k_1}\cdots D_n^{k_n}(x_1^{p-1}\cdots x_n^{p-1}).
\end{aligned}
\end{equation}
The spaces on both sides of \eqref{eq:cyclic-vector-map} are free modules of rank $p^n$ over $F[[x_1^p,\dots,x_n^p]]((u))^\wedge$. 

\begin{lemma}
$\Phi$ is an isomorphism.
\end{lemma}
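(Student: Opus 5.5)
We prove that $\Phi$ is an isomorphism by reducing modulo the maximal ideal of the base ring and then lifting back. Both sides are free of rank $p^n$ over the base $B = F[[x_1^p,\dots,x_n^p]]((u))^\wedge$, so it suffices to show that $\Phi$ is an isomorphism after a suitable reduction, followed by a Nakayama-type lifting argument.

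\textbf{Filtration by $x$-adic order.} The first step is to note that $B$ is naturally $F((u))[[x_1^p,\dots,x_n^p]]$, a complete local ring with maximal ideal $\frakm_B = (x_1^p,\dots,x_n^p)$ and residue field $F((u))$. Since $\Phi$ is $B$-linear between finite free modules of the same rank, it is enough to show that the reduction $\Phi \otimes_B B/\frakm_B$ is an isomorphism. Then $\det\Phi$ is a unit modulo $\frakm_B$, hence a unit in the local ring $B$.

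\textbf{Computing the reductions.} Modulo $\frakm_B$, the target becomes $F((u))[x_1,\dots,x_n]/(x_1^p,\dots,x_n^p)$, which is a tensor product of one-variable pieces. The source becomes $F((u))[T_1,\dots,T_n]/(T_i^p + u^{-p}\overline{(\partial_{x_i}W)^p})$. Here $(\partial_{x_i}W)^p$ lies in $B$, and its reduction modulo $\frakm_B$ is its constant term $(\partial_{x_i}W(0))^p$. This constant term vanishes, because $0$ is a critical point of $W$ (this is part of the isolated critical point assumption). So the source reduces to $F((u))[T]/(T_i^p)$.

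\textbf{The reduced operators.} The operator $D_i = \partial_{x_i} - u^{-1}\partial_{x_i}W$ descends modulo $\frakm_B$, since it is $B$-linear. On the target it acts as $\partial_{x_i} - u^{-1}g_i$, where $g_i$ is the class of $\partial_{x_i}W$. Note that $g_i$ lies in the ideal $(x_1,\dots,x_n)$ of the truncated algebra, so multiplication by it is nilpotent.

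\textbf{Triangularity.} We grade the target by total $x$-degree, which is well defined on the truncated ring $F((u))[x]/(x^p)$. The operator $\partial_{x_i}$ lowers degree by exactly one, while multiplication by $g_i$ does not lower degree. Consequently
\[
D_1^{k_1}\cdots D_n^{k_n}(x_1^{p-1}\cdots x_n^{p-1}) = c\,x_1^{p-1-k_1}\cdots x_n^{p-1-k_n} + (\text{terms of higher total degree}),
\]
where $c = \prod_i (p-1)(p-2)\cdots(p-k_i)$. For $0 \le k_i \le p-1$ this constant is a nonzero element of $F$, since each factor lies strictly between $0$ and $p$.

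It follows that the matrix of the reduced map, from the monomial basis $T^k$ to the monomial basis $x^{p-1-k}$, is block triangular with respect to total degree, with invertible diagonal entries. Hence the reduction is an isomorphism, and by the first step so is $\Phi$.

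\textbf{Main obstacle.} The one point needing care is the higher-degree correction terms in the triangularity step. One must check that they are indeed of higher total degree, and that working in the truncated ring is legitimate. The latter holds because the reduction $\Phi \otimes_B B/\frakm_B$ is computed by applying $D^k$ and then reducing, as $\Phi$ is $B$-linear. Beyond this, the argument requires no completion subtleties, since Nakayama's lemma applies over the complete local ring $B$.
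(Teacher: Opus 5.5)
Your proposal is correct and follows the paper's proof. The leading term of $D_1^{k_1}\cdots D_n^{k_n}(x_1^{p-1}\cdots x_n^{p-1})$ is $\prod_i (p-1)\cdots(p-k_i)\,x_i^{p-1-k_i}$, with nonzero coefficient, and every other contribution has higher $x$-degree; so the reduction modulo $(x_1^p,\dots,x_n^p)$ is an isomorphism, and Nakayama's lemma over the local ring $F[[x_1^p,\dots,x_n^p]]((u))^\wedge$ then finishes the argument. The differences are only cosmetic: you reduce first and phrase the lifting step via the determinant, whereas the paper computes the leading term before reducing.
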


\begin{proof}
Take $k_1,\dots,k_n \in \{0,\dots,p-1\}$. We have
\begin{equation} \label{eq:triangular-term}
\begin{aligned}
& \Phi(T_1^{k_1}\cdots T_n^{k_n}) =
D_1^{k_1} \cdots D_n^{k_n} (x_1^{p-1}\cdots x_n^{p-1}) = \prod_{i=1}^n
(p-1)\cdots (p-k_i) x_i^{p-1-k_i} 
 \\ & \qquad \qquad + \text{\em (monomials in $(x_1,\dots,x_n)$ of higher degree, with $F[u^{-1}]$-coefficients).}
\end{aligned}
\end{equation}
The term we wrote down is obtained by choosing the derivative part of $D_i$ at every stage; if we choose the multiplication part instead, the degree becomes higher by at least two, since $\partial_{x_i}W$ vanishes at the origin.
Since the coefficient in \eqref{eq:triangular-term} is nonzero in $F$, the elements $\Phi(T_1^{k_1}\cdots T_n^{k_n})$ yield a basis of
$F[[x_1,\dots,x_n]]((u))^\wedge/(x_1^p,\dots,x_n^p)$. The reduction of $\Phi$ modulo that ideal is therefore an isomorphism. Since $F[[x_1^p,\dots,x_n^p]]((u))^\wedge$ is local and both sides of \eqref{eq:cyclic-vector-map} are finite free of the same rank,
Nakayama's Lemma implies that $\Phi$ is an isomorphism.
\end{proof}

We now return to our main topic. In view of Lemma \ref{th:completion}, Proposition \ref{th:explicit-cartier} is equivalent to the following:

\begin{proposition} \label{th:explicit-cartier-2}
The map
\begin{equation} \label{eq:explicit-cartier-2}
\frac{F[[x_1^p,\dots,x_n^p]]((u))^\wedge}{((\partial_{x_{1}}W)^p,\dots,(\partial_{x_{n}}W)^p)}
\longrightarrow H^n\bigl(\Omega^*((u))^\wedge, ud-dW \bigr), \quad
[f] \longmapsto [f\, C^{-1}(\mathit{vol})]
\end{equation}
is an isomorphism.
\end{proposition}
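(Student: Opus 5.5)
We will use the isomorphism $\Phi$ from \eqref{eq:cyclic-vector-map} to transport the twisted de Rham complex into a Koszul-type complex over the auxiliary algebra $A$. Write $R = F[[x_1^p,\dots,x_n^p]]((u))^\wedge$. The basic observation is that $ud - dW$ is obtained from $u\,d$ by conjugation with $e^{W/u}$. Concretely, on a form $g\,dx_{i_1}\wedge\cdots\wedge dx_{i_k}$ we have
\begin{equation*}
(ud-dW)(g\, dx_I) = u\sum_i D_i(g)\, dx_i \wedge dx_I .
\end{equation*}
Hence $(\Omega^*((u))^\wedge, ud-dW)$ is, up to the harmless factor $u$, the Koszul complex of the commuting $R$-linear operators $D_1,\dots,D_n$ acting on $F[[x_1,\dots,x_n]]((u))^\wedge$. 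Via $\Phi$, which intertwines left multiplication by $T_i$ on $A$ with $D_i$ (by construction, since the $D_i$ commute), this becomes the Koszul complex of multiplication by $T_1,\dots,T_n$ on the commutative ring $A$. Under this identification the element $1 \in A$ goes to $x_1^{p-1}\cdots x_n^{p-1}$, so that $f\cdot 1\, \mathit{vol}$ corresponds to $f\,C^{-1}(\mathit{vol})$.

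Next we compute top cohomology. The Koszul complex $K(T_1,\dots,T_n;A)$ has degree-$n$ cohomology $A/(T_1,\dots,T_n)$. Since $A = R[T_1,\dots,T_n]/(T_i^p + u^{-p}(\partial_{x_i}W)^p)$, setting all $T_i=0$ gives
\begin{equation*}
A/(T_1,\dots,T_n) \iso R/\big(u^{-p}(\partial_{x_1}W)^p,\dots,u^{-p}(\partial_{x_n}W)^p\big) = R/\big((\partial_{x_1}W)^p,\dots,(\partial_{x_n}W)^p\big),
\end{equation*}
because $u$ is invertible. Tracing the class of $f\in R$ through $\Phi$ shows that this isomorphism is exactly the map \eqref{eq:explicit-cartier-2}.

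One point needs care: the signs and order conventions in the Koszul differential. The wedge $dx_i\wedge dx_I$ introduces signs, but these are the standard Koszul signs and do not affect the cokernel in top degree. We should also check that $\Phi$ is a module map in the appropriate sense, i.e. $\Phi(T_i a) = D_i\Phi(a)$. This holds since $D_i$ commutes with $R$ and with the other $D_j$, and the relations in $A$ are respected by \eqref{eq:d-power}.

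The main subtlety is therefore not the top degree but ensuring that the identification is an isomorphism of complexes, not just of degree-zero terms. This follows because each term of the twisted de Rham complex is a direct sum of copies of $F[[x]]((u))^\wedge$, indexed by $dx_I$, and the Koszul complex over $A$ is the corresponding direct sum of copies of $A$, with $\Phi$ applied componentwise. Surjectivity onto $H^n$ and the exact form of the kernel then come directly from the Koszul computation; no regularity of the sequence $T_i$ is needed for top degree. As a consistency check against Lemma \ref{th:u-filtration}, the ranks agree, since $\dim_F R/((\partial W)^p) \otimes$ equals $\dim \mathit{Jac}(W)$ over $F((u))$ by flatness of Frobenius.
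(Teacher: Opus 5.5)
Your proposal is correct and is essentially the paper's argument. The paper likewise uses the isomorphism $\Phi$ of \eqref{eq:cyclic-vector-map}, which intertwines $T_i$ with $D_i$, to identify $H^n(\Omega^*((u))^\wedge, ud-dW)$ with $F[[x_1,\dots,x_n]]((u))^\wedge/(\mathit{im}(D_1)+\cdots+\mathit{im}(D_n)) \iso A/(T_1,\dots,T_n)$, and then notes that $[1]$ is sent to $[C^{-1}(\mathit{vol})]$; the only difference is that you identify the full Koszul complexes rather than just their top-degree cokernels, which is harmless but more than is needed.
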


\begin{proof}
We have 
\begin{equation}
\begin{aligned}
& \frac{F[[x_1^p,\dots,x_n^p]]((u))^\wedge}{
((\partial_{x_{1}}W)^p,\dots,(\partial_{x_{n}}W)^p)} \iso \frac{A}{(T_1,\dots,T_n)} \\
& \qquad \iso \frac{F[[x_1,\dots,x_n]]((u))^\wedge}{\mathit{im}(\mathit{D_1}) + \cdots + \mathit{im}(\mathit{D_n})} 
\iso H^n\bigl(\Omega^*((u))^\wedge, ud-dW).
\end{aligned}
\end{equation}
The first isomorphism is by definition \eqref{eq:auxiliary-algebra}; the second one is induced by \eqref{eq:cyclic-vector-map}; and the third is by definition of $ud-dW$ and the $D_i$. Under the combined isomorphism, the class $[1]$ is sent to $[C^{-1}(\mathit{vol})]$. Hence, that isomorphism is precisely \eqref{eq:explicit-cartier-2}.
\end{proof}

\subsection{Spreading out}
Our aim is the following version of the result from \cite{varchenko81}:

\begin{theorem} \label{th:varchenko}
Take $W \in \bC[[x_1,\dots,x_n]]$ with an isolated critical point at the origin. 
Let $\pi$ be the partition such that multiplication by $W$, acting on $\mathit{Jac}(W)$, lies in the nilpotent conjugacy class $\scrO_\pi(\bC)$. Then the connection \eqref{eq:u-connection-2} is equivalent over $\bC((u))$ to $\partial_u + u^{-1}A$, where the nilpotent part of $A$ lies in $\bar\scrO_\pi(\bC)$.
\end{theorem}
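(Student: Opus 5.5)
The plan is to spread out over a finitely generated ring, reduce modulo maximal ideals, and then invoke Proposition \ref{th:orbit-closure} in the variable $u$ in place of $q$.

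\emph{Reduction to a finitely generated ring.} Since $\mathit{Jac}(W)$ is finite-dimensional, \eqref{eq:high-power} holds for some $N$. Hence $W$ is right-equivalent, after a finite change of coordinates, to a polynomial. More directly, one can replace $W$ by its truncation $W^{\leq N+1}$, since by finite determinacy the twisted de Rham cohomology together with its connection depends only on a jet of $W$. I would rather avoid relying on that, so the concrete choice is the following. Choose $R \subset \bC$ finitely generated, containing $1/2$, such that:
\begin{itemize}
\item $W \in R[[x_1,\dots,x_n]]$ (possible if $W$ is polynomial, which we arrange by finite determinacy);
\item $\mathit{Jac}(W)_R$ is free over $R$, with $(x)^N \subset (\partial_{x_i}W)$ holding over $R$;
\item the matrix of $W$ acting on $\mathit{Jac}(W)_R$ has its minors satisfying exactly the same rank conditions \eqref{eq:minors} defining $\scrO_\pi$, as over $\bC$. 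This means equalities $\dim \ker W^i = d_i(\pi)$, achieved by inverting suitable nonzero minors.
\end{itemize}
Then the reductions mod every $\frakm$ still have the partition $\pi$, and in particular the Jacobian ring has constant dimension. Next, using the $u$-filtration argument from Lemma \ref{th:u-filtration} over $R$ (after inverting finitely many more elements so that the relevant modules are free), $H^n(\Omega^*_R((u)),ud-dW)$ is free over $R((u))$. Its formation commutes with base change to $\bC$ and to each $F = R/\frakm$, and $\nabla_{\partial_u}$ gives a connection on it defined over $R((u))$.

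\emph{Reduction mod $\frakm$.} For each $\frakm$, with $p = \mathrm{char}(F) > 2$, Corollary \ref{th:w-conjugacy} shows that the $p$-curvature of the reduced connection lies in $\scrO_{\pi_F}(F((u)))$. Here $\pi_F$ is the partition of $W$ acting on $\mathit{Jac}(W)\otimes F$, which equals $\pi$ by the choice of $R$. In particular it lies in $\bar\scrO_\pi(F((u)))$. Proposition \ref{th:orbit-closure}, applied with $u$ as the variable, then gives that over $\bC((u))$ the connection is equivalent to $\partial_u + u^{-1}A$, with $A$ having rational eigenvalues and nilpotent part in $\bar\scrO_\pi(\bC)$.

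\emph{Expected main obstacle.} The main difficulty is the spreading-out step: ensuring that the cohomology with its connection is a free $R((u))$-module compatible with all reductions, and that the conjugacy class of $W$ on the Jacobian ring does not jump upon reduction. The latter is handled by inverting the minors as described. For the former, I would use the Koszul regularity of $(\partial_{x_i}W)$ over $R$ localized, together with the $u$-adic filtration, exactly as in Lemma \ref{th:u-filtration}.
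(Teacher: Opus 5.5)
Your proposal is correct and follows essentially the same route as the paper. In both, $W$ is made polynomial by a formal coordinate change; the Jacobian ring and $H^n(\Omega^*_R((u)),ud-dW)$ are spread out as free modules over a finitely generated $R \ni \half$, compatibly with base change to $\bC$ and to each $F = R/\frakm$; Corollary \ref{th:w-conjugacy} is applied mod $\frakm$; and the conclusion comes from Proposition \ref{th:orbit-closure} in the variable $u$.

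The only difference is cosmetic. You stop the partition from jumping by inverting nonzero minors, whereas the paper enlarges $R$ so that a Jordan-form conjugation and its inverse are defined over $R$. In fact neither step is needed: the minor equations \eqref{eq:minors} already hold over $R \subset \bC$, which puts the mod $\frakm$ reduction in $\bar\scrO_\pi(F)$, and that is all Proposition \ref{th:orbit-closure} requires.
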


There is a formal change of variables which transforms $W$ into a polynomial \cite[Section 6.3]{agv1}. Since the statement of Theorem \ref{th:varchenko} is preserved by such a coordinate change, we assume without loss of generality that $W$ is already polynomial.
We will use versions of differential forms with coefficients in $\bC$; in rings
\begin{equation} \label{eq:w-ring}
R \subset \bC \text{ is finitely generated, and contains $\half$ as well as the coefficients of $W$};
\end{equation}
and in finite fields $F = R/\frakm$ (see Section \ref{subsec:nilpotent}). To distinguish between those versions, the underlying ring or field will be used as a subscript.

\begin{lemma} \label{th:r}
 There is a ring \eqref{eq:w-ring} such that: 
 
 (i) $\mathit{Jac}_R(W) \iso H^n(\Omega^*_R, -dW)$ is a free $R$-module; and the map
\begin{equation} \label{eq:tensor-with-c-1}
H^n(\Omega^*_R,-dW) \otimes_R \bC \longrightarrow H^n(\Omega^*_{\bC},-dW)
\end{equation}
is an isomorphism. 

(ii) The lower degree cohomology groups vanish:  \begin{align}\label{eq:KoszulzeroR}
H^j(\Omega^*_R,-dW)=0,
\qquad j < n.
\end{align}

\end{lemma}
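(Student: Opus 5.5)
The plan is to spread out the finitely many algebraic identities that witness the isolated critical point and the Jacobian basis. Since $W$ is a polynomial, $\Omega^*_R$ means forms with coefficients in $R[[x_1,\dots,x_n]]$. Over $\bC$, \eqref{eq:high-power} gives some $N$ with $(x_1,\dots,x_n)^N \subset (\partial_{x_1}W,\dots,\partial_{x_n}W)$ in $\bC[[x]]$. This inclusion already holds in the local ring $\bC[x]_{(x)}$, because $\bC[x]_{(x)} \to \bC[[x]]$ is faithfully flat and the ideal is generated by polynomials.

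\emph{Choice of $R$.} For each monomial $x^\alpha$ with $|\alpha| = N$, fix an identity
\begin{equation*}
g_\alpha\, x^\alpha = \sum_i h_{\alpha,i}\, \partial_{x_i}W
\end{equation*}
with $g_\alpha, h_{\alpha,i} \in \bC[x]$ and $g_\alpha(0) \neq 0$. Next, choose monomials $e_1,\dots,e_\mu$ of degree $<N$ forming a basis of $\mathit{Jac}_\bC(W)$. For every monomial $x^\beta$ with $|\beta|<N$, fix a similar polynomial identity in $\bC[x]_{(x)}$ expressing $x^\beta$ as a $\bC$-linear combination of the $e_j$ modulo $(\partial W)$. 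Let $R$ be generated by $\half$, the coefficients of $W$, all coefficients appearing in these finitely many identities, and the inverses $g_\alpha(0)^{-1}$, together with the corresponding denominators. Then each $g_\alpha$ is a unit in $R[[x]]$. Hence $(x)^N \subset (\partial W)$ in $R[[x]]$, and $\mathit{Jac}_R(W)$ is spanned over $R$ by $e_1,\dots,e_\mu$.

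\emph{Part (i).} Because $(x)^N \subset (\partial W)$, the Jacobian ring is a quotient of $R[x]/(x)^N$, so base change behaves well: $\mathit{Jac}_R(W) \otimes_R \bC$ maps onto $\mathit{Jac}_\bC(W)$, and this map sends the spanning set $\{e_j\}$ to a basis. It follows that the $e_j$ are $R$-linearly independent (note $R \subset \bC$), so $\mathit{Jac}_R(W)$ is free with basis $\{e_j\}$. The same observation shows that \eqref{eq:tensor-with-c-1} is an isomorphism. The identification $\mathit{Jac}_R(W) \iso H^n(\Omega^*_R,-dW)$ is the same top-degree computation as in Lemma \ref{th:u-filtration}, and holds over any ring.

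\emph{Part (ii), the expected obstacle.} Here I must show that $\partial_{x_1}W,\dots,\partial_{x_n}W$ is Koszul-regular on $R[[x]]$. The plan is a fibrewise criterion. For every maximal ideal $\frakm \subset R$, the reduction mod $\frakm$ still satisfies $(x)^N \subset (\partial W)$ in $F[[x]]$. So the partials generate an ideal of height $n$ in the Cohen--Macaulay local ring $F[[x]]$, hence form a regular sequence there, and the Koszul complex over $F[[x]]$ is exact in degrees $<n$. The Koszul complex over $R[[x]]$ consists of flat $R$-modules ($R$ is Noetherian). Its top cohomology $\mathit{Jac}_R(W)$ is $R$-free by (i). A descending induction on degree, using the universal coefficient sequence for $\otimes_R R/\frakm$, then shows that each $H^j$ with $j<n$ satisfies $H^j \otimes_R R/\frakm = 0$ for all $\frakm$. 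Each $H^j$ is killed by $(\partial W) \supset (x)^N$, so it is a module over $R[x]/(x)^N$, which is finite over $R$. If finite generation of $H^j$ over $R$ fails or is hard to establish, I will first try to arrange it by replacing $R[[x]]$ with its $(x)$-adic truncations. Given finite generation, Nakayama at each maximal ideal gives $H^j = 0$. This step, controlling finiteness of the lower Koszul cohomology over $R$ so that Nakayama applies, is where I expect the real work to lie. An alternative I would try is a direct argument: $R[[x]]$ is faithfully flat over $R[x]_{(x)}$-type localizations, and regularity can be checked there after a further enlargement of $R$.
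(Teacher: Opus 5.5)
Your proposal is correct, but it takes a somewhat different route from the paper in both parts.

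For (i), the paper spreads out only the relations witnessing $(x_1,\dots,x_n)^N \subset (\partial_{x_1}W,\dots,\partial_{x_n}W)$. It concludes that $\mathit{Jac}_R(W)$ is generated by the monomials of degree $<N$. It then invokes generic freeness, localizing $R$ once more until $\mathit{Jac}_R(W)$ becomes free. You additionally spread out the relations expressing each low-degree monomial through a monomial basis $e_1,\dots,e_\mu$ of $\mathit{Jac}_{\bC}(W)$, and get $R$-linear independence for free from $R \subset \bC$. This costs a little more bookkeeping, but it avoids generic freeness, produces an explicit basis, and makes the base change isomorphism to $\bC$ immediate.

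For (ii), the paper only says the proof is ``similar''. Your fibrewise argument is a complete proof: the reduced partials form a regular sequence in $F[[x]]$; $R[[x]]$ is $R$-flat with $R[[x]]\otimes_R F = F[[x]]$; $H^n$ is free; then descending induction and Nakayama. It also shows that (ii) needs no further enlargement of $R$ beyond what you did for (i).

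The step you flagged as the real difficulty is in fact immediate. $R[[x]]$ is Noetherian, so each $H^j(\Omega^*_R,-dW)$ is a finitely generated $R[[x]]$-module. It is killed by each $\partial_{x_i}W$, since multiplication by $\partial_{x_i}W$ is nullhomotopic via contraction with $\partial_{x_i}$. Hence it is killed by $(x)^N$, so it is finitely generated over $R[x]/(x)^N$ and therefore over $R$. No truncation is needed, and your last ``alternative'' can be dropped.

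Do make the induction interleave Nakayama. You need $H^{j+1}$ to be flat (free if $j+1=n$, zero otherwise) before you can identify $H^j(\Omega^*_R,-dW)\otimes_R F$ with $H^j(\Omega^*_F,-dW)$.

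There is also an even shorter route. Since $\sqrt{(\partial W)}\supseteq (x)$ and $x_1,\dots,x_n$ is a regular sequence on the Noetherian ring $R[[x]]$, the ideal $(\partial W)$ has grade $n$. Grade-sensitivity of the Koszul complex then gives (ii) directly.
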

\begin{proof} (i) Start with any $R$ as in \eqref{eq:w-ring}, and let $K=\operatorname{Frac}(R)$ be its fraction field. Because the singularity is isolated, the comparison map
\begin{align} 
\frac{K[x_1,\dots,x_n]_{(x_1,\dots,x_n)}}{(\partial_{x_1}W,\dots,\partial_{x_n}W)} \longrightarrow \frac{K[[x_1,\dots,x_n]]}{(\partial_{x_1}W,\dots,\partial_{x_n}W)}
\end{align}
is an isomorphism. 
For some $N$,
\begin{equation}
(x_1,\dots,x_n)^N K[x_1,\dots,x_n]_{(x_1,\dots,x_n)}
\subset
(\partial_1W,\dots,\partial_nW)\cdot K[x_1,\dots,x_n]_{(x_1,\dots,x_n)}.
\end{equation} 
This means for any monomial $x_1^{k_{1}}\cdots x_n^{k_{n}}$ of degree $N$, we have 
\begin{equation}
x_1^{k_{1}}\cdots x_n^{k_{n}}=\sum h_{k,i} \cdot \partial_{x_i}W, \;\; h_{k,i} \in  K[x_1,\cdots,x_n]_{(x_1,\cdots,x_n)}.
\end{equation}
We can localize $R$ to a ring $R^{\operatorname{fin}}$ so that $h_{k,i} \in R^{\operatorname{fin}}[[x_1,\cdots,x_n]]$. Then 
\begin{align}\label{eq:spreadingoutsupport}
(x_1,\dots,x_n)^N \cdot R^{\operatorname{fin}}[[x_1,\dots,x_n]]
\subset
(\partial_1W,\dots,\partial_nW) \cdot R^{\operatorname{fin}}[[x_1,\dots,x_n]]. 
\end{align}
Having done this, it follows that $Jac_{R^{\operatorname{fin}}}(W)$ is generated as an $R^{\operatorname{fin}}$-module by the finitely many monomials of total degree less than $N$. Since a finite module is generically free \cite[Tag~051S]{stacks}, we may localize $R^{\operatorname{fin}}$ further to a ring $R^{\operatorname{free}}$ so that 
\begin{equation}
\frac{R^{\operatorname{fin}}[[x_1,\dots,x_n]]}{(\partial_{x_{1}}W,\dots,\partial_{x_{n}}W)} \otimes_{R^{\operatorname{fin}}} R^{\operatorname{free}} 
\end{equation}
is a finite free module. In view of \eqref{eq:spreadingoutsupport}, we have 
\begin{equation}
\begin{aligned}
& \frac{R^{\operatorname{fin}}[[x_1,\dots,x_n]]}
     {(\partial_{x_1}W,\dots,\partial_{x_n}W)}
\otimes_{R^{\operatorname{fin}}} R^{\operatorname{free}}
\cong
\frac{R^{\operatorname{fin}}[[x_1,\dots,x_n]]}
     {\bigl((x_1,\dots,x_n)^N,
     \partial_{x_1}W,\dots,\partial_{x_n}W\bigr)}
\otimes_{R^{\operatorname{fin}}} R^{\operatorname{free}}
\\
& \qquad \qquad \cong
\frac{R^{\operatorname{free}}[[x_1,\dots,x_n]]}
     {\bigl((x_1,\dots,x_n)^N,
     \partial_{x_1}W,\dots,\partial_{x_n}W\bigr)}
\cong
\mathit{Jac}_{R^{\operatorname{free}}}(W),
\end{aligned}
\end{equation}
which gives the desired freeness. 

In our construction, we've seen that $\mathit{Jac}_{R^{\operatorname{free}}}(W)$ is a quotient of $R^{\operatorname{free}}[[x_1,\dots,x_n]]/(x_1,\dots,x_n)^N$, and since that is a finite $R^{\operatorname{free}}$-module, tensoring with $\bC$ yields $\bC[[x_1,\dots,x_n]]/(x_1,\dots,x_n)^N$; one can then further quotient by the $\partial_{x_i}W$ to show that \eqref{eq:tensor-with-c-1} is an isomorphism. The proof of (ii) is similar.  \end{proof}

\begin{lemma} \label{th:rspectral} For $R$ as in Lemma \ref{th:r}: 

(i) $H^n(\Omega_R^*((u)), ud-dW)$ is a free $R((u))$-module. Moreover, the map 
\begin{equation} 
H^n(\Omega^*_R((u)), ud-dW) \otimes_{R((u))} \bC((u)) \longrightarrow H^n(\Omega^*_{\bC}((u)),ud-dW)
\end{equation}
is an isomorphism.

(ii) The lower degree cohomology groups vanish:  \begin{align}\label{eq:KoszulzeroR2}
H^j(\Omega^*_R((u)),ud-dW)=0
\qquad j < n.
\end{align}
\end{lemma}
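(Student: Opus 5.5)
We will run the argument of Lemma \ref{th:u-filtration} over $R$ instead of a field, using Lemma \ref{th:r} as input. First work with $\Omega^*_R[[u]]$ and the differential $ud-dW$. Filter by powers of $u$: the associated graded complex is $(\Omega^*_R,-dW)[[u]]$. By Lemma \ref{th:r}(ii) its cohomology vanishes below degree $n$, and by Lemma \ref{th:r}(i) its degree $n$ cohomology is $\mathit{Jac}_R(W)[[u]]$, a free $R[[u]]$-module of finite rank. Because the complex is $u$-adically complete and bounded (degrees $0,\dots,n$), the spectral sequence converges. Since the $E_1$ page is concentrated in the top degree, it degenerates. This gives two conclusions: $H^j(\Omega^*_R[[u]],ud-dW)=0$ for $j<n$, and $H^n(\Omega^*_R[[u]],ud-dW)$ is $u$-adically complete and separated, with $H^n/u \iso \mathit{Jac}_R(W)$. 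Concretely, choose classes $[f_k\,\mathit{vol}]$ lifting an $R$-basis of $\mathit{Jac}_R(W)$. Successive $u$-approximation shows they span $H^n$ over $R[[u]]$. They are also linearly independent: given a relation, look at its lowest $u$-order term, which gives a relation in $\mathit{Jac}_R(W)$; then divide by $u$, which is allowed because $H^n$ has no $u$-torsion. That absence of $u$-torsion itself follows from $H^{n-1}$ of the $u=0$ complex being zero.

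Next invert $u$. Localization is exact, so $H^*(\Omega^*_R((u)),ud-dW)=R((u))\otimes_{R[[u]]}H^*(\Omega^*_R[[u]],ud-dW)$. This gives (ii) directly, and gives freeness in (i) with basis the $[f_k\,\mathit{vol}]$. The identification $\Omega^*_R((u))=\Omega^*_R[[u]][u^{-1}]$ is as modules of Laurent series with coefficients in $R[[x]]$ of bounded-below $u$-order, which matches the paper's conventions.

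For the base change statement, run the same argument over $\bC$, which is Lemma \ref{th:u-filtration}. There the classes $[f_k\,\mathit{vol}]$, viewed in $\Omega^*_\bC((u))$, form a $\bC((u))$-basis, because by \eqref{eq:tensor-with-c-1} their $u=0$ reductions form a $\bC$-basis of $\mathit{Jac}_\bC(W)$. The comparison map sends the $R((u))$-basis to a $\bC((u))$-basis, so it is an isomorphism.

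The main obstacle is the convergence and completeness step over $R$. Here $R[[x_1,\dots,x_n]]$ and $R[[u]]$ are not fields, and we must make sure the limiting lifts stay inside $\Omega^*_R[[u]]$. This works because all corrections are $R$-linear combinations produced by the $R$-level Koszul exactness of Lemma \ref{th:r}(ii), applied degree by degree in $u$. I would also double-check that $R[[x]]$-coefficient power series with Laurent tails in $u$ behave as claimed under the localization.
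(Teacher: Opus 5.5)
Your proposal is correct and follows essentially the same route as the paper. You run the $u$-filtration argument of Lemma \ref{th:u-filtration} over $R[[u]]$ using Lemma \ref{th:r}, compare with $\bC$ through the $u=0$ reductions and \eqref{eq:tensor-with-c-1}, and invert $u$. The only cosmetic difference is that the paper compares the free $[[u]]$-modules before inverting $u$ (a map of finite free $\bC[[u]]$-modules that is an isomorphism mod $u$), whereas you invert $u$ first and match explicit bases.
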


\begin{proof}

The $u$-filtration argument from the proof of Lemma \ref{th:u-filtration} works with $R$-coefficients as well. In particular, the vanishing \eqref{eq:KoszulzeroR2} holds and  $H^n(\Omega^*_R[[u]],ud-dW)$ is a free $R[[u]]$-module, whose $u = 0$ reduction is $H^n(\Omega^*_R,-dW)$. The outcome is that
\begin{equation} \label{eq:u-map-c}
H^n(\Omega^*_R[[u]], ud-dW) \otimes_{R[[u]]} \bC[[u]] \longrightarrow H^n(\Omega^*_{\bC}[[u]],ud-dW)
\end{equation}
is a map of finite rank free $\bC[[u]]$-modules, whose $u=0$ reduction is an isomorphism. Therefore \eqref{eq:u-map-c} must be an isomorphism, and we can then invert $u$.
\end{proof}

\begin{lemma} \label{th:r-2}
(i) For $R$ as in Lemma \ref{th:r}, and any residue field $F = R/\frakm$, the map
\begin{equation} 
\label{eq:r-to-f-1}
H^n(\Omega^*_R,-dW) \otimes_R F \longrightarrow H^n(\Omega^*_F,-dW)
\end{equation}
is an isomorphism.

(ii) The same applies to
\begin{equation}
\label{eq:r-to-f-2}
H^n(\Omega^*_R((u)),ud-dW) \otimes_{R((u))} F((u)) \longrightarrow H^n(\Omega^*_F((u)),ud-dW).
\end{equation}
\end{lemma}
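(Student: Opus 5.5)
The plan is to prove (i) first, using the structure from Lemma \ref{th:r}, and then deduce (ii) from it by a $u$-filtration argument of the same kind as in Lemma \ref{th:rspectral}.

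\emph{Part (i).} From \eqref{eq:spreadingoutsupport} we have $(x_1,\dots,x_n)^N \subset (\partial_{x_1}W,\dots,\partial_{x_n}W)$ over $R[[x_1,\dots,x_n]]$. Reducing mod $\frakm$ gives the same inclusion over $F[[x_1,\dots,x_n]]$. Hence both sides of \eqref{eq:r-to-f-1} can be computed in the truncated ring: via $\mathit{Jac}(W) \iso H^n(\Omega^*,-dW)$, they are the quotients of $R[x]/(x)^N$, respectively $F[x]/(x)^N$, by the images of the $\partial_{x_i}W$. Taking a quotient by the images of finitely many elements is right exact, so it commutes with $\otimes_R F$. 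Also $R[x]/(x)^N \otimes_R F = F[x]/(x)^N$. Together these show that \eqref{eq:r-to-f-1} is an isomorphism.

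We will also want the analogue of Lemma \ref{th:r}(ii) over $F$, namely $H^j(\Omega^*_F,-dW)=0$ for $j<n$. This holds because the $\partial_{x_i}W$ still cut out an isolated point over $F$: by the inclusion above, $F[[x]]/(\partial W)$ is finite-dimensional. In a regular local ring, a system of parameters is a regular sequence, so the Koszul complex $(\Omega^*_F,-dW)$ is acyclic below degree $n$.

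\emph{Part (ii).} I would first work over $[[u]]$ rather than $((u))$. On both the $R$ side and the $F$ side, the $u$-filtration argument gives the following: the lower cohomology vanishes, $H^n(\Omega^*[[u]],ud-dW)$ is free, and its reduction at $u=0$ is $H^n(\Omega^*,-dW)$. This uses the vanishing from part (i) on the $F$ side.

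Next, the natural map
\[
H^n(\Omega^*_R[[u]],ud-dW)\otimes_{R[[u]]}F[[u]] \longrightarrow H^n(\Omega^*_F[[u]],ud-dW)
\]
goes between free $F[[u]]$-modules of the same finite rank. Modulo $u$ it is the isomorphism \eqref{eq:r-to-f-1}, so by Nakayama's lemma it is surjective, and a surjection between free modules of equal finite rank is an isomorphism. Inverting $u$ then gives \eqref{eq:r-to-f-2}.

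One point needs care: $R[[u]]\otimes_{R[[u]]}F[[u]]$ versus $\otimes_R F$, since $R[[u]]\otimes_R F \neq F[[u]]$ in general. Because the module in question is finite free, base change along $R[[u]]\to F[[u]]$ is harmless. I expect the main obstacle to be this bookkeeping, together with checking that the freeness and the identification of the $u=0$ reduction really survive reduction mod $\frakm$. The flatness question of whether $\Omega^*_R[[u]]\otimes F$ equals $\Omega^*_F[[u]]$ should be avoided by working directly with the $u$-adic filtration on both sides, rather than by attempting a tensor-level comparison of the complexes.
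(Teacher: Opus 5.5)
Your proof is correct, but it takes a longer route than the paper, mainly in (ii).

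The paper works at the level of complexes. Since $R$ is Noetherian, $\frakm$ is finitely generated. Hence $\frakm R[[x_1,\dots,x_n]] = \frakm[[x_1,\dots,x_n]]$, so $R[[x_1,\dots,x_n]] \otimes_R F = F[[x_1,\dots,x_n]]$ and $\Omega^*_R \otimes_R F = \Omega^*_F$. The same reasoning gives $\Omega^*_R((u)) \otimes_{R((u))} F((u)) = \Omega^*_F((u))$, and the paper states that (ii) is parallel to (i). Passing to cohomology is then immediate. $H^n$ is the top-degree cohomology, i.e.\ the cokernel of $\Omega^{n-1} \to \Omega^n$, and cokernels commute with base change by right exactness.

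So the point you single out as needing care is not a flatness issue. For Noetherian $R$ one does have $R[[u]] \otimes_R F = F[[u]]$. The tensor-level comparison you chose to avoid is exactly the paper's argument.

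Your alternative is valid. In (i) you truncate to $R[x]/(x)^N$. In (ii) you compare $[[u]]$-lattices via freeness on both sides, equality of ranks from (i), and Nakayama. This has the mild advantage of never tensoring power-series modules. However, it uses more input:
\begin{itemize}
\item the vanishing of $H^j(\Omega^*_F,-dW)$ for $j<n$ (your regular-sequence argument, or Lemma \ref{th:u-filtration} over $F$);
\item the $u$-filtration argument on both the $R$ side and the $F$ side;
\item the inclusion \eqref{eq:spreadingoutsupport}, which appears in the proof rather than the statement of Lemma \ref{th:r}. It does hold for the ring built there and survives any enlargement, and it can also be recovered from the freeness in Lemma \ref{th:r}(i).
\end{itemize}
The direct argument needs only finite generation of $\frakm$ and right exactness. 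Run over $R[[u]]$, it also gives the lattice-level comparison $H^n(\Omega^*_R[[u]],ud-dW)\otimes_{R[[u]]}F[[u]] \cong H^n(\Omega^*_F[[u]],ud-dW)$ that your argument produces.
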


\begin{proof}
For (i), note that $R[[x_1,\dots,x_n]] \otimes_R F = F[[x_1,\dots,x_n]]$, and similarly $\Omega^*_R \otimes_R F = \Omega^*_F$. Passage to cohomology is straightforward using the hypotheses from Lemma \ref{th:r}. The argument for (ii) is parallel.
\end{proof}

Note that if some $R$ has the properties from Lemma \ref{th:r}, then those continue to hold for any larger ring in the class \eqref{eq:w-ring}.

\begin{proof}[Proof of Theorem \ref{th:varchenko}]
Let $R$ be as in Lemma \ref{th:r}. There is some matrix that transforms the nilpotent endomorphism $W$ of $\mathit{Jac}_{\bC}(W) = \mathit{Jac}_R(W) \otimes_R \bC$ into Jordan normal form. We may enlarge $R$ so that this transformation and its inverse are defined over $R$, and therefore can be reduced mod $\frakm$. From this and Lemma \ref{th:r-2}(i), it follows that for any $F = R/\frakm$,
\begin{equation}
W: \mathit{Jac}_F(W) \rightarrow \mathit{Jac}_F(W) \;\; \text{belongs to $\scrO_\pi(F)$.}
\end{equation}
Corollary \ref{th:w-conjugacy} and Lemma \ref{th:r-2}(ii) then imply the following:
\begin{equation}
\parbox{37em}{
The connection $\nabla_{\partial_u}$ on $H^n(\Omega^*_R((u)), ud-dW)$, when reduced mod any $\frakm$, has $p$-curvature in $\scrO_\pi(F((u)))$. 
}
\end{equation}
We now apply Proposition \ref{th:orbit-closure} and Lemma \ref{th:rspectral}, and reach the conclusion that the connection on $H^n(\Omega^*_{\bC}((u)),ud-dW)$ has the desired form.
\end{proof}

\end{document}